%
\documentclass[11pt]{amsart}
\usepackage{fullpage}   

\usepackage{amsmath}
\usepackage{amsfonts}
\usepackage{amssymb}
\usepackage{amsthm}
\usepackage{subfigure} 
\usepackage{graphicx}
\usepackage{color}
\usepackage{lineno}
\usepackage{pinlabel}
\usepackage{tikz}
\usepackage{hyperref}
\usepackage{stmaryrd}

\newcommand*\patchAmsMathEnvironmentForLineno[1]{%
  \expandafter\let\csname old#1\expandafter\endcsname\csname #1\endcsname
  \expandafter\let\csname oldend#1\expandafter\endcsname\csname end#1\endcsname
  \renewenvironment{#1}%
     {\linenomath\csname old#1\endcsname}%
     {\csname oldend#1\endcsname\endlinenomath}}%
\newcommand*\patchBothAmsMathEnvironmentsForLineno[1]{%
  \patchAmsMathEnvironmentForLineno{#1}%
  \patchAmsMathEnvironmentForLineno{#1*}}%
\AtBeginDocument{%
\patchBothAmsMathEnvironmentsForLineno{equation}%
\patchBothAmsMathEnvironmentsForLineno{align}%
\patchBothAmsMathEnvironmentsForLineno{flalign}%
\patchBothAmsMathEnvironmentsForLineno{alignat}%
\patchBothAmsMathEnvironmentsForLineno{gather}%
\patchBothAmsMathEnvironmentsForLineno{multline}%
}

\newtheorem{theorem}{Theorem}
\newtheorem{proposition}[theorem]{Proposition}
\newtheorem{lemma}[theorem]{Lemma}

\newtheorem{corollary}[theorem]{Corollary}

\theoremstyle{definition}
\newtheorem{definition}[theorem]{Definition}

\newtheorem{example}[theorem]{Example}
\newtheorem{remark}[theorem]{Remark}

\newcommand{\id}{\mathrm{id}}
\newcommand{\ba}{\setminus}

\newcommand{\del}{\bbslash}
\newcommand{\con}{\sslash}

\begin{document}



\title[Hopf algebras and  Tutte polynomials]{Hopf algebras and  Tutte polynomials}


\author{Thomas Krajewski$^*$}
\author{Iain Moffatt$^*$}
\author{Adrian Tanasa$^\ddagger$}
\address{$^*$CPT - UMR 7332, CNRS, Aix-Marseille Universit\'{e} and Universit\'{e} de Toulon, Campus de Luminy, 13228 Marseille Cedex 9, France}
\email{thomas.krajewski@cpt.univ-mrs.fr}
\address{$^*$Department of Mathematics, Royal Holloway, University of London, Egham, Surrey, TW20 0EX, United Kingdom}
\email{iain.moffatt@rhul.ac.uk}
\address{$^\ddagger$Universit\'{e} de Bordeaux, LaBRI, UMR 5800, F-33400 Talence, France and Horia Hulubei National Institute for Physics and Nuclear Engineering P.O.B. MG-6 077125 Magurele, Romania and I. U. F. Paris, France}

\email{adrian.tanasa@ens-lyon.org}

\thanks{T.K. was partially supported by the ANR JCJC CombPhysMat2Tens grant.  A.T. was partially supported by the PN  09370102 and ANR JCJC CombPhysMat2Tens grants. This work was initiated while the authors were visiting the Erwin Schr\"odinger International Institute for Mathematical Physics (ESI) in Vienna during their 2014 programme on Combinatorics, Geometry and Physics. All three authors would like to thank the ESI for their support and for providing a productive working environment.}

\subjclass[2010]{05C31}
\keywords{Graph polynomial, Tutte polynomial, Hopf algebra, Las~Vergnas polynomial, Penrose polynomial, Bollob\'as-Riordan polynomial, Krushkal polynomial.}
\date{This version \today}

\begin{abstract}
By considering  Tutte polynomials of   Hopf algebras, we show how a Tutte polynomial can be canonically associated with  combinatorial objects that have some notions of deletion and contraction. We show that several graph polynomials from the literature arise  from this framework. These polynomials include the classical Tutte polynomial of graphs and matroids, Las~Vergnas' Tutte polynomial of the morphism of matroids and his Tutte polynomial for embedded graphs, Bollob\'as and Riordan's ribbon graph polynomial, the Krushkal polynomial, and the Penrose polynomial.

We show that our Tutte polynomials of Hopf algebras share common properties with the classical Tutte polynomial, including deletion-contraction definitions, universality properties, convolution formulas, and duality relations. New results for  graph polynomials from the literature are then obtained as examples of the general results.

Our results offer a framework for the study of the Tutte polynomial and its analogues in other settings,  offering the means to determine the properties and connections between a wide class of polynomial invariants. 
\end{abstract}

\maketitle

\section{Introduction and overview}\label{sec1}
The Tutte polynomial is arguably the most important  graph polynomial, and unquestionably the most studied. It encodes a substantial amount of the combinatorial information of a graph,  specialises to a myriad of other polynomials  (including the chromatic and  flow polynomials).  It appears in knot theory as the Jones and homfly-pt polynomials, and in statistical mechanics as the Ising and Potts model partition functions. 

Given the pervasiveness of the Tutte polynomial, it is unsurprising that attention has been given to finding analogues or extensions of the Tutte polynomial from graphs to other types of combinatorial object.
 These analogues can mostly be fit in to three broad types. Some analogues, such as W.T.~Tutte and H.~Crapo's extension to matroids \cite{Cr69,tuttethesis},  uncontroversially should be called a Tutte polynomial. Some analogues, such as M.~Las~Vergnas' Tutte polynomial for morphisms of matroids \cite{Las78a,Las80}, offer entirely satisfactory candidates for  a Tutte polynomial, but without an explanation of why we should use that particular polynomial and no other. Finally, some polynomials, such as the Bollob\'as-Riordan polynomial \cite{CMNR1} or G.~Farr's  polynomials of alternating dimaps \cite{Farr:2013aa}, offer polynomials that have some of the properties we would expect of a Tutte polynomial,  but do not have some of the other properties we would expect (for example, a ``full'' deletion-contraction definition in the case of the Bollob\'as-Riordan polynomial). 

Thus we arrive at the fundamental problem of what  we mean when we say that a polynomial invariant is a ``Tutte polynomial'' of some class of objects?  It is exactly this problem that we are interested in here. 

As an answer to this  problem, we propose a  Hopf algebraic framework for Tutte-like graph polynomials. This framework 
offers a canonical construction  of a ``Tutte polynomial'' of a   (suitable) set of combinatorial objects that is equipped with \emph{some} notions of ``deletion'' and ``contraction''. (We emphasise that these need not be the usual notions of deletion and contraction for the given objects. In fact, different  ``Tutte polynomials''   arise when using different notions of deletion and contraction for the same type of object.)  The resulting polynomials satisfy what we can reasonably expect an analogue of the Tutte polynomial to:
\begin{itemize}
\item Have a natural, canonical definition that arises from the class. 
\item Have a ``full'' recursive deletion-contraction-type definition terminating in trivial objects.
\item Have a  state-sum (rank-nullity-type) formulation.
\item Have a  universality property.
\item Share standard properties of the Tutte polynomial such as duality relations, and convolution formula when possible. 
\end{itemize}

The overall aim of this theory is to study graph polynomials en masse, rather than individually, and  we must question whether the framework offered here is useful for doing this.   Any general theory should: (i) absorb examples from the literature, and (ii) resolve problems.  The bulk of this paper is taken up verifying that the theory does indeed do this.

We show that various graph polynomials, including the classical Tutte polynomials of graphs, matroids, and morphisms of matroids, arise as canonical Tutte polynomials. We also show that other graph polynomials, such as the Bollob\'as-Riordan polynomial \cite{BR02}, the Krushkal~\cite{Kr}  and the surprisingly  Penrose polynomial~\cite{Aig97,Pe71}, arise as restricted versions of canonical Tutte polynomials.

As for resolving problems, we illustrate that the theory does this by considering topological Tutte polynomials.
Over the last few years there has been considerable interest in extensions of the Tutte polynomial to graphs embedded in surfaces.  The study of topological Tutte polynomials began, as far as the authors are aware, with M.~Las Vergnas' Tutte polynomial of the morphism of a matroid. By considering matroid perspectives associated with embedded graphs, in  \cite{Las78a,Las80}  (see also \cite{Las75,Las78}), he introduced a polynomial $L_G(x,y,z)$, since named the Las~Vergnas polynomial, that extends the classical Tutte polynomial to cellularly embedded graphs.  Unfortunately Las~Vergnas' polynomial did not gain much attention and it took several years for topological Tutte polynomials to attract the serious attention of the community. This attention was instigated by B.~Bollob\'as and O.~Riordan's papers \cite{BR02} and \cite{BR01} where they introduced a topological Tutte polynomial $R_G(x,y,z)$. This polynomial, which is usually described in the language of ribbon graphs, has attracted much attention and has found applications in knot theory and quantum field theory (see, for example, \cite{Da,EMMbook,KRT10} and the references therein). Most recently, motivated by the algebra and combinatorics of statistical mechanics, S.~Krushkal  introduced in \cite{Kr} a polynomial $K_G(x,y,a,b)$ that extends the Tutte polynomial (and the Bollob\'as-Riordan polynomial) to graphs that are (not necessarily cellularly) embedded in a surface. 

There are three problematic aspects to the theory of topological Tutte polynomials as it stands. This first is simply why are there three different ``Tutte polynomials'' for graphs in surfaces? Which can claim to be \emph{the} Tutte polynomial? Secondly, why do the polynomials not have full recursive deletion-contraction relations that terminate in trivial graphs in surfaces? Thirdly, why are almost all  results about topological graph polynomials in the literature restricted to the  2-variable specialisation of the Bollob\'as-Riordan polynomial $x^{\gamma(G)/2}   R_G(x+1,y,1/\sqrt{xy})$? Answering these three questions was the motivation behind this work. An answer given by  the Hopf algebraic framework is offered in Remark~\ref{adhj}.

This paper is structured as follows. Section~\ref{hjdas} introduces the Tutte polynomial of a Hopf algebra, canonical Tutte polynomials, and lists some examples of these. 
Section~\ref{s.app} proves that these polynomials have desirable properties, including full deletion contraction-definitions, state sum formulations, universality properties, convolution formulae, and that specialisation and duality results arise from maps at the Hopf algebra level.
Section~\ref{s.examp} provides full details for the canonical Tutte polynomials that was summarised Section~\ref{hjdas}.   

\section{The definition of a Tutte polynomial of a Hopf algebra}\label{hjdas}

\subsection{The general case of Hopf algebras}
Let $ \mathcal{H} =  \bigoplus_{i\geq 0}\mathcal{H}_i $ be a graded connected commutative Hopf algebra (i.e., $\mathcal{H}$ is graded Hopf algebra with $\mathcal{H}_0$ of dimension 1), where each $\mathcal{H}_i$  is a vector space over $\mathbb{Q}$. (We work here over $\mathbb{Q}$ for simplicity, although it is possible to work in a more general setting.) All Hopf algebras here are commutative. 
An element  $S\in \mathcal{H} $ is said to be of  \emph{graded dimension} $i$ if it is in $\mathcal{H}_i$.
 If $f$ and $g$ are mappings from $\mathcal{H}$ into some commutative algebra with product $m$, then their \emph{convolution product}, $f\ast g$, is the mapping from $\mathcal{H}$ defined by $f\ast g:=m\circ (f\otimes g) \circ \Delta$.

Let  $\{S_i\}_{i\in I}$ be a basis for $\mathcal{H}_1$.  For each $i\in I$ we  define the mapping $\delta_i:\mathcal{H} \rightarrow \mathbb{Q}$ to be  the linear extension of 
\begin{equation}\label{e.dd}
   \delta_i(S) := \begin{cases} 1 &\mbox{if } S=S_i ,\\   0 & \mbox{otherwise} .\end{cases} 
   \end{equation}
Let $\{ x_j  \}_{j\in J}$ be a set of indeterminates,  $a_i\in\mathbb{Q}[\{ x_j  \}_{j\in J}]$ for each  $i\in I$, and $\mathbf{a}=\{a_i\}_{i\in I}$. We define the \emph{selector} $\delta_{\mathbf{a}} :  \mathcal{H} \rightarrow \mathbb{Q}[\{ x_j  \}_{j\in J}]$ by 
\begin{equation}\label{e.dd2}
 \delta_{\mathbf{a}}:=\sum_{i\in I} a_i\delta_i    .
 \end{equation}
 Similarly, for a set of indeterminates $\{ y_j  \}_{j\in J}$, set  $\mathbf{b}=\{b_i\}_{i\in I}$  with each   $b_i\in\mathbb{Q}[\{ y_j  \}_{j\in J}]$,  we define $\delta_{\mathbf{b}}:=\sum_{i\in I} b_i\delta_i $ .

With this choice of $\delta_{\mathbf{a}}$ (or $\delta_{\mathbf{b}}$) we can consider its $\ast$-exponential: 
\begin{equation}\label{e.dd3}
\exp_*(\delta_{\mathbf{a}}) =   \sum\limits_{m \geq 0}   \frac{\delta_{\mathbf{a}}{}^{\ast m}}{m!}  =  \epsilon + \delta_{\mathbf{a}} + \frac{1}{2}( \delta_{\mathbf{a}}\ast \delta_{\mathbf{a}} )+ \cdots    ,
 \end{equation}
where $\epsilon$ is the Hopf algebra counit.

We now introduce the Tutte polynomial of a Hopf algebra.
\begin{definition}\label{d.TH}
Let $\mathcal{H}$,  $\delta_{\mathbf{a}}$  and $\delta_{\mathbf{b}}$ be as above. Then we define the  \emph{Tutte polynomial} of $\mathcal{H}$,  $\alpha( \mathbf{a}, \mathbf{b}): \mathcal{H} \rightarrow  \mathbb{Q}[\{ x_j ,y_j \}_{j\in J}] $ by 
\[ \alpha( \mathbf{a}, \mathbf{b})  := \exp_* (\delta_{\mathbf{a}}) \ast \exp_* (\delta_{\mathbf{b}}) . \]
\end{definition}

At this point the reader may find it instructive to look forward to Example~\ref{ex.tutte}, which shows a  computation of Tutte polynomial of a Hopf algebra of graphs.

Before we continue (and, in particular, justify why we name $\alpha$ the Tutte polynomial)  we say a few words about  notation. In our examples,  $\mathcal{H}_1$ will  have a small dimension (of 2 to 5 elements) so we will usually fix an order of the basis and specify $\mathbf{a}$ and $\mathbf{b}$ as vectors, and  the pair $\mathbf{a}, \mathbf{b}$ as a list in $ \alpha( \mathbf{a}, \mathbf{b})$. We will also use a similar notation to specify $\delta_{\mathbf{a}}$. This will both reduce clutter and make better contact with standard graph polynomial notation.  Furthermore, we will often only define $\delta_{\mathbf{a}}$ with the understanding that $\delta_{\mathbf{b}}$ is defined similarly. Its exact definition will be clear from context.

\subsection{Deletion, contraction, and canonical Tutte polynomials}

Our aim here is to show that the  general definition of the Tutte polynomial of a Hopf algebra,   $\alpha( \mathbf{a}, \mathbf{b})$, provides a framework for studying a large class of graph polynomials. To this end we now work towards identifying how a variety of graph polynomials arise canonically as the Tutte polynomial of a Hopf algebra. More strongly, we  show that given a set of combinatorial objects with some notion of deletion and contraction, we can use Definition~\ref{d.TH} to obtain a natural and canonical Tutte polynomial for that class of objects. We will go on to identify numerous graph polynomials as the   canonical Tutte polynomial of an appropriate class.

\begin{definition}\label{d.1}
A \emph{minor system} consists of the following.
\begin{enumerate}
\item A graded set $\mathcal{S}=\bigcup_{n\geq 0}  \mathcal{S}_n$ of finite combinatorial objects such that each $S\in \mathcal{S}_n$ has a finite set $E(S)$ of exactly $n$ sub-objects associated with it, and such that there is a unique element $1\in \mathcal{S}_0$. 

\item Two \emph{minor operations}, $\del$  called \emph{deletion} and $\sslash$ called \emph{contraction}, that associate  elements $S\del  e $ and $S\con  e$, respectively, to each pair $(S  \in \mathcal{S}_n, e\in E(S))$, where  $E(S\del  e)=E(S\con e)=E(S)\setminus e$, and such that  for $e\neq f$ 
\[   (S \del e) \del f = (S \del f) \del e , \quad   (S \con e) \del f = (S \del f) \con e , \quad  (S \con e) \con f = (S \con f) \con e.\]
\end{enumerate}
\end{definition}

An example of a minor system is the set of matroids, with $E(S)$ the cardinality of the ground set of a matroid $S$, and with the usual  deletion and contraction  of matroids. Other examples can be found in Section~\ref{s.examp}.

We say that $S'$ is a \emph{minor} of $S$, if $S'$ can be obtained from $S$ by a sequence of applications of the minor operations.  By definition, if $S$ is in a minor system, then so are all of its minors.
Since the order of the application of the minor operations to distinct elements of $E(S)$ does not matter, we can use the notation $S\del A$ and $S\con A$ to mean we apply the appropriate minor operation to all of the elements in $A\subseteq E(S)$ in some order.

It is  a fairly routine exercise to verify that minor systems have a natural Hopf algebra structure:
\begin{proposition}\label{p.1}
The vector space $\mathcal{H}$ of formal $\mathbb{Q}$-linear combinations of elements  of a minor system $\mathcal{S}$ forms a coalgebra with counit  under
\[  \Delta(S)=\sum_{A\subseteq E(S)}(S\del A^c  ) \otimes  (S\con A),   \quad \quad    \varepsilon(S) = \begin{cases} 1 &\mbox{if } S\in \mathcal{S}_0, \\ 
0 & \mbox{otherwise} .\end{cases}  \]  
If, in addition, the vector space forms a commutative algebra with multiplication $m$ and unit $\eta$ such that  $\eta(1)\in \mathcal{S}_0$; 
  for all $S_1,S_2\in \mathcal{S}$
  \[ E( m( S_1\otimes S_2 ))  =  E(S_1) \sqcup E(S_2),\]
 and for each $A_i\subseteq E(S_i)$ 
\[m( S_1\del A_1 \otimes S_2 \del A_2 ) = m( S_1 \otimes S_2)  \del (A_1 \sqcup A_2 ), \quad
m( S_1\con A_1 \otimes S_2 \con A_2 ) = m( S_1 \otimes S_2)  \con (A_1 \sqcup A_2 ),
    \]
 then it is a graded connected Hopf algebra. 
\end{proposition}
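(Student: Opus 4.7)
The plan is to verify the coalgebra axioms (coassociativity and counit) first, then, under the additional hypotheses, establish the bialgebra compatibility of $\Delta$ with $m$ and of $\varepsilon$ with $m$, and finally invoke gradedness and connectedness to produce an antipode automatically via Takeuchi's recursion.

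For coassociativity, I would expand both $(\Delta \otimes \id)\circ \Delta(S)$ and $(\id \otimes \Delta)\circ\Delta(S)$ directly from the definition. The first becomes a double sum over $B \subseteq A \subseteq E(S)$ of triples
\[
(S \del A^c \del (A \setminus B)) \otimes (S \del A^c \con B) \otimes (S \con A),
\]
and the second, over $A \subseteq E(S)$ and $C \subseteq A^c$, of triples
\[
(S \del A^c) \otimes (S \con A \del (A^c \setminus C)) \otimes (S \con A \con C).
\]
I would reindex each sum by an ordered tripartition $E(S) = X \sqcup Y \sqcup Z$, taking $(X,Y,Z) = (B,\, A\setminus B,\, A^c)$ on the first and $(X,Y,Z) = (A,\, C,\, A^c\setminus C)$ on the second. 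Both then become sums indexed by such tripartitions in which the outer tensor factors reduce, on both sides, to $S \del (Y \sqcup Z)$ and $S \con (X \sqcup Y)$ respectively; the middle factors read $S \del Z \con X$ on one side and $S \con X \del Z$ on the other, and these agree by the third commutativity axiom of Definition~\ref{d.1}. For the counit, the grading forces $|E(S \del A^c)| = |A|$ and $|E(S \con A)| = |A^c|$, so $\varepsilon(S \del A^c) = 1$ only when $A = \emptyset$ (and symmetrically for the right counit); uniqueness of $1 \in \mathcal{S}_0$ then collapses the respective sums to $1 \otimes S$ and $S \otimes 1$.

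For the bialgebra axiom I would apply $\Delta$ to $m(S_1 \otimes S_2)$ and use the hypothesis $E(m(S_1 \otimes S_2)) = E(S_1) \sqcup E(S_2)$ to split each summation index $A$ uniquely as $A_1 \sqcup A_2$ with $A_i \subseteq E(S_i)$; the assumed multiplicativity of $\del$ and $\con$ then factors each summand as
\[
m(S_1 \del A_1^c \otimes S_2 \del A_2^c) \otimes m(S_1 \con A_1 \otimes S_2 \con A_2),
\]
which matches $(m \otimes m)\circ(\id\otimes\tau\otimes\id)\circ(\Delta\otimes\Delta)(S_1\otimes S_2)$ term by term. Compatibility of $\varepsilon$ with $m$ and of the unit with $\Delta, \varepsilon$ are immediate from $E(1)=\emptyset$ and the fact that $m(S_1 \otimes S_2) \in \mathcal{S}_0$ iff both $S_i \in \mathcal{S}_0$. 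Since the resulting bialgebra is graded with $\mathcal{H}_0 = \mathbb{K}\cdot 1$, it is connected, so Takeuchi's formula produces an antipode.

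The main obstacle is the coassociativity bookkeeping: because the coproduct is asymmetric (deletion in the left tensor slot, contraction in the right), the two iterations produce syntactically different expressions in the middle slot, and reconciling them requires both the ordered-tripartition reindexing and an appeal to the deletion-contraction commutativity axiom. Everything else is formal verification of axioms and a standard invocation of connectedness.
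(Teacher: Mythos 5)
Your proposal is correct and follows essentially the same route as the paper's proof: expand both iterated coproducts, reindex each by ordered tripartitions $X\sqcup Y\sqcup Z$ of $E(S)$, reconcile the middle tensor factor via the mixed commutativity axiom $(S \con e)\del f = (S\del f)\con e$ (note this is the second, not third, relation listed in Definition~\ref{d.1}), check the counit by the grading, and then observe that the hypotheses make $m$ a graded coalgebra morphism so that connectedness supplies the antipode. No substantive differences beyond your being slightly more explicit about the bialgebra compatibility and the Takeuchi-type argument for the antipode.
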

Hopf algebras have a long history in combinatorics, starting with G.-C. Rota \cite{MR540015}, and S. Joni and G.-C. Rota \cite{MR544721}. We do not attempt to give a comprehensive survey of their use here, but do make a few comments. Various instances of the Hopf algebras defined in Proposition~\ref{p.1} are very well-known and well-studied. Of particular relevance here is that the deletion-contraction Hopf algebras of Proposition~\ref{p.1} (which includes deletion-restriction Hopf algebras upon choosing contraction to be deletion) are much studied in matroid theory. They appear in W.~Schmitt's article \cite{Sch94}, and have been used to study graph polynomials (see, for example,  \cite{MR1654184, MR2037633,MR2718681,MR1778203} for a selection of applications). 
It is also worth noting that the Hopf algebras for ribbon graphs are closely related to those arising in the theory of Vassiliev invariants  \cite{MR1318886}.

We call a Hopf algebra of the type described in Proposition~\ref{p.1} the \emph{Hopf algebras of the minor system} $\mathcal{S}$.

\bigskip

We say that a selector $\delta_{\mathbf{a}}$ is \emph{uniform} if, for each $S\in \mathcal{H}$,  the evaluations of $\delta^{\otimes m}$ for each summand of $\Delta^{(m-1)}(S)$ are equal. (Equivalently,  $\delta_{\mathbf{a}}$ is uniform if  $\delta^{\otimes m}$ is a well-defined map on the symmetric algebra $\mathfrak{S}^m(\mathcal{H}_1)$ for each $m$.)

\begin{definition}
Let  $\mathcal{H}$  be a Hopf algebra of a minor system  $\mathcal{S}$, and $\delta_{\mathbf{a}}$ and $\delta_{\mathbf{b}}$ be uniform selectors, where the $\delta_i$ are determined by the elements of $\mathcal{S}_1$. Then we say that  $\alpha( \mathbf{a}, \mathbf{b})$, as given in Definition~\ref{d.TH}, is a
\emph{canonical Tutte polynomial} of the minor systems $S$.
\end{definition}

\subsection{A summary of examples of canonical Tutte polynomials}\label{cxty}
In this section we give an overview of  how various polynomials arise as  canonical Tutte polynomials. The level of detail we give in this summary is just enough to understand the applications and properties given in the next section. Full details are given in  Section~\ref{s.examp}. Of course some of the polynomials that arise are generalisations of others, however recall that the aim here is to find the correct notion of a Tutte polynomial for a given setting rather than to construct the most general polynomial possible.

\subsubsection*{Matroids, $\mathcal{H}^m$}
(See Section~\ref{s.mat} for details.)

\begin{itemize}

\item \textit{Objects:} Matroids with their usual deletion and contraction. (See Definition~\ref{d.hm}.)

\item \textit{Selector:} $\delta_{\mathbf{a}}= x_1\delta_c+x_2\delta_l$, $\delta_{\mathbf{b}}= y_1\delta_c+y_2\delta_l$ where $\delta_c$ detects $U_{1,1}$ and $\delta_l$ detects $U_{0,1}$.  (See \eqref{t.mat.delta}.)

\item \textit{Canonical Tutte polynomial:} (See Theorem~\ref{t.mat})
\[ \alpha(  \mathbf{a}, \mathbf{b}) ( M )  =    x_1^{r(M)} y_2^{|E(M)|-r(M)}    T_{M}\left( \frac{y_1}{x_1}+1, \frac{x_2}{y_2}+1\right),\]
where $T_M(x,y)$ is  the classical Tutte polynomial of a matroid, \eqref{e.tutteM}.
\end{itemize}

\subsubsection*{Matroid Perspectives, $\mathcal{H}^{mp}$}
(See Section~\ref{ssec.mp} for details.)
\begin{itemize}

\item \textit{Objects:} Matroid Perspectives with their usual deletion and contraction.  (See Definition~\ref{d.hm}.)

\item \textit{Selector:} $\delta_{\mathbf{a}}= x_1\delta_{cc} + x_2\delta_{ll} +x_3\delta_{cl}$, $\delta_{\mathbf{b}}= y_1\delta_{cc} + y_2\delta_{ll} +y_3\delta_{cl}$ where  
$\delta_{cc}$, $\delta_{ll}$, $\delta_{cl}$ detect, respectively, 
  $U_{0,1} \rightarrow U_{0,1}$, $U_{1,1} \rightarrow U_{1,1}$, and $U_{1,1} \rightarrow U_{0,1}$ (See \eqref{e.demp}.)

\item \textit{Canonical Tutte polynomial:} (See Theorem~\ref{s2.t1}.)
\[ \alpha( \mathbf{a}, \mathbf{b}) ( \mathbf{M} )  =     x_1^{r'(M')} y_2^{|E|-r(M)} x_3^{r(M)-r'(M')}     T_{\mathbf{M} }\left( \frac{y_1}{x_1}+1, \frac{x_2}{y_2}+1, \frac{y_3}{x_3}  \right), \]
where $T_{\mathbf{M}} (x,y,z)$ is  Las~Vergnas' Tutte polynomial of  matroid perspectives, \eqref{d.tpmp}.
 \end{itemize}

\subsubsection*{Delta-matroids, $\mathcal{H}^{dm}$}
(See Section~\ref{s.BRdm} for details.)
\begin{itemize}

\item \textit{Objects:} Delta-matroids with their usual deletion and contraction. (See Definition~\ref{d.hopfdm}.)

\item \textit{Selector:} 
$\delta_{\mathbf{a}}= x_1\delta_{c} + x_2\delta_{o} +\sqrt{x_1x_2}\delta_{n}$, 
$\delta_{\mathbf{b}}= y_1\delta_{c} + y_2\delta_{o} +\sqrt{y_1y_2}\delta_{n}$, where $\delta_{c} $, $\delta_{o} $ and $\delta_{n}$ detect, respectively, 
$D_c:=( \{e\}, \{  \{e\} \})$, $D_o:=( \{e\}, \{  \emptyset \})$, and $D_n:=( \{e\}, \{  \emptyset , \{e\} \})$
  (See \eqref{e.BRdelta1}.)

\item \textit{Canonical  Tutte polynomial:} (See Theorem~\ref{s2.t3}.)
\[ \alpha( \mathbf{a}, \mathbf{b}) ( D)  =  x_1^{ \rho(D)}   y_2^{|E|- \rho(D)} \tilde{R}_D\left(\frac{y_1}{x_1}+1, \frac{x_2}{y_2} +1\right),\]
where $\tilde{R}_D(x,y)$ is the (2-variable) Bollob\'as-Riordan polynomial, \eqref{s2.e3}.
\end{itemize}

\subsubsection*{Delta-matroids, $\mathcal{H}^{pe}$}
(See Section~\ref{ss.penrose} for details.)
\begin{itemize}

\item \textit{Objects:} Delta-matroids with  the operations $D/e$ and $(D+e)/e$ as deletion and contraction. See Definition~\ref{pe.d1}. (Note that using operations $D\ba e$ and $(D+e)/e $ results in an equivalent polynomial, as in Theorem~\ref{dual6}.)

\item  \textit{Selector:} Same as that above for  $\mathcal{H}^{dm}$. 

\item \textit{Canonical  Tutte polynomial:} (See Theorem~\ref{s3.t1}.)
 \[\alpha( \mathbf{a}, \mathbf{b}) ( D)  =  x_1^{ \xi(D)}   y_2^{|E|- \xi(D)} \tilde{P}_D\left(\frac{y_1}{x_1}+1, \frac{x_2}{y_2} +1\right).\]
where  $\tilde{P}_D(x,y)$ is the 2-variable Penrose polynomial,  \eqref{s3.e10}.
\end{itemize}

\subsubsection*{Graphs, $\mathcal{H}^{g}$}
(See Section~\ref{s.gr}.)
\begin{itemize}

\item \textit{Objects:} A quotient space of graphs with their usual deletion and contraction. (See Definition~\ref{d.hg}.)

\item \textit{Selector:}    
$ \delta_{\mathbf{a}} = x_1\delta_b+x_2\delta_l$,
$ \delta_{\mathbf{b}} = y_1\delta_b+y_2\delta_l$,
where $\delta_b$ and $\delta_l$ detect a one-edge bridge and loop, respectively.
(See \eqref{g.selector}.)

\item \textit{Canonical  Tutte polynomial:} (See Theorem~\ref{t.gr}.)
\[ \alpha(  \mathbf{a}, \mathbf{b}) ( G )  =    x_1^{r(G)} y_2^{|E(G)|-r(G)}    T_{G}( \tfrac{y_1}{x_1}+1, \tfrac{x_2}{y_2}+1),
\]
where $T_G(x,y)$ is the classical Tutte polynomial of a graph,  \eqref{e.tutte}.
\end{itemize}

\subsubsection*{Graphs in pseudo-surfaces, $\mathcal{H}^{ps}$}
(See Section~\ref{ssec.lv} for details.)
\begin{itemize}

\item \textit{Objects:} A quotient space of graphs in pseudo-surfaces (see Definition~\ref{s2.d2}), deletion deletes edges, contraction forms topological quotient space.  (See Definition~\ref{s2.d3}.)

\item \textit{Selector:}   
$\delta_{\mathbf{a}}= x_1\delta_{cc} + x_2\delta_{ll} +x_3\delta_{cl} $,
$\delta_{\mathbf{b}}= y_1\delta_{cc} + y_2\delta_{ll} +y_3\delta_{cl} $,
where $\delta_{cc}$, $\delta_{ll}$ and $\delta_{cl}$, detect 
a 1-path in the sphere, a loop in the sphere, and a loop that is a meridian of a torus, respectively. 
(See \eqref{e.deps}.)

\item \textit{Canonical  Tutte polynomial:} (See Theorem~\ref{s2.c1}.)
 \[\alpha( \mathbf{a}, \mathbf{b}) ( G )  =   x_1^{r(G)} y_2^{\kappa(G)} x_3^{n(G)-\kappa(G)}     L_{G\subset \Sigma}( \tfrac{y_1}{x_1}+1, \tfrac{x_2}{y_2}+1, \tfrac{y_3}{x_3}  ) ,\]
where $L_{G\subset \Sigma}(x,y,z) $ is the Las~Vergnas polynomial, \eqref{LVeq4}. 
\end{itemize}

\subsubsection*{Ribbon graphs, $\mathcal{H}^{rg}$}
(See Section~\ref{s.BRrg} for details.)
\begin{itemize}
\item \textit{Objects:} A quotient space of ribbon graphs, with the usual deletion and contraction of ribbon graphs. (See Definition~\ref{d.hopfrg}.)

\item \textit{Selector:}  
$ \delta_{\mathbf{a}}= x_1\delta_{b} + x_2\delta_{o} +\sqrt{x_1x_2}\delta_{n}$, 
$\delta_{\mathbf{b}}= y_1\delta_{b} + y_2\delta_{o} +\sqrt{y_1y_2}\delta_{n}$,
where $\delta_{b}$,  $\delta_{o}$, and $\delta_{n}$, detect a one-edge bridge, orientable loop and non-orientable loop, respectively.
 (See \eqref{e.rgbr1}.)

\item \textit{Canonical  Tutte polynomial:} (See Theorem~\ref{t.rgbr1}.)
 \[
 \alpha( \mathbf{a}, \mathbf{b}) ( G)  = x_1^{ \rho(G)}   y_2^{|E|- \rho(G)} \tilde{R}_G\left(\frac{y_1}{x_1}+1, \frac{x_2}{y_2} +1\right),
 \]
where $\widetilde{R}_G( x,y)$ is the 2-variable Bollob\'as-Riordan polynomial,  \eqref{gjs}. Note that 
this equals $R_G(x,y-1,1/\sqrt{x(y-1)})$, where $R_G(x,y,z)$ is the  Bollob\'as-Riordan polynomial, as in \eqref{dhdf}.
\end{itemize}

\subsubsection*{Ribbon graphs, $\mathcal{H}^{per}$}
(See Section~\ref{s.penrg} for details.)
\begin{itemize}

\item \textit{Objects:} A quotient space of ribbon graphs, with ribbon graph contraction as deletion, and twist-contract as contraction. (See Definition~\ref{d.hopperg}.)

\item \textit{Selector:} Same as for  $\mathcal{H}^{rg}$ above. (See \eqref{e.rgbr1}.)

\item \textit{Canonical  Tutte polynomial:} (See Theorem~\ref{t.rgpe1}.)
\[ \alpha( \mathbf{a}, \mathbf{b}) ( G)  =  x_1^{ \xi(D)}   y_2^{|E|- \xi(D)} \tilde{P}_D\left(\frac{y_1}{x_1}+1, \frac{x_2}{y_2} +1\right), \]
where  $\tilde{P}_G(x,y)$  is the 2-variable Penrose polynomial of \eqref{yisnf}.
\end{itemize}

\subsubsection*{Vertex partitioned ribbon graphs $\mathcal{H}^{vrg}$}
(See Section~\ref{dasg} for details.)
\begin{itemize}

\item \textit{Objects:} A quotient space of ribbon graphs equipped with a partition of their vertex set. (See Definition~\ref{d.hopfvrg}.)

\item \textit{Selector:}   
$\delta_{\mathbf{a}}= x_1\delta_{b} + x_2\delta_{o} +\sqrt{x_2x_3}_3\delta_{n}+x_3\delta_{l}$,
$\delta_{\mathbf{b}}= y_1\delta_{b} + y_2\delta_{o} +\sqrt{y_2y_3}_3\delta_{n}+y_3\delta_{l}$
where the $\delta_i$ are as in \eqref{e.prgic}).

\item \textit{Canonical  Tutte polynomial:} (See Theorem~\ref{t.vbr1}.)
\[ \alpha( \mathbf{a}, \mathbf{b}) ( G,\mathcal{P})  =      y_1^{r(G_{/\mathcal{P}})}    y_2^{|E|-\rho(G)}    y_3^{\rho(G)-r(G_{/\mathcal{P}})}   
  \sum_{A\subseteq E}   \left(\frac{x_1}{y_1}\right)^{r(A_{/\mathcal{P}})}  \left(\frac{x_2}{y_2}\right)^{|A|-\rho(A)} \left(\frac{x_3}{y_3}\right)^{\rho(A)-r(A_{/\mathcal{P}})}.\]
This is  an extension of the Bollob\'as-Riordan polynomial   to vertex partitioned graphs. It coincides with the Bollob\'as-Riordan polynomial $R_G(x,y,z)$ when the blocks of the partition are of size one. 
\end{itemize}

\subsubsection*{Vertex partitioned graphs in surfaces, $\mathcal{H}^{vgs}$}
(See Section~\ref{xpdg} for details.)
\begin{itemize}

\item \textit{Objects:} A quotient space of graphs in surfaces equipped with a partition of their vertex sets (Definition~\ref{d.vgs2}). (See Definition~\ref{d.vgs1}.)

\item \textit{Selector:}   
$\delta_{\mathbf{a}}= x_1\delta_{1} + x_2\delta_{2} +x_3\delta_{3}+\sqrt{x_3x_4}\delta_{4}+x_4\delta_{5}$, 
$\delta_{\mathbf{b}}= y_1\delta_{1} + y_2\delta_{2} +y_3\delta_{3}+\sqrt{y_3y_4}\delta_{4}+y_4\delta_{5}$, 
where the $\delta_i$ are as in \eqref{dhgads}).

\item \textit{Canonical  Tutte polynomial:} (See Theorem~\ref{t.vkr1}.)
\begin{multline*}
 \alpha( \mathbf{a},\mathbf{b}) ( G\subset\Sigma,\mathcal{P})  =    
   y_1^{r(G_{/\mathcal{P}})} 
    y_2^{\kappa(G\subset\Sigma,\mathcal{P})}  
         y_3^{\rho(G\subset\Sigma,\mathcal{P})-r(G_{/\mathcal{P}})}   
    y_4^{  |E| - \rho(G\subset\Sigma,\mathcal{P})-\kappa(G\subset\Sigma,\mathcal{P})}   
\\
\\\sum_{A\subseteq E}  
 \left(\frac{x_1}{y_1}\right)^{r(A_{/\mathcal{P}})} 
  \left(\frac{x_2}{y_2}\right)^{\kappa(A)} 
        \left(\frac{x_3}{y_3}\right)^{\rho(A)-r(A_{/\mathcal{P}})}     
   \left(\frac{x_4}{y_4}\right)^{  |A| - \rho(A)-\kappa(A)}   
      .\end{multline*}
This is  an extension of the Krushkal polynomial  to vertex partitioned graphs in surfaces. It coincides with the Krushkal polynomial $K_{G\subset \Sigma}(a,b,x,y)$ when the blocks of the partition are of size one. 

\end{itemize}

\section{Properties of canonical Tutte polynomials}\label{s.app}

Throughout this section we work primarily with Hopf algebras of a minor system. We will show that canonical Tutte polynomials of minor systems, in general, have many of the desirable properties of the classical Tutte polynomial of a graph or matroid. 
The summary given in Section~\ref{cxty} provides enough information to interpret the applications of the general results to those examples. However, in order to bring the general results `up front',  at times we refer forward to later sections for full details.

\subsection{State sum formulations}
The following theorem provides state sum formulations for canonical Tutte polynomials. 

\begin{theorem}\label{tm} 
Let  $\mathcal{H}$  be a  Hopf algebra of a minor system $\mathcal{S}$ with coproduct $ \Delta(S)=\sum_{A\subseteq E(S)}(S\del A^c  ) \otimes  (S\con A) $. 
Suppose that a set $I$ indexes the elements of $\mathcal{H}_1$, and that the functions $\delta_i$ are defined by Equation~\eqref{e.dd}.
Suppose also that for each $j$ in some indexing set $J$ there is a function $r_j:\mathcal{H}\rightarrow \mathbb{Q}$ 
such that 
\begin{equation}\label{tm.e1}
r_j(S) =  r_j(S\con e) +m_{ij} \quad \text{ when } \delta_i(S \del e^c)=1,
\end{equation}
where $S\in \mathcal{H}$, $e\in S$ and $m_{ij}\in \mathbb{Q}$; and such that 
$r_j(S) = 0$  when $S\in\mathcal{H}_0$.

For a set of indeterminates $\{x_j\}_{j\in J}$ define  
\begin{equation}\label{tm.e2}
\delta_{\mathbf{a}}:= \sum_{i\in I} a_i\delta_i  \quad \text{where} \quad a_i:=\prod_{j\in J} x_j^{m_{ij}}. 
\end{equation}
Then $\delta_{\mathbf{a}}$ is uniform. Moreover, if  $\delta_{\mathbf{b}}:= \sum_{i\in I} b_i\delta_i $ with $b_i:=\prod_{j\in J} y_j^{m_{ij}}$, the Tutte polynomial of $\mathcal{H}$ satisfies
\begin{equation}\label{tm.e3}
 \alpha( \mathbf{a}, \mathbf{b})(S)   =     \prod_{j\in J}      y_j^{r_j(S)} \sum_{A\subseteq E(S)}   \prod_{j\in J}  \left(\frac{x_j}{y_j}\right)^{r_j(A)},
\end{equation}
 where   $r_i(A):=  (S\del A^c)$.
\end{theorem}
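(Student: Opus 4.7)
The plan is to first obtain a closed form for $\exp_*(\delta_{\mathbf{a}})(S)$, then combine the two exponentials via the convolution product to produce the state-sum formula~\eqref{tm.e3}. Since $\delta_{\mathbf{a}}$ vanishes outside $\mathcal{H}_1$, the only nonzero summand of $\exp_*(\delta_{\mathbf{a}})(S)=\sum_{m\geq 0}\frac{1}{m!}\delta_{\mathbf{a}}^{\ast m}(S)$ is the one with $m=n:=|E(S)|$. Iterating the coproduct, $\Delta^{(n-1)}(S)$ decomposes as a sum indexed by the linear orderings $\sigma=(e_1,\ldots,e_n)$ of $E(S)$; writing $A_{<k}^\sigma:=\{e_1,\ldots,e_{k-1}\}$ and $A_{>k}^\sigma:=\{e_{k+1},\ldots,e_n\}$, the $k$-th tensor factor is $T_k^\sigma:=S\del A_{>k}^\sigma \con A_{<k}^\sigma$, which lies in $\mathcal{H}_1$; writing $T_k^\sigma=S_{i_k(\sigma)}$, the contribution of $\sigma$ to $\delta_{\mathbf{a}}^{\ast n}(S)$ is $\prod_{k=1}^n a_{i_k(\sigma)}$.

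The crux is that this contribution is independent of $\sigma$. I would introduce the partial contractions $S_0^\sigma:=S$ and $S_k^\sigma:=S_{k-1}^\sigma\con e_k$, and then invoke the mixed commutation relation in Definition~\ref{d.1} to identify $T_k^\sigma$ with $S_{k-1}^\sigma\del e_k^c$ (complement taken inside $E(S_{k-1}^\sigma)$). Hypothesis~\eqref{tm.e1} then gives $m_{i_k(\sigma)j}=r_j(S_{k-1}^\sigma)-r_j(S_k^\sigma)$ for each $j\in J$, and the sum over $k$ telescopes to $\sum_{k=1}^n m_{i_k(\sigma)j}=r_j(S)-r_j(S\con E(S))=r_j(S)$, since $S\con E(S)\in\mathcal{H}_0$. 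Hence every ordering contributes the same monomial $\prod_j x_j^{r_j(S)}$; this simultaneously establishes uniformity of $\delta_{\mathbf{a}}$ and
\[
\exp_*(\delta_{\mathbf{a}})(S)=\prod_{j\in J}x_j^{r_j(S)}
\]
(the $n!$ orderings absorbing the $1/n!$ in the exponential).

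The analogous identity $\exp_*(\delta_{\mathbf{b}})(S)=\prod_j y_j^{r_j(S)}$ holds by the same argument. Applying the convolution product together with $\Delta(S)=\sum_{A\subseteq E(S)}(S\del A^c)\otimes(S\con A)$ yields
\[
\alpha(\mathbf{a},\mathbf{b})(S)=\sum_{A\subseteq E(S)}\prod_{j\in J}x_j^{r_j(S\del A^c)}y_j^{r_j(S\con A)}.
\]
To conclude, I would prove the additivity $r_j(S)=r_j(S\del A^c)+r_j(S\con A)$ by running the telescoping argument of the previous paragraph along an ordering of $E(S)$ that enumerates $A$ first and $A^c$ second: the first $|A|$ terms sum to $r_j(S\del A^c)$ (by applying the exponential formula to the minor $S\del A^c$, whose ground set is $A$), and the remaining terms sum to $r_j(S\con A)$ (by applying it to $S\con A$); the commutation axioms ensure the indices $i_k$ match up at each stage. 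Substituting $r_j(S\con A)=r_j(S)-r_j(S\del A^c)$ and factoring out $\prod_j y_j^{r_j(S)}$ gives precisely~\eqref{tm.e3}.

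I expect the main technical obstacle to be the clean identification of $T_k^\sigma$ with $S_{k-1}^\sigma\del e_k^c$: this rewriting is what converts the iterated coproduct of $S$ into a telescoping chain compatible with the recursion~\eqref{tm.e1}, and it relies essentially on the mixed commutation axiom $(S\con e)\del f=(S\del f)\con e$ from Definition~\ref{d.1}. Everything else is bookkeeping once this identification is in place.
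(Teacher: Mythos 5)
Your proposal is correct and takes essentially the same route as the paper's proof: you identify the nonvanishing summands of the iterated coproduct with orderings of $E(S)$, use the commutation axioms to see each tensor factor as $S_{k-1}^\sigma\del e_k^c$ and evaluate via \eqref{tm.e1} (your telescoping is just the unrolled form of the paper's induction, proving uniformity and $\exp_*(\delta_{\mathbf{a}})(S)=\prod_j x_j^{r_j(S)}$ at once), establish the additivity $r_j(S)=r_j(S\del A^c)+r_j(S\con A)$ from an ordering/summand splitting $A$ from $A^c$ exactly as the paper does with its split summand, and finish with the convolution product. No gaps.
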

\begin{proof}
The proof of the theorem has four main steps: (1) showing $\delta_{\mathbf{a}}$ is uniform, (2) finding a closed form for $\exp_* (\delta_{\mathbf{a}})(S)$, (3) showing that for each $j$,  $r_j(S)= r_j(S\del A^c) + r_j(D\con A)$, and (4) proving the  given form of $\alpha( \mathbf{a}, \mathbf{b})(S)$. 

We start by showing $\delta_{\mathbf{a}}$ is uniform. For this we set up some notation. For $S\in \mathcal{S} \subset \mathcal{H}$ we use $P_l(S)$,  where $l=1,\ldots |E(S)|!$, to denote  the summands of $\Delta^{(|E(S)|-1)}(S)$ that consist of the tensor product of  $|E(S)|$ objects each of which is of  graded dimension 1. In addition let $\#_i(P_l(S))$ denote the number of tensor factors in $P_l(S)$  which $\delta_i$ maps to 1. 

To prove the uniformity of $\delta_{\mathbf{a}}$  we need to show that for each $l$ and $S$, $\delta_{\mathbf{a}}^{\otimes l}$ takes the same value on each summand of $\Delta^{(l-1)}(S)$. If any tensor factor in the  summand is not of graded dimension 1 then  $\delta_{\mathbf{a}}$ will evaluate to zero on that summand, thus we need only consider summands in which each tensor factor is of graded dimension 1. That is, we need to show that for each $S\in\mathcal{S}$,  $\delta^{\ast |E(S)|}$  takes the same value on $P_l(S)$, for each $l$. To do this we show 
\begin{equation}\label{tm.e4}
\delta^{\ast |E(S)|}   (P_l(S)) =   \prod_{j\in J} x_j^{r_j(S)}. 
\end{equation}
By definition we have 
 \begin{equation}\label{tm.e5}
\delta^{\ast |E(S)|}   (P_l(S)) =  \prod_{i\in I} a_i^{\#_i(P_l(S))}   =  \prod_{j\in J}    x_j^{   \sum_{i\in I }  m_{ij}  \cdot \#_i(P_l(S))  }.
\end{equation}
So we need to show for each $j\in J$ that
 \begin{equation}\label{tm.e6}
r_j(S) =      \sum_{i\in I }  m_{ij} \cdot  \#_i(P_l(S)) .
\end{equation}
We will do this by induction on $|E(S)|$.   If  $|E(S)|=0$ the result holds since both sides of \eqref{tm.e6} are trivial. 
$|E(S)|=1$ then for exactly one $k\in I$, $\delta_k(S)=1$ and so  
$ \sum_{i\in I }  m_{ij} \cdot  \#_i(P_l(S))  = m_{kj} = r_j(S)$.  

Now suppose that  $S\in\mathcal{S}$ with $|E(S)|\geq 2$, and that \eqref{tm.e6} holds for all $S'\in \mathcal{S}$ with $|E(S')|<|E(S)|$.   
We can write 
 \begin{equation}\label{tm.e7} 
 P_l(S) ={Q}_1 \otimes {Q}_2 \otimes {Q}_3 \otimes \cdots \otimes {Q}_{|E(S)|},
  \end{equation}
where each ${Q}_k$ is of graded dimension 1.
Observe that we can write 
 \begin{equation}\label{tm.e8} 
 P_n(S\con e_1) = {Q}_2 \otimes {Q}_3 \otimes \cdots \otimes {Q}_{|E(S)|},
  \end{equation}
for some $n$ and that ${Q}_1= S \del{e_1}^c $.
By the inductive hypothesis
 \begin{equation}\label{tm.e16} 
 r_j(S\con e_1) =      \sum_{i\in I }  m_{ij} \cdot  \#_i(P_n(S\con e_1)),
 \end{equation}
for each $j\in J$.

We know that $\delta_{p}(S \del{e_1}^c ) =1$ for some $p$ and is zero otherwise. Using Equation~\eqref{tm.e8} for the first equality, the inductive hypothesis for the second, and  Equation~\eqref{tm.e1}  for the third, we have
\[ \sum_{i\in I }  m_{ij} \cdot  \#_i(P_l(S))   
     m_{pj} + \left( \sum_{i\in I }  m_{ij} \cdot  \#_i(P_n(S\con e_1))    \right)
  =  m_{pj} +    r_j(S\con e_1) 
=    r_j(S) .\]
Thus we have shown $\delta_{\mathbf{a}}$ is uniform.

\medskip

Next we  find a closed form for $\exp_* (\delta_{\mathbf{a}})(S)$.
Using the definition of the $\ast$-exponential, 
\begin{equation}\label{tm.e10}
\exp_*(\delta_{\mathbf{a}}(S)) =   \sum\limits_{p=0}^\infty  \left( \frac{\left(\sum_{i\in I} a_i\delta_i \right)^{\ast p}}{p!}\right) (S)  .
  \end{equation}
All the terms in this sum vanish except the ones for which $p=|E(S)|$ (as otherwise some $\delta_i$ will evaluate to zero). 
Furthermore, the non-vanishing terms arise exactly from the $|E(S)|!$ terms of $\Delta^{(|E(S)|-1)}(S)$ that consist of the tensor product of  $|E(S)|$ elements of graded dimension 1. Thus
\begin{equation}\label{tm.e11}
\exp_*(\delta_{\mathbf{a}}(S)) =  \frac{1}{|E(S)|!}  \sum_{l=1}^{|E(S)|!}   \delta^{\ast |E(S)|}   (P_l(S)).
  \end{equation}
Equation~\eqref{tm.e4} then gives  
\begin{equation}\label{tm.e15}
\exp_*(\delta_{\mathbf{a}}(S)) =  \prod_{j\in J} x_j^{r_j(S)}.
  \end{equation}

\medskip

Next, to show that  $\alpha( \mathbf{a}, \mathbf{b}) ( S)$ can be written on the form of  Equation~\eqref{tm.e3} we prove the following identity.
For each $A\subseteq E(S)$, and for each $j\in J$,
\begin{equation}\label{tm.e12}
r_j(S)= r_j(S\del A^c) + r_j(S\con A). 
\end{equation}
To prove \eqref{tm.e12} we
 start with the observation that since  $\Delta$ is a cocommutative and   $(S \con e) \del f = (S \del f) \con e$ for $e\neq f$, 
\begin{equation}\label{tm.e13}
 \Delta(S) =  \Delta(S\del A^c)\otimes  \Delta(S\con A)  . 
 \end{equation}
 Using the notation from Equation~\eqref{tm.e7}, let  
   \begin{equation}\label{tm.e14} 
 P_l(S) ={Q}_1 \otimes {Q}_2 \otimes {Q}_3 \otimes \cdots \otimes {Q}_{|E(S)|}
  \end{equation}
 be one of the $|E(S)|!$ summands of $\Delta^{(|E(S)|-1)}(S)$ in which each $Q_k$ is of graded dimension 1.
 Then since $\delta_{\mathbf{a}}$ is uniform and since $\delta_i$ is zero on all elements except for those of graded dimension 1,
 \begin{align*}
\exp_*(\delta_{\mathbf{a}}) (S) 
&=   \sum\limits_{p=0}^\infty   \left( \frac{\left(\sum_{i\in I} a_i\cdot \delta_i \right)^{\ast p}(S)}{p!}\right) 
=  \prod_{k=1}^n \left(  \sum_{i\in I}  a_i\cdot \delta_{i}(Q_k) \right)
\\&=  \left(  \prod_{k=1}^{|A|} \left(  \sum_{i\in I}  a_i \cdot \delta_{i}(Q_k) \right)   \right)  \cdot \left(  \prod_{k=|A|+1}^{|E(S)|-|A|} \left(  \sum_{i\in I}  a_i\cdot \delta_{i}(Q_k) \right)   \right) 
  \\ &=\exp_*(\delta_{\mathbf{a}}) (S\del A^c) \cdot \exp_*(\delta_{\mathbf{a}}) (S\con A) ,
\end{align*}
where the last equality follows since  ${Q}_1 \otimes \cdots \otimes {Q}_{|E(S)|}$ is a summand of $\Delta^{(|E(S)|-1)}(S)$ so by Equation~\eqref{tm.e13} 
 ${Q}_1 \otimes \cdots \otimes {Q}_{|A|}$ is a  summand of $\Delta^{(|S\del A^c|-1)}(S\del A^c)$, and 
 $Q_{|A|+1}\otimes \cdots \otimes Q_{|E(S)|}$ is a  summand of $\Delta^{|S\con A|-1}(S\con A)$.
But then, by \eqref{tm.e15}, we have 
\[ \prod_{j\in J}  x_j^{r_j(S)} =  \left( \prod_{j\in J}  x_j^{r_j(S\del A^c)} \right)  \left( \prod_{j\in J}  x_j^{r_j(S\con A)}\right)\]
from which \eqref{tm.e12} immediately follows.

\medskip

Finally we prove Equation~\eqref{tm.e3}.
\begin{align*}
 \alpha( \mathbf{a}, \mathbf{b}) ( S)  &=  \sum_{A\subseteq E(S)}\exp_* (\delta_{\mathbf{a}}) ( S\del A^c )   \cdot \exp_* (\delta_{\mathbf{b}})  ( S\con A) 
=  \sum_{A\subseteq E(S)}      \left( \prod_{j\in J}  x_j^{r_j(S\del A^c)} \right)      \left( \prod_{j\in J}  y_j^{r_j(S\con A)}\right)
 \\ &=     \sum_{A\subseteq E}    \left( \prod_{j\in J}  x_j^{r_j(S\del A^c)} \right) \left( \prod_{j\in J}  y_j^{r_j(S)- r_j(S\del A^c)}\right)
=     \left( \prod_{j\in J}  y_j^{r_j(S)} \right)   \sum_{A\subseteq E}    \left( \prod_{j\in J} \left( \frac{x_j}{y_j}\right)^{r_j(A)} \right),
\end{align*}
completing the proof of the theorem.
\end{proof}

The reader will undoubtedly recognise Equation~\eqref{tm.e3} as being of a similar form to the spanning subgraph expansion of the classical Tutte polynomial of a graph
$T_G(x,y) = \sum_{A\subseteq E(G)} (x-1)^{r(G)-r(A)} (y-1)^{n(A)}$. (In fact it is its universal form.)

\subsection{Deletion-contraction definitions}
The motivation behind our consideration of  Hopf algebras generated by concepts of deletion and contraction was to construct graph polynomials that satisfy a  recursive deletion-contraction definition that is independent of order of edges to which it is applied, and that reduces the computation of a polynomial to that of a unique trivial object (equivalently, it generates a 1-dimensional skein module). That is we want our polynomials to satisfy a recursive definition analogous to that for the classical Tutte polynomial of a matroid. The following theorem tells us that they do.

\begin{theorem}\label{t.1}
Let $\mathcal{H}$ be a Hopf algebra of a minor system, and $\delta_{\mathbf{a}}$ and  $\delta_{\mathbf{b}}$  be a uniform selectors. Then  the canonical Tutte polynomial $\alpha(\mathbf{a}, \mathbf{b})$ is a recursively defined by 
\[ \alpha(S)   
=    \begin{cases}\delta_{\mathbf{b}}(S\con e^c )  \cdot \alpha(S\del e)  +   \delta_{\mathbf{a}} (S\del e^c ) \cdot \alpha(S\con e)   & \text{if }  S\notin \mathcal{S}_0 ,
\\ 1 & \text{if }  S\in \mathcal{S}_0 .\end{cases}
 \] 

\end{theorem}
\begin{proof}
Suppose that $\delta_{\mathbf{a}}$ and  $\delta_{\mathbf{b}}$  are uniform. 
For each $S\in \mathcal{S}$,  we can write $f_S( \mathbf{a})$ for  $\exp_* (\delta_{\mathbf{a}}) (S)$.
Then \[ \alpha(\mathbf{a}, \mathbf{b})(S) =   \sum_{A\subseteq E(S)}  f_{S\del A^c}( \mathbf{a}) \cdot f_{S\con  A}( \mathbf{b}) . \]
Suppose that $|E(S)|\geq 2$, and  $e\in  E(S)$. Then  
\begin{align*}
\alpha( \mathbf{a}, \mathbf{b})(S) 
&=   \sum_{A\subseteq E(S)}  f_{S\del A^c}( \mathbf{a}) \cdot f_{S\con  A}( \mathbf{b}) 
=  \sum_{\substack{A\subseteq E(S) \\ e\notin A} }  f_{S\del A^c}( \mathbf{a}) \cdot f_{S\con  A}( \mathbf{b})
     + \sum_{\substack{A\subseteq E(S) \\ e\in A} }  f_{S\del A^c}( \mathbf{a}) \cdot f_{S\con  A}( \mathbf{b}) \\
&= \left( \sum_{A\subseteq E(S \del e) }  f_{(S\del e)  \del A^c}( \mathbf{a}) \cdot f_{S\con  A}( \mathbf{b}) \right)
     +\left(  \sum_{A\subseteq E(S\con A) }  f_{S\del A^c}( \mathbf{a}) \cdot f_{(S\con e)\con  A}( \mathbf{b}) \right).
   \end{align*} 
Consider the computation of one of the terms of the form   $ f_{S\con  A}( \mathbf{b}) $ in the above. 
Recall $ f_{S\con  A}( \mathbf{b})  =  \exp_* (\delta_{\mathbf{b}}) (S\con  A) $.    If $m=|E(S\con  A)|$, then  we can write 
\[  \exp_* (\delta_{\mathbf{b}}) (S\con  A)   =   \frac{1}{m!}  \sum_{j=1}^m   \delta_{\mathbf{b}}(U_{j,1}) \cdot   \delta_{\mathbf{b}}(U_{j,2})\cdot \cdots  \cdot\delta_{\mathbf{b}}(U_{j,m})   \]
where each  $U_{i,j}\in \mathcal{S}_1$. (The  $U_{j,1} \otimes \cdots \otimes U_{j,m}$ are exactly the terms  of $\Delta^{(m)} (S\con  A)$ in which each tensor factor is in $\mathcal{S}_1$.)
Since $\delta_{\mathbf{b}}$ is uniform, 
\[ f_{S\con  A}( \mathbf{b})= \exp_* (\delta_{\mathbf{b}}) (S\con  A) = \delta_{\mathbf{b}}(U_{j,1}) \cdot   \delta_{\mathbf{b}}(U_{j,2})\cdot \cdots  \cdot\delta_{\mathbf{b}}(U_{j,m}) ,\]
for each $j$, and  we can choose the summand $\exp_* (\delta_{\mathbf{b}}) (S\con  A)$  is calculated from.    
Writing $\Delta^{(m)}$ as $ (\Delta^{(m-1)} \otimes \id )  \circ \Delta$, we can choose a summand that arises as a term of 
$   \Delta^{(m-1)}( (S\con  A)\del e  )    \otimes ((S\con  A)\con e^c )  $, 
and so 
$ f_{S\con  A}( \mathbf{b})   = f_{(S \con A)  \del e}( \mathbf{b})   \cdot \delta_{\mathbf{b}}((S \con A)\con e^c ) =f_{(S  \del e)\con A}( \mathbf{b})   \cdot \delta_{\mathbf{b}}(S\con e^c )  $.

For    $ f_{S\del A^c}( \mathbf{a}) $ we proceed similarly.
We have $  f_{S\del A^c}( \mathbf{a}) =  \exp_* (\delta_{\mathbf{a}}) (S\del A^c) $.  
If $m=|E(S\del  A^c)|$, then by the definition of $\delta_{\mathbf{a}}$, we can write   
\[  \exp_* (\delta_{\mathbf{a}}) (S\del  A^c)   =   \frac{1}{m!}  \sum_{j=1}^{m!}   \delta_{\mathbf{a}}(U_{j,1}) \cdot   \delta_{\mathbf{a}}(U_{j,2})\cdot \cdots  \cdot\delta_{\mathbf{a}}(U_{j,m})   \]
where, again, each  $U_{i,j}\in \mathcal{S}_1$. 
Since $\delta_{\mathbf{a}}$ is uniform, 
\[ f_{S\del  A^c}( \mathbf{a})= \exp_* (\delta_{\mathbf{a}}) (S\del  A^c) = \delta_{\mathbf{a}}(U_{j,1}) \cdot   \delta_{\mathbf{a}}(U_{j,2})\cdot \cdots  \cdot\delta_{\mathbf{a}}(U_{j,m}) ,\]
for each $j$, and so we can choose the summand $\exp_* (\delta_{\mathbf{a}}) (S\del  A^c)$  is calculated from.    
So writing $\Delta^{(m)}$ as $ ( \id  \otimes \Delta^{(m-1)} )  \circ \Delta$, we can choose a summand that arises as a term of 
$   ((S\del  A)\del e^c )  \otimes  \Delta^{(m-1)}( (S\del  A)/ e  )     $ ,  
and so 
$  f_{S\del  A}( \mathbf{a})   =  \delta_{\mathbf{a}} ((S\del  A)\del e^c )  \cdot f_{(S\del  A)\con e }( \mathbf{a} )
 =  \delta_{\mathbf{a}} (S\del e^c )  \cdot f_{(S\con e)\del  A }(   \mathbf{a} )
  =  \delta_{\mathbf{a}} (S\del e^c )  \cdot f_{(S\con e)\del  A }(    \mathbf{a} )
 $.
Thus we have that 
\[ \alpha( \mathbf{a}, \mathbf{b})(S)   =    \delta_{\mathbf{b}}(S\con e^c )  \cdot \alpha( \mathbf{a}, \mathbf{b})(S\del e)  +   \delta_{\mathbf{a}} (S\del e^c ) \cdot \alpha( \mathbf{a}, \mathbf{b})(S\con e)  .\]
It is easily checked that this identity also holds when $|E(S)|=1$, and that $\alpha( \mathbf{a}, \mathbf{b})(S)=1$ when  $|E(S)|=0$.
\end{proof}

Theorem~\ref{t.1}  gives recursive deletion-contraction definitions for each of the graph  polynomials in Section~\ref{cxty}. Some of these relations are known: 
\begin{enumerate}
\item  Theorems~\ref{t.1} and~\ref{t.mat} give the standard deletion-contraction relations for the Tutte polynomial of a matroid.
\item  Theorems~\ref{t.1} and~\ref{t.gr} give the standard deletion-contraction relations for the Tutte polynomial of a graph. 
\item  Theorems~\ref{t.1} and~\ref{s2.t1} give the deletion-contraction relations for $T_{M\rightarrow M'}$ from \cite{Las99}.   
\item  Theorems~\ref{t.1} and~\ref{s2.c1} give the  deletion-contraction relations for $L_{G\subset \Sigma}$ from \cite{EMMlv}. 
\item  Theorems~\ref{s2.t3}, \ref{t.rgbr1}  and~\ref{t.1} give deletion-contraction relations for $\tilde{R}$ that are equivalent to those for the Bollob\'as-Riordan polynomial along $z=1/\sqrt{xy}$ of a ribbon graph  from  \cite{EMMbook}, and of a delta-matroid from \cite{CMNR2}.
\end{enumerate}
However, the deletion-contraction definition relations for the remaining polynomials are new to the literature. Specifically, our work here gives new deletion-contraction definitions (that  terminate in their evaluations on trivial objects) for the 
\begin{enumerate}
\item 3-variable Bollob\'as-Riordan polynomial,
\item Krushkal polynomial,
\item 2-variable Penrose polynomial.
\end{enumerate}
One thing we emphasise is that to obtain the full deletion-contraction relations for the 3-variable Bollob\'as-Riordan polynomial and  the Krushkal polynomial we had to extend the class of objects on which the polynomials had been defined. 

 In the interests of brevity we will not explicitly write down the deletion-contraction definitions for all of the above polynomials. Instead we will illustrate the application of Theorem~\ref{t.1} to $\tilde{R}_{D}(x,y)$. For the deletion-contraction definition, the definitions for ribbon loops can be found just above Lemma~\ref{s2.l9}, and for ribbon dual-loops, just above  Lemma~\ref{s2.l10}.
\begin{theorem}\label{s2.c11}
$\tilde{R}_D(x,y)$    is recursively defined by  $\tilde{R}_{ ( \emptyset , \emptyset )}(x,y)=1$ and 
\[  \tilde{R}_D(x,y) =   f(e)\cdot  \tilde{R}_{D\backslash e}(x,y)+ g(e)\cdot   \tilde{R}_{D/ e}(x,y), \]
where 
\begin{align*}   f(e)&=  \begin{cases}
y-1   &\text{if $e$ is not a ribbon dual-loop,}\\
1   &\text{if $e$ is an orientable ribbon dual-loop,}\\
 \sqrt{y-1}   &\text{if $e$ is a non-orientable ribbon dual-loop;}
  \end{cases}  
\\
    g(e)&=  \begin{cases}
x-1   &\text{if $e$ is not a ribbon loop,}\\
1   &\text{if $e$ is an orientable ribbon loop,}\\
 \sqrt{x-1}   &\text{if $e$ is a non-orientable ribbon loop.}
  \end{cases}  \end{align*}
\end{theorem}
\begin{proof}
Theorem~\ref{s2.t3} gives that $ \tilde{R}_D(x,y)= \alpha( 1,y-1,\sqrt{y-1}, x-1,1,\sqrt{x-1})$. The result then follows by an application of Theorem~\ref{t.1} using Lemmas~\ref{s2.l9} and \ref{s2.l10} to recognise the values of $\delta(D/e^c)$ and $\delta(D\backslash e^c)$.
\end{proof}

\subsection{Universal forms}
The well-known universality property of the Tutte polynomial of a matroid can be formulated as saying that there exists a unique, well-defined, matroid polynomial  $f_M(x,y, a, b)$ given by
\begin{equation}\label{univ1} f_M =   \begin{cases}
  1 & \text{if } M=U_{0,0},\\
y f_{M\backslash e} (x,y)  &\text{if $e$ is a loop,}\\
xf_{M\backslash e} (x,y)   &\text{if $e$ is a coloop,}\\
af_{M\backslash e} (x,y) +b f_{M/ e} (x,y)  &\text{otherwise,}
 \end{cases}  \end{equation}
and that \[  f_M =  b^{r(M)} a^{|E(M)|-r(M)}    T_{M}( \tfrac{x}{b}, \tfrac{y}{a}).\]

The two key features of  this universality property are (1) that the recursion relations in Equation~\eqref{univ1} give a well-defined polynomial, and (2) that this polynomial can be obtained from a particular distinguished  specialisation, namely  $T_{M}$.
In the present context of canonical Tutte polynomials $\alpha$, Theorem~\ref{t.1} provides a recursion relation for a polynomial, the question becomes one of determining what  particular distinguished  specialisation can play the role of $T_M$ in the general setting.  This is answered by the following  theorem.
\begin{theorem}\label{univ2}
Let  $\mathcal{H}$, $J$, $\delta_{\mathbf{a}}$, and $\delta_{\mathbf{b}}$ be defined as in Theorem~\ref{tm}.  
Suppose that $J_X, J_Y, J_Z \subseteq J$ partition $J$, and that  $\delta_{\mathbf{a}'}$ and $\delta_{\mathbf{b'}}$ are obtained from $\delta_{\mathbf{a}}$ and $\delta_{\mathbf{b}}$, respectively, by setting $x_j=1$ when $j\in J_X$ and $y_j=1$ when $j\in J_Y$. 
Then there is a unique, well-defined polynomial invariant $\alpha$ of $\mathcal{H}$ given by
\[ \alpha(S)   
=    \begin{cases}\delta_{\mathbf{b}}(S\con e^c )  \cdot \alpha(S\del e)  +   \delta_{\mathbf{a}} (S\del e^c ) \cdot \alpha(S\con e)   & \text{if }  S\notin \mathcal{S}_0 ,
\\ 1 & \text{if }  S\in \mathcal{S}_0 .\end{cases}
 \] 
Moreover,
\[    \alpha(S)    =      \left(  \prod_{j\in J_X}      x_j^{r_j(S)}  \right)      \left(  \prod_{j\in J_Y}      y_j^{r_j(S)}  \right)   \left(    \left. \alpha'(\mathbf{a}',\mathbf{b}') (S)\right|_{ \substack{ y_j\mapsto y_j/x_j \text{ for } j\in J_X \\   x_j\mapsto x_j/y_j \text{ for } j\in J_Y  } }  \right)   , \]
where $\alpha'(\mathbf{a}',\mathbf{b}') (S)$  is the canonical Tutte polynomial of $\mathcal{H}$ defined by   $\delta_{\mathbf{a}'}$ and $\delta_{\mathbf{b'}}$ .
\end{theorem}
\begin{proof}
It follows from Theorems~\ref{tm} and~\ref{t.1} that the recursion relations define the polynomial 
\[\alpha(S) = \alpha( \mathbf{a}, \mathbf{b})(S)   =    \prod_{j\in J}      y_j^{r_j(S)} \sum_{A\subseteq E(S)}   \prod_{j\in J}  \left(\frac{x_j}{y_j}\right)^{r_j(A)}. \]
Also by Theorem~\ref{tm}, 
\begin{multline*}  \alpha'(\mathbf{a}',\mathbf{b}') (S)   =         \left(  \prod_{j\in J_X   }      y_j^{r_j(S)}  \right)     \left(  \prod_{j\in J_Z   }      y_j^{r_j(S)}  \right)
\\
   \sum_{A\subseteq E(S)}   \left(  \prod_{j\in J_X}  \left(   \frac{1}{y_j}\right)^{r_j(A)} \right)     \left(  \prod_{j\in J_Y}  \left(   x_j\right)^{r_j(A)} \right)    \left(  \prod_{j\in J_Z}  \left(   \frac{x_j}{y_j}\right)^{r_j(A)} \right) .  \end{multline*}
 The theorem is readily seen to hold upon comparing these state sums for $\alpha$ and $\alpha'$.
\end{proof}

Conceptually Theorem~\ref{univ2} says that in the definition  $\alpha( \mathbf{a}, \mathbf{b})$  via Theorem~\ref{tm}, half of the variables are redundant. Although $\alpha( \mathbf{a}, \mathbf{b})$ is in variables $\{x_j, y_j\}_{j\in J}$, for each $j\in J$ we can set either $x_j$  or $y_j$ to 1 without losing any information from the polynomial. 

 Theorem~\ref{univ2} applies to all of the invariants in Section~\ref{s.examp}. In particular, it shows that each of $T_M$, $T_G$, $T_{\mathbf{M}}$, $L_{G\subset \Sigma}$,   $\widetilde{R}_D$, $\widetilde{R}_G$, $R_{(G,\mathcal{P})}$, $\widetilde{K}_{G\subset \Sigma}$,    $\widetilde{P}_D$, and $\widetilde{P}_G$ is a universal object for the relevant class of polynomials. We will not write down explicit universality statements for each of these polynomials, but will note, as an example, that for matroids, the two universality statements for $T_M$ in this section coincide.

\subsection{Specialisation as a Hopf algebra morphism}

Although fairly straightforward,  the following theorem provides a formal definition of what it means for one graph polynomial to generalise or to contain another.  We will use it to show that many known relations between graph polynomials result from natural maps on the Hopf algebra level. 

\begin{theorem}\label{tgen}
Let $\mathcal{H}$ and $\mathcal{H}'$ be graded connected commutative Hopf algebras and $\phi:\mathcal{H}\rightarrow \mathcal{H}'$ be a Hopf algebra morphism. Suppose that $\delta_{\mathcal{H}, \mathbf{a}} $ and  $\delta_{\mathcal{H}',\mathbf{x}}$ are selectors for  $\mathcal{H}$ and $\mathcal{H}'$, respectively, such that $\delta_{\mathcal{H}, \mathbf{a}}   =\delta_{\mathcal{H}',\mathbf{x}} \circ \phi$. 
Then
\[    \alpha_{\mathcal{H}}( \mathbf{a}, \mathbf{b})  =   \alpha_{\mathcal{H}'}( \mathbf{x}, \mathbf{y}) \circ \phi ,\]
where the $\alpha$'s are defined using $\delta_{\mathcal{H}}$ and $\delta_{\mathcal{H}'}$, respectively.
Moreover, if $\delta_{\mathcal{H}'}$ is uniform, then so is $\delta_{\mathcal{H}}$.
\end{theorem}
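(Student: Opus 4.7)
The plan is to show that $\phi$ being a Hopf algebra morphism causes the entire construction of $\alpha$ --- convolution, $\ast$-exponential, and the final product --- to commute with $\phi$. The argument is essentially functorial bookkeeping built on one small lemma.

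The first step is a convolution-intertwining lemma: for any linear maps $f, g : \mathcal{H}' \to \mathbb{K}[\{x_j, y_j\}_{j\in J}]$, the identity $(f\circ\phi)\ast(g\circ\phi) = (f\ast g)\circ\phi$ holds. This is a one-line computation from the coalgebra property $\Delta_{\mathcal{H}'}\circ\phi = (\phi\otimes\phi)\circ\Delta_{\mathcal{H}}$ together with the definition $f\ast g = m\circ(f\otimes g)\circ\Delta$. A routine induction on $m$ then yields $(f_1\ast\cdots\ast f_m)\circ\phi = (f_1\circ\phi)\ast\cdots\ast(f_m\circ\phi)$ for any $m$.

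Second, I would apply the iterated lemma with every factor equal to $\delta_{\mathcal{H}',\mathbf{x}}$ and invoke the hypothesis $\delta_{\mathcal{H},\mathbf{a}} = \delta_{\mathcal{H}',\mathbf{x}}\circ\phi$ to obtain $\delta_{\mathcal{H},\mathbf{a}}^{\ast m} = \delta_{\mathcal{H}',\mathbf{x}}^{\ast m}\circ\phi$ for all $m\geq 0$. Summing the defining series term by term gives $\exp_{*}(\delta_{\mathcal{H},\mathbf{a}}) = \exp_{*}(\delta_{\mathcal{H}',\mathbf{x}})\circ\phi$, and identically $\exp_{*}(\delta_{\mathcal{H},\mathbf{b}}) = \exp_{*}(\delta_{\mathcal{H}',\mathbf{y}})\circ\phi$. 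Applying the convolution-intertwining lemma one final time to the pair $\exp_{*}(\delta_{\mathcal{H}',\mathbf{x}})$, $\exp_{*}(\delta_{\mathcal{H}',\mathbf{y}})$ delivers the desired identity $\alpha_{\mathcal{H}}(\mathbf{a},\mathbf{b}) = \alpha_{\mathcal{H}'}(\mathbf{x},\mathbf{y})\circ\phi$.

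For the uniformity assertion I would use the equivalent formulation noted just before Theorem~\ref{tm}: $\delta_{\mathbf{a}}$ is uniform precisely when $\delta_{\mathbf{a}}^{\otimes m}$ descends to the symmetric power $\mathfrak{S}^m(\mathcal{H}_1)$, i.e.\ is invariant under permutation of its tensor factors. Because $\phi$ is a morphism of graded (combinatorial) Hopf algebras it restricts to a map $\mathcal{H}_1\to\mathcal{H}'_1$, so $\phi^{\otimes m}$ is equivariant for the permutation action of the symmetric group on $\mathcal{H}_1^{\otimes m}$. Combined with $\delta_{\mathcal{H},\mathbf{a}}^{\otimes m} = \delta_{\mathcal{H}',\mathbf{x}}^{\otimes m}\circ\phi^{\otimes m}$, the symmetry of $\delta_{\mathcal{H}',\mathbf{x}}^{\otimes m}$ transfers immediately to $\delta_{\mathcal{H},\mathbf{a}}^{\otimes m}$.

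I do not expect any real obstacle: the whole proof is formal manipulation with definitions. The only point needing explicit mention is the hypothesis that $\phi$ preserves the grading, so that $\phi(\mathcal{H}_1)\subseteq \mathcal{H}'_1$; this is part of what is meant by a morphism of combinatorial Hopf algebras and is used exclusively in the uniformity step.
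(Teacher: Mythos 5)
Your proposal is correct and follows essentially the same route as the paper: your convolution-intertwining lemma $(f\circ\phi)\ast(g\circ\phi)=(f\ast g)\circ\phi$ is just a packaged form of the paper's Sweedler-notation computation showing $\delta_{\mathcal{H}}^{\ast k}(S)=\delta_{\mathcal{H}'}^{\ast k}(\phi(S))$, and your uniformity step via permutation-equivariance of $\phi^{\otimes m}$ (using the symmetric-algebra reading of uniformity) is exactly the paper's argument with $\sigma$ in the symmetric group. The only implicit ingredient in both your argument and the paper's is that the second selectors are likewise intertwined, $\delta_{\mathcal{H},\mathbf{b}}=\delta_{\mathcal{H}',\mathbf{y}}\circ\phi$, which is understood from the convention that $\mathbf{b}$, $\mathbf{y}$ arise from $\mathbf{a}$, $\mathbf{x}$ by the same change of variables.
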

\begin{proof}
 We write $\delta_{\mathcal{H}}$ for $\delta_{\mathcal{H},\mathbf{a}} $, and  $\delta_{\mathcal{H}'}$ for $\delta_{\mathcal{H}', \mathbf{x}} $. Let $S\in \mathcal{H}$.
 Using Sweedler notation we can write  
 \[\Delta_{\mathcal{H}}^{(k)}(S )= \sum_j S_j^{(1)}\otimes \cdots \otimes S_j^{(k)}.\]  
 Since $\phi$ is a Hopf algebra morphism
 \[\Delta_{\mathcal{H'}}^{(k)}(\phi(S) )=  \phi^{\otimes k}  \left(\Delta_{\mathcal{H}}^{(k)}(S) \right)= \sum_j \phi\left(S_j^{(1)}\right)\otimes \cdots \otimes \phi\left(S_j^{(k)}\right) .  \]
 These two expressions and that  $\delta_{\mathcal{H}} =\delta_{\mathcal{H'}}\circ \phi$ give
\begin{align*}
\delta_{\mathcal{H}}^{\ast k}(S  ) &=  \sum_j  \delta_{\mathcal{H}} (S_j^{(1)})\cdot \cdots \cdot \delta_{\mathcal{H}} (S_j^{(k)})
=  \sum_j  \delta_{\mathcal{H'}} ( \phi( S_j^{(1)}))\cdot \cdots \cdot \delta_{\mathcal{H}'} (\phi(S_j^{(k)}))
=\delta_{\mathcal{H'}}^{\ast k}(\phi(S)  ) .
\end{align*}
It follows that $\exp_*(\delta_{\mathcal{H}})( S ) = \exp_*(\delta_{\mathcal{H'}})( \phi(S)  ) $, and so
$\alpha_{\mathcal{H}}(S)  =   \alpha_{\mathcal{H}'}(  \phi(S))$, as required. 

To see that  $\delta_{\mathcal{H}}$ is uniform when $\delta_{\mathcal{H}'}$ is, let $\sigma$ be an element of the symmetric group on $k$ elements. 
Then $\delta_{\mathcal{H}}^{\otimes k}  ( S_1\otimes \cdots \otimes   S_k  ) =  \delta_{\mathcal{H'}}(\phi( S_1))  \otimes \cdots \otimes  \delta_{\mathcal{H'}}(\phi( S_k)) )$ 
and $\delta_{\mathcal{H}}^{\otimes k}  ( S_{\sigma(1)}\otimes \cdots \otimes   S_{\sigma(k)}  ) =  \delta_{\mathcal{H'}}(\phi( S_{\sigma(1)}))  \otimes \cdots \otimes   \delta_{\mathcal{H'}}(\phi( S_{\sigma(k)})) )$. Since $\delta_{\mathcal{H'}}$ is uniform these two expressions are equal, and so  $\delta_{\mathcal{H}}$ is uniform.
\end{proof}
 
The point of Theorem~\ref{tgen} is that it can be used to show that the fact that one graph polynomial can be obtained as a specialisation of another follows from the fact that there is a Hopf algebra morphism (usually projection) between the corresponding Hopf algebras. Figure~\ref{dghad} summarise some of the Hopf algebra morphisms given in the paper. These descend to relations between graph polynomials on the level of canonical Tutte polynomials (See Section~\ref{cxty} for the corresponding polynomials, and follow the references for the exact specialisation). 

\begin{figure}[ht]
\begin{center}
\begin{tikzpicture}
\tikzset{node distance=3cm, auto}
  \node (A) {$\mathcal{H}^{mp}$};
  \node (B) [right of=A] {$\mathcal{H}^{m}$};
  \node (C) [right of=B] {$\mathcal{H}^{dm}$};
   \node (D) [below of=A] {$\mathcal{H}^{ps}$};
    \node (E) [below of=B] { $\mathcal{H}^{g}$};
      \node (F) [below of=C] {$\mathcal{H}^{rg}$};
        \node (G) [right of=F] {$\mathcal{H}^{vrg}$};
  \node (H) [right of=G] {$\mathcal{H}^{vgs}$};
   \draw[<->] (A) to node {Cor.~\ref{s2.t1.c2}} (B);
 \draw[<-] (E) to node[above]{plane} node[below]{Cor.~\ref{t.rgbr1.c2}} (F);
     \draw[<-] (B) to node[left] {Cor.~\ref{t.gr.l1}} (E);
          \draw[->] (B) to node {Cor.~\ref{s2.t3.c2}} (C);
    \draw[->] (D) to node {Lem.~\ref{s2.l5}} (A);
   \draw[->] (D) to node {Cor.~\ref{s2.t1.c3}} (E);
  \draw[->] (F) to node {Lem.~\ref{t.gr.l1}} (C);
    \draw[->] (G) to node[above] {Cor.~\ref{c.vbr2}} (F);
      \draw[->] (H) to node[above] {Cor.~\ref{c.vkr2}} (G);
        \draw[->,bend left] (H) to node[above] {Cor.~\ref{c.vkr2}} (D);
\end{tikzpicture}
\end{center}
\caption{Hopf algebra morphisms inducing relations between graph polynomials.}
\label{dghad}
\end{figure}
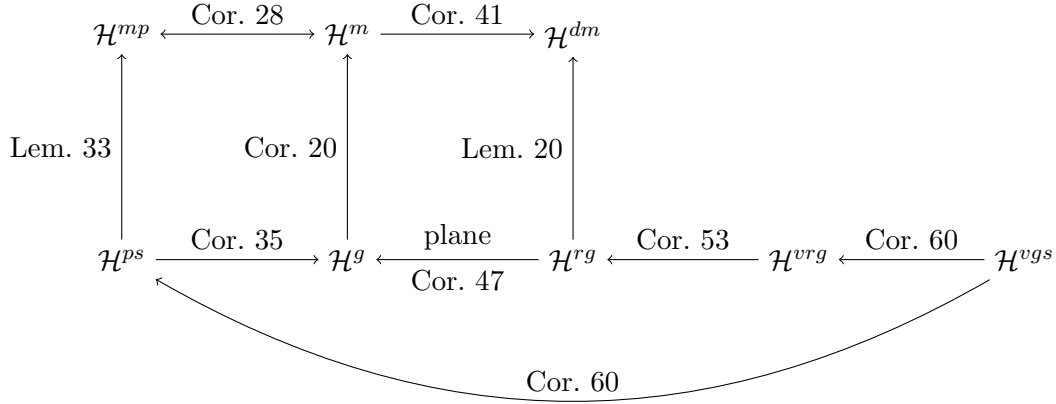

\subsection{Convolution formulas}
The convolution formula for the Tutte polynomial, which appears in W. Kook, V. Reiner and D. Stanton's paper \cite{KRS99}, and implicitly in G. Etienne and M. Las Vergnas' paper \cite{ELV}, expresses the Tutte polynomial of a graph or matroid $M$ in terms of 1-variable specialisations:
\begin{equation}\label{conv}
T_M(x,y)= \sum_{A\subseteq E(M)}  T_{M\ba A^c} (0,y)  \cdot T_{M/ A} (x,0).
\end{equation}
 It follows easily from the writing of  $T_M(x,y)$ in terms of exponentials in Corrolary~\ref{t.mat.c1}. To see this, start with the following rewriting of $\alpha$:  
 \begin{align}\label{conv2}
 \begin{split}
  \alpha( \mathbf{a}, \mathbf{b}) (S) 
&= [\exp_* (\delta_{\mathbf{a}}) \ast \exp_* (\delta_{\mathbf{b}})] (S)
\\ &= [\exp_* (\delta_{\mathbf{a}}) \ast \exp_* (\delta_{\mathbf{c}}) \ast \exp_* (\delta_{-\mathbf{c}}) \ast \exp_* (\delta_{\mathbf{b}})](G)
\\ &= [\exp_* (\delta_{\mathbf{a}}) \ast \exp_* (\delta_{\mathbf{c}})] \otimes [\exp_* (\delta_{-\mathbf{c}}) \ast \exp_* (\delta_{\mathbf{b}})](\Delta(S))
\\&= \sum_{A\subseteq E}\alpha( \mathbf{a}, \mathbf{c}) (S\del A^c)   \cdot \alpha(-\mathbf{c}, \mathbf{b})   (S\con A^c) .
\end{split}
\end{align}
An application of Theorem~\ref{t.mat.c1} with $  \mathbf{a} = (1,y-1)$,  $  \mathbf{b} = (x-1,1)$ and  $  \mathbf{c} = (-1,1)$ then gives \eqref{conv}. This derivation of the convolution formula was first observed in \cite{duchamp}.

Equation \eqref{conv2} is not specific to  $T_M$ and can be used to obtain new convolution formulae for other canonical Tutte polynomials. There is a slight subtlety in the derivation of such formula, however. For a given canonical Tutte polynomial $\alpha( \mathbf{a}, \mathbf{b})$,  the variables in each  of  $ \mathbf{a}$ or $ \mathbf{b}$ may depend upon each other (for example, this may be forced by  uniformity of $\delta$). We need to ensure that in  both $ -\mathbf{a}$ and $ -\mathbf{b}$ the variables satisfy the same dependence, and ensuring this may require some specialisation of the variables or classes of object considered.  For example, the 2-variable Bollob\'as-Riordan polynomial needs $ \mathbf{a}$ to be of the form $\mathbf{a} = (x,y,\sqrt{xy})$,  but $ -\mathbf{a} = (-x,-y,-\sqrt{xy})$ is not of this form ($(-x,-y,\sqrt{xy})$ is) and so we can not write the final equality in  \eqref{conv2} in this instance. As we will see below, we can work around this issue for the   the 2-variable Bollob\'as-Riordan polynomial   by restricting to orientable ribbon graphs or even delta-matroids.

\begin{theorem}\label{conv3}
The following identities hold.
\begin{enumerate}
\item \label{conv3.a}  If $\mathbf{M}$ is a matroid perspective and $ T_{\mathbf{M}}$ the Tutte polynomial of a morphism of a matroid, \[   T_{\mathbf{M}}(x,y)= \sum_{A\subseteq E(\mathbf{M})}  T_{\mathbf{M}\ba A^c} (0,y,-1) \cdot T_{\mathbf{M}/ A} (x,0,z).  \] 

\item   \label{conv3.b}  If $D$ is an even delta-matroid and $ \tilde{R}$ the 2-variable Bollob\'as-Riordan polynomial, \[\widetilde{R}_D(x,y) = 
\sum_{A\subseteq E(D)}  \widetilde{R}_{D\ba A^c} (0,y)  \cdot \widetilde{R}_{D/ A} (x,0).\]

\item \label{conv3.c}   If  $( G,\mathcal{P})$ is a vertex partitioned ribbon graph and $R_{ ( G,\mathcal{P})}(x,y,z)$ the Bollob\'as-Riordan polynomial, \[   R_{ ( G,\mathcal{P})}(x,y,yz^2)         =   \sum_{A\subseteq E(G)}    R_{ ( G,\mathcal{P}) \ba A^c }(0,y,yz^2)  \cdot R_{ ( G,\mathcal{P})/ A}(x,-1,1) .    \]

\item   \label{conv3.d}  If $( G\subset\Sigma,\mathcal{P})$ is  a vertex partitioned graph in a surface and  $\widetilde{K}_{(G\subset\Sigma,\mathcal{P})} (x,y,a,b)$ the Krushkal polynomial, 
\[   \widetilde{K}_{(G\subset\Sigma,\mathcal{P})} (x,y,a,ab^2)   =   \sum_{A\subseteq E(G)}  \widetilde{K}_{(G\subset\Sigma,\mathcal{P}) \ba A^c} (  -1, y,a,ab^2)  \cdot \widetilde{K}_{(G\subset\Sigma,\mathcal{P})/ A} ( x,-1,-1,-1) .  \]

\end{enumerate}

\end{theorem}
\begin{proof}
Item~\ref{conv3.a} follows by Corollary~\ref{s2.t1.c1} and Equation~\eqref{conv2} with $  \mathbf{a} = (1,y-1,1)$,  $  \mathbf{b} = (x-1,1,z)$ and  $  \mathbf{c} = (-1,1,-1)$.

For Item~\ref{conv3.b} start by considering Equation~\eqref{e.BRdelta1}.  Since $D$ is even and contraction and deletion preserve  the parity of  delta-matroids, $D_n=(\{e\},\{ \emptyset, \{e\} \})$ will never appear in any $\Delta^{n}(D)$, where $n\in \mathbb{N}$. Thus we can ignore the $\delta_n$ term in    Equation~\eqref{e.BRdelta1},  giving that  $\alpha(\mathbf{a},\mathbf{b})$ of Equation~\eqref{e.BRdelta2} equals 
 $ \exp_* (  x_1\delta_b+x_2\delta_o ) \ast  \exp_* (  y_1\delta_b+y_2\delta_o ) $. We can now apply Equation~\eqref{conv2} to this expression for $\alpha$ with  $  \mathbf{a} = (1,y-1)$, $  \mathbf{b} = (x-1,1)$ and $ \mathbf{c} = (-1,1)$. Applying Theorem~\ref{s2.t3} then gives the convolution formula.
 
 Item~\ref{conv3.c} follows from the expression of $R$ in terms of $\alpha$ in Definition~\ref{def.vbr} and Equation~\eqref{conv2} with $  \mathbf{a} = (1, y,yz,yz^2)$,  $  \mathbf{b} = (x-1,1,1,1)$ and  $  \mathbf{c} = (-1,1,1,1)$.

  Item~\ref{conv3.d} follows from the expression of $\widetilde{K}$ in terms of $\alpha$ in Definition~\ref{def.vkr} and Equation~\eqref{conv2} with $  \mathbf{a} = (1, y,a,ab,ab^2)$,  $  \mathbf{b} = (x,1,1,1,1)$ and  $  \mathbf{c} = (-1,1,1,1,1)$.
\end{proof}

By the discussions in Sections~\ref{ssec.lv} and~\ref{s.BRrg},  Item~\ref{conv3.a}  in Theorem~\ref{conv3} can be expressed in terms of the Las~Vergnas polynomial of graphs in pseudo-surfaces, and Item~\ref{conv3.b} in terms of orientable ribbon graphs (since $D(G)$ is even when $G$ is orientable by \cite{ab2,CMNR1}). 
 Note that while Corollary~\ref{c.vbr2} gives $R_{G}(x,y)$ as an evaluation of    $R_{(G,\mathcal{P})} (x,y,z)$, this result with  Item~\ref{conv3.c} of Theorem~\ref{conv3} does not give a convolution formula for $R_{G}(x,y)$ since the $R_{ ( G,\mathcal{P})/ A}(x,-1,1)$ is not   of the form required by Corollary~\ref{c.vbr2} to specialise to $R_{G/A}(x,y)$.

\subsection{Duality}\label{s.dual}
It is well-known that the Tutte polynomial of a matroid satisfies the duality relation $T_M(x,y)=T_{M^*}(y,x)$. We will now show how such duality  relations fit in our Hopf algebra framework, and, moreover, result from Hopf algebra morphisms.

\begin{definition}\label{dual1}
Let $\mathcal{H}$ be a Hopf algebra  of a minor system, as described in Proposition~\ref{p.1}. 
By a \emph{combinatorial duality} for  $\mathcal{H}$ we mean an involutionary grading preserving algebra morphism $\ast: \mathcal{H}\rightarrow \mathcal{H}$, where we denote $\ast(S)$ by $S^*$ and call it the \emph{dual} of $S$,  such that for each $S\in \mathcal{H}$ and each $e\in E(S)$, we   have $(S\del e)^*=S^* \con e$ and  $(S\con e)^*=S^* \del e$.
\end{definition}

We can now state a general duality theorem, which is a variation of  Theorem~\ref{tgen}.
\begin{theorem}\label{dual3}
Let $\mathcal{H}$ be a Hopf algebra  of a minor system with a combinatorial duality $\ast$. Let $\delta_{\mathbf{a}}=\sum_{i\in I} a_i\delta_i$  and $\delta_{\mathbf{b}}=\sum_{i\in I} b_i\delta_i$ be  selectors for  $\mathcal{H}$. Then for all $S\in \mathcal{H}$, 
\[   \alpha (\mathbf{a}, \mathbf{b}) (S) =    \alpha ( \mathbf{b}^*,\mathbf{a}^*) (S^*)  , \]
 where  $\alpha ( \mathbf{b}^*,\mathbf{a}^*) $ is defined by the selectors 
 $\delta_{\mathbf{a}^*} := \delta_{\mathbf{a}}  \circ \ast$ and  $\delta_{\mathbf{b}^*} := \delta_{\mathbf{b}}  \circ \ast$.
\end{theorem}
\begin{proof}
The proof is very similar to the proof of Theorem~\ref{tgen}, and so we only provide a sketch. 
First observe that    $\Delta(S^*)=(\tau\circ \Delta (S))^*$ where $\tau:\mathcal{H}\rightarrow \mathcal{H}:S\otimes S' \mapsto  S'\otimes S$ is the \emph{flip}.  This is since 
$ \Delta(S^*)=\sum\limits_{A\subseteq E(S^*)}(S^*\del A^c  ) \otimes  (S^*\con A)    
=   \sum\limits_{A\subseteq E(S)}  (S\con A^c)^*  \otimes (S\del A  )^*  
= (\tau\circ \Delta (S))^*$, 
where the last equality follows since the sum is over all subsets of $E(S)$.

The result can then be obtained by  following the proof of Theorem~\ref{tgen}, but replacing $\tau\circ \ast$ for $\phi$ and noting that the presence of the flip $\tau$ reverses the order of the tensor factors.
\end{proof}

\begin{corollary}\label{dual4}
The following duality identities hold.
\begin{enumerate}
\item \label{dual4.a} [Crapo \cite{Cr69},Tutte~\cite{Tu47}] For a matroid  $M$, $T_M(x,y)=T_{M^*}(y,x)$.
\item \label{dual4.b} [Las~Vergnas \cite{Las80}] For a matroid perspective $\mathbf{M}=M\rightarrow M'$,  \[T_{\mathbf{M}}(x,y,z)=  z^{r(E)-r'(E)} T_{\mathbf{M}^*}(y,x,1/z).\]
\item \label{dual4.c} [Las~Vergnas \cite{Las78}] For a graph in a pseudo-surface  $G\subset \Sigma$, \[L_{G\subset \Sigma}(x,y,z)=  z^{n(G)-\kappa(G)} L_{G^*\subset \Sigma}(y,x,1/z).\]
\item \label{dual4.d} [Chun et al \cite{CMNR1}] For a delta-matroid  $D$, $\widetilde{R}_D(x,y)=\widetilde{R}_{D^*}(y,x)$.
\item \label{dual4.e} [Ellis-Monaghan and Sarmiento \cite{ES}, Moffatt \cite{Mo08}] For a ribbon graph  $G$, $\widetilde{R}_G(x,y)=\widetilde{R}_{G^*}(y,x)$.
\end{enumerate}
\end{corollary}
\begin{proof}
Each identity follows by an application of Theorem~\ref{dual3}. 

For  Item~\ref{dual4.a}, $U_{1,1}^*=U_{0,1}$ and so for  $\delta_{\mathbf{a}}=x_1\delta_c+x_2\delta_l$ from \eqref{t.mat.delta.1}, $\delta_{\mathbf{a}^*} = \delta_{\mathbf{a}}  \circ \ast =  x_2\delta_c+x_1\delta_l$. Thus by Corollary~\ref{t.mat.c1} and Theorem~\ref{dual3}
\[  T_{M}(x,y) = \alpha( 1, y-1, x-1,1) ( M ) =   \alpha( 1, x-1,1, y-1 ) ( M^*) =  T_{M^*}(y,x).  \]

For   Item~\ref{dual4.b},   $(U_{0,1} \rightarrow U_{0,1})^* = (U_{1,1} \rightarrow U_{1,1})$, 
$(U_{1,1} \rightarrow U_{1,1})^*=(U_{0,1} \rightarrow U_{0,1})$, 
and $(U_{1,1} \rightarrow U_{0,1})^* =  (U_{1,1} \rightarrow U_{0,1})$. 
 For  $\delta_{\mathbf{a}}=x_1\delta_{cc} + x_2\delta_{ll} +x_3\delta_{cl}$, from \eqref{e.demp}, $\delta_{\mathbf{a}^*} = \delta_{\mathbf{a}}  \circ \ast =x_2\delta_{cc} + x_1\delta_{ll} +x_3\delta_{cl}$, and so by Theorems~\ref{s2.t1.e} and~\ref{dual3},
 \[     T_{\mathbf{M} }( x,y,z  )    =  \alpha(  1,y-1,1 , x-1,1,z) (\mathbf{M})=   \alpha(  1 , x-1,z  ,  y-1,1 1) (\mathbf{M}^*)  =   z^{r(E)-r'(E)} T_{\mathbf{M}^*}(y,x,1/z).  \]

Item~\ref{dual4.c} can be obtained by proceeding as above but using the constructions in Section~\ref{ssec.lv}, or by using Item~\ref{dual4.b} and Equations~\eqref{LVeq} and~\eqref{LVeq2}.

 Item~\ref{dual4.d} follows similarly. With   $\delta_{\mathbf{a}}= a_1\delta_{b} + a_2\delta_{o} +a_3\delta_{n}$ from \eqref{e.BRdelta1}, $\delta_{\mathbf{a}^*}= a_2\delta_{b} + a_1\delta_{o} +a_3\delta_{n}$. Then by Theorems~\ref{s2.t3} and~\ref{dual3},
\[ \widetilde{R}_D(x,y) =   \alpha( 1, y-1, x-1,1) ( D ) =   \alpha( 1, x-1,1, y-1 ) ( D^*)  = \widetilde{R}_{D^*}(x,y).  \]

Item~\ref{dual4.e} can be obtained by proceeding as above by using the constructions in Section~\ref{s.BRrg}, or by using Item~\ref{dual4.d} and that $\widetilde{R}_G=\widetilde{R}_{D(G)}$ from Equation~\eqref{t.rgbr.e}.
\end{proof}

\section{Examples in detail}\label{s.examp}
In this section we give a large number of examples of minor systems and their canonical Tutte polynomials. In particular we identify the classical Tutte polynomial of a graph or matroid, and a number of its extensions to graphs in surfaces as canonical Tutte polynomials. Because of the wide variety of examples, this section is fairly long. However, each subsection deals with a different polynomial and the subsections are largely independent of each other.

As mentioned previously, we will often specify $\mathbf{a}$, $\mathbf{b}$ $\delta_{\mathbf{a}}$, $\delta_{\mathbf{b}}$, and $\alpha(\mathbf{a},\mathbf{b})$ as follows. We fix some basis of $\mathcal{H}_1$ and some order of it. Then we specify $\delta_{\mathbf{a}} = \sum_{i=1}^n  a_i \delta_i$  by writing  $ \delta(a_1,a_2, \ldots, a_n)$. We do similarly for $\delta_{\mathbf{b}}$. Finally, we specify $\alpha(\mathbf{a},\mathbf{b})$ by writing $\alpha(a_1,\ldots, a_n,b_1,\ldots, b_n)$.

\subsection{The classical Tutte polynomial of a matroid}\label{s.mat}

The \emph{Tutte polynomial} of a matroid $M$ on a ground set  $E$ with rank function $r$ is  
\begin{equation}\label{e.tutteM}
T_M(x,y) = \sum_{A\subseteq E} (x-1)^{r(G)-r(A)} (y-1)^{|A|-r(A)}.
\end{equation}

The following result is readily seen to hold.
\begin{lemma}\label{l.hm}
The set of isomorphism classes of matroids forms a minor system where the grading is given by the cardinality of the ground set, deletion and contraction are given by the usual matroid deletion and contraction, and multiplication is given by direct sum.
\end{lemma}
 For convenience we will henceforth identify a  matroid with its isomorphism class. The minor system gives rise to a well-known deletion-contraction Hopf algebra of matroids.
\begin{definition}\label{d.hm}
We let  $\mathcal{H}^{m}$  denote the  Hopf algebra associated with matroids via  Lemma~\ref{l.hm} and Proposition~\ref{p.1}. 
Its coproduct is given by 
 $
 \Delta_{m}(M)=\sum_{A\subseteq E(M)}M\backslash A^c  \otimes  M/ A.
$
\end{definition}
There are exactly two elements in $\mathcal{H}^{m}_1$, namely the uniform matroids $U_{1,1}$ and $U_{0,1}$. The selector associated with $\mathcal{H}^{m}$ is 
\begin{equation}\label{t.mat.delta.1}
\delta_{\mathbf{a}}=\delta(x_1,x_2) = x_1\delta_c+x_2\delta_l,
\end{equation}
 where
\begin{equation}\label{t.mat.delta}
  \delta_{c}(M  ) := \begin{cases} 1 &\mbox{if } M = U_{1,1}, \\   0 & \mbox{otherwise}; \end{cases} 
\quad\text{and} \quad
   \delta_{l}(M) := \begin{cases} 1 &\mbox{if } M= U_{0,1},  \\   0 & \mbox{otherwise}. \end{cases} \end{equation}
  (In the notation of Theorem~\ref{tm}, $\mathbf{a}=(x_1,x_2)$ with ordering $U_{1,1}, U_{0,1}$ of a basis of $\mathcal{H}^{m}_1$.)
\begin{theorem}\label{t.mat}
The Tutte polynomial of  a matroid  arises as the canonical Tutte polynomial of the  Hopf algebra $\mathcal{H}^{m}$: 
 \begin{equation}
 \alpha( x_1, x_2, y_1,y_2) ( M )  =    x_1^{r(M)} y_2^{|E(M)|-r(M)}    T_{M}\left( \frac{y_1}{x_1}+1, \frac{x_2}{y_2}+1\right). 
 \end{equation}
\end{theorem}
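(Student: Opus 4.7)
The plan is to derive Theorem~\ref{t.mat} as a direct specialisation of Theorem~\ref{tm}. First I would introduce two rank-like functions indexed by $J=\{1,2\}$: namely $r_1(M):=r(M)$ and $r_2(M):=|E(M)|-r(M)$. Indexing the basis $\{U_{1,1},U_{0,1}\}$ of $\mathcal{H}^m_1$ by $I=\{c,l\}$, I would set the exponents $m_{c1}=1$, $m_{c2}=0$, $m_{l1}=0$, $m_{l2}=1$, so that the prescription $a_i=\prod_{j\in J} x_j^{m_{ij}}$ of \eqref{tm.e2} reproduces exactly the selector of \eqref{t.mat.delta.1}, namely $a_c=x_1$ and $a_l=x_2$.

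Next I would verify the hypothesis~\eqref{tm.e1}. For any $e\in E(M)$, the restriction $M\del e^c$ is the single-element matroid $U_{1,1}$ when $e$ is not a loop, and $U_{0,1}$ when $e$ is a loop, so $\delta_i(M\del e^c)=1$ for $i=c$ (respectively $i=l$) according to these cases. In the non-loop case, equation~\eqref{s2.e1} gives $r(M)=r(M/e)+1$, from which both $r_1(M)=r_1(M/e)+m_{c1}$ and $r_2(M)=r_2(M/e)+m_{c2}$ follow (using also $|E(M)|=|E(M/e)|+1$). In the loop case \eqref{s2.e1} gives $r(M)=r(M/e)$, and the parallel check works with $m_{l1}=0$ and $m_{l2}=1$. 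The boundary condition $r_j=0$ on $\mathcal{H}^m_0$ is trivial.

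With these verifications in place, Theorem~\ref{tm} yields
\[
\alpha(x_1,x_2,y_1,y_2)(M) = y_1^{r(M)}\,y_2^{|E(M)|-r(M)} \sum_{A\subseteq E(M)} \left(\frac{x_1}{y_1}\right)^{\!r(A)}\!\left(\frac{x_2}{y_2}\right)^{\!|A|-r(A)},
\]
where I have used $r_1(M\del A^c)=r(A)$ and $r_2(M\del A^c)=|A|-r(A)$. A short algebraic manipulation---replacing $y_1^{r(M)}$ by $x_1^{r(M)}(y_1/x_1)^{r(M)}$ and absorbing the factor $(y_1/x_1)^{r(M)}/(y_1/x_1)^{r(A)}=(y_1/x_1)^{r(M)-r(A)}$ into the summand---recasts the right-hand side as $x_1^{r(M)}\,y_2^{|E(M)|-r(M)}\,T_M\!\left(\tfrac{y_1}{x_1}+1,\tfrac{x_2}{y_2}+1\right)$ via the subset expansion~\eqref{e.tutteM}. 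The only potential obstacle is keeping the exponent bookkeeping across the four variables straight, which is routine.
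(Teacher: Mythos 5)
Your proposal is correct and follows essentially the same route as the paper's proof: take $r_1=r$ and $r_2=|E|-r$, verify the hypothesis of Theorem~\ref{tm} via Equation~\eqref{s2.e1}, and compare the resulting subset expansion with Equation~\eqref{e.tutteM}. Your version merely spells out the $m_{ij}$ bookkeeping and the loop/non-loop case check more explicitly than the paper does.
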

\begin{proof} For a matroid $M$ on $E$,
upon taking $r_1(M):=r(M)$ to be the rank function of $M$, and $r_2:=|E| - r(M)$ to be its nullity, 
and recalling that $r(M) =r(M/e)$ if $e$ is a loop, and is equal to $r(M/e)+1$ otherwise, 
 Theorem~\ref{tm} gives 
 \begin{align*}
 \alpha( x_1, x_2, y_1,y_2) ( M )  &= 
   y_1^{r(M)}    y_2^{|E|-r(M)} \sum_{A\subseteq E(S)}   \left(\frac{x_1}{y_1}\right)^{r(A)}  \left(\frac{x_2}{y_2}\right)^{|A|-r(A)}
\\&=     x_1^{r(M)}    y_2^{|E|-r(M)} \sum_{A\subseteq E(S)}   \left(\frac{y_1}{x_1}\right)^{r(M)-r(A)}  \left(\frac{x_2}{y_2}\right)^{|A|-r(A)}.
\end{align*}
The result follows by comparing this with Equation~\eqref{e.tutteM}.
\end{proof}

\begin{corollary}\label{t.mat.c1}
With $\alpha( x_1, x_2, y_1,y_2)$ defined as in Theorem~\ref{t.mat},
 \begin{equation}
     T_{M}(x,y) = \alpha( 1, y-1, x-1,1) ( M ).
 \end{equation}
\end{corollary}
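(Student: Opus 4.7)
The plan is to deduce the corollary directly from Theorem~\ref{t.mat} by substitution. Specifically, I would apply the formula
\[
\alpha(x_1, x_2, y_1, y_2)(M) = x_1^{r(M)}\, y_2^{|E(M)| - r(M)}\, T_M\!\left(\frac{y_1}{x_1}+1,\ \frac{x_2}{y_2}+1\right)
\]
with the substitution $x_1 = 1$, $x_2 = y-1$, $y_1 = x-1$, $y_2 = 1$.

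With these values, the prefactor $x_1^{r(M)} y_2^{|E(M)| - r(M)} = 1^{r(M)} \cdot 1^{|E(M)| - r(M)} = 1$, while the arguments of $T_M$ become $\frac{x-1}{1}+1 = x$ and $\frac{y-1}{1}+1 = y$. Thus the right-hand side collapses to $T_M(x,y)$, yielding the claim.

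There is no real obstacle here: the corollary is a routine evaluation of the theorem at a convenient specialisation, chosen so that the prefactor trivialises and the internal substitution becomes the identity on $(x,y)$. The only ``check'' is to verify that the substitution is well-defined (we are not dividing by zero since $x_1 = y_2 = 1$), which it plainly is. Accordingly, the proof will consist of a single displayed line of substitution followed by the observation that this recovers $T_M(x,y)$.
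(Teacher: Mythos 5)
Your substitution argument is correct and is exactly how the corollary follows in the paper: Corollary~\ref{t.mat.c1} is an immediate specialisation of Theorem~\ref{t.mat} at $x_1=1$, $x_2=y-1$, $y_1=x-1$, $y_2=1$, where the prefactor trivialises and the internal arguments become $x$ and $y$. Nothing further is needed.
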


In Theorem~\ref{t.mat}  we have recovered a result of Duchamp et. al from \cite{duchamp}, which was the inspiration for this work.

\subsection{The classical Tutte polynomial of a graph}\label{s.gr}
The classical Tutte polynomial of a graph $G$ is 
\begin{equation}\label{e.tutte}
T_G(x,y) = \sum_{A\subseteq E(G)} (x-1)^{r(G)-r(A)} (y-1)^{|A|-r(A)},
\end{equation}
where $r(A):=v(A)-c(A)$, and $v(A)$ and $c(A)$ denote the numbers of vertices and components, respectively, of the spanning subgraph of $G$ on $A$.

For a graded connected Hopf algebra we require a single element of graded dimension zero. For this we consider graphs up to 1-sums. Recall $G$ and $H$ are graphs, and $v$ is a vertex of $G$ and  $u$ a vertex of $H$, then a \emph{1-sum}, $G\oplus_1 H$ is the graph obtained by identifying the vertices $u$ and $v$.  

The following is easily seen.
\begin{lemma}\label{l.hg}
The set of equivalence classes of graphs considered up to 1-sums and isomorphism  forms a minor system where the grading is given by the cardinality of the edge set, deletion and contraction are given by the usual graph deletion and contraction, and multiplication is given by disjoint union. 
\end{lemma}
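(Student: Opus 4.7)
The plan is to directly verify that graphs, taken up to 1-sums and isomorphism, satisfy both the axioms of Definition~\ref{d.1} and the compatibility conditions of Proposition~\ref{p.1}. The verification splits naturally into (i) grading and the unique ground-level element, (ii) well-definedness and commutation of the minor operations, and (iii) compatibility with disjoint union.

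First I would check that edge-cardinality gives a well-defined grading. Since graph isomorphism preserves $|E|$, and a 1-sum identifies vertices but not edges, the edge set is preserved under the equivalence relation, so $|E(\cdot)|$ descends to equivalence classes. Next, to identify the unique element of $\mathcal{S}_0$, note that any edgeless graph on $n$ vertices is related by $n-1$ successive 1-sums (applied to pairs of isolated vertices) to the single-vertex graph, so there is a single equivalence class of edgeless graphs.

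The second main step is to verify that graph deletion and contraction descend to equivalence classes and satisfy the required commutation relations. The key point is that 1-sums act only at vertices while $\del$ and $\con$ act at an edge; consequently, one checks case-by-case that 1-summing commutes with $G \mapsto G \del e$ and $G \mapsto G \con e$. Some care is needed when $e$ is incident to the cut vertex of the 1-sum, but in each case the resulting graph is easily seen to be 1-sum equivalent to what one obtains by performing the minor operation first and re-1-summing afterward. The identities $(G \del e) \del f = (G \del f) \del e$, $(G \con e) \del f = (G \del f) \con e$, and $(G \con e) \con f = (G \con f) \con e$ for distinct edges $e, f$ are classical and follow directly from the set-theoretic definitions of graph minors.

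Finally, disjoint union gives a commutative, associative multiplication with unit the class of the single-vertex graph: for any $G$, the disjoint union $G \sqcup \{\bullet\}$ is related to $G$ by one 1-sum identifying $\bullet$ with any vertex of $G$, so it is the unit in $\mathcal{S}_0$. The equality $E(G_1 \sqcup G_2) = E(G_1) \sqcup E(G_2)$ is immediate, as are the identities $(G_1 \sqcup G_2) \del (A_1 \sqcup A_2) = (G_1 \del A_1) \sqcup (G_2 \del A_2)$ and its $\con$-analogue. The main obstacle lies in step (ii): the bookkeeping needed to verify that $\del$ and $\con$ genuinely descend to 1-sum-equivalence classes when the edge involved is incident to a cut vertex. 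This is handled most cleanly by observing that the multiset of blocks of a graph is invariant under 1-sums, and that graph deletion and contraction act on this multiset in a well-defined way (with contraction possibly merging two blocks when $e$ is a bridge, and deletion possibly separating blocks).
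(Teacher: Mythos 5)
The paper offers no proof of this lemma at all (it is introduced with ``The following is easily seen''), and your direct verification of the minors-system axioms of Definition~\ref{d.1} together with the compatibility conditions of Proposition~\ref{p.1} is precisely the routine check the authors intend, so the proposal is correct and in line with the paper. One small correction to your parenthetical: contracting a bridge does not merge two blocks --- the bridge block simply disappears, since distinct blocks meeting at the identified vertex remain distinct blocks --- but this does not affect your well-definedness argument, which only requires that deletion and contraction of $e$ change nothing outside the block containing $e$ (together with the easy fact that a graph is 1-sum equivalent to the disjoint union of its blocks).
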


We now identify a graph with its equivalence class. The above minor system gives rise to a Hopf algebra:
\begin{definition}\label{d.hg}
We let  $\mathcal{H}^{g}$  denote the  Hopf algebra associated with graphs via  Lemma~\ref{l.hg} and Proposition~\ref{p.1}. 
Its coproduct is given by 
$
 \Delta_{g}(G)=\sum\limits_{A\subseteq E(G)}G\backslash A^c  \otimes  G/ A.
 $
\end{definition}

\begin{lemma}\label{t.gr.l1}
There is a natural Hopf algebra morphism $\phi: \mathcal{H}^{g}\rightarrow \mathcal{H}^{m}$ given by $\phi:G\rightarrow C(G)$, where $C(G)$ is the cycle matroid of $G$. 
\end{lemma}
\begin{proof}
Since $C(G\oplus_1 H)=C(G)\oplus C(H)$, $\phi$ is well-defined. 
It is easily seen that $\phi$ is multiplicative, and sends the (co)unit to the (co)unit.
A standard result in matroid theory is that $C(G)/A=C(G/A)$ and  $C(G)\backslash A=C(G\backslash A)$, giving
$  C(\Delta_{g}(G))  =    \sum_{A\subseteq E} C(G\backslash  A^c)  \otimes  C(G/A)  =  \sum_{A\subseteq E} C(G)\backslash  A^c \otimes  C(G)/A  = \Delta_{m}(C(G)) $.
\end{proof}
We will use $\phi$ to identify the Tutte polynomial of $\mathcal{H}^{g}$.

 $\mathcal{H}^{g}_1$  has two elements, a bridge and a loop which giving rise to a selector
\[  \delta(x_1,x_2) = x_1\delta_b+x_2\delta_l,\]
 where
\begin{equation}\label{g.selector}
   \delta_{b}(G) := \begin{cases} 1 &\mbox{if } G=(\{u,v\},\{(u,v)\}),\\   0 & \mbox{otherwise}; \end{cases} 
\quad\text{and} \quad
   \delta_{l}(G) := \begin{cases} 1 &\mbox{if } G=(\{v\},\{(v,v)\}) ,\\   0 & \mbox{otherwise}. \end{cases} \end{equation}
\begin{theorem}\label{t.gr}
The Tutte polynomial of  a graph  arises as the Tutte polynomial of the  Hopf algebra $\mathcal{H}^{g}$: 
 \begin{equation}
 \alpha( x_1, x_2, y_1,y_2) ( G )  =    x_1^{r(G)} y_2^{|E(G)|-r(G)}    T_{G}( \tfrac{y_1}{x_1}+1, \tfrac{x_2}{y_2}+1). 
 \end{equation}
\end{theorem}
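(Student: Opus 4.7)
The plan is to reduce this theorem to Theorem~\ref{t.mat} (the matroid case) by transporting the computation along the Hopf algebra morphism $\phi:\mathcal{H}^{g}\rightarrow \mathcal{H}^{m}$, $G\mapsto C(G)$, provided by Lemma~\ref{t.gr.l1}, and then invoking Theorem~\ref{tgen}.

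First I would verify that the selectors on $\mathcal{H}^{g}$ and $\mathcal{H}^{m}$ are compatible under $\phi$. The module $\mathcal{H}^{g}_{1}$ has two basis elements: a single bridge $B=(\{u,v\},\{(u,v)\})$ and a single loop $L=(\{v\},\{(v,v)\})$. Under the cycle matroid map we have $\phi(B)=C(B)=U_{1,1}$ and $\phi(L)=C(L)=U_{0,1}$. Consequently, for the matroid selectors $\delta_{c},\delta_{l}$ of \eqref{t.mat.delta} and the graph selectors $\delta_{b},\delta_{l}$ of \eqref{g.selector}, we have $\delta_{b}=\delta_{c}\circ\phi$ and $\delta_{l}=\delta_{l}\circ\phi$ at the level of $\mathcal{H}_1$, and (by linearity and the fact that $\phi$ is graded) $\delta_{c}\circ\phi$ and $\delta_{l}\circ\phi$ vanish on elements of any other graded component. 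Therefore, taking the same indeterminates on both sides,
\[
\delta_{\mathcal{H}^{g},(x_1,x_2)} \;=\; \delta_{\mathcal{H}^{m},(x_1,x_2)}\circ \phi.
\]

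Having established this compatibility, I would apply Theorem~\ref{tgen} to conclude
\[
\alpha_{\mathcal{H}^{g}}(x_1,x_2,y_1,y_2)(G) \;=\; \alpha_{\mathcal{H}^{m}}(x_1,x_2,y_1,y_2)(C(G)).
\]
Uniformity of the graph selector follows from uniformity of the matroid selector (which is given by Theorem~\ref{t.mat}, via Theorem~\ref{tm}) combined with the uniformity-transfer statement in Theorem~\ref{tgen}. The right-hand side can then be evaluated directly using Theorem~\ref{t.mat}. Finally I would conclude using the three standard identities $r_{C(G)}(E)=v(G)-c(G)=r(G)$, $|E(C(G))|=|E(G)|$, and $T_{C(G)}(x,y)=T_{G}(x,y)$ (this last being the fact that the Whitney rank polynomial of a graph coincides with that of its cycle matroid). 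Substituting these gives the claimed closed form.

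There is essentially no obstacle here beyond the bookkeeping of matching up the two basis elements of $\mathcal{H}^{g}_{1}$ with the two basis elements of $\mathcal{H}^{m}_{1}$; all the analytic content has been absorbed into Theorem~\ref{tm} and Theorem~\ref{tgen}. The proof is really a demonstration that Theorem~\ref{tgen} is the right mechanism for inheriting Tutte-polynomial identities from one Hopf algebra of a minors system to another via a natural morphism, and this is precisely why we expect graph and matroid versions of the Tutte polynomial to be tied together in the Hopf-algebraic framework.
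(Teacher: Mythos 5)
Your proposal is correct and follows essentially the same route as the paper: verify that the graph selectors pull back from the matroid selectors via the cycle-matroid morphism $\phi$ of Lemma~\ref{t.gr.l1}, then apply Theorem~\ref{tgen} together with Theorem~\ref{t.mat} and identify $r_{C(G)}(E)=r(G)$ and $T_{C(G)}=T_G$. The paper's proof is just a terser statement of exactly this argument (and it also notes the alternative of arguing directly via Theorem~\ref{tm}, which you did not need).
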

\begin{proof}
Upon verifying that $ \delta_{b}(G)=  \delta'_{c}(C(G))$ and $ \delta_{l}(G)=  \delta'_{l}(C(G))$, where the primed $\delta$'s are those of Equation~\eqref{t.mat.delta}, the result follows immediately from Theorems~\ref{tgen} and~\ref{t.mat}.
\end{proof}
Note that  Theorem~\ref{t.gr}  can also be proven via Theorem~\ref{tm} giving a proof almost identical to that of Theorem~\ref{t.mat}.

\begin{corollary}\label{t.gr.c1}
 \begin{equation}
     T_{G}(x,y) = \alpha( 1, y-1, x-1,1) ( G ).
 \end{equation}
\end{corollary}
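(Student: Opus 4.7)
The plan is to obtain Corollary~\ref{t.gr.c1} as a direct specialization of Theorem~\ref{t.gr}. Since Theorem~\ref{t.gr} already gives the closed form
\[
\alpha(x_1,x_2,y_1,y_2)(G) \;=\; x_1^{r(G)}\, y_2^{|E(G)|-r(G)}\, T_G\!\left(\tfrac{y_1}{x_1}+1,\;\tfrac{x_2}{y_2}+1\right),
\]
the statement to prove is merely the evaluation at the point $(x_1,x_2,y_1,y_2)=(1,y-1,x-1,1)$.

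First I would substitute these four values into the formula. The prefactors collapse immediately since $x_1^{r(G)}=1^{r(G)}=1$ and $y_2^{|E(G)|-r(G)}=1^{|E(G)|-r(G)}=1$, so no information is lost in the monomial multiplier. Then the arguments of $T_G$ become $\tfrac{y_1}{x_1}+1 = (x-1)+1 = x$ and $\tfrac{x_2}{y_2}+1 = (y-1)+1 = y$, yielding $T_G(x,y)$ as required.

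There is essentially no obstacle here, since Theorem~\ref{t.gr} has already done the real work of matching the Hopf-algebraic expression against the spanning subgraph expansion \eqref{e.tutte}; the corollary is just a convenient change of variables chosen so that the two monomial prefactors trivialize and the two fractions $y_1/x_1$ and $x_2/y_2$ become $x-1$ and $y-1$, respectively. Accordingly, the proof would consist of a single line invoking Theorem~\ref{t.gr} with the indicated substitution.
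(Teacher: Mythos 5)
Your substitution argument is correct and is exactly the intended derivation: the paper states the corollary without proof precisely because it follows from Theorem~\ref{t.gr} by evaluating at $(x_1,x_2,y_1,y_2)=(1,y-1,x-1,1)$, where the prefactors become $1$ and the arguments of $T_G$ become $x$ and $y$. Nothing is missing.
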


Graphs provide a convenient setting to illustrate a direct computation of a canonical Tutte polynomial $\alpha( \mathbf{a}, \mathbf{b})$.  
\begin{example}\label{ex.tutte} 
Let $\mathcal{H}^g$ be the Hopf algebra of formal $\mathbb{Q}$-linear combinations of graphs considered up to the one point join operation and isomorphism, with multiplication given by disjoint union, and coproduct  given  $\Delta(G)=\sum_{A\subseteq E(G)}G\backslash A^c  \otimes  G/ A$. Let $\delta_{\mathbf{a}}=x_1\delta_b+x_2\delta_l$ and $\delta_{\mathbf{b}}=y_1\delta_b+y_2\delta_l$, where 
$\delta_b$ and $\delta_l$ are given by \eqref{g.selector}. Then 
\[ \Delta\left( \raisebox{-1mm}{\includegraphics[scale=0.15]{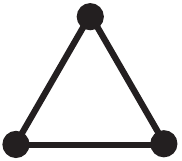}} \right) =  
  \raisebox{-1mm}{\includegraphics[scale=0.15]{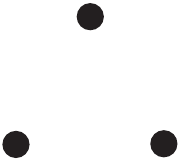}} \otimes   \raisebox{-1mm}{\includegraphics[scale=0.15]{a2}}  +   3 \raisebox{-1mm}{\includegraphics[scale=0.15]{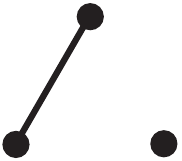}} \otimes   \raisebox{-1mm}{\includegraphics[scale=0.15]{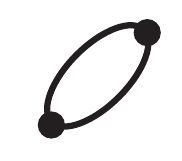}} +3 \raisebox{-1mm}{\includegraphics[scale=0.15]{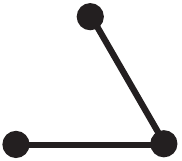}} \otimes   \raisebox{-1mm}{\includegraphics[scale=0.15]{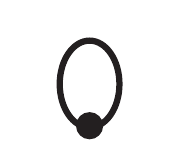}} +  \raisebox{-1mm}{\includegraphics[scale=0.15]{a2}} \otimes   \raisebox{-1mm}{\includegraphics[scale=0.15]{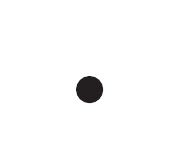}}   , \]
so
\begin{multline*}  \alpha( \mathbf{a}, \mathbf{b})   \left( \raisebox{-1mm}{\includegraphics[scale=0.15]{a2}} \right)  
=\exp_* (\delta_{\mathbf{a}}) \ast \exp_* (\delta_{\mathbf{b}})  \left( \raisebox{-1mm}{\includegraphics[scale=0.15]{a2}} \right) 
= \exp_* (\delta_{\mathbf{a}})  \left( \raisebox{-1mm}{\includegraphics[scale=0.15]{a4}} \right)  \cdot  \exp_* (\delta_{\mathbf{b}})  \left( \raisebox{-1mm}{\includegraphics[scale=0.15]{a2}} \right) 
 +3  \exp_* (\delta_{\mathbf{a}})  \left( \raisebox{-1mm}{\includegraphics[scale=0.15]{a3}} \right)  \cdot  \exp_* (\delta_{\mathbf{b}})  \left( \raisebox{-1mm}{\includegraphics[scale=0.15]{a7}} \right) 
\\
+3  \exp_* (\delta_{\mathbf{a}})  \left( \raisebox{-1mm}{\includegraphics[scale=0.15]{a1}} \right)  \cdot  \exp_* (\delta_{\mathbf{b}})  \left( \raisebox{-1mm}{\includegraphics[scale=0.15]{a6}} \right) 
+ \exp_* (\delta_{\mathbf{a}})  \left( \raisebox{-1mm}{\includegraphics[scale=0.15]{a2}} \right)  \cdot  \exp_* (\delta_{\mathbf{b}})  \left( \raisebox{-1mm}{\includegraphics[scale=0.15]{a5}} \right) .
 \end{multline*}
Now $\exp_* (\delta_{\mathbf{a}})  \left( \raisebox{-1mm}{\includegraphics[scale=0.15]{a4}} \right)=1$. 
The only non-zero terms of $ \exp_* (\delta_{\mathbf{b}})  \left( \raisebox{-1mm}{\includegraphics[scale=0.15]{a2}} \right) $ come from the terms of $  \Delta^{(2)}$ in which all tensor factors are in $\mathcal{H}^g_1$. Direct computation gives 
$ \Delta^{(2)}\left( \raisebox{-1mm}{\includegraphics[scale=0.15]{a2}} \right) =    (id\otimes \Delta)\circ \Delta  = \cdots +6    \raisebox{-1mm}{\includegraphics[scale=0.15]{a3}} \otimes \raisebox{-1mm}{\includegraphics[scale=0.15]{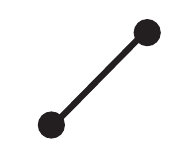}} \otimes   \raisebox{-1mm}{\includegraphics[scale=0.15]{a6}}   +\cdots$ where no other summands are in $(\mathcal{H}^g_1)^{\otimes 3}$.  Thus  
$ \exp_* (\delta_{\mathbf{b}})  \left( \raisebox{-1mm}{\includegraphics[scale=0.15]{a2}} \right)  =  y_1^2y_2$. By computing the other exponentials similarly we see that $ \alpha( \mathbf{a}, \mathbf{b})   \left( \raisebox{-1mm}{\includegraphics[scale=0.15]{a2}} \right)= y_1^2y_2+3x_1y_1y_2 + 3x_2^2y_2+x_1^2x_2=  x_1^{r(G)} y_2^{|E(G)|-r(G)}    T_{\includegraphics[scale=0.1]{a2}}( \tfrac{y_1}{x_1}+1, \tfrac{x_2}{y_2}+1)$.
\end{example}

\subsection{The Tutte polynomial of a morphism of a matroid}\label{ssec.mp}

As defined by Las~Vergnas in \cite{Las78a,Las80}, the {\em Tutte polynomial} of the matroid perspective  $\mathbf{M}=M\rightarrow M'$, where $M$ has rank function $r$, $M'$ has rank function $r'$ and both matroids have ground set $E$ is 
  \begin{equation}\label{d.tpmp}
   T_{\mathbf{M}}(x,y,z) = \sum_{A \subseteq E}
                 (x - 1)^{r'( E) - r'( A)}
                 (y-1)^{|A|-r(A)}
                  z^{(r(E)-r(A))-(r'(E)-r'(A))}.
  \end{equation}

The following lemma is easily seen to hold.
\begin{lemma}\label{s2.l1}
The set of isomorphism classes of matroid perspectives forms a minor system where the grading is given by the cardinality of the ground set, deletion and contraction are given by matroid perspective deletion and contraction, and multiplication is given by direct sum. 
\end{lemma}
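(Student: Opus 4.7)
The plan is to verify the two requirements of Definition~\ref{d.1} and then the multiplicativity conditions needed to invoke Proposition~\ref{p.1}. Place a matroid perspective $\mathbf{M} = (M \to M')$ with ground set $E$ of cardinality $n$ in $\mathcal{S}_n$, and set $E(\mathbf{M}) := E$. The unique element of $\mathcal{S}_0$ is the (trivial) matroid perspective on the empty ground set. The minor operations $\mathbf{M} \del e := (M\backslash e \to M'\backslash e)$ and $\mathbf{M} \con e := (M/e \to M'/e)$ are those already defined before the statement of the lemma.

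The one real content is to verify that these two operations preserve the defining perspective inequality. For deletion this is immediate: the inequality $r(B) - r(A) \geq r'(B) - r'(A)$ for $A \subseteq B \subseteq E$ descends at once to subsets of $E\setminus\{e\}$, since the rank functions of $M\backslash e$ and $M'\backslash e$ are just restrictions of $r$ and $r'$. For contraction, the rank functions become $r_{M/e}(A) = r(A\cup\{e\}) - r(\{e\})$ and $r_{M'/e}(A) = r'(A\cup\{e\}) - r'(\{e\})$, so for $A \subseteq B \subseteq E\setminus\{e\}$ the required inequality $r_{M/e}(B) - r_{M/e}(A) \geq r_{M'/e}(B) - r_{M'/e}(A)$ telescopes to
\[ r(B\cup\{e\}) - r(A\cup\{e\}) \geq r'(B\cup\{e\}) - r'(A\cup\{e\}), \]
which is precisely the perspective condition on $M \to M'$ applied to the pair $A\cup\{e\} \subseteq B\cup\{e\}$.

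The three commutativity relations of Definition~\ref{d.1} then follow componentwise from the well-known identities $(M\backslash e)\backslash f = (M\backslash f)\backslash e$, $(M/e)\backslash f = (M\backslash f)/e$, and $(M/e)/f = (M/f)/e$ for matroids. Likewise, the multiplicativity axioms of Proposition~\ref{p.1} for direct sum hold because componentwise matroid direct sum already commutes with deletion and contraction on disjoint subsets, i.e.\ $(M_1 \oplus M_2)\backslash(A_1 \sqcup A_2) = (M_1\backslash A_1) \oplus (M_2\backslash A_2)$, and similarly for contraction. The main (and modest) obstacle is the preservation of the perspective inequality under contraction, which is dispatched by the telescoping identity above; all remaining checks are routine transfers from the matroid setting already used in Lemma~\ref{l.hm}.
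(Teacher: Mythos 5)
Your proof is correct; the paper states this lemma without proof ("easily seen to hold"), and your verification is exactly the intended routine check, with the only substantive point being closure of the perspective condition under contraction, which your telescoping identity handles properly. (The one detail you leave implicit, that the direct sum of two perspectives is again a perspective, is immediate from additivity of the rank functions over the direct sum, so nothing is missing in substance.)
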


\begin{definition}\label{d.hmp}
We let  $\mathcal{H}^{mp}$  denote the  Hopf algebra associated with matroid perspectives via   Proposition~\ref{p.1}. 
Its coproduct is given by 
$
 \Delta_{mp}(\mathbf{M})=\sum_{A\subseteq E}\mathbf{M}\backslash A^c  \otimes  \mathbf{M}/ A$,
 where  $\mathbf{M}=M\rightarrow M'$,  $E$ is its ground set, and  the deletion and contraction are the usual matroid perspective deletion and contraction.
\end{definition}

We will  show that Las Vergnas' Tutte polynomial of a matroid perspective is the canonical  Tutte polynomial of $\mathcal{H}^{mp}$. 

Up to isomorphism, there are exactly three matroid perspective over one element:   $U_{0,1} \rightarrow U_{0,1}$, $U_{1,1} \rightarrow U_{1,1}$, and $U_{1,1} \rightarrow U_{0,1}$. Set 
\[   \delta_{cc}(\mathbf{M}  ) := \begin{cases} 1 &\mbox{if } \mathbf{M}  =U_{1,1} \rightarrow U_{1,1}, \\   0 & \mbox{otherwise} ;\end{cases} 
 \quad  \quad
   \delta_{ll}(\mathbf{M}   ) := \begin{cases} 1 &\mbox{if } \mathbf{M}   = U_{0,1} \rightarrow U_{0,1}, \\   0 & \mbox{otherwise} ;\end{cases} \] 
\[   \delta_{cl}(\mathbf{M}   ) := \begin{cases} 1 &\mbox{if } \mathbf{M}  =U_{1,1} \rightarrow U_{0,1}, \\   0 & \mbox{otherwise} .\end{cases} \] 
(The subscripts of the $\delta$'s record, in order, if each matroid in $\mathbf{M}$ contains a loop or a coloop.)
Let
\begin{equation}\label{e.demp} 
\delta_{\mathbf{a}}= \delta(x_1,x_2,x_3) :=x_1\delta_{cc} + x_2\delta_{ll} +x_3\delta_{cl}. 
  \end{equation}

\begin{theorem}\label{s2.t1}
Las Vergnas' Tutte polynomial of  a matroid perspective arises as the canonical Tutte polynomial of the  Hopf algebra $\mathcal{H}^{mp}$: 
 \begin{equation}\label{s2.t1.e}
 \alpha( \mathbf{a}, \mathbf{b}) ( \mathbf{M} )  =     x_1^{r'(M')} y_2^{|E|-r(M)} x_3^{r(M)-r'(M')}     T_{\mathbf{M} }\left( \frac{y_1}{x_1}+1, \frac{x_2}{y_2}+1, \frac{y_3}{x_3}  \right) ,
 \end{equation}
where $\mathbf{a} = ( x_1,x_2,x_3)$, $\mathbf{b}  =(y_1,y_2,y_3)$,  $E$ is the ground set of the matroid perspective $\mathbf{M} =M\rightarrow M' $, $r$ is the rank function of $M$, and $r'$ the rank function  of $M'$. 
\end{theorem}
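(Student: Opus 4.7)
The plan is to apply Theorem~\ref{tm} with three rank-type functions that record how a matroid perspective behaves under deletion and contraction, and then rearrange the resulting state sum to match the definition \eqref{d.tpmp} of Las Vergnas' polynomial. This is the direct analogue of the argument that derives Theorem~\ref{t.mat} from Theorem~\ref{tm}; the only genuinely new input is the presence of the third basis element $U_{1,1}\rightarrow U_{0,1}$, which carries the information about the rank drop between $M$ and $M'$.

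First I would fix the ordering $U_{1,1}\!\to\! U_{1,1}$, $U_{0,1}\!\to\! U_{0,1}$, $U_{1,1}\!\to\! U_{0,1}$ of the basis of $\mathcal{H}^{mp}_1$ (so that $\delta_1=\delta_{cc}$, $\delta_2=\delta_{ll}$, $\delta_3=\delta_{cl}$), and choose the three rank functions
\[
r_1(\mathbf{M}) := r'(M'), \qquad r_2(\mathbf{M}) := |E|-r(M), \qquad r_3(\mathbf{M}) := r(M)-r'(M').
\]
Each vanishes on the empty matroid perspective. With the ansatz $a_i=\prod_j x_j^{m_{ij}}$ and the prescription $\delta_{\mathbf{a}}$ of \eqref{e.demp}, one wants $(m_{ij})$ to be the $3\times 3$ identity.

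The next step is to verify hypothesis \eqref{tm.e1} case by case. If $\mathbf{M}\del e^c = U_{1,1}\!\to\! U_{1,1}$, then $e$ is a coloop in both $M$ and $M'$, so contracting $e$ decreases $r$, $r'$, and $|E|$ each by $1$; hence $r_1$ drops by $1$ and $r_2, r_3$ are unchanged, matching $m_{cc,1}=1$, $m_{cc,2}=m_{cc,3}=0$. If $\mathbf{M}\del e^c = U_{0,1}\!\to\! U_{0,1}$, then $e$ is a loop in both matroids, so only $|E|$ drops and we obtain $r_2\mapsto r_2-1$ with the others fixed. If $\mathbf{M}\del e^c = U_{1,1}\!\to\! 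U_{0,1}$, then $e$ is a coloop of $M$ and a loop of $M'$, so $r(M)$ and $|E|$ each drop by $1$ while $r'(M')$ is unchanged, giving $r_3\mapsto r_3-1$. These three cases exhaust $\mathcal{S}_1$ because the defining inequality of a matroid perspective forbids $U_{0,1}\!\to\! U_{1,1}$; this non-existence is what makes the three functions $r_1, r_2, r_3$ suffice and is the main subtlety of the argument.

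Theorem~\ref{tm} then yields
\[
\alpha(\mathbf{a},\mathbf{b})(\mathbf{M}) = y_1^{r'(M')}y_2^{|E|-r(M)}y_3^{r(M)-r'(M')}\sum_{A\subseteq E}\left(\tfrac{x_1}{y_1}\right)^{r'(A)}\!\left(\tfrac{x_2}{y_2}\right)^{|A|-r(A)}\!\left(\tfrac{x_3}{y_3}\right)^{r(A)-r'(A)},
\]
using that the restriction $\mathbf{M}|_A$ has rank functions $r|_A$ and $r'|_A$. To finish, I would redistribute the $x_i/y_i$ factors using $y_1/x_1 = X-1$, $x_2/y_2 = Y-1$, $y_3/x_3 = Z$, then multiply and divide by $x_1^{r'(M')} x_3^{r(M)-r'(M')}$ so as to pull $x_1^{r'(M')}y_2^{|E|-r(M)}x_3^{r(M)-r'(M')}$ out front. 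Matching the remaining per-$A$ monomial against \eqref{d.tpmp} term by term is a direct book-keeping check and produces exactly the right-hand side of \eqref{s2.t1.e}, completing the proof.
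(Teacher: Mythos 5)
Your proposal is correct and follows essentially the same route as the paper's proof: the same three functions $r_1(\mathbf{M})=r'(M')$, $r_2(\mathbf{M})=|E|-r(M)$, $r_3(\mathbf{M})=r(M)-r'(M')$, the same case analysis of the one-element restrictions via \eqref{s2.e1}, an application of Theorem~\ref{tm}, and the same final rewriting against \eqref{d.tpmp}. One small terminological slip: when $\mathbf{M}\del e^c=U_{1,1}\to U_{1,1}$ (respectively $U_{1,1}\to U_{0,1}$) the element $e$ need only be a non-loop of $M$ and $M'$ (respectively of $M$), not a coloop of them; the rank increments you use still follow from \eqref{s2.e1}, so the argument is unaffected.
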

\begin{proof}
Set $r_1(\mathbf{M}):=r'(M')$, $r_2(\mathbf{M}):=|E|-r(M)$, and $r_3(\mathbf{M}):=r(M)-r'(M')$. Then
$r_1(\mathbf{M})=r_1(\mathbf{M}/e)+1$, if $\delta_{cc}(\mathbf{M}|_e)=1$;  
$r_2(\mathbf{M})=r_2(\mathbf{M}/e)+1$, if $\delta_{ll}(\mathbf{M}|_e)=1$;  
$r_3(\mathbf{M})=r_3(\mathbf{M}/e)+1$, if $\delta_{cl}(\mathbf{M}|_e)=1$;  
and otherwise $r_j(\mathbf{M})=r_j(\mathbf{M}/e)$.   
An application of  Theorem~\ref{tm} then gives 
 \[
 \alpha( \mathbf{a},\mathbf{b}) ( \mathbf{M} )  =       y_1^{r'(M')}    y_2^{|E|-r(M)}    y_3^{r(M)-r'(M')}   
  \sum_{A\subseteq E}   \left(\frac{x_1}{y_1}\right)^{r'(A)}  \left(\frac{x_2}{y_2}\right)^{|A|-r(A)} \left(\frac{x_3}{y_3}\right)^{r(A)-r'(A)},
\]
which, remembering the definition of $T_{\mathbf{M}}$ from \eqref{d.tpmp}, is readily  written  as~\eqref{s2.t1.e}.
\end{proof}

\begin{corollary}\label{s2.t1.c1}
With $ \alpha( \mathbf{a}, \mathbf{b})$ defined as in Theorem~\ref{s2.t1},
 \begin{equation}
   T_{M\rightarrow M' }( x,y,z  ) =  \alpha( \mathbf{a}, \mathbf{b}) ( M\rightarrow M' ) ,
 \end{equation}
where $\mathbf{a} = ( 1,y-1,1)$, $\mathbf{b}  =(x-1,1,z)$. 
\end{corollary}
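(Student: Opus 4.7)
The plan is to prove the corollary by direct substitution into the formula of Theorem~\ref{s2.t1}. Specifically, with $\mathbf{a} = (x_1,x_2,x_3) = (1, y-1, 1)$ and $\mathbf{b} = (y_1,y_2,y_3) = (x-1, 1, z)$, I would verify that the prefactor $x_1^{r'(M')} y_2^{|E|-r(M)} x_3^{r(M)-r'(M')}$ collapses to $1$, and that the arguments of $T_{\mathbf{M}}$ simplify to $(x,y,z)$.

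First, I would substitute each scalar: $x_1 = 1$ and $x_3 = 1$ immediately kill the first and third factors of the prefactor, while $y_2 = 1$ kills the middle factor, so the entire prefactor equals $1$ regardless of the ranks $r(M)$, $r'(M')$, or the size of the ground set $E$. Next, I would evaluate the three arguments of $T_{\mathbf{M}}$: the first becomes $\tfrac{y_1}{x_1}+1 = (x-1)+1 = x$; the second becomes $\tfrac{x_2}{y_2}+1 = (y-1)+1 = y$; and the third becomes $\tfrac{y_3}{x_3} = z$. Combining these simplifications with Equation~\eqref{s2.t1.e} then yields
\[
\alpha(\mathbf{a},\mathbf{b})(M\to M') = T_{M\to M'}(x,y,z),
\]
which is precisely the claim.

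Since the corollary is a specialization of Theorem~\ref{s2.t1}, there is essentially no obstacle; the only thing to be careful about is confirming that the substitutions make sense as elements of $\mathbb{K}[x,y,z]$ (in particular, that no denominator vanishes identically). Here $x_1 = y_2 = x_3 = 1$, so the fractions $\tfrac{y_1}{x_1}$, $\tfrac{x_2}{y_2}$, and $\tfrac{y_3}{x_3}$ are all honest polynomials, and the substitution is therefore valid in $\mathbb{K}[x,y,z]$. This completes the proof sketch.
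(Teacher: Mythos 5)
Your proposal is correct and matches the paper's (implicit) argument: the corollary is just the specialization $x_1=x_3=y_2=1$, $x_2=y-1$, $y_1=x-1$, $y_3=z$ in Equation~\eqref{s2.t1.e}, which makes the prefactor $1$ and the arguments of $T_{\mathbf{M}}$ equal to $(x,y,z)$. The substitution checks you perform are exactly what is needed, so there is nothing to add.
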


The following corollary provides a good illustration of how Hopf algebra maps give rise to relationships between polynomials. The following identities for the  Tutte polynomial of matroid perspectives first appeared in \cite{Las80}.
\begin{corollary}\label{s2.t1.c2}
Let $\mathcal{H}^{mp}$ be the Hopf algebra of matroid perspectives from Definition~\ref{d.hmp}, and $\mathcal{H}^{m}$ be the Hopf algebra of matroids  from Definition~\ref{d.hm}. Then the following hold.
\begin{enumerate}
\item The  inclusion $\phi_1: \mathcal{H}^{m} \rightarrow \mathcal{H}^{mp}$ defined by $ \phi_1 (M)= (M\rightarrow M)$ is a Hopf algebra morphism.    Furthermore it naturally induces the  identity $T_M(x,y) = T_{M\rightarrow M}(x,y,z)$.
\item The  projection $\phi_2: \mathcal{H}^{mp} \rightarrow \mathcal{H}^{m}$ defined by $ \phi_2 (M\rightarrow M')=M$ is a Hopf algebra morphism.    Furthermore it naturally induces the  identity $T_M(x,y) = T_{M\rightarrow M'}(x,y,x-1)$.
\item The  projection $\phi_3: \mathcal{H}^{mp} \rightarrow \mathcal{H}^{m}$ defined by $ \phi_3 (M\rightarrow M')=M'$ is a Hopf algebra morphism.    Furthermore it naturally induces the  identity $T_{M'}(x,y) =  (y-1)^{r(M)-r(M')} T_{M\rightarrow M'}(x,y,1/(y-1))$.
\end{enumerate}
\end{corollary}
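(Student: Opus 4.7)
The plan is to prove each of the three parts by the same template: (i) check that $\phi_i$ is a Hopf algebra morphism; (ii) compute how $\phi_i$ pulls back the basis selectors; and (iii) invoke Theorem~\ref{tgen}, together with Corollaries~\ref{t.mat.c1} and~\ref{s2.t1.c1}, to extract the stated Tutte polynomial identity.

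Step~(i) is routine in all three cases. Preservation of units, counits, and direct-sum multiplication is immediate from the definitions. Compatibility with the coproducts follows from the fact that matroid perspective deletion and contraction act coordinatewise, i.e.\ $(M\to M')\backslash A = (M\backslash A\to M'\backslash A)$ and $(M\to M')/A = (M/A\to M'/A)$. In each of the three cases a single line of computation then gives $\Delta\circ\phi_i = (\phi_i\otimes\phi_i)\circ\Delta$.

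Step~(ii) amounts to evaluating $\phi_i$ on the three generators $U_{1,1}\to U_{1,1}$, $U_{0,1}\to U_{0,1}$, $U_{1,1}\to U_{0,1}$ of $\mathcal{H}^{mp}_1$ and on the two generators $U_{1,1}$, $U_{0,1}$ of $\mathcal{H}^{m}_1$. This yields $\delta_{cc}\circ\phi_1=\delta_c$, $\delta_{ll}\circ\phi_1=\delta_l$, $\delta_{cl}\circ\phi_1=0$ for part~(1); $\delta_c\circ\phi_2=\delta_{cc}+\delta_{cl}$ and $\delta_l\circ\phi_2=\delta_{ll}$ for part~(2); and $\delta_c\circ\phi_3=\delta_{cc}$ and $\delta_l\circ\phi_3=\delta_{ll}+\delta_{cl}$ for part~(3).

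In step~(iii), Theorem~\ref{tgen} matches the $\alpha$-polynomials of the two Hopf algebras once the selectors are aligned. Substituting $\mathbf{a}=(1,y-1)$, $\mathbf{b}=(x-1,1)$ on the $\mathcal{H}^m$ side and translating through $\phi_i$ gives specialisations on the $\mathcal{H}^{mp}$ side that, via Corollary~\ref{s2.t1.c1}, can be matched against $T_{M\to M'}(x,y,z)$ for suitable $z$. For parts~(1) and~(2) the induced parameters fit the normalised form $(1,y-1,1,x-1,1,z)$ of Corollary~\ref{s2.t1.c1} directly: part~(1) holds for every $z$ since the $z$-exponent $r(M)-r'(M')$ vanishes on the image of $\phi_1$, and part~(2) is the case $z=x-1$. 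The only mildly delicate point is part~(3), where the pullback forces $\mathbf{a}=(1,y-1,y-1)$ and $\mathbf{b}=(x-1,1,1)$, which is not yet the normalised form. Applying Theorem~\ref{s2.t1} directly at these values gives $\alpha(1,y-1,y-1,x-1,1,1)(M\to M') = (y-1)^{r(M)-r'(M')}\,T_{M\to M'}\!\left(x,y,\tfrac{1}{y-1}\right)$, which yields the claimed prefactor. I expect this bookkeeping of variables in part~(3) to be the only nontrivial step; everything else is a formal application of the general machinery already established.
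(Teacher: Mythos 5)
Your proposal is correct and follows essentially the same route as the paper: verify that each $\phi_i$ is a Hopf algebra morphism, align the selectors by pulling back through $\phi_i$ (the paper records exactly your identifications, e.g.\ $\delta_{\mathcal{H}^{m}}(x_1,x_2)=\delta_{\mathcal{H}^{mp}}(x_1,x_2,x_1)\circ\phi_2$ and $\delta_{\mathcal{H}^{m}}(x_1,x_2)=\delta_{\mathcal{H}^{mp}}(x_1,x_2,x_2)\circ\phi_3$), and then apply Theorem~\ref{tgen} together with Theorems~\ref{t.mat} and~\ref{s2.t1} to read off the three identities, including the $(y-1)^{r(M)-r'(M')}$ prefactor and $z\mapsto 1/(y-1)$ in part~(3). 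The only cosmetic difference is that you specialise the variables early and phrase the comparison via Corollaries~\ref{t.mat.c1} and~\ref{s2.t1.c1}, whereas the paper compares the fully general parameter forms before specialising; the content is the same.
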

\begin{proof}
It is readily verified that each of the three maps is a Hopf algebra morphism. To obtain the polynomial identities we apply Theorem~\ref{tgen} to Theorems~\ref{t.mat} and~\ref{s2.t1}.

For $\phi_1$,  in Theorem~\ref{tgen} let $\mathcal{H}=\mathcal{H}^{m}$,  $\mathcal{H'}=\mathcal{H}^{mp}$,
 $\delta_{\mathcal{H}}$ be the selector used in Theorem~\ref{t.mat}, and 
 $\delta_{\mathcal{H'}}$ be the selector used in Theorem~\ref{s2.t1}. 
 We then have $  \delta_{\mathcal{H}}( x_1,x_2) ( M )  =  \delta_{\mathcal{H}}( x_1,x_2,x_3) ( M\rightarrow M) $. By Theorem~\ref{tgen}, it follows that 
 \[ \alpha_{\mathcal{H}}(  x_1,x_2, y_1,y_2) ( M)  =   \alpha_{\mathcal{H}'}( x_1,x_2, x_3,y_1,y_2,y_3)   ( M\rightarrow M) .\]
 Theorems~\ref{t.mat} and~\ref{s2.t1} give 
 \[  
  x_1^{r(M)} y_2^{|E|-r(M)}    T_{M}\left(  \frac{y_1}{x_1}+1, \frac{x_2}{y_2}+1  \right)
=
  x_1^{r(M)} y_2^{|E|-r(M)} x_3^{r(M)-r(M)}     T_{M\rightarrow M}\left( \frac{y_1}{x_1}+1, \frac{x_2}{y_2}+1, \frac{y_3}{x_3}  \right) 
      \] 
 from which the result follows.

For $\phi_2$, in Theorem~\ref{tgen} let $\mathcal{H}=\mathcal{H}^{mp}$,  $\mathcal{H'}=\mathcal{H}^{m}$,
 $\delta_{\mathcal{H}}$ be the selector used in Theorem~\ref{s2.t1}, and 
 $\delta_{\mathcal{H'}}$ be the selector used in Theorem~\ref{t.mat}. 
 We then have $  \delta_{\mathcal{H}}( x_1,x_2,x_1) ( M\rightarrow M' )  =  \delta_{\mathcal{H}}(x_1,x_2) ( M) $. By Theorem~\ref{tgen}, it follows that 
$\alpha_{\mathcal{H}}(  x_1,x_2,x_1, y_1,y_2,y_1) ( M\rightarrow M' )  =   \alpha_{\mathcal{H}'}( x_1,x_2, y_1,y_2)   ( M) $.
 
 Theorems~\ref{t.mat} and~\ref{s2.t1} give 
 \[  
 x_1^{r'(M')} y_2^{|E|-r(M)} x_1^{r(M)-r'(M')}     T_{M\rightarrow M' }\left( \frac{y_1}{x_1}+1, \frac{x_2}{y_2}+1, \frac{y_1}{x_1}  \right) 
 =
 x_1^{r(M)} y_2^{|E|-r(M)}    T_{M}\left(  \frac{y_1}{x_1}+1, \frac{x_2}{y_2}+1  \right), 
    \] 
 from which the result follows.

 The argument for $\phi_3$ is similar. In Theorem~\ref{tgen} let $\mathcal{H}=\mathcal{H}^{mp}$,  $\mathcal{H'}=\mathcal{H}^{m}$,
 $\delta_{\mathcal{H}}$ be the selector used in Theorem~\ref{s2.t1}, and 
 $\delta_{\mathcal{H'}}$ be the selector used in Theorem~\ref{t.mat}. 
 We then have $  \delta_{\mathcal{H}}( x_1,x_2,x_2) ( M\rightarrow M' )  =  \delta_{\mathcal{H}}( x_1,x_2) ( M') $. By Theorem~\ref{tgen}, it follows that 
$ \alpha_{\mathcal{H}}(  x_1,x_2,x_2, y_1,y_2,y_2) ( M\rightarrow M' )  =   \alpha_{\mathcal{H}'}( x_1,x_2, y_1,y_2)   ( M') $.
 Theorems~\ref{t.mat} and~\ref{s2.t1} give 
 \[  
 x_1^{r'(M')} y_2^{|E|-r(M)} x_2^{r(M)-r'(M')}     T_{M\rightarrow M' }\left( \frac{y_1}{x_1}+1, \frac{x_2}{y_2}+1, \frac{y_2}{x_2}  \right) 
 =
 x_1^{r(M')} y_2^{|E|-r(M')}    T_{M'}\left(  \frac{y_1}{x_1}+1, \frac{x_2}{y_2}+1  \right), 
    \] 
and the result follows.
\end{proof}

\subsection{Las Vergnas' topological Tutte polynomial}\label{ssec.lv}

 Here a \emph{graph in a pseudo-surface},  $G\subset \Sigma$, consists of a graph $G=(V,E)$ and a drawing of $G$ on a pseudo-surface $\Sigma$  (i.e., a surface with pinch points,  also known as a pinched surface) such that the edges only intersect at their ends and such that any pinch points are vertices of the graph. 
The components of $\Sigma \backslash G$ are called the {\em regions} of $G$, and  $G\subset \Sigma$ is a \emph{cellularly embedded graph} if $\Sigma$ is a surface (so there are no pinch points) and each of its regions is homeomorphic to a disc. 

We define  
\begin{equation}\label{LVeq3}
\kappa(A):= \#\mathrm{cpts}(\Sigma  \backslash (V \cup  A))  - \#\mathrm{cpts}(\Sigma  \backslash V).
\end{equation} 
Let $G\subset \Sigma$ be a graph in a pseudo-surface, and $e\in E(G)$. Then we say that $e$ is a {\em quasi-loop} if  $\kappa(e)=1$,  a {\em quasi-bridge} if it is adjacent to exactly one region of $G\subset \Sigma$, and a \emph{bridge} (respectively, \emph{loop}) if it is a bridge (respectively,  loop) of the underlying graph $G$.
Note that a quasi-loop is necessarily a loop; a bridge is necessarily a quasi-bridge; and a quasi-bridge could be a loop, a bridge, or neither. 
If $e\in E(G)$ then $G\backslash  e\subset \Sigma$ is the  graph in a pseudo-surface obtained by removing the edge $e$ from the drawing of $G\subset \Sigma$ (without removing the  points of $e$ from $\Sigma$, or its incident vertices).    Edge contraction is defined by forming a quotient space of the surface: $G/e \subset \Sigma/e$ is the graph in a pseudo-surface obtained by identifying the edge $e$ to a point. This point becomes a vertex of $G/e$.  If $e$ is a loop, then contraction can create pinch points with the new vertex lying on it (see Figure~\ref{cont.a}--\ref{cont.c}).

\begin{figure}
\centering
\subfigure[$G\subset \Sigma$. ]{
\includegraphics[height=16mm]{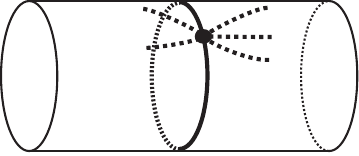}
\label{cont.a}
}
\hspace{1cm}
\subfigure[$G/e\subset \Sigma/e$.]{
\includegraphics[height=16mm]{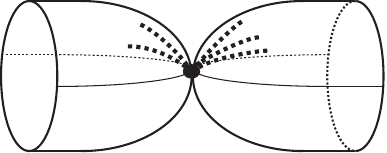}
\label{cont.c}
}
\hspace{1cm}
\subfigure[Resolving the pinch point.]{
\includegraphics[height=16mm]{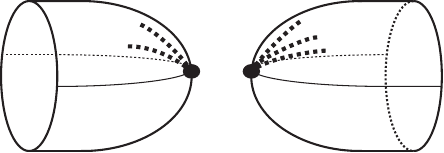}
\label{cont.b}
}
\caption{Actions on graphs in  pseudo-surfaces.}
\label{cont}
\end{figure}

The \emph{dual}, $G^*$, of a graph in a pseudo-surface $G \subset \Sigma$ is the abstract graph with vertex set corresponding to the regions of $\Sigma \backslash G$ and an edge between  (not necessarily distinct) vertices whenever the corresponding regions share an edge of $G$ on their boundaries. There is natural identification between the edges of $G^{*}$ and $G$.

The {\em cycle matroid}, $C(G)$, of $G \subset \Sigma$ is the cycle matroid of its underlying graph.
 Its {\em bond matroid} is   $B(G):=(C(G))^*$. It is worth emphasising that although when $G$ is a plane graph  $ B(G^*) =C(G)$, this   {\em does not} hold, in general, for non-plane graphs. 

When $G\subset \Sigma$ is a graph  in a pseudo-surface
$ (B(G^{*}) \rightarrow C(G)) $ is a matroid perspective (see \cite{EMMlv,Las78}).
Its {\em Las~Vergnas polynomial}, $L_{G\subset \Sigma}$, is then defined by
\begin{equation} \label{LVeq}
L_{G\subset \Sigma}(x,y,z)   :=   T_ {(B(G^{*}) \rightarrow C(G))}  (x,y,z).
 \end{equation}
In \cite{EMMlv} it was shown that when $G\subset \Sigma$ is a graph in the  pseudo-surface and $A\subseteq E(G)$, then 
\begin{equation}\label{LVeq2}
r_{B(G^*)}(A)= |A|-\kappa(A).
\end{equation}
Then writing $\kappa(G)$ for $\kappa(E)$ and using \eqref{LVeq2}, 
\begin{equation} \label{LVeq4}
L_{G\subset \Sigma}(x,y,z)   =  \sum_{A \subseteq E} (x-1)^{r(G)-c(A)}(y-1)^{\kappa(A)}z^{[n(G)-\kappa(G)]-[n(A)-\kappa(A)]}.
 \end{equation}

We will  show that the Las~Vergnas polynomial is the canonical Tutte polynomial  associated with graphs in pseudo-surfaces and their minors. We will state the result before  describing the relevant Hopf algebra $\mathcal{H}_{ps}$, and selectors.

\begin{theorem}\label{s2.c1}
The Las Vergnas polynomial is the canonical Tutte polynomial of the  Hopf algebra $\mathcal{H}_{ps}$ associated with pseudo-surface minors: 
 \begin{equation}
 \alpha( \mathbf{a}, \mathbf{b}) ( G )  =   x_1^{r(G)} y_2^{\kappa(G)} x_3^{n(G)-\kappa(G)}     L_{G\subset \Sigma}( \tfrac{y_1}{x_1}+1, \tfrac{x_2}{y_2}+1, \tfrac{y_3}{x_3}  ) ,
 \end{equation}
where $\mathbf{a} = ( x_1,x_2,x_3)$, $\mathbf{b}  =(y_1,y_2,y_3)$. 
\end{theorem}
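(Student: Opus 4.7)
The plan is to reduce the result to the matroid perspective case (Theorem~\ref{s2.t1}) via a Hopf algebra morphism, using the universal map \(\phi\colon \mathcal{H}^{ps}\to \mathcal{H}^{mp}\) defined on generators by
\[\phi(G\subset\Sigma):=B(G^{*})\to C(G),\]
and then to invoke Theorem~\ref{tgen}. First I would describe \(\mathcal{H}^{ps}\) as the combinatorial Hopf algebra arising from the minors system of graphs in pseudo-surfaces (grading by edge count, deletion by removal of the edge from the drawing, and contraction by topological quotient as in Figure~\ref{cont}, multiplication by disjoint union after identifying along a pinch-free vertex), and note that $\mathcal{H}^{ps}_{1}$ contains three ``essential'' types of one-edge graphs: bridges, quasi-loops, and loops that are not quasi-loops.

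Next I would verify that \(\phi\) is a bialgebra morphism. Multiplicativity is immediate since disjoint union of embedded graphs yields the direct sum of both cycle and bond matroids. The compatibility with the coproduct reduces to the two identities
\[C(G\del e)=C(G)\del e,\quad C(G\con e)=C(G)\con e,\quad B\bigl((G\del e)^{*}\bigr)=B(G^{*})\con e,\quad B\bigl((G\con e)^{*}\bigr)=B(G^{*})\del e,\]
which follow from the standard minor identities for the cycle matroid together with the topological duality relations \((G\con e)^{*}=G^{*}\del e\) and \((G\del e)^{*}=G^{*}\con e\) from \cite{EMMlv}, combined with the general matroid fact \(B(M)\del A=B(M\con A)\) and \(B(M)\con A=B(M\del A)\). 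This step (and the setup of \(\mathcal{H}^{ps}\) itself, particularly checking that contraction in pseudo-surfaces is associative and commutative with deletion) is the main obstacle: one has to be careful at pinch points produced by contracting quasi-loops.

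Once \(\phi\) is established, I would define the selector on \(\mathcal{H}^{ps}\) by \(\delta_{\mathbf{a}}^{ps}:=x_{1}\delta_{b}+x_{2}\delta_{ql}+x_{3}\delta_{\ell}\), where \(\delta_{b}\), \(\delta_{ql}\), \(\delta_{\ell}\) evaluate to \(1\) on bridges, quasi-loops, and non-quasi-loop loops respectively (extended by linearity and \(0\) on everything else in grade \(1\)). Using the classification worked out via \eqref{LVeq2}, one checks case-by-case that a bridge has \(\phi\)-image \(U_{1,1}\to U_{1,1}\); a quasi-loop has image \(U_{0,1}\to U_{0,1}\); and a non-quasi-loop loop has image \(U_{1,1}\to U_{0,1}\). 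Comparing with \eqref{e.demp} yields \(\delta_{\mathbf{a}}^{ps}=\delta_{\mathbf{a}}^{mp}\circ\phi\), and similarly for \(\delta_{\mathbf{b}}\).

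Finally, applying Theorem~\ref{tgen} gives \(\alpha_{\mathcal{H}^{ps}}(\mathbf{a},\mathbf{b})=\alpha_{\mathcal{H}^{mp}}(\mathbf{a},\mathbf{b})\circ\phi\), and Theorem~\ref{s2.t1} evaluates the right-hand side explicitly in terms of \(T_{B(G^{*})\to C(G)}=L_{G\subset\Sigma}\). The exponents translate via \(r'(C(G))=r(G)\), \(r(B(G^{*}))=|E|-\kappa(G)\) (from \eqref{LVeq2}), so that \(|E|-r(B(G^{*}))=\kappa(G)\) and \(r(B(G^{*}))-r'(C(G))=n(G)-\kappa(G)\), producing the claimed formula. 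Uniformity of \(\delta_{\mathbf{a}}^{ps}\) is inherited from uniformity of \(\delta_{\mathbf{a}}^{mp}\) via the second conclusion of Theorem~\ref{tgen}, so \(\alpha_{\mathcal{H}^{ps}}(\mathbf{a},\mathbf{b})\) is genuinely the canonical Tutte polynomial of the minors system.
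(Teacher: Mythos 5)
Your proposal is correct and follows essentially the same route as the paper: the morphism $\phi\colon \mathcal{H}^{ps}\to\mathcal{H}^{mp}$, $G\subset\Sigma\mapsto (B(G^{*})\to C(G))$ (whose compatibility with pseudo-surface deletion and contraction the paper also takes from \cite{EMMlv}), the matching of the three grade-one classes with $U_{1,1}\to U_{1,1}$, $U_{0,1}\to U_{0,1}$, $U_{1,1}\to U_{0,1}$, and then Theorem~\ref{tgen} combined with Theorem~\ref{s2.t1}. Your explicit case-check of the one-edge classes and the rank translations $r'(C(G))=r(G)$, $r(B(G^{*}))=|E|-\kappa(G)$ are precisely the verifications the paper leaves to the reader.
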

We will prove the theorem after describing the Hopf algebra $\mathcal{H}^{ps}$.

There are  infinitely many edgeless graphs in pseudo-surfaces so, as with graphs, we need to consider a  quotient space of graphs in pseudo-surfaces.

\begin{definition}\label{s2.d2}

We will say that two graphs in pseudo-surfaces  $G_1\subset \Sigma_1$ and  $G_k\subset \Sigma_k$ are  \emph{LV-equivalent} if there is a sequence of graphs in pseudo-surfaces  $G_1\subset \Sigma_1, G_2\subset \Sigma_2, \ldots, G_k\subset \Sigma_k$ such that $G_i\subset \Sigma_i$ is obtained from  $G_{i-1}\subset \Sigma_{i-1}$, or vice versa, by one of the following moves. 
\begin{enumerate}
\item Deleting an isolated vertex, that is not a pinch point, from a graph.
\item Deleting a component of the pseudo-surface that contains no edges of the graph. 
\item Connect summing two pseudo-surface components (away from any graph components), or identifying a vertex in each of them to form a pinch point. 
\item Replacing a region, with another pseudo-surface with boundary (so that it forms a region of a new graph in a pseudo-surface).
\end{enumerate}
\end{definition}
It is clear that LV-equivalence gives rise to an equivalence relation, and we let $\mathcal{G}^{ps}$ denote the set of all equivalence classes of graphs in pseudo-surfaces considered up to LV-equivalence.  
It is easily seen that $\mathcal{G}^{ps}$ forms a minor system: 
\begin{lemma}\label{s2.l4}
The set  $\mathcal{G}^{ps}$ forms a minor system where the grading is given by the cardinality of the edge set, deletion and contraction are given by pseudo-surface deletion and contraction, and multiplication is given by disjoint union. 
\end{lemma}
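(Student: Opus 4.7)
The plan is to verify in turn the two items of Definition~\ref{d.1} together with the multiplicative compatibility conditions in Proposition~\ref{p.1}. First I would establish the grading: by construction the edge set is preserved by every move in Definition~\ref{s2.d2} (each move only alters isolated vertices, empty surface components, pinch points, or the interior of a region, none of which changes $E(G)$), so $|E(\cdot)|$ descends to a well-defined grading on $\mathcal{G}^{ps}$. For the unique element in grade $0$, I would observe that move~(1) collapses isolated non-pinch vertices, move~(2) deletes edgeless surface components, and moves~(3)--(4) allow us to identify and reshape any remaining surface pieces; together these identify every edgeless graph in a pseudo-surface with a single equivalence class, which serves as $1\in \mathcal{G}^{ps}_0$.

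The heart of the proof is showing that edge deletion $\del$ and edge contraction $\con$, as defined on representatives $G\subset\Sigma$, descend to well-defined operations on $\mathcal{G}^{ps}$. The plan is to check this one move at a time from Definition~\ref{s2.d2}: for each move taking $G\subset\Sigma$ to $G'\subset\Sigma'$, and each edge $e\in E(G)=E(G')$, I would exhibit a (possibly longer) sequence of LV-moves from $G\del e\subset\Sigma$ to $G'\del e\subset\Sigma'$, and similarly for contraction. Most cases are immediate because the move acts on a part of $\Sigma$ disjoint from $e$; the only delicate case is move~(4), where the region being replaced may be incident to $e$. Here contracting $e$ may merge or split the affected region, possibly creating a pinch point, but the resulting ambiguity in the shape of the new region is again absorbed by an instance of move~(4).

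Next I would verify the three commutation identities
\[ (G\del e)\del f = (G\del f)\del e,\quad (G\con e)\del f = (G\del f)\con e,\quad (G\con e)\con f=(G\con f)\con e.\]
The first holds at the level of representatives because edge deletion simply erases edges from the drawing. The third is the well-known fact that on graphs in pseudo-surfaces the quotient maps identifying $e$ and $f$ commute up to homeomorphism rel.\ graph structure. The mixed relation follows because deleting $f$ and then contracting $e$ produces the same quotient space and residual edge set as contracting $e$ and then deleting $f$.

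Finally I would verify the multiplicative conditions in Proposition~\ref{p.1}. Disjoint union $G_1\sqcup G_2\subset \Sigma_1\sqcup\Sigma_2$ is clearly well-defined on LV-equivalence classes, the empty graph in the empty pseudo-surface is a unit and lies in $\mathcal{G}^{ps}_0$, and $E(G_1\sqcup G_2) = E(G_1)\sqcup E(G_2)$. The compatibility of disjoint union with $\del$ and $\con$ is immediate, since each minor operation acts only on the component containing the chosen edge. The main obstacle, as noted, is the well-definedness of contraction under move~(4); once that is handled, the remaining verifications are routine.
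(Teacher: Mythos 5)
Your proposal is correct and follows exactly the routine verification the paper has in mind: the paper gives no argument for Lemma~\ref{s2.l4}, asserting it is ``easily seen,'' and your plan (grading by $|E|$, identification of all edgeless objects in grade $0$, well-definedness of $\del$ and $\con$ on LV-classes move by move with move~(4) as the only delicate case, the three commutation identities, and compatibility with disjoint union) is precisely the omitted check. No discrepancy to report.
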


\begin{definition}\label{s2.d3}
We let  $\mathcal{H}^{ps}$  denote the  Hopf algebra associated with $\mathcal{G}^{ps}$ via  Lemma~\ref{s2.l4} and Proposition~\ref{p.1}. 
Its coproduct is given by 
 $
 \Delta_{ps}( G \subset \Sigma)=\sum_{A\subseteq E} (G\backslash  A^c\subset \Sigma)  \otimes  (G/A \subset \Sigma/A)
$,
 where  pseudo-surface deletion and contraction are used.
\end{definition}

\begin{lemma}\label{s2.l5}
There is a natural Hopf algebra morphism $\phi: \mathcal{H}^{ps}\rightarrow \mathcal{H}^{mp}$ given by $\phi:G\rightarrow \mathbf{M}(G)$, where $\mathbf{M}(G)$ is the matroid perspective $ (B(G^{*}) \rightarrow C(G)) $.
\end{lemma}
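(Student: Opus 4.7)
The plan is to verify in turn the four things required to call $\phi$ a Hopf algebra morphism: well-definedness on LV-equivalence classes, multiplicativity, compatibility with the coproduct, and preservation of unit and counit. Since $\phi$ is a composition of the two standard assignments $G\mapsto G^{*}$, then $G^{*}\mapsto B(G^{*})$, paired with $G\mapsto C(G)$, almost everything will be pushed onto known matroid identities. I expect the only genuine content is checking that each of the four LV-equivalence moves of Definition~\ref{s2.d2} preserves the pair $(B(G^{*}), C(G))$; once this is done, the rest of the argument proceeds exactly as in Lemma~\ref{t.gr.l1}.

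To check well-definedness I would use the rank formula $r_{B(G^{*})}(A)=|A|-\kappa(A)$ from~\eqref{LVeq2}, so that it suffices to show each LV move preserves $\kappa$ (and leaves $C(G)$ unchanged, which is clear because $C(G)$ depends only on the underlying graph and is additive under disjoint union and under 1-sums). For move~(1), removing an isolated, non-pinch-point vertex removes a single interior point of some region, changing neither $\#\mathrm{cpts}(\Sigma\backslash V)$ nor $\#\mathrm{cpts}(\Sigma\backslash(V\cup A))$. For move~(2), discarding an edgeless component of $\Sigma$ decreases both counts by the same amount. For move~(3), a connect sum away from the graph merges exactly one component of $\Sigma\backslash V$ with one component of the other pseudo-surface, and the same happens for $\Sigma\backslash(V\cup A)$, so the difference $\kappa(A)$ is unaffected; identifying two vertices to a pinch point does not change the point-set complement of $V$ at all. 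Finally, move~(4) replaces a region, and since a region is disjoint from $E(G)$, the change in $\#\mathrm{cpts}(\Sigma\backslash V)$ equals the change in $\#\mathrm{cpts}(\Sigma\backslash(V\cup A))$ for every $A$, so it cancels in $\kappa(A)$. Hence $\mathbf{M}(G)$ is an LV-invariant, and $\phi$ descends to $\mathcal{H}^{ps}$.

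Multiplicativity is immediate from $C(G_1\sqcup G_2)=C(G_1)\oplus C(G_2)$ and $(G_1\sqcup G_2)^{*}=G_1^{*}\sqcup G_2^{*}$, the latter giving $B((G_1\sqcup G_2)^{*})=B(G_1^{*})\oplus B(G_2^{*})$; preservation of the unit and counit is automatic since the only graded-zero element maps to the empty matroid perspective. The key step for compatibility with the coproduct is to show that for each $A\subseteq E(G)$,
\[
\mathbf{M}(G\del A^c)=\mathbf{M}(G)\del A^c,\qquad \mathbf{M}(G\con A)=\mathbf{M}(G)\con A.
\]
The cycle matroid side is the standard identity $C(G\del A)=C(G)\del A$ and $C(G\con A)=C(G)\con A$. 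For the bond matroid side I would use the pseudo-surface duality identities $(G\del A)^{*}=G^{*}\con A$ and $(G\con A)^{*}=G^{*}\del A$, together with the matroid duality $(M\con A)^{*}=M^{*}\del A$, which combine to
\[
B((G\del A)^{*})=C(G^{*}\con A)^{*}=(C(G^{*})\con A)^{*}=B(G^{*})\del A,
\]
and analogously $B((G\con A)^{*})=B(G^{*})\con A$. Summing over $A\subseteq E$ gives $\phi^{\otimes 2}\circ \Delta_{ps}=\Delta_{mp}\circ \phi$, which completes the verification. The only delicate point in the whole argument is the invariance under move~(4) of Definition~\ref{s2.d2}, and even there the cancellation in $\kappa(A)$ is forced by the fact that the replacement is carried out entirely inside a region and so cannot interact with the edge set $A$.
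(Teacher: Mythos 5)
Your proof is correct, but it reaches the two substantive conclusions by a genuinely different route from the paper. For well-definedness the paper argues that each LV-move preserves, up to isolated vertices, the maximal $2$-connected components of both the underlying graph and the dual graph, so that $C(G)$ and $C(G^*)$ (hence $B(G^*)$) are unchanged; you instead reduce everything to the rank formula $r_{B(G^{*})}(A)=|A|-\kappa(A)$ of \eqref{LVeq2} and check move-by-move that $\kappa$ and $C(G)$ are invariant. Your version is more computational and handles the connect-sum/pinch-point move particularly transparently, though at move (4) the justification ``a region is disjoint from $E(G)$'' should be expanded a little: the replacement piece is connected (it forms a single region) and is glued along the same boundary circles as the old one, so for every $A$ both counts entering \eqref{LVeq3} are literally unchanged, not merely changed by equal amounts. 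For compatibility with the coproduct the paper simply cites Section~4.2 of \cite{EMMlv} for $\mathbf{M}(G\backslash A\subset\Sigma)=\mathbf{M}(G\subset\Sigma)\backslash A$ and $\mathbf{M}(G/A\subset\Sigma/A)=\mathbf{M}(G\subset\Sigma)/A$, whereas you derive these facts from the pseudo-surface duality identities $(G\backslash e)^{*}=G^{*}/e$ and $(G/e)^{*}=G^{*}\backslash e$ combined with matroid duality $(M/A)^{*}=M^{*}\backslash A$. That derivation is valid with the paper's conventions (deleting $e$ merges exactly the regions incident with $e$, and collapsing $e$ restricts to a homeomorphism off the edge, so the regions are unchanged), and it makes the lemma self-contained apart from \eqref{LVeq2}; but note that these duality identities are precisely the content of the result the paper cites, so in a final write-up you should either include the short topological verification just indicated or cite \cite{EMMlv} for them explicitly. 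What your approach buys is independence from the $2$-connectivity argument and an explicit mechanism (the $\kappa$ function) that tracks why $B(G^*)$ is an LV-invariant; what the paper's approach buys is brevity, by delegating exactly the minor-compatibility statements to the earlier reference.
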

\begin{proof}
First observe that if  $G\subset \Sigma$ and $H\subset \Sigma'$ are related by LV-equivalence then, up to the numbers of isolated vertices, their underlying graphs $G$ and $H$, and also the dual graphs $G^*$ and $H^*$, have the same  maximal 2-connected components.  Thus $M(G)=M(H)$, and  $M(G^*)=M(H^*)$. It follows that  $\mathbf{M}(G)=\mathbf{M}(H)$, and so $\phi$ is well-defined. 
It is easily seen that $\phi$ is multiplicative, and sends the (co)unit to the (co)unit. For the coproduct, in Section~4.2 of \cite{EMMlv} it was shown that $\mathbf{M}(G\subset \Sigma)/A=\mathbf{M}(G/A \subset \Sigma/A )$ and  $\mathbf{M}(G\subset \Sigma)\backslash A=\mathbf{M}(G\backslash A \subset \Sigma )$. Using this  we have 
$\mathbf{M}(\Delta_{ps}(G\subset \Sigma))  
 \sum_{A\subseteq E} \mathbf{M}(G\backslash  A^c\subset \Sigma)  \otimes  \mathbf{M}(G/A \subset \Sigma/A)  
=  \sum_{A\subseteq E} \mathbf{M}(G\subset \Sigma)\backslash  A^c \otimes  \mathbf{M}(G \subset \Sigma)/A 
=\Delta_{ps}( \mathbf{M}(G\subset \Sigma) )$.
\end{proof}

We now determine the Tutte polynomial of the Hopf algebra $\mathcal{H}^{ps}$ .

\begin{lemma}\label{s2.l3}
$\mathcal{G}^{ps}_1$  has a basis consisting of exactly three elements represented by 
\begin{enumerate}
\item a 1-path in the sphere,
\item a loop in the sphere,
\item a loop that forms the meridian of a torus.
\end{enumerate}
\end{lemma}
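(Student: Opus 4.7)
The plan is to prove the lemma in two parts: (a) the three listed graphs lie in distinct LV-equivalence classes, and (b) every one-edge graph in a pseudo-surface is LV-equivalent to one of them.

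For part (a), I would exploit the Hopf algebra morphism $\phi: \mathcal{H}^{ps}\rightarrow \mathcal{H}^{mp}$ from Lemma~\ref{s2.l5}. Since $\phi$ is well-defined on LV-equivalence classes, distinctness reduces to computing the matroid perspective $\mathbf{M}(G) = (B(G^*) \rightarrow C(G))$ for each candidate. For the 1-path in the sphere, $e$ is a bridge so $C(G) = U_{1,1}$, and since the complement is a single disc, $G^*$ is a single vertex with a loop, giving $C(G^*) = U_{0,1}$ and $B(G^*) = U_{1,1}$; thus $\mathbf{M}(G) = U_{1,1}\rightarrow U_{1,1}$. For the loop in the sphere, $C(G) = U_{0,1}$ and the two disc regions yield $G^*$ as a 1-path, giving $B(G^*) = U_{0,1}$, so $\mathbf{M}(G) = U_{0,1}\rightarrow U_{0,1}$. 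For the meridian of the torus, $C(G) = U_{0,1}$, and since the complement is a cylinder, $G^*$ is one vertex with a loop, giving $B(G^*) = U_{1,1}$, so $\mathbf{M}(G) = U_{1,1}\rightarrow U_{0,1}$. These are the three distinct elements of $\mathcal{H}^{mp}_1$, so the three graphs represent distinct LV-classes.

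For part (b), take arbitrary $G\subset \Sigma$ with a single edge $e$. First apply move~(1) to remove all isolated non-pinch vertices, the reverse of move~(3) to split any pinch point whose resolution disconnects $\Sigma$, and move~(2) to discard the resulting edge-free components. After this cleanup, the only vertices are the endpoints of $e$. Now proceed by cases: if $e$ is not a loop, $G$ is a 1-path; the single region adjacent to $e$ has boundary a closed walk traversing $e$ twice, and move~(4) replaces it with a disc, producing the 1-path in the sphere. If $e$ is a loop whose two sides lie in distinct regions, apply move~(4) to each to replace it with a disc, producing the loop in the sphere. If $e$ is a loop with both sides in the same region, that region has boundary consisting of two copies of $e$, and move~(4) replaces it with a cylinder, producing the meridian in the torus.

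The main obstacle will be verifying carefully that the cleanup moves suffice to reduce an arbitrary pseudo-surface so that move~(4) yields one of the three canonical embeddings; in particular, exotic pinch-point configurations whose resolution does not disconnect $\Sigma$ require care. However, part (a) already shows there are at most three distinct LV-classes hitting each of the three matroid perspectives in the image, so the case analysis in (b) need only verify that each matroid perspective has a unique LV representative among the three listed, which is exactly what the move~(4) reductions above accomplish.
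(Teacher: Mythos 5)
Your part (a) is correct and is a worthwhile addition: the three representatives map under the morphism $\phi$ of Lemma~\ref{s2.l5} (which precedes this lemma, so there is no circularity) to the three distinct one-element matroid perspectives $U_{1,1}\rightarrow U_{1,1}$, $U_{0,1}\rightarrow U_{0,1}$ and $U_{1,1}\rightarrow U_{0,1}$, so they lie in distinct LV-classes; the paper leaves this implicit. The substance of the lemma, however, is the reduction in part (b), and there your argument has a genuine gap. Your cleanup only invokes the reverse of move (3) of Definition~\ref{s2.d2}, which undoes a pinch point formed by identifying vertices of \emph{two different} pseudo-surface components. It therefore cannot remove a pinch point identifying two points of the same component (a pinched torus, say), nor a configuration in which pinch points chain components so that no single resolution disconnects (e.g.\ two spheres glued at both endpoints of the edge $e$). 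In such cases your claim that ``after this cleanup the only vertices are the endpoints of $e$'' is either false, or the surviving pinch points are the endpoints of $e$ and extra sheets and regions remain attached there, so one application of move (4) to ``the region adjacent to $e$'' does not yield the sphere or torus models. The paper's proof avoids this by first resolving \emph{every} pinch point (as in Figure~\ref{cont}), then deleting isolated vertices and empty components, and only then running a case analysis by bridge / quasi-loop / quasi-bridge. (A smaller point: in your third case the loop may have a M\"obius-band neighbourhood, e.g.\ a loop in the projective plane, where the unique region meets $e$ twice along a single boundary walk; ``replace it with a cylinder'' needs separate justification there.)

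The attempted repair at the end does not close this gap: part (a) only shows that the three listed classes are pairwise distinct, i.e.\ it gives a \emph{lower} bound on the number of classes in $\mathcal{G}^{ps}_1$. To deduce an upper bound from the images under $\phi$ you would need $\phi$ to be injective on one-edge LV-classes, which is not known and is essentially equivalent to what you are trying to prove; two inequivalent one-edge embeddings could a priori share the same matroid perspective. So the statement that every one-edge graph in a pseudo-surface is LV-equivalent to one of the three representatives must come entirely from the reduction argument itself, exactly as in the paper's proof, and that reduction is where your treatment of pinch points is incomplete.
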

\begin{proof}
Let $G\subset \Sigma$ be a graph in a pseudo-surface with exactly one edge $e$. This edge is either a bridge, a loop that is a quasi-loop, or a loop that is a quasi-bridge. In all three cases,  resolve each pinch point as in Figures~\ref{cont.c}--\ref{cont.b}, delete any isolated vertices, then remove any empty surface components. If $e$ is a bridge then the resulting graph in a pseudo-surface has exactly one region which can be replaced with a disc to give a 1-path in the sphere. If $e$ is a loop that is a quasi-loop then there are two regions each of which can be replaced with a disc to give a loop in the sphere. Finally, if $e$ is a loop that is a quasi-bridge then there is one region with two boundary components. The region can be replaced with an annulus to give a loop that forms the meridian of a torus.
\end{proof}

We identify a graph  in a pseudo-surface with its LV-equivalence class.  We set 
\begin{align*}
   \delta_{br}(G\subset \Sigma  ) :=& \begin{cases} 1 &\mbox{if $G\subset \Sigma  $ is a 1-path in the sphere,} \\   0 & \mbox{otherwise;} \end{cases}  
\\
  \delta_{ql}(G\subset \Sigma ) :=& \begin{cases} 1 &\mbox{if $G\subset \Sigma  $ is a loop in the sphere,} \\   0 & \mbox{otherwise;} \end{cases}  
\\
   \delta_{qb}(G\subset \Sigma ) :=& \begin{cases} 1 &\mbox{if $G\subset \Sigma  $ is a loop that is a meridian of a torus,}  \\   0 & \mbox{otherwise.} \end{cases}  
\end{align*}
Let
\begin{equation}\label{e.deps}
\delta_{\mathbf{a}}= \delta(x_1,x_2,x_3) :=x_1\delta_{br} + x_2\delta_{ql} +x_3\delta_{qb}.   
\end{equation}

 \begin{proof}[Proof of Theorem~\ref{s2.c1}]
 Upon verifying that $ \delta(G\subset \Sigma  ) =  \delta'(\mathbf{M}(G\subset \Sigma  ) )$, where $\delta$ is from Equation~\eqref{e.deps} and $\delta'$ is from Equation~\eqref{e.demp}, the result follows by applying  Theorems~\ref{tgen} and~\ref{s2.t1} using the Hopf algebra morphism from Lemma~\ref{s2.l5} then reinterpreting the matroid parameters of Theorem~\ref{s2.t1} in terms of graphs in pseudo-surfaces. 
\end{proof}

Analogously to Corollary~\ref{s2.t1.c2}, identities between the Tutte polynomial of a graph and the Las~Vergnas polynomial can be seen to be consequences of Hopf algebra maps.
\begin{corollary}\label{s2.t1.c3}
Let $\mathcal{H}^{ps}$ be the Hopf algebra of graphs in pseudo-surfaces from Definition~\ref{s2.d3}, and $\mathcal{H}^{g}$ be the Hopf algebra of graphs from Definition~\ref{d.hg}. Furthermore let $\mathcal{H}^{pg}$ be the Hopf subalgebra of $\mathcal{H}^{ps}$ generated by plane graphs.
  Then the following hold.
\begin{enumerate}
\item The  projection $\phi_1: \mathcal{H}^{pg} \rightarrow \mathcal{H}^{g}$ that takes a graph in a pseudo-surface to its underlying graph  is a Hopf algebra morphism.   Furthermore it naturally induces the identity that for a plane graph $G$, $T_G(x,y) = L_{G\subset \mathbb{R}^2}(x,y,z)$.
\item The  projection $\phi_2: \mathcal{H}^{ps} \rightarrow \mathcal{H}^{g}$ defined by $ \phi_2 (G\subset \Sigma)=G$ is a Hopf algebra morphism.   Furthermore it naturally induces the  identity $T_{G}(x,y) =  (y-1)^{n(G)-\kappa(G)} L_{G\subset \Sigma}(x,y,1/(y-1))$.
\end{enumerate}
\end{corollary}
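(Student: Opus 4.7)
The plan is to follow the template of Corollary~\ref{s2.t1.c2}: apply Theorem~\ref{tgen} to each Hopf algebra morphism and then rewrite the resulting equation between canonical Tutte polynomials using the closed forms from Theorems~\ref{t.gr} and~\ref{s2.c1}.

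First I would verify that $\phi_1$ and $\phi_2$ are Hopf algebra morphisms. Multiplicativity (disjoint union is preserved) and compatibility with the (co)unit are immediate from the definitions. For the coproduct, the key observation is that pseudo-surface deletion and contraction specialise to the usual graph deletion and contraction on the underlying graph (even when contracting a loop creates a pinch point, since the underlying graph of $G/e\subset\Sigma/e$ is precisely $G/e$). Well-definedness of $\phi_1$ on $\mathcal{H}^{pg}$ is automatic: every LV-equivalence move either modifies the pseudo-surface away from the graph or changes only isolated vertices, both of which descend to the identity in $\mathcal{H}^g$.

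Next I would set up the selector correspondence for each map. Under $\phi_2$, the three elements of $\mathcal{G}^{ps}_1$ (the $1$-path in the sphere, the loop in the sphere, the meridian loop of a torus) have underlying graphs that are a bridge, a loop, and a loop respectively, so with $\delta_{\mathcal{H}^g}(x_1,x_2)=x_1\delta_b+x_2\delta_l$ one obtains $\delta_{\mathcal{H}^g}(x_1,x_2)\circ\phi_2 = x_1\delta_{br}+x_2\delta_{ql}+x_2\delta_{qb}=\delta_{\mathcal{H}^{ps}}(x_1,x_2,x_2)$. Theorem~\ref{tgen} then gives
\[
\alpha_{\mathcal{H}^{ps}}(x_1,x_2,x_2,y_1,y_2,y_2)=\alpha_{\mathcal{H}^g}(x_1,x_2,y_1,y_2)\circ\phi_2.
\]
Substituting the closed forms of Theorems~\ref{s2.c1} and~\ref{t.gr} (using $|E(G)|-r(G)=n(G)$), dividing through by the common factor $x_1^{r(G)}y_2^{\kappa(G)}$, and setting $z=y_2/x_2$ so that $y-1=x_2/y_2=1/z$, a short rearrangement yields the stated identity for part~(2).

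For $\phi_1$ the same strategy applies, with $\mathcal{H}^{pg}_1$ spanned by only the bridge and quasi-loop basis elements (the torus meridian is not planar). Hence $\delta_{\mathcal{H}^g}(x_1,x_2)\circ\phi_1 = x_1\delta_{br}+x_2\delta_{ql}$, and Theorem~\ref{tgen} together with Theorems~\ref{s2.c1} and~\ref{t.gr} identifies the two canonical Tutte polynomials up to the factor involving the exponent $n(G)-\kappa(G)$. The remaining step is to show that $n(A)=\kappa(A)$ for every $A\subseteq E(G)$ when $G\subset\mathbb{R}^2$, which follows from the Euler relation $v-|A|+r=1+c(A)$ for (possibly non-cellular) embeddings in $S^2$, giving $\kappa(A)=(1+n(A))-1=n(A)$. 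Consequently all $z$-exponents in $L_{G\subset\mathbb{R}^2}$ vanish and the identity $T_G(x,y)=L_{G\subset\mathbb{R}^2}(x,y,z)$ drops out. The main obstacle is simply keeping the selector bookkeeping straight at $\mathcal{G}^{ps}_1$ and checking $\kappa(A)=n(A)$ for plane graphs; once these are in hand, everything else is symbolic manipulation using Theorems~\ref{tgen},~\ref{t.gr} and~\ref{s2.c1}.
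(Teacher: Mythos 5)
Your proposal is correct and follows essentially the same route as the paper: check the two maps are Hopf algebra morphisms, match the selectors (using that $\delta_{qb}$ vanishes on plane graphs for $\phi_1$, and the specialisation $x_3=x_2$, $y_3=y_2$ for $\phi_2$), apply Theorem~\ref{tgen}, and compare the closed forms of Theorems~\ref{t.gr} and~\ref{s2.c1}. Your explicit verification that $\kappa(A)=n(A)$ for all $A\subseteq E(G)$ when $G$ is plane simply spells out the step the paper leaves implicit in ``from which the result follows.''
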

\begin{proof}
It is readily verified that each of the three maps is a Hopf algebra morphism. To obtain the polynomial identities we apply Theorem~\ref{tgen} to Theorems~\ref{t.gr} and~\ref{s2.c1}.

For $\phi_1$, in Theorem~\ref{tgen} let $\mathcal{H}=\mathcal{H}^{pg}$,  $\mathcal{H'}=\mathcal{H}^{g}$,
 $\delta_{\mathcal{H}}$ be the selector used in Theorem~\ref{s2.c1}, and 
 $\delta_{\mathcal{H'}}$ be the selector used in Theorem~\ref{t.gr}. 
 Since  $\delta_{qb}(G\subset \mathbb{R}^2)$ is always zero, we have $  \delta_{\mathcal{H}}( x_1,x_2,x_3) (G\subset \mathbb{R}^2) =  \delta_{\mathcal{H'}}( x_1,x_2) ( G) $. By Theorems~\ref{tgen}, \ref{t.gr} and~\ref{s2.c1}, 
 \[  
 x_1^{r(G)} y_2^{|E(G)|-r(G)}    T_{G}( \tfrac{y_1}{x_1}+1, \tfrac{x_2}{y_2}+1)
 =
 x_1^{r(G)} y_2^{\kappa(G)} x_3^{n(G)-\kappa(G)}     L_{G\subset \mathbb{R}^2}( \tfrac{y_1}{x_1}+1, \tfrac{x_2}{y_2}+1, \tfrac{y_3}{x_3}  ) 
    \] 
 from which the result follows.
 This argument holds if $G\subset \Sigma$ is, more generally, in  $\mathcal{H}^{pg}$.

For $\phi_2$, in Theorem~\ref{tgen} let $\mathcal{H}=\mathcal{H}^{ps}$,  $\mathcal{H'}=\mathcal{H}^{g}$,
 $\delta_{\mathcal{H}}$ be the selector used in Theorem~\ref{s2.c1}, and 
 $\delta_{\mathcal{H'}}$ be the selector used in Theorem~\ref{t.gr}. 
 We have $  \delta_{\mathcal{H}}( x_1,x_2,x_2) (G\subset \Sigma) =  \delta_{\mathcal{H'}}( x_1,x_2) ( G) $ (since   $\delta_{\mathcal{H}}( x_1,x_2,x_2)$ no longer distinguishes how a loop lies in the pseudo-surface). By Theorems~\ref{tgen}, \ref{t.gr} and~\ref{s2.c1}, 
 \[  
 x_1^{r(G)} y_2^{|E(G)|-r(G)}    T_{G}( \tfrac{y_1}{x_1}+1, \tfrac{x_2}{y_2}+1)
 =
 x_1^{r(G)} y_2^{\kappa(G)} x_2^{n(G)-\kappa(G)}     L_{G\subset \mathbb{R}^2}( \tfrac{y_1}{x_1}+1, \tfrac{x_2}{y_2}+1, \tfrac{y_2}{x_2}  ) 
    \] 
 from which the result follows.
\end{proof}

\subsection{Delta-matroids and the Bollob\'as-Riordan polynomial}\label{s.BRdm}
Our notation for delta-matroids follows~\cite{CMNR1,CMNR2} and we refer the reader to these references for background on them.

Let $D=(E,{\mathcal{F}})$ be a delta-matroid and  $A\subseteq E$. The  \emph{twist} of $D=(E,{\mathcal{F}})$ with respect to $A$  is $D* A:=(E,\{A\triangle  X \mid  X\in \mathcal{F}\})$.
The \emph{dual} of $D$, written $D^*$, is equal to $D*E$.
 %
The feasible sets of  $D$ are graded by  cardinality. Let $\mathcal{F}_{\max}(D)$ and $\mathcal{F}_{\min}(D)$ be the set of feasible sets of maximum and minimum cardinality, respectively. We will usually omit $D$ when the context is clear.   If   the sets in $\mathcal{F}_{\min}$  (respectively, $\mathcal{F}_{\max}$) are of cardinality $m$ and     $k\in \mathbb{Z}$, then $\mathcal{F}_{\min+k}$ (respectively, $\mathcal{F}_{\max+k}$) denotes the set of feasible sets in $\mathcal{F}$ of cardinality $m+k$.

Let $D_{\max}:=(E,\mathcal{F}_{\max})$ and  $D_{\min}:=(E,\mathcal{F}_{\min})$.
Then $D_{\max}$ is the \emph{upper matroid} and $D_{\min}$ is the \emph{lower matroid} for $D$. 
 Let $r_{\max}$ and $r_{\min}$, respectively,  denote the rank functions of these two matroids. We define a function $\rho$ on delta-matroids by
\[ \rho(D) :=  \frac{1}{2}( r_{\max}(D)  +   r_{\min}(D) ) , \]
and for $A\subseteq E$,
\[ \rho(A) :=   \rho(D\backslash A^c)   .  \]
Observe that if $D$ is a matroid then $D_{\max}= D_{\min}$ and  $\rho$ is precisely its rank function. 
It is important to notice that in general   $\rho_D(A)\neq    \frac{1}{2}( r_{D_{\max}}(A)  +   r_{D_{\min}}(A) )$.

Defined in \cite{CMNR1}, the \emph{(2-variable) Bollob\'as-Riordan polynomial}, $\tilde{R}_D(x,y)$ is
\begin{equation}\label{s2.e3}
\tilde{R}_D(x,y) :=  \sum_{A\subseteq E}  (x-1)^{\rho(E)-\rho(A)}(y-1)^{|A|-\rho(A)}.
\end{equation} 
Note that \eqref{s2.e3} is  obtained by replacing $r$ for $\rho$ in the definition of the Tutte polynomial. It is the extension of the 2-variable version of Bollob\'as and Riordan's ribbon graph polynomial \cite{BR01,BR02} to delta-matroids.

We will  show that the 2-variable Bollob\'as-Riordan polynomial is the canonical  Tutte polynomial  of delta-matroids with their usual deletion and contraction. 

The following result is easily verified.
\begin{lemma}\label{l.hopfdm.1}
The set of isomorphism classes of delta-matroids  form a minor system where the grading is given by the cardinality of the ground set, deletion and contraction are given by delta-matroid deletion and contraction, and multiplication is given by direct sum. 
\end{lemma}
For convenience we will henceforth identify a delta-matroid with its isomorphism class. 

\begin{definition}\label{d.hopfdm}
We let  $\mathcal{H}^{dm}$  denote the  Hopf algebra associated with delta-matroids via  Lemma~\ref{l.hopfdm.1}  and Proposition~\ref{p.1}. 
Its coproduct is given by 
$
 \Delta_{dm}(D)=\sum_{A\subseteq E}D\backslash A^c  \otimes  D/ A
$,
 where  $D=(E,\mathcal{F})$ is a delta-matroid and the deletion and contraction are delta-matroid deletion and contraction.
\end{definition}

Up to isomorphism, there are exactly three delta-matroids over one element:   
$D_c:=( \{e\}, \{  \{e\} \})$, $D_o:=( \{e\}, \{  \emptyset \})$, and $D_n:=( \{e\}, \{  \emptyset , \{e\} \})$. 
 Accordingly, set 
\begin{equation}\label{e.dm.del}  
 \delta_{c}(D ) := \begin{cases} 1 &\mbox{if } D =D_c, \\   0 & \mbox{otherwise}; \end{cases} 
 \quad 
  \delta_{o}(D ) := \begin{cases} 1 &\mbox{if } D =D_o, \\   0 & \mbox{otherwise}; \end{cases} 
  \quad
  \delta_{n}(D ) := \begin{cases} 1 &\mbox{if } D =D_n ,\\   0 & \mbox{otherwise}. \end{cases}
 \end{equation}
(Using  notation defined after the statement of Theorem~\ref{s2.t3} below, the subscripts of the $\delta$'s record, in order, if each delta-matroids is a \underline{c}oloop, \underline{o}rientable ribbon-loop, or \underline{n}on-orientable ribbon-loop.)
We set
\begin{equation}\label{e.BRdelta1}
 \delta_{\mathbf{a}}= \delta(a_1,a_2,a_3) := a_1\delta_{c} + a_2\delta_{o} +a_3\delta_{n}.  
  \end{equation}
Then the Tutte polynomial of $\mathcal{H}^{dm}$  is defined by
\begin{equation}\label{e.BRdelta2}
\alpha( \mathbf{a}, \mathbf{b})  := \exp_* (\delta_{\mathbf{a}}) \ast \exp_* (\delta_{\mathbf{b}}) . 
\end{equation}

For a canonical Tutte polynomial we require that $\delta_{\mathbf{a}}$ is uniform. By applying $\delta\otimes \delta$ to  $\Delta(D)$, where $D$ is over $E=\{e,f\}$ and has feasible sets $\emptyset$, $\{e\}$, and   $\{e,f \}$,  it  is seen that $\delta_{\mathbf{a}}$ is uniform only if $a_3=\sqrt{a_1a_2}$. Thus the Tutte polynomial of a delta-matroid will be a 2-variable polynomial, rather than a 3-variable polynomial (which is perhaps  unexpected given that there are three delta-matroids of graded dimension 1).

\begin{theorem}\label{s2.t3}
The 2-variable Bollob\'as-Riordan polynomial arises as the canonical Tutte polynomial of the  Hopf algebra $\mathcal{H}^{dm}$: 
 \begin{equation}
 \alpha( \mathbf{a}, \mathbf{b}) ( D)  =  x_1^{ \rho(D)}   y_2^{|E|- \rho(D)} \tilde{R}_D\left(\frac{y_1}{x_1}+1, \frac{x_2}{y_2} +1\right),
 \end{equation}
where $\mathbf{a} = ( x_1,x_2,\sqrt{x_1x_2})$, $\mathbf{b}  =(y_1,y_2,\sqrt{y_1y_2})$, and $E$ is the ground set of the delta-matroid $D$. 
\end{theorem}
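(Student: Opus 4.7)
The plan is to apply Theorem~\ref{tm} with rank-like functions $r_1 := \rho$ and $r_2 := |E| - \rho$ (nullity), after first verifying that these satisfy the key recursion~\eqref{tm.e1} for each of the three basis elements $D_c$, $D_o$, $D_n$ of $\mathcal{H}^{dm}_1$. The three cases correspond to $e$ being a coloop, a loop, or a ``ribbon loop'' (in some but not all feasible sets) of $D$. Using standard properties of delta-matroid contraction, one shows that
\begin{align*}
e \text{ coloop:} & \quad r_{\max}(D) = r_{\max}(D/e)+1, \quad r_{\min}(D) = r_{\min}(D/e)+1, \\
e \text{ loop:} & \quad r_{\max}(D) = r_{\max}(D/e), \quad r_{\min}(D) = r_{\min}(D/e), \\
\text{otherwise:} & \quad r_{\max}(D) = r_{\max}(D/e)+1, \quad r_{\min}(D) = r_{\min}(D/e).
\end{align*}
Averaging gives $\rho(D) - \rho(D/e) \in \{1, 0, 1/2\}$ in the three cases respectively, so the exponents required by Theorem~\ref{tm} are $m_{c,1}=1$, $m_{o,1}=0$, $m_{n,1}=1/2$ for the $\rho$ function, and $m_{c,2}=0$, $m_{o,2}=1$, $m_{n,2}=1/2$ for the nullity.

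Next, I would read off from Theorem~\ref{tm} the values $a_i := x_1^{m_{i,1}} x_2^{m_{i,2}}$, namely $a_c = x_1$, $a_o = x_2$, $a_n = \sqrt{x_1 x_2}$. This recovers the relation $a_3 = \sqrt{a_1 a_2}$ flagged before the theorem statement as necessary for uniformity, and Theorem~\ref{tm} guarantees that $\delta_{\mathbf{a}}$ is then uniform. Applying \eqref{tm.e3} with these choices yields
\[
\alpha(\mathbf{a},\mathbf{b})(D) = y_1^{\rho(D)} y_2^{|E|-\rho(D)} \sum_{A \subseteq E} \left(\frac{x_1}{y_1}\right)^{\rho(A)} \left(\frac{x_2}{y_2}\right)^{|A|-\rho(A)}.
\]

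Finally, I would manipulate this into the desired form by pulling a factor of $(x_1/y_1)^{\rho(D)}$ inside the sum, giving
\[
\alpha(\mathbf{a},\mathbf{b})(D) = x_1^{\rho(D)} y_2^{|E|-\rho(D)} \sum_{A \subseteq E} \left(\frac{y_1}{x_1}\right)^{\rho(D)-\rho(A)} \left(\frac{x_2}{y_2}\right)^{|A|-\rho(A)},
\]
which is precisely $x_1^{\rho(D)} y_2^{|E|-\rho(D)} \tilde{R}_D(y_1/x_1 + 1, x_2/y_2 + 1)$ by~\eqref{s2.e3}.

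The main obstacle is the third (ribbon-loop) case of the $r_{\max}$, $r_{\min}$ recursion, where $e$ lies in some but not all feasible sets of $D$. Verifying that $r_{\max}(D)$ drops by exactly one while $r_{\min}(D)$ is unchanged on contraction requires a careful argument using the symmetric exchange axiom, which is the delta-matroid analogue of the rank-recursion that is automatic for matroids. Everything else is either direct computation or an application of Theorem~\ref{tm}, so this is the only point where genuine delta-matroid theory (as developed by Bouchet and in \cite{CMNR1}) is needed.
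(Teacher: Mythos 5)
Your overall strategy is the paper's: apply Theorem~\ref{tm} with $r_1=\rho$ and $r_2=|E|-\rho$, read off $a_1=x_1$, $a_2=x_2$, $a_3=\sqrt{x_1x_2}$, and rearrange the resulting state sum into the form of \eqref{s2.e3}. However, there is a genuine gap in the one step you yourself identify as the crux: the verification of hypothesis~\eqref{tm.e1}. That hypothesis is conditioned on which $\delta_i$ evaluates to $1$ on $D\del e^c=D|_e$, i.e.\ on the isomorphism type of the one-element restriction, and you identify the three types $D_c$, $D_o$, $D_n$ with $e$ being a coloop, a loop, or ``in some but not all feasible sets'' of $D$. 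That trichotomy is wrong, and the rank recursions you attach to it fail. For example, take $D=U_{1,2}$ on $\{e,f\}$ with $\mathcal{F}=\{\{e\},\{f\}\}$: here $e$ is neither a loop nor a coloop, yet $D|_e\cong D_c$ and $\rho(D)=\rho(D/e)+1$ (indeed $r_{\min}$ drops by $1$ under contraction, contradicting your ``otherwise'' row). Or take $\mathcal{F}=\{\emptyset,\{e,f\}\}$ (the delta-matroid of two interlaced orientable loops): again $e$ lies in some but not all feasible sets, yet $D|_e\cong D_o$, $\rho(D)=\rho(D/e)$, and in fact $r_{\min}$ \emph{increases} by $1$ under contraction while $r_{\max}$ drops by $1$ --- behaviour your table does not allow at all.

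The correct case division is by the ribbon-loop trichotomy: $e$ not a ribbon loop, an orientable ribbon loop, or a non-orientable ribbon loop, defined via membership of $e$ in $\mathcal{F}_{\min}$ and in $(D\ast e)_{\min}$. The paper needs two nontrivial lemmas here, both proved with the Symmetric Exchange Axiom: Lemma~\ref{s2.l9}, showing that this type of $e$ is preserved under deleting the other elements and hence determines whether $D|_e$ is $D_c$, $D_o$ or $D_n$; and Lemma~\ref{s2.l7}, computing $\rho(D)-\rho(D/e)\in\{1,0,\tfrac12\}$ in the three cases (including the subtle orientable-ribbon-loop subcase where $r_{\min}$ goes up). Your proposal compresses both of these into a single ``careful argument'' for the third case only, built on the wrong classification, so as written the key hypothesis of Theorem~\ref{tm} is not verified. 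The remainder of your argument (uniformity from $a_3=\sqrt{a_1a_2}$ via Theorem~\ref{tm}, and the algebraic rearrangement of \eqref{tm.e3} into $\tilde{R}_D$) matches the paper and is fine once the case analysis is repaired.
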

The proof of Theorem~\ref{s2.t3} is similar in structure to that of Theorem~\ref{s2.t1}, and will follow from a sequence of lemmas.

It is convenient for us to relate $D|_e$, the restriction of $D$ to $e$, to the element $e$ of $D$. For this we need a little additional notation. Let $D=(E, \mathcal{F})$ be a delta-matroid, and $e\in E$. 
 Then  $e$ is a {\em ribbon loop} if $e$ is a loop in $D_{\min}$.
  A ribbon loop $e$ is  is \emph{orientable} if $e$ is not a loop in $(D\ast e)_{\min}$,
  and is  \emph{non-orientable} if $e$ is a  loop in $(D\ast e)_{\min}$.
  Note that it can be determined if $e$ is a (orientable/non-orientable) ribbon loop by looking for its membership in sets in $\mathcal{F}_{\min}$ and   $\mathcal{F}_{\min+1}$.

\begin{lemma}\label{s2.l9}
 Let $D=(E, \mathcal{F})$ be a delta-matroid, and $e\in E$.   Then   $e$ is not a ribbon loop  (is an orientable ribbon loop,  is a non-orientable ribbon loop, respectively) if and only if  $D|_e$ is isomorphic to $D_c$  ($D_o$, $D_n$, respectively).
\end{lemma}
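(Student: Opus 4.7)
The plan is to reduce both sides of the lemma to a common numerical description involving the two quantities
\[
m_0 := \min\{|F| : F \in \mathcal{F},\ e \notin F\}, \qquad m_1 := \min\{|F| : F \in \mathcal{F},\ e \in F\},
\]
with the convention $\min \emptyset := +\infty$. I will show that both the isomorphism type of $D|_e$ and the ribbon-loop status of $e$ are controlled by the comparison of $m_1$ and $m_0$, in matching ways.

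The core step is to establish the formula
\[
\mathcal{F}(D|_e) = \bigl\{F \cap \{e\} : F \in \mathcal{F} \text{ and } |F\setminus\{e\}| \text{ is minimum}\bigr\},
\]
which I would prove by induction on $|E \setminus \{e\}|$. For the inductive step I pick any $f \neq e$ and use $D|_e = (D\setminus f)|_e$. When $f$ is a coloop, $D\setminus f = D/f$ induces a bijection $F \mapsto F\setminus f$ of feasible sets that lowers $|F\setminus\{e\}|$ uniformly by $1$, so the set of minimisers and their intersections with $\{e\}$ are unchanged. When $f$ is not a coloop, $\mathcal{F}(D\setminus f) = \{F\in\mathcal{F} : f \notin F\}$; the nontrivial point is that every intersection $F^* \cap \{e\}$ realised by a minimiser $F^*$ of $|F\setminus\{e\}|$ in $\mathcal{F}$ is still realised by some minimiser avoiding $f$. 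This follows from the symmetric exchange axiom applied to $F^*$ (assumed to contain $f$) and any $Y \in \mathcal{F}$ with $f\notin Y$: the minimality of $F^*$ forces the exchanged element $v$ not to lie in $F^* \cup \{e, f\}$, and the resulting set $F^* \triangle \{f, v\}$ is then a minimiser with $f$ omitted and the same intersection with $\{e\}$ as $F^*$.

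Once the formula is in hand, the observation that $|F\setminus\{e\}|$ is $|F|$ when $e\notin F$ and $|F|-1$ when $e\in F$ reads off
\[
\emptyset \in \mathcal{F}(D|_e) \iff m_0 + 1 \leq m_1, \qquad \{e\} \in \mathcal{F}(D|_e) \iff m_1 \leq m_0 + 1,
\]
so $D|_e \cong D_c$ iff $m_1 \leq m_0$, $D|_e \cong D_n$ iff $m_1 = m_0 + 1$, and $D|_e \cong D_o$ iff $m_1 \geq m_0 + 2$. For the ribbon-loop side, by definition $e$ is a ribbon loop iff $e$ lies in no minimum feasible set of $D$, i.e.\ iff $m_1 > m_0$; and for the orientability test I compute the analogous parameters $m_0^\ast, m_1^\ast$ for $D * e$. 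Since $|F \triangle \{e\}| = |F| \pm 1$, one obtains $m_0^\ast = m_1 - 1$ and $m_1^\ast = m_0 + 1$, so $e$ fails to be a loop of $(D*e)_{\min}$ iff $m_1^\ast \leq m_0^\ast$ iff $m_1 \geq m_0 + 2$. Combining these gives exactly the three claimed equivalences.

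The main obstacle will be the non-coloop step of the inductive proof of the formula for $\mathcal{F}(D|_e)$: one must run a short case analysis on the exchanged element $v$ provided by the symmetric exchange axiom, ruling out every case in which $|F \setminus \{e\}|$ would strictly decrease (since this would contradict the minimality of $F^*$), and showing that the one surviving case produces the desired $f$-free minimiser with the correct intersection with $\{e\}$. Once this is settled, the rest is a clean translation between integer inequalities and the definitions of ribbon loop and orientable/non-orientable ribbon loop.
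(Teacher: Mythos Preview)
Your proof is correct and takes a genuinely different route from the paper's.

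The paper argues directly that each of the three properties of $e$ (not a ribbon loop / orientable ribbon loop / non-orientable ribbon loop) is preserved under deletion of any other element $f$. It does this by a case analysis, using the Symmetric Exchange Axiom to produce witnesses in $\mathcal{F}(D)_{\min}$ or $\mathcal{F}(D)_{\min+1}$ that avoid $f$. Iterating gives that the ribbon-loop type of $e$ in $D$ equals its type in $D|_e$, which is read off immediately.

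Your approach instead isolates the two integers $m_0,m_1$ and shows that they simultaneously control the isomorphism type of $D|_e$ (via your formula for $\mathcal{F}(D|_e)$) and the ribbon-loop classification of $e$ (via the easy computation $m_0^\ast=m_1-1$, $m_1^\ast=m_0+1$ for $D\ast e$). The SEA enters only once, in the non-coloop step of the induction for your formula, and your case analysis there is the right one: minimality of $|F^*\setminus\{e\}|$ indeed excludes $v\in F^*$ and $v=e$, leaving exactly the exchange that preserves $F^*\cap\{e\}$ and removes $f$. The edge cases $m_0=\infty$ (coloop) and $m_1=\infty$ (loop) are covered by your $+\infty$ convention.

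What each buys: the paper's argument is tailored to the three-way classification and tracks it directly. Your argument produces two reusable byproducts---the explicit description $\mathcal{F}(D|_e)=\{F\cap\{e\}:|F\setminus\{e\}|\text{ minimal}\}$ and the numerical trichotomy $m_1\le m_0$ / $m_1=m_0+1$ / $m_1\ge m_0+2$---which make the orientability calculation for $D\ast e$ a one-line substitution rather than a separate round of exchange arguments.
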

\begin{proof}
Let   $f\in E$ with  $f\neq e$. 
We start by showing that $e$ is a ribbon loop in $D$ if and only if it is a ribbon loop in $D\backslash f$.
This  is easily seen to be true if $f$ is a coloop, so suppose it is not. Then there is some $Z\in \mathcal{F}$ with $f\notin Z$.
 
 Suppose that $e$ is not a ribbon loop. 
Then there is some $X\in \mathcal{F}(D)_{\min}$ with $e\in X$. We show there is some $Y\in \mathcal{F}(D)_{\min}$ such that  $e\in Y$ but $f\notin Y$. From this it immediately follows that  $e\in Y\in \mathcal{F} (D\backslash f )_{\min}$ and so  $e$ is not a ribbon loop in $D\backslash f$.  To construct $Y$, if $f\notin X$ take $Y=X$, otherwise $f\in X$ and so  $f\in X \triangle Z$, where $Z$ is as above. The Symmetric Exchange Axiom gives that there is some $X\triangle \{f,u\} \in \mathcal{F}$. By the minimality of $X$, $u\neq e$ and $X\triangle \{f,u\} \in \mathcal{F}(D)_{\min}$. Take   $Y=X\triangle \{f,u\}$. It follows that $e$ is not a ribbon loop in $D\backslash f$. 

For the converse suppose that $e$ is a ribbon loop. Then $e$ is not in any feasible set in $\mathcal{F}(D)_{\min}$. To show that $e$ is not in any feasible set in $\mathcal{F}(D\backslash f)_{\min}$ it is enough to show that there is some set in $\mathcal{F}(D)_{\min}$ that does not contain $f$. Choose  $X\in \mathcal{F}(D)_{\min}$. If $f\notin X$ we are done, otherwise we have  $f\in X\in \mathcal{F}(D)_{\min}$. Then 
 $f\in X \triangle Z$,  and the Symmetric Exchange Axiom gives that there is some $X\triangle \{f,u\} \in \mathcal{F}$. The minimality of $X$ gives that  $f\notin X\triangle \{f,u\} \in \mathcal{F}(D)_{\min}$. This is the required set. Thus  $e$ is a ribbon loop in $D\backslash f$. 

We have just shown that $e$ is a ribbon loop in $D$ if and only if it is one in $D\backslash f$. 
Next we show that a ribbon loop $e$ is orientable in $D$ if and only if it is orientable in $D\backslash f$. 
Again this  is easily seen to be true if $f$ is a coloop, so suppose it is not. Then there is some $Z\in \mathcal{F}$ with $f\notin Z$.

If $e$ is a non-orientable ribbon loop, then there is some $X\in \mathcal{F}(D)_{\min+1}$ with $e\in X$. We show there is some $Y\in \mathcal{F}(D)_{\min+1}$ with $e\in Y$ and $f\notin Y$. We have seen above (in the argument that if $e$ is a ribbon loop in $D$ then it is one in $D\backslash f$)  that there is a set in  $Y\in \mathcal{F}(D)_{\min}$ not containing $f$  and no sets in  $\mathcal{F}(D)_{\min}$ contain $e$, it follows that  $e$ is a non-orientable ribbon loop in $\mathcal{F}(D\backslash f)$.   
 To construct $Y$, if $f\notin X$ take $Y=X$, otherwise $f\in X$ and so  $f\in X \triangle Z$, where $Z$ is as above. The Symmetric Exchange Axiom gives that there is some $X\triangle \{f,u\} \in \mathcal{F}$.  The set $X\triangle \{f,u\}$ must be in $ \mathcal{F}(D)_{\min}$ or $ \mathcal{F}(D)_{\min+1}$, but since it contains $e$ it must be in  $ \mathcal{F}(D)_{\min+1}$. Thus taking $Y=X\triangle \{f,u\}$ gives the required set.
 
 Conversely,  if $e$ is an orientable ribbon loop, then no element of   $\mathcal{F}(D)_{\min}$  or  $\mathcal{F}(D)_{\min+1}$ contains  $e$.   We have seen above (in the argument that if $e$ is a ribbon loop in $D$ then it is one in $D\backslash f$)  that there is a set in $\mathcal{F}(D)_{\min}$ that does not contain $f$. It follows that  no element of   $\mathcal{F}(D\backslash f)_{\min}$  or  $\mathcal{F}(D\backslash f)_{\min+1}$ will contain $e$, so  $e$ is an orientable ribbon loop of $D\backslash f$.  
 Thus we have shown that a ribbon loop $e$ is orientable in $D$ if and only if it is orientable in $D\backslash f$.

 Finally, since $e$ is not a ribbon loop  (an orientable ribbon loop,  a non-orientable ribbon loop, respectively) in $D$ if and only if it is one in $D\backslash f$,  the result stated in the lemma follows by deleting the edges in $E\backslash e$ one at a time.
\end{proof}

\begin{lemma}\label{s2.l7}
Let $D=(E, \mathcal{F})$ be a delta-matroid, and $e\in E$. Then 
\[  \rho(D) = \begin{cases}    \rho(D/e) +1 &  \text{if $e$ is not a ribbon loop},    \\  \rho(D/e) &  \text{if $e$ is  an orientable ribbon loop},    \\  \rho(D/e) +\frac{1}{2} &  \text{if $e$ is  a non-orientable ribbon loop.}\end{cases}   \]
\end{lemma}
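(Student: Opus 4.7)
The plan is to compute $\rho(D/e)$ by computing $r_{\min}(D/e)$ and $r_{\max}(D/e)$ separately and then averaging. Write $m:=r_{\min}(D)$ and $M:=r_{\max}(D)$, so that $\rho(D)=\tfrac{1}{2}(m+M)$. Recall that when $e$ is not a loop of $D$, the feasible sets of $D/e$ are $\{F\setminus e : F\in\mathcal{F},\ e\in F\}$; when $e$ is a loop of $D$, $D/e=D\setminus e$ has the same feasible sets as $D$. So when $e$ is not a loop, $r_{\min}(D/e)$ and $r_{\max}(D/e)$ are exactly one less than the minimum and maximum sizes of feasible sets of $D$ containing $e$.

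The technical heart is a pair of Symmetric Exchange Axiom arguments. First, whenever some feasible set contains $e$, some set of maximum size $M$ does too: given any $F\in\mathcal{F}$ with $e\in F$ and any $F_{\max}\in\mathcal{F}_{\max}$ with $e\notin F_{\max}$, the axiom applied to $e\in F\triangle F_{\max}$ yields an exchange element $v$ that cannot equal $e$ nor lie in $F\setminus F_{\max}$ (both options would produce feasible sets strictly larger than $F_{\max}$), so $v\in F_{\max}\setminus F$ and $F_{\max}\triangle\{e,v\}$ is a size-$M$ feasible set containing $e$. Second, I shall verify the equivalence that $e$ is an orientable ribbon loop iff no set in $\mathcal{F}_{\min+1}(D)$ contains $e$, and is non-orientable iff some such set does. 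This follows by expressing the feasible sets of $D\ast e$ directly: those containing $e$ are $F\cup e$ for $F\in\mathcal{F}(D)$ with $e\notin F$ (of size $|F|+1$), and those not containing $e$ are $F\setminus e$ for $F\in\mathcal{F}(D)$ with $e\in F$ (of size $|F|-1$); comparing the two resulting minima, and using that $e$ is a ribbon loop to exclude $\mathcal{F}_{\min}(D)$ from the first, gives the characterisation.

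With these two facts at hand, the three cases of the lemma follow by direct computation. If $e$ is not a ribbon loop, then some set in $\mathcal{F}_{\min}$ contains $e$, the relevant extremes are $m$ and $M$, and hence $r_{\min}(D/e)=m-1$, $r_{\max}(D/e)=M-1$, giving $\rho(D/e)=\rho(D)-1$. If $e$ is a non-orientable ribbon loop then some set in $\mathcal{F}_{\min+1}$ contains $e$ while none in $\mathcal{F}_{\min}$ does, so the minimum size of a feasible set containing $e$ is exactly $m+1$ and its maximum is $M$, giving $\rho(D/e)=\tfrac{1}{2}(m+(M-1))=\rho(D)-\tfrac{1}{2}$. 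Finally, if $e$ is an orientable ribbon loop and is itself a loop of $D$, then $\mathcal{F}(D/e)=\mathcal{F}(D)$ and $\rho(D/e)=\rho(D)$ immediately; otherwise $e$ lies in some feasible set, and a further Symmetric Exchange Axiom argument against any $F_0\in\mathcal{F}_{\min}$ shows the smallest feasible set containing $e$ has size exactly $m+2$ (the candidate alternatives force either a set in $\mathcal{F}_{\min}$ or $\mathcal{F}_{\min+1}$ that contains $e$, contradicting orientability), yielding $r_{\min}(D/e)=m+1$, $r_{\max}(D/e)=M-1$, and again $\rho(D/e)=\rho(D)$.

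The main obstacle is executing the Symmetric Exchange Axiom applications cleanly: the axiom does not specify on which side of the symmetric difference the exchanged element lies, so each application requires ruling out the forbidden cases ($v=e$, or $v$ on the ``wrong'' side) by invoking maximality, minimality, or the orientability hypothesis. Once this bookkeeping is completed, the case analysis above becomes a short and routine calculation.
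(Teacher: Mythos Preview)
Your proposal is correct and follows essentially the same approach as the paper: compute $r_{\min}(D/e)$ and $r_{\max}(D/e)$ separately in each of the three cases using Symmetric Exchange Axiom arguments, then average. The paper's proof carries out the same SEA exchanges (showing $e$ lies in some set of $\mathcal{F}_{\max}$ whenever it lies in any feasible set, and locating the smallest feasible set containing $e$ as lying in $\mathcal{F}_{\min}$, $\mathcal{F}_{\min+1}$, or $\mathcal{F}_{\min+2}$ according to the ribbon-loop type); your organisation is slightly cleaner in that you first isolate the general principle that $r_{\min}(D/e)$ and $r_{\max}(D/e)$ are one less than the extremal sizes of feasible sets containing $e$, but the substance is identical.
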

\begin{proof}
We prove the lemma by computing $r_{\max}(D)$ and $r_{\min}(D)$  which, respectively, equal the maximum and minimum cardinalities of the feasible sets in   $\mathcal{F}(D)$.

First suppose that $e$ is not a ribbon loop, so $e$ is in some feasible set in   $\mathcal{F}(D)_{\min}$. Since $e$ is not a loop, $\mathcal{F}(D/e)=\{ X\backslash e \mid X\in \mathcal{F} \text{ and } e\in X \}$ and it follows $r_{\min}(D/e)=r_{\min}(D)-1$. 
For $r_{\max}$, we first show that $e$ appears in some element of $\mathcal{F}(D)_{\max}$.  Let $X\in  \mathcal{F}(D)_{\max}$. If $e\in X$ we are done, otherwise choose some $Y\in \mathcal{F}(D)$ that contains $e$ (this exists since $e$ is not a ribbon loop). Then $e\in X\triangle Y$ and so the Symmetric Exchange Axiom gives $ X \triangle \{e,u\} \in \mathcal{F}(D)$. By the maximality of $X$, we have $X \triangle \{e,u\}\in    \mathcal{F}(D)_{\max}$  and so $e$ appears in some element of $\mathcal{F}(D)_{\max}$. It then follows from the definition of contraction that $r_{\max}(D/e)=r_{\max}(D)-1$  (observe that this argument holds as long as $e$ is not a loop. We will use this fact below.). Thus  $\rho(D) =\tfrac{1}{2} ( r_{\max}(D) +r_{\min}(D)  )    =   \tfrac{1}{2} ( r_{\max}(D/e) +r_{\min}(D/e)+2) = \rho(D/e)$. 

Next suppose that $e$ is  a non-orientable ribbon loop. In particular, $e$ is not a loop.  We have $e$ is not in any element of $\mathcal{F}(D)_{\min} $ nor any element of  $\mathcal{F}(D\ast e)_{\min} $. It is not hard to see that the latter implies that $e$ is in some element of $ \mathcal{F}(D)_{\min+1}$. Then, since $\mathcal{F}(D/e)=\{ X\backslash e \mid X\in \mathcal{F} \text{ and } e\in X \}$, it follows that $r_{\min}(D/e)=r_{\min}(D)$. 
The identity $r_{\max}(D/e)=r_{\max}(D)-1$ follows as in the case of when $e$ is not a ribbon loop above. Thus we have that  $\rho(D)  =\tfrac{1}{2} ( r_{\max}(D) +r_{\min}(D)  )    =   \tfrac{1}{2} ( r_{\max}(D/e) +r_{\min}(D/e)+1)= \rho(D/e)+\tfrac{1}{2} $.

Finally suppose that $e$ is  an orientable ribbon loop. If $e$ is a loop then $\mathcal{F}(D)=\mathcal{F}(D/e)$ and so 
$\rho(D) =\tfrac{1}{2} ( r_{\max}(D) +r_{\min}(D)  )    =   \tfrac{1}{2} ( r_{\max}(D/e) +r_{\min}(D/e)) = \rho(D/e)$. Now suppose that $e$ is not a loop.  
Since $e$ is  an orientable ribbon loop, $e$ is not in any element of $\mathcal{F}(D)_{\min} $ but is in some element of  $\mathcal{F}(D\ast e)_{\min} $. It is not hard to see that the latter implies that $e$ is not in any  element of $ \mathcal{F}(D)_{\min+1}$. We show that $e$ is  in some  element of $ \mathcal{F}(D)_{\min+2}$, from which it follows immediately from the definition of $\mathcal{F}(D/e)$ that $r_{\min}(D/e)=r_{\min}(D)+1$. Choose $X\in   \mathcal{F}(D)_{\min}$ and a feasible set $Y\in  \mathcal{F}(D)$  that contains $e$. Since  $e\notin X$, the Symmetric Exchange Axiom gives that $ X\triangle \{e,u\} \in \mathcal{F}$. By the minimality of $X$ and since no set in $\mathcal{F}(D)_{\min}$ or $\mathcal{F}(D)_{\min+1}$ contains $e$  it follows that $X\triangle \{e,u\} \in \mathcal{F}(D)_{\min+2}$ and contains $e$.  
The identity $r_{\max}(D/e)=r_{\max}(D)-1$ again follows as in the case of when $e$ is not a ribbon loop above. Thus, by the definition of $\rho$, we have that  $\rho(D) = \tfrac{1}{2} ( r_{\max}(D) +r_{\min}(D)  )    =   \tfrac{1}{2} ( r_{\max}(D/e) +1 +r_{\min}(D/e)-1) = \rho(D/e)$. 
\end{proof}
Observe that this proof gives that $r_{\max}(D/e)=r_{\max}(D)-1$ when $e$ is not a loop, and $r_{\max}(D/e)=r_{\max}(D)$ when it is. We will use this observation in Section~\ref{ss.penrose}.

\begin{proof}[Proof of Theorem~\ref{s2.t3}]
Applying Theorem~\ref{tm} with $r_1(A) := \rho(A)$, and  $r_2(A) :=|A|- \rho(A)$ (so via Lemmas~\ref{s2.l9} and \ref{s2.l7}    $a_1=x_1^1x_2^0$, $a_2=x_1^0x_2^1$, $a_3=x_1^{1/2}x_2^{1/2}$) gives 
\begin{align*}
 \alpha( \mathbf{a}, \mathbf{b}) ( D)  &= y_1^{ \rho(D)}   y_2^{|E|- \rho(D)}  \sum_{A\subseteq E} \left(\frac{x_1}{y_1}\right)^{\rho(A)}\left(\frac{x_2}{y_2} \right)^{|A|-\rho(A)}
 \\&= x_1^{ \rho(D)}   y_2^{|E|- \rho(D)}  \sum_{A\subseteq E} \left(\frac{y_1}{x_1}\right)^{\rho(D)-\rho(A)}\left(\frac{x_2}{y_2} \right)^{|A|-\rho(A)},
\end{align*}
from which the result follows.
\end{proof}

We can use Hopf algebra mappings to show that the 2-variable Bollob\'as-Riordan polynomial, $\widetilde{R}_D(x,y)$, extends the Tutte polynomial from matroids to delta-matroids.
\begin{corollary}\label{s2.t3.c2}
Let $\mathcal{H}^{dm}$ be the Hopf algebra of delta-matroids from Definition~\ref{d.hopfdm}, and $\mathcal{H}^{m}$ be the Hopf algebra of matroids  from Definition~\ref{d.hm}. Then the inclusion $\phi: \mathcal{H}^{m} \rightarrow \mathcal{H}^{dm}$ is a Hopf algebra morphism.  Furthermore it naturally induces the  identity $T_M(x,y) = \tilde{R}_D(x,y)$.
\end{corollary}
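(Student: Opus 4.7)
The plan is to deduce the corollary from Theorem~\ref{tgen}, in the same way that Corollary~\ref{s2.t1.c2} was derived, together with the explicit closed forms of $\alpha_{\mathcal{H}^m}$ from Theorem~\ref{t.mat} and $\alpha_{\mathcal{H}^{dm}}$ from Theorem~\ref{s2.t3}. The map $\phi$ sends a matroid $M=(E,r)$ to the delta-matroid whose feasible sets are exactly the bases of $M$.

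First I would verify that $\phi$ is a Hopf algebra morphism. On the minors-system level this amounts to checking: (i) the direct sum of matroids, viewed as a delta-matroid, is the direct sum of the delta-matroids, which follows because bases of $M\oplus M'$ are unions of a basis of $M$ with a basis of $M'$; (ii) matroid deletion agrees with delta-matroid deletion; and (iii) matroid contraction agrees with delta-matroid contraction. The content of (ii) and (iii) is routine when $e$ is neither a loop nor a coloop. The edge cases are handled by the matroid-theoretic fact that deletion and contraction of a loop (respectively a coloop) coincide, which matches the delta-matroid convention $D/e:=D\ba e$ for loops and coloops. Once this is done, preservation of the (co)unit and multiplicativity are immediate, and compatibility with the coproduct follows from compatibility with deletion and contraction exactly as in the proof of Lemma~\ref{t.gr.l1}.

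Second I would identify the selectors. The two elements of $\mathcal{H}^m_1$ are $U_{1,1}$ and $U_{0,1}$; the former (a coloop) has the single basis $\{e\}$, so $\phi(U_{1,1})=D_c$, while the latter (a loop) has the single basis $\emptyset$, so $\phi(U_{0,1})$ is the delta-matroid with feasible set $\{\emptyset\}$, namely $D_o$. This last identification uses Lemma~\ref{s2.l9}: the element of $D_o$ is a ribbon loop (it is not in the unique minimum feasible set $\emptyset$), and it is orientable since $D_o\ast e=D_c$ whose minimum feasible set does contain $e$. Consequently, with $\delta_{\mathcal{H}^{dm}}$ as in \eqref{e.BRdelta1} and $a_3=\sqrt{a_1a_2}$ by the uniformity constraint, we obtain
\[
\delta_{\mathcal{H}^{dm}}(x_1,x_2,\sqrt{x_1x_2})\circ\phi\;=\;\delta_{\mathcal{H}^m}(x_1,x_2),
\]
since the right-hand side evaluates $U_{1,1}\mapsto x_1$ and $U_{0,1}\mapsto x_2$, and the left-hand side does the same via $\phi(U_{1,1})=D_c$ and $\phi(U_{0,1})=D_o$.

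Third I would apply Theorem~\ref{tgen} to conclude
\[
\alpha_{\mathcal{H}^m}(x_1,x_2,y_1,y_2)(M)\;=\;\alpha_{\mathcal{H}^{dm}}(x_1,x_2,\sqrt{x_1x_2},y_1,y_2,\sqrt{y_1y_2})(\phi(M)).
\]
Now invoke Theorem~\ref{t.mat} on the left side and Theorem~\ref{s2.t3} on the right side. For a matroid $M$ viewed as a delta-matroid $D=\phi(M)$, all feasible sets are equicardinal, so $D_{\max}=D_{\min}=M$ and $\rho(D)=r(M)$; therefore the two prefactors $x_1^{r(M)}y_2^{|E|-r(M)}$ and $x_1^{\rho(D)}y_2^{|E|-\rho(D)}$ are identical. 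Cancelling them yields $T_M(\tfrac{y_1}{x_1}+1,\tfrac{x_2}{y_2}+1)=\tilde R_D(\tfrac{y_1}{x_1}+1,\tfrac{x_2}{y_2}+1)$, and specialising $x_1=y_2=1$, $y_1=x-1$, $x_2=y-1$ gives the desired identity $T_M(x,y)=\tilde R_D(x,y)$. The main (and essentially only) obstacle is the bookkeeping in the first step to confirm that matroid and delta-matroid minors agree on loops and coloops under $\phi$; everything else is an application of the two structure theorems already proved.
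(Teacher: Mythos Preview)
Your proposal is correct and follows essentially the same route as the paper's proof: verify $\phi$ is a Hopf algebra morphism, check $\delta_{\mathcal{H}^{dm}}\circ\phi=\delta_{\mathcal{H}^m}$ on the degree-one generators (the paper phrases this as ``$D_n$ is not a matroid, so it never appears''), apply Theorem~\ref{tgen}, and then compare the closed forms from Theorems~\ref{t.mat} and~\ref{s2.t3} using $\rho(M)=r(M)$ for matroids. Your detour through Lemma~\ref{s2.l9} to justify the name of $D_o$ is unnecessary---the identification $\phi(U_{0,1})=D_o$ follows directly from the definitions---but it does no harm.
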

\begin{proof}
That the map  is a Hopf algebra morphism follows readily from the fact that delta-matroids  restrict to matroids in a way compatible with  the standard constructions of deletion, contraction, etc. (recall a matroid is a delta-matroid).   

In Theorem~\ref{tgen} let $\mathcal{H}=\mathcal{H}^{m}$,  $\mathcal{H'}=\mathcal{H}^{dm}$,
 $\delta_{\mathcal{H}}$ be the selector used in Theorem~\ref{t.mat}, and 
 $\delta_{\mathcal{H'}}$ be the selector used in Theorem~\ref{s2.t3}. 
Then since matroids are closed under deletion and contraction and  $D_n$ is not a matroid, $D_n$  will never appear as a term in $\delta_{\mathcal{H}}^{(k)} (M)$. Therefore
 $  \delta_{\mathcal{H}}(x_1,x_2) ( M )  =  \delta_{\mathcal{H}'}( x_1,x_2,\sqrt{x_1x_2}) (M) $.
  By Theorem~\ref{tgen}, it follows that 
 \[ \alpha_{\mathcal{H}}(  x_1,x_2, y_1,y_2) ( M)  =   \alpha_{\mathcal{H}'}( x_1,x_2, \sqrt{x_1x_2},y_1,y_2,\sqrt{y_1y_2})  (M) .\]
 Theorems~\ref{t.mat} and~\ref{s2.t3} give 
 \[  
  x_1^{r(M)} y_2^{|E|-r(M)}    T_{M}\left(  \frac{y_1}{x_1}+1, \frac{x_2}{y_2}+1  \right)
=
  x_1^{ \rho(M)}   y_2^{|E|- \rho(M)} \tilde{R}_M\left(\frac{y_1}{x_1}+1, \frac{x_2}{y_2} +1\right),
      \] 
 from which the result follows upon noting that if $M$ is a matroid, $r(M)=\rho(M)$.
\end{proof}

\subsection{Bollob\'as and Riordan's ribbon graph polynomial}\label{s.BRrg}
We use standard ribbon graph terminology following \cite{EMMbook}.
For a ribbon graph $G=(V,E)$   we set $e(G):=|E|$, $v(G):=|V|$, $c(G)$ to be is its number of components,  $f(G)$ its number of boundary components, and $\gamma (G)$ its Euler genus (which is twice its genus if it is orientable and its genus if it is not). The rank of $G$ is $r(G):=v(G)-c(G)$.
\emph{Euler's formula} gives that $v(G)-e(A)+f(A)=2c(A)-\gamma(A)$.

The \emph{Bollob\'as-Riordan polynomial} of \cite{BR02} is defined as
\begin{equation}\label{dhdf}
R_G(x,y,z) := \sum_{A \subseteq E}   (x-1)^{r( G ) - r( A )}   y^{|A|-r(A)} z^{\gamma(A)}. 
\end{equation}
In this section we focus on the  \emph{2-variable Bollob\'as-Riordan polynomial} 
\begin{equation}\label{gjs}
\widetilde{R}_G( x,y):= \sum_{A \subseteq E}   (x-1)^{\rho( G ) - \rho( A )}   (y-1)^{|A|-\rho(A)},
\end{equation}
where  
\begin{equation}\label{e.rgrho}
\rho(A) := \tfrac{1}{2}\left(|A|+ v(A)-f(A)\right),
\end{equation}
 and  $ \rho( G ):=\rho( E ) $.

Euler's formula can be used to relate the two versions of the  Bollob\'as-Riordan polynomial: 
\[
\widetilde{R}_G(x+1,y+1)=x^{\frac{1}{2} \gamma(G)}R_G(x+1,y,1/\sqrt{xy}).
\]

We turn to the Hopf algebra of ribbon graphs. As was the case with graphs, to ensure a single element of graded dimension zero in the Hopf algebra we work with equivalence classes of ribbon graphs. For this,  let $G=(V,E)$ be a ribbon graph, $v\in V$, and $P$ and $Q$ be non-trivial ribbon subgraphs of $G$. Then $G$ is said to be the \emph{join} of $P$ and $Q$, written $P\vee Q$, if $G=P\cup Q$ and $P\cap Q=\{v\}$ and if there exists an arc on $v$ with the property that all edges of $P$ incident to $v$ meet it there, and none of the edges of $Q$ do. Note that the parameters $\gamma$, $r$, $e$, and $\rho$ are invariant under joins and disjoint unions (but $v$ and $c$ are not).  

We omit the straightforward proof of the following lemma.
\begin{lemma}\label{l.rgmin}
The set of equivalence classes of ribbon graphs considered up to joins and isomorphism  forms a minor system where the grading is given by the cardinality of the edge set, deletion and contraction are given by ribbon graph deletion and contraction, and multiplication is given by direct sum.
\end{lemma}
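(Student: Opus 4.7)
The plan is to check the two parts of Definition~\ref{d.1} directly and then verify the additional multiplicative hypotheses needed in Proposition~\ref{p.1}. The argument is in the same spirit as Lemma~\ref{l.hg} for graphs, but with joins playing the role of 1-sums and with the local topological description of ribbon-graph contraction (as in Table~\ref{tablecontractrg}) in place of the usual graph contraction.

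First I would check that the grading is well-defined: joins and isomorphisms both preserve the edge count, so setting $\mathcal{S}_n$ to be the equivalence classes $[G]$ with $|E(G)|=n$ is unambiguous, and ribbon graph deletion and contraction each remove exactly one edge, giving $E(G\del e)=E(G\con e)=E(G)\setminus\{e\}$ as required. For the uniqueness of an element in $\mathcal{S}_0$, the argument mirrors the one for $\mathcal{H}^g$: any edgeless ribbon graph is a disjoint collection of vertex discs, and under the join equivalence these all collapse to a single vertex (one treats each isolated vertex disc as joinable to any other vertex along a trivial arc, exactly as 1-sums absorb isolated vertices in the graph case). Thus $\mathcal{S}_0$ consists of a single class, which will serve as the unit.

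Next I would verify the three commutation relations for distinct edges $e\neq f$. Since ribbon-graph deletion only removes the interior of the disc associated with $e$, it plainly commutes with deletion of $f$. For $(G\con e)\del f=(G\del f)\con e$ and $(G\con e)\con f=(G\con f)\con e$, the key observation is that contraction is local: it modifies only a regular neighbourhood $e\cup u\cup v$ of $e$ together with its boundary on the vertices $u,v$. When $f$ is disjoint from this neighbourhood, the operations clearly commute; when $f$ shares a vertex with $e$, one checks directly from the three cases in Table~\ref{tablecontractrg} that performing the two operations in either order produces homeomorphic ribbon graphs (preserving the cyclic orders at the resulting vertices). This is a routine case analysis.

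Finally, for the hypotheses of Proposition~\ref{p.1}, disjoint union is associative and commutative on equivalence classes, the single-vertex class is a two-sided unit, and $E(G\sqcup H)=E(G)\sqcup E(H)$. Compatibility with the minor operations, $m(G_1\del A_1\otimes G_2\del A_2)=m(G_1\otimes G_2)\del(A_1\sqcup A_2)$ and likewise for contraction, is immediate since each edge is deleted or contracted in its own connected component and the operations are local. I expect the only genuinely subtle point to be the treatment of isolated vertices produced by ribbon-graph contraction (for instance, contracting an orientable loop creates an extra vertex component), which is exactly where the join equivalence is essential in order to keep $\mathcal{S}_0$ of cardinality one and to make the multiplication well-defined on classes; once this is set up, the remainder of the verification is a direct check.
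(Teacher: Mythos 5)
The paper gives no proof of this lemma (it is explicitly omitted as straightforward), and your direct verification of the axioms of Definition~\ref{d.1} together with the multiplicativity hypotheses of Proposition~\ref{p.1} is exactly the routine argument the authors intend, so your approach matches the paper's. The one point you leave implicit is that deletion and contraction are well defined on join-equivalence classes themselves (minors of $G_1\vee G_2$ agree, up to joins, with minors of $G_1\sqcup G_2$, including when the edge in question meets the join vertex); this is a local check of the same routine kind as the commutation relations you verify and does not affect correctness.
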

 For convenience we usually identify a ribbon graph with its equivalence class. 
\begin{definition}\label{d.hopfrg}
Let  $\mathcal{H}^{rg}$  denote the  Hopf algebra associated with ribbon graphs via  Lemma~\ref{l.rgmin} and Proposition~\ref{p.1}. 
Its coproduct is given by 
$
 \Delta_{rg}(G)=\sum_{A\subseteq E(G)}D\backslash A^c  \otimes  D/ A
$,
 where the deletion and contraction are ribbon graph deletion and contraction.
\end{definition}

There are exactly three elements of $\mathcal{H}^{rg} $ of graded dimension 1. Accordingly we set
   \begin{equation}\label{e.rgic}
\delta_{\mathrm{b}} (G) =\begin{cases} 1 &\mbox{if } 
G =  \raisebox{-1.6mm}{\includegraphics[scale=0.15]{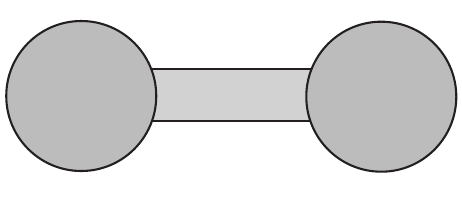}}, \\
0 & \mbox{otherwise} ;\end{cases} 
\quad \quad  
\delta_{\mathrm{o}} (G) =\begin{cases} 1 &\mbox{if } 
G = \raisebox{4.5mm}{\rotatebox{-90}{\includegraphics[scale=0.2]{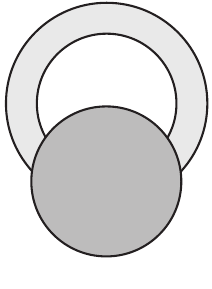}}} ,
\\
0 & \mbox{otherwise}; \end{cases}
\quad \quad  
\delta_{\mathrm{n}} (G) =\begin{cases} 1 &\mbox{if } 
G =\raisebox{4.5mm}{\rotatebox{-90}{\includegraphics[scale=0.2]{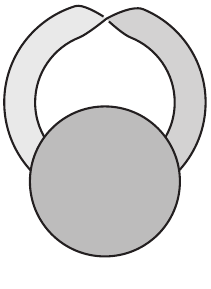}}} ,
\\
0 & \mbox{otherwise}.\end{cases}
  \end{equation}  
Then 
\begin{equation}\label{e.rgbr1}
 \delta_{\mathbf{a}}= \delta(a_1,a_2,a_3) := a_1\delta_{b} + a_2\delta_{o} +a_3\delta_{n}.
  \end{equation}
By considering the ribbon graph that describes a graph with one vertex and two loops on a Klein bottle, it can be seen that  $\delta_{\mathbf{a}}$ is not uniform unless $a_3=\sqrt{a_1a_2}$. The following theorem will show that the converse holds, so    $\delta_{\mathbf{a}}$ is  uniform if and only if $a_3=\sqrt{a_1a_2}$.

\begin{theorem}\label{t.rgbr1}
The 2-variable Bollob\'as-Riordan polynomial arises as the Tutte polynomial of the  Hopf algebra $\mathcal{H}^{rg}$, 
 \[
 \alpha( \mathbf{a}, \mathbf{b}) ( G)  = x_1^{ \rho(G)}   y_2^{|E|- \rho(G)} \tilde{R}_G\left(\frac{y_1}{x_1}+1, \frac{x_2}{y_2} +1\right),
 \]
where $\mathbf{a} = ( x_1,x_2,\sqrt{x_1x_2})$, $\mathbf{b}  =(y_1,y_2,\sqrt{y_1y_2})$, and $E=E(G)$. 
\end{theorem}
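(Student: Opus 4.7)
The plan is to derive Theorem~\ref{t.rgbr1} as a consequence of the delta-matroid version, Theorem~\ref{s2.t3}, by exhibiting a Hopf algebra morphism $\phi:\mathcal{H}^{rg}\rightarrow\mathcal{H}^{dm}$ and invoking Theorem~\ref{tgen}. The morphism is the map $\phi(G)=D(G)$ sending a ribbon graph to its associated delta-matroid (whose feasible sets are the edge sets of the spanning quasi-trees of $G$, per the paper of Chun--Moffatt--Noble--Rueckriemen cited in Section~\ref{s.BRdm}). First I would check that $\phi$ is well-defined on equivalence classes (joins do not change the delta-matroid of a ribbon graph) and multiplicative with respect to direct sum/disjoint union. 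The crucial compatibility is that delta-matroid deletion and contraction commute with ribbon-graph deletion and contraction: $D(G\ba e)=D(G)\ba e$ and $D(G\con e)=D(G)\con e$. Both identities are standard for $D(G)$ and can be found in the cited delta-matroid literature; once they are in hand, compatibility with the coproduct $\Delta_{rg}$ is automatic.

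Next I would verify the selectors agree under $\phi$. The three ribbon graphs of graded dimension one (a bridge, an orientable loop, a non-orientable loop) are sent by $\phi$ to the three one-element delta-matroids $D_c$, $D_o$, $D_n$, respectively. Indeed, a bridge gives a coloop in $D(G)$, an orientable loop gives the delta-matroid $D_o$ (feasible set $\{\emptyset\}$), and a non-orientable loop gives $D_n$ (feasible sets $\{\emptyset,\{e\}\}$), matching the defining conditions in \eqref{e.dm.del}. Consequently, with $\mathbf{a}=(x_1,x_2,\sqrt{x_1x_2})$ the selector of \eqref{e.rgbr1} satisfies $\delta_{\mathcal{H}^{rg},\mathbf{a}}=\delta_{\mathcal{H}^{dm},\mathbf{a}}\circ\phi$. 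Since the delta-matroid selector is uniform (by Theorem~\ref{s2.t3}), Theorem~\ref{tgen} gives at once that $\delta_{\mathcal{H}^{rg},\mathbf{a}}$ is uniform and that
\[
\alpha_{\mathcal{H}^{rg}}(\mathbf{a},\mathbf{b})(G)=\alpha_{\mathcal{H}^{dm}}(\mathbf{a},\mathbf{b})(D(G)).
\]

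To finish, I would translate the right-hand side into ribbon-graph language. Theorem~\ref{s2.t3} evaluates the right-hand side as $x_1^{\rho(D(G))}y_2^{|E|-\rho(D(G))}\widetilde{R}_{D(G)}\bigl(\tfrac{y_1}{x_1}+1,\tfrac{x_2}{y_2}+1\bigr)$, so what remains is to identify the delta-matroid rank $\rho$ on $D(G)$ with the ribbon-graph quantity $\rho(G)=\tfrac12(|E|+v(G)-f(G))$ of \eqref{e.rgrho}, and similarly for all spanning subgraphs $A\subseteq E$. This identity $\rho_{D(G)}(A)=\tfrac12(|A|+v(A)-f(A))$ is precisely the content of the result that the delta-matroid Bollob\'as--Riordan polynomial of $D(G)$ coincides with the ribbon graph Bollob\'as--Riordan polynomial, established in~\cite{CMNR1}; substituting it yields exactly the stated formula.

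The main obstacle I would expect is purely bookkeeping: carefully checking that $\phi$ respects the quotient by joins on the ribbon-graph side and the isomorphism identification on the delta-matroid side, and then producing clean references (or short direct arguments) for the two compatibilities $D(G\ba e)=D(G)\ba e$ and $D(G\con e)=D(G)\con e$, especially when $e$ is an orientable loop (where ribbon graph contraction creates an extra component but the delta-matroid contraction collapses to deletion). Once these are secured, the theorem is essentially a formal consequence of Theorems~\ref{tgen} and~\ref{s2.t3}, and as a by-product the ``only if'' direction of uniformity of $\delta_{\mathbf{a}}$ noted after \eqref{e.rgbr1} follows from the example of Equation~\eqref{fie}.
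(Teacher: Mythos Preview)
Your proposal is correct and follows essentially the same route as the paper: the paper also constructs the Hopf algebra morphism $\phi(G)=D(G)$ (its Lemma~\ref{t.rgbr.l1}), checks that the selectors correspond under $\phi$, applies Theorems~\ref{tgen} and~\ref{s2.t3}, and then finishes by proving $\rho_{D(G)}(A)=\rho_G(A)$. The only minor difference is that the paper carries out this last identity by a direct rank computation using $D(G)_{\min}=C(G)$ and $D(G)_{\max}=B(G^*)$ rather than citing it from~\cite{CMNR1}.
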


We proceed as we did in Section~\ref{s.gr} when we recovered the Tutte polynomial for graphs from that for matroids via a Hopf algebra map.  A \emph{quasi-tree} is a ribbon graph with exactly one boundary component, so $f(G)=1$. 
 If $G$ is a ribbon graph let $D(G):=(E, \mathcal{F}(G))$, where $\mathcal{F}(G)$ consists of the edge sets of the spanning subgraphs of $G$ that restrict to a quasi-tree in each connected component of $G$, i.e.,     $\mathcal{F}(G):= \{ A\mid f(A)=c(G)  \} $.  It was shown in \cite{ab2,CMNR1} that  $D(G)$ is a delta-matroid.

\begin{lemma}\label{t.rgbr.l1}
There is a natural Hopf algebra morphism $\phi: \mathcal{H}^{rg}\rightarrow \mathcal{H}^{dm}$ given by $\phi:G\rightarrow D(G)$. 
\end{lemma}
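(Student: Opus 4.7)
The plan is to mirror the proofs of Lemma~\ref{t.gr.l1} and Lemma~\ref{s2.l5}: I need to check that $\phi$ is well defined on join/isomorphism equivalence classes, that it is multiplicative and unit-preserving, that it respects the counit, and finally (the main content) that it intertwines the two coproducts. The first three items are essentially bookkeeping; the coproduct identity rests on the fact that delta-matroid minors agree with ribbon graph minors.

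For well-definedness, recall $\mathcal{F}(G)=\{A\subseteq E(G)\mid f(A)=c(G)\}$. If $G=P\vee Q$ at a vertex $v$, then by definition of the join the edges of $P$ and of $Q$ meet $v$ on disjoint arcs, so for any $A_P\subseteq E(P)$ and $A_Q\subseteq E(Q)$ the boundary components of the spanning subgraph on $A_P\cup A_Q$ are exactly the disjoint union of the boundary components of the spanning subgraphs on $A_P$ and $A_Q$. Therefore $f(A_P\cup A_Q)=f(A_P)+f(A_Q)$ and $c(P\vee Q)=c(P)+c(Q)-1$ while $c(P\sqcup Q)=c(P)+c(Q)$, and a short count shows that in both cases $A_P\cup A_Q$ is feasible in $D(P\vee Q)$ (resp.\ $D(P\sqcup Q)$) if and only if $A_P$ is feasible in $D(P)$ and $A_Q$ in $D(Q)$. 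Hence $D(P\vee Q)=D(P\sqcup Q)=D(P)\oplus D(Q)$, so $\phi$ is constant on each $\vee$-equivalence class, and it is obviously constant on isomorphism classes; this gives a well-defined map.

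For multiplicativity, the identity $D(G\sqcup H)=D(G)\oplus D(H)$ just observed is precisely $\phi(m_{rg}(G\otimes H))=m_{dm}(\phi(G)\otimes \phi(H))$. The empty ribbon graph maps to the empty delta-matroid, so the unit and counit are preserved; the grading is preserved since $|E(G)|=|E(D(G))|$.

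For the coproduct, I would invoke the result of \cite{CMNR1} (cited in Section~\ref{s.BRdm}) stating that $D(G\backslash e)=D(G)\backslash e$ and $D(G/e)=D(G)/e$ for every $e\in E(G)$, hence $D(G\backslash A)=D(G)\backslash A$ and $D(G/A)=D(G)/A$ for every $A\subseteq E(G)$. Since $\phi$ respects the grading, the indexing sets of the coproducts match, and
\begin{align*}
(\phi\otimes\phi)\,\Delta_{rg}(G)
 &=\sum_{A\subseteq E(G)}D(G\backslash A^c)\otimes D(G/A)\\
 &=\sum_{A\subseteq E(G)}D(G)\backslash A^c\otimes D(G)/A
 \;=\;\Delta_{dm}(D(G))\;=\;\Delta_{dm}(\phi(G)),
\end{align*}
as required. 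The main obstacle is really the compatibility of delta-matroid minors with ribbon graph minors, but this is established in \cite{CMNR1}, so invoking it makes the verification routine; once this is in hand Theorem~\ref{tgen} will immediately let us derive Theorem~\ref{t.rgbr1} from Theorem~\ref{s2.t3}.
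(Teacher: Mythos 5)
Your overall route is the same as the paper's: well-definedness via $D(P\vee Q)=D(P)\oplus D(Q)$, multiplicativity and (co)unit preservation, and the coproduct identity via the compatibility $D(G\backslash A)=D(G)\backslash A$, $D(G/A)=D(G)/A$ from \cite{CMNR1}. Those last steps, and the final display, are exactly what the paper does. The one place you add detail beyond the paper — the justification of the join identity — contains a false claim: the boundary components of the spanning subgraph of $P\vee Q$ on $A_P\cup A_Q$ are \emph{not} the disjoint union of those of the spanning subgraphs of $P$ on $A_P$ and of $Q$ on $A_Q$, and $f(A_P\cup A_Q)\neq f(A_P)+f(A_Q)$ in general. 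Already for $A_P=A_Q=\emptyset$ the shared vertex contributes one boundary circle, not two; more substantively, if $P$ and $Q$ are each a single vertex with an untwisted loop, then the join is a one-vertex ribbon graph with two non-interleaved orientable loops, which has $f=3$ while $f(A_P)+f(A_Q)=4$ — one boundary component of the join traverses edges of both $P$ and $Q$ through the join vertex. With your stated count the ``short count'' would in fact break in the join case: since $f(A_P)\geq c(P)$ and $f(A_Q)\geq c(Q)$ one could never have $f(A_P)+f(A_Q)=c(P\vee Q)=c(P)+c(Q)-1$, so no set of $P\vee Q$ would be feasible, contradicting the conclusion you want.

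The fix is local and easy: by Euler's formula (Euler genus is additive over a join, the number of components drops by one, vertices drop by one, edges add) one gets $f(A_P\cup A_Q)=f(A_P)+f(A_Q)-1$, and since also $c(P\vee Q)=c(P)+c(Q)-1$, the feasibility condition $f(A_P\cup A_Q)=c(P\vee Q)$ is equivalent to $f(A_P)+f(A_Q)=c(P)+c(Q)$, which (using $f\geq c$ componentwise) holds if and only if $A_P$ and $A_Q$ are feasible in $D(P)$ and $D(Q)$ respectively. With that correction your argument gives $D(P\vee Q)=D(P\sqcup Q)=D(P)\oplus D(Q)$, and the rest of your proof agrees with the paper's.
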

\begin{proof}
Since $D(G\vee H)=D(G)\oplus D(H)$, $\phi$ is well-defined. 
It is easily seen that $\phi$ is multiplicative, and sends the (co)unit to the (co)unit.
It was shown in \cite{CMNR1} that $D(G)/A=D(G/A)$ and  $D(G)\backslash A=D(G\backslash A)$, giving
$  D(\Delta_{rg}(G))  =    \sum_{A\subseteq E} D(G\backslash  A^c)  \otimes  D(G/A)  =  \sum_{A\subseteq E} D(G)\backslash  A^c \otimes  D(G)/A  = \Delta_{dm}(D(G)) $.
\end{proof}
We will use $\phi$ to identify the Tutte polynomial of $\mathcal{H}^{rg}$.

\begin{proof}[Proof of Theorem~\ref{t.rgbr1}]
Upon verifying that $ \delta_{b}(G)=  \delta'_{c}(D(G))$, $ \delta_{o}(G)=  \delta'_{o}(D(G))$,  and $ \delta_{n}(G)=  \delta'_{n}(C(G))$, where the primed $\delta$'s are those of Equation~\eqref{e.dm.del}, Theorems~\ref{tgen} and~\ref{s2.t3} give
\[ \alpha(\mathbf{a}, \mathbf{b}) ( G) = x_1^{ \rho(D(G))}   y_2^{|E|- \rho(D(G))} 
\sum_{A\subseteq E }\frac{y_1}{x_1}^{\rho_{D(G)}(E)-\rho_{D(G)}(A)}\frac{x_2}{y_2}^{|A|-\rho_{D(G)}(A)},\]
where $E=E(D(G))$.
It remains to show that for any $A\subseteq E$,
\begin{equation}\label{e.t.rgbr1}\rho_{D(G)}(A) =  \rho_{G}(A).\end{equation}  It was shown in \cite{ab2,CMNR1} that 
 $D(G)_{\min}=C(G)$ and  $D(G)_{\max}=B(G^*)=(C(G^*))^*$. Then using the rank functions for cycle matroids and dual matroids we get 
$2 \rho_{D(G)}(A)  =     2 \rho (D(G)\backslash A^c)   
=  r_{\max}( D(G)\backslash A^c  )   +  r_{\min}( D(G)\backslash A^c  )  
 =   r_{\max}( D(G\backslash A^c)  )   +  r_{\min}( D(G\backslash A^c ) )   
 =     |A| -     r((G\backslash A^c)^*)  + r(G\backslash A^c)  
 = |A|  -  v((G\backslash A^c)^*) +  c((G\backslash A^c)^*)  + v(G\backslash A^c) - c(G\backslash A^c)
= |A|  + v(G\backslash A^c) -   f(G\backslash A^c) 
=2 \rho (G\backslash A^c)
=2 \rho_G (A),$
and the result follows.
\end{proof}

Note that in the proof of Theorem~\ref{t.rgbr1} we have shown that 
\begin{equation}\label{t.rgbr.e}
\widetilde{R}_G(x,y) = \widetilde{R}_{D(G)}(x,y),
\end{equation}
and that this identity is naturally induced by the Hopf algebra morphism $\phi$ of Lemma~\ref{t.rgbr.l1}.

\begin{corollary}\label{t.rgbr1.c1}
 \begin{equation}
    \tilde{R}_G(x,y) = \alpha( 1, y-1,\sqrt{y-1} ,x-1,1,\sqrt{x-1}) ( G ).
 \end{equation}
\end{corollary}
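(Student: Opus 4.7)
The plan is to obtain the stated specialisation directly from Theorem~\ref{t.rgbr1}, which expresses $\alpha(\mathbf{a},\mathbf{b})(G)$ as a weighted evaluation of $\tilde{R}_G$. First I would verify that the proposed values $\mathbf{a}=(1,y-1,\sqrt{y-1})$ and $\mathbf{b}=(x-1,1,\sqrt{x-1})$ lie in the admissible range of Theorem~\ref{t.rgbr1}, i.e.\ they have the form $\mathbf{a}=(x_1,x_2,\sqrt{x_1 x_2})$ and $\mathbf{b}=(y_1,y_2,\sqrt{y_1 y_2})$ required for uniformity of the selector. Taking $x_1=1$, $x_2=y-1$, $y_1=x-1$, $y_2=1$, this is immediate.

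Next I would simply substitute into the formula
\[
 \alpha(\mathbf{a},\mathbf{b})(G) = x_1^{\rho(G)} y_2^{|E|-\rho(G)} \tilde{R}_G\!\left(\tfrac{y_1}{x_1}+1,\; \tfrac{x_2}{y_2}+1\right).
\]
With the chosen values the prefactor reduces to $1^{\rho(G)}\cdot 1^{|E|-\rho(G)} = 1$, and the arguments of $\tilde{R}_G$ become $\tfrac{x-1}{1}+1 = x$ and $\tfrac{y-1}{1}+1 = y$. Hence $\alpha(1,y-1,\sqrt{y-1},x-1,1,\sqrt{x-1})(G) = \tilde{R}_G(x,y)$, which is exactly the claim.

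There is no real obstacle here: the corollary is a routine specialisation of Theorem~\ref{t.rgbr1}, chosen precisely so that the $x_1^{\rho(G)} y_2^{|E|-\rho(G)}$ prefactor trivialises and the arguments of $\tilde{R}_G$ collapse to $(x,y)$. The only point worth checking carefully is that the third coordinates $\sqrt{x_1 x_2}$ and $\sqrt{y_1 y_2}$ are chosen consistently with the uniformity constraint noted just before Theorem~\ref{t.rgbr1}; with $a_3=\sqrt{a_1 a_2}$ and $b_3=\sqrt{b_1 b_2}$ they are, so the theorem applies directly and the proof is a one-line substitution.
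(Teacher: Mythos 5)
Your substitution is correct and is exactly how the paper intends the corollary to be read: it is stated without proof as the immediate specialisation of Theorem~\ref{t.rgbr1} with $x_1=1$, $x_2=y-1$, $y_1=x-1$, $y_2=1$, where the prefactor trivialises and the arguments become $(x,y)$. Your check that the third coordinates respect the $\sqrt{a_1a_2}$, $\sqrt{b_1b_2}$ uniformity constraint is the right point to verify, and it holds.
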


\begin{corollary}\label{t.rgbr1.c2}
Let $\mathcal{H}^{prg}$ be the Hopf subalgebra of $\mathcal{H}^{rg}$ generated by plane graphs (i.e., ribbon graphs of genus 0), and $\mathcal{H}^{g}$ be the Hopf algebra of graphs from Definition~\ref{d.hg}.  Then 
the  projection $\phi: \mathcal{H}^{prg} \rightarrow \mathcal{H}^{g}$ defined by setting $ \phi (G)$ to be the underlying graph of $G$ is a Hopf algebra morphism.   Furthermore it naturally induces the  identity $T_{G}(x,y) =   \tilde{R}_G(x,y)$.
\end{corollary}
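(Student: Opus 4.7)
The proof applies Theorem~\ref{tgen} once I verify that $\phi$ is a Hopf algebra morphism. Well-definedness modulo the equivalences, multiplicativity, and compatibility with unit and counit are straightforward: the underlying graph of a ribbon graph join $G\vee H$ is the 1-sum $\phi(G)\oplus_1 \phi(H)$, and $\phi$ carries disjoint unions to disjoint unions. The main point is to show $\phi$ intertwines the coproducts. Ribbon deletion coincides with graph deletion edge-by-edge, so $\phi(G\backslash A^c)=\phi(G)\backslash A^c$. For contraction, since contractions commute on distinct edges, it suffices to treat contraction of a single edge $e$ of a plane ribbon graph $G$. For a non-loop edge, ribbon and graph contraction both identify the two endpoints of $e$ and remove $e$. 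For a loop (necessarily orientable in a plane ribbon graph, cf.\ Table~\ref{tablecontractrg}), ribbon contraction splits the incident vertex $v$ into two vertices $v_1,v_2$, distributing the remaining edges at $v$ according to the two cyclic arcs cut off by $e$, while graph contraction simply removes $e$ and keeps $v$. The Jordan curve theorem applied to the closed curve $e\cup\{v\}$ in the sphere shows that in the planar embedding the two cyclic arcs at $v$ lie on opposite sides of this curve; consequently every path in $G\backslash e$ from one side to the other must pass through $v$. It follows that $v_1$ and $v_2$ lie in distinct connected components of $\phi(G/e)$ (with one possibly isolated), and a single 1-sum identifying $v_1$ with $v_2$ recovers exactly $\phi(G)/e$. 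Hence $\phi(G/e) = \phi(G)/e$ in $\mathcal{H}^g$.

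Once $\phi$ is known to be a Hopf algebra morphism, the polynomial identity is immediate. The selector \eqref{e.rgbr1} on $\mathcal{H}^{prg}$ reduces to $a_1\delta_b+a_2\delta_o$, since plane ribbon graphs admit no non-orientable loops; and $\phi$ carries ribbon bridges to graph bridges and orientable ribbon loops to graph loops, matching the selector \eqref{g.selector} on $\mathcal{H}^g$. Thus $\delta_{\mathcal{H}^{prg},\mathbf{a}} = \delta_{\mathcal{H}^g,\mathbf{a}'}\circ\phi$ with $\mathbf{a}=(x_1,x_2,\sqrt{x_1x_2})$ and $\mathbf{a}'=(x_1,x_2)$ (and similarly for $\mathbf{b}$), so Theorem~\ref{tgen} yields
\[ \alpha_{\mathcal{H}^{prg}}(x_1,x_2,\sqrt{x_1x_2},y_1,y_2,\sqrt{y_1y_2})(G) = \alpha_{\mathcal{H}^g}(x_1,x_2,y_1,y_2)(\phi(G)). \]
Substituting the evaluations from Theorems~\ref{t.rgbr1} and~\ref{t.gr}, and noting that Euler's formula gives $\rho(A)=r(A)+\tfrac12\gamma(A)=r(A)$ for every spanning subgraph of a plane $G$, one reads off $\widetilde{R}_G(x,y)=T_{\phi(G)}(x,y)$.

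The hard part is the loop case of the coproduct check: one must see that the vertex-splitting produced by ribbon contraction of an orientable loop is always absorbed by the 1-sum equivalence in $\mathcal{H}^g$. This relies essentially on planarity via the Jordan curve theorem, and fails for ribbon graphs of positive genus—consistent with the fact that $\widetilde{R}_G\neq T_G$ in general.
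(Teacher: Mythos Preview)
Your proof is correct and follows essentially the same approach as the paper: both verify that $\phi$ commutes with contraction of a loop by using planarity to show the two vertices produced by ribbon contraction of an orientable loop land in separate components (you via the Jordan curve theorem, the paper via the equivalent ``no interlacing cycle'' observation), then invoke Theorem~\ref{tgen} together with Theorems~\ref{t.rgbr1} and~\ref{t.gr} and Euler's formula. The only cosmetic difference is that the paper resolves the disjoint-union-versus-join equivalence on the ribbon graph side in $\mathcal{H}^{prg}$ before applying $\phi$, whereas you resolve the corresponding 1-sum equivalence on the graph side in $\mathcal{H}^g$ after applying $\phi$.
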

\begin{proof}
Clearly $\phi(G\backslash e) =  \phi(G)\backslash e$, for each $e\in E(G)$. If $e$ is not a loop then it is also clear that $\phi(G/ e) =  \phi(G)/ e$. If $e$ is a loop then, since $G$ is plane,  there can be  no cycle $C$ of $G$ interlaced with $e$ (i.e, at the vertex $v$ which meets $e$, no cycle $C$ appears in the cyclic order $eCeC$ at that vertex). Thus $G/e = G_1 \sqcup G_2= G_1 \vee G_2$, where we have used the fact that elements of $\mathcal{H}^{prg}$ are considered modulo joins. It is then not hard to see that  $\phi(G/ e) = \phi(G_1 \vee G_2) = \phi(G)/e$. From these observations it follows easily that $\phi$  is a Hopf algebra morphism.

To obtain the polynomial identities we apply Theorem~\ref{tgen} to Theorems~\ref{t.rgbr1} and~\ref{t.gr}.
In Theorem~\ref{tgen} let $\mathcal{H}=\mathcal{H}^{prg}$,  $\mathcal{H'}=\mathcal{H}^{g}$,
 $\delta_{\mathcal{H}}$ be the selector used in Theorem~\ref{t.rgbr1}, and 
 $\delta_{\mathcal{H'}}$ be the selector used in Theorem~\ref{t.gr}. 
  By Theorems~\ref{tgen}, \ref{t.rgbr1} and~\ref{t.gr}, 
 \[  
 x_1^{ \rho(G)}   y_2^{|E|- \rho(G)} \tilde{R}_G\left(\frac{y_1}{x_1}+1, \frac{x_2}{y_2} +1\right) =    x_1^{r(\phi(G))} y_2^{|E(\phi(G))|-r(\phi(G))}    T_{\phi(G)}( \frac{y_1}{x_1}+1, \frac{x_2}{y_2}+1).
    \] 
Since $G$ is plane, Euler's formula then gives $v(G)-e(G)+f(G)=2c(G)$. Substituting for $e(G)$ in Equation~\eqref{e.rgrho} gives 
$ \rho(G) = r(\phi(G))$, from which the result follows.
\end{proof}

\subsection{The three-variable Bollob\'as-Riordan polynomial}\label{dasg} 
Here we determine the minor system that gives rise  to the (3-variable) Bollob\'as-Riordan polynomial, \eqref{dhdf}. We will see that $R_G(x,y,z)$ is not  associated with ribbon graphs, but rather ribbon graphs whose vertex set has been partitioned.

A \emph{vertex partitioned ribbon graph}, $(G,\mathcal{P})$ consists of a ribbon graph $G=(V,E)$ and a partition  $\mathcal{P}$ of its vertex set $V$. 
Deletion and contraction for  vertex partitioned ribbon graphs is defined in the natural way.  If $e \in E(G)$, then \emph{deletion} is defined by $(G,\mathcal{P}) \backslash e:=  (G\backslash e,\mathcal{P})$. \emph{Contraction} is defined by $(G,\mathcal{P}) / e:=  (G/ e,\mathcal{P}')$, where the partition $\mathcal{P}'$ is induced by $\mathcal{P}$ as follows. Suppose $e=(u,v)$ and $P_u, P_v\in \mathcal{P}$ are the blocks of the partition containing $u$ and $v$ respectively ($u$ may equal $v$ and the blocks need not be distinct). Then $\mathcal{P}'$ is obtained from $\mathcal{P}$ by removing blocks of the partition $P_u$ and  $P_v$,  and replacing them with the block $(P_u\cup P_v) \backslash \{u,v\} \cup W$ where $W$ is the set of vertices created by the contraction (so $w$ consists of one or two vertices). 

There are two graphs naturally associated with   $(G,\mathcal{P})$. The first is the underlying graph of $G$. The second is obtained by identifying the vertices in the underlying graph of $G$ that belong to each block of $\mathcal{P}$. We denote this graph by $G_{/\mathcal{P}}$. 
As an example, Figure~\ref{f.vprg} shows a  vertex partitioned ribbon graph $(G,\mathcal{P})$, a minor of it, and  $G_{/\mathcal{P}}$.

\begin{figure}
\centering
\subfigure[$(G,\mathcal{P})$. ]{
\labellist \small\hair 2pt
\pinlabel $A$  at 37 159
\pinlabel $A$  at 142 237
\pinlabel $B$ at 77 40
\pinlabel $B$   at 204 40
\pinlabel $C$   at 244 159
\pinlabel $c$   at 60 248
\pinlabel $c$   at 230 265
\pinlabel $c$   at 246 88
\pinlabel $c$   at 137 14
\pinlabel $d$   at 142 129
\endlabellist
\includegraphics[height=25mm]{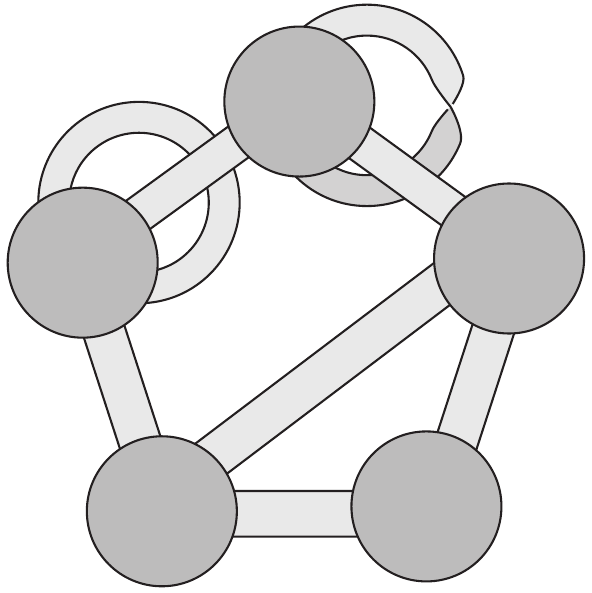}
\label{f.vprg.1}
}
\hspace{1cm}
\subfigure[$(G,\mathcal{P}) \ba X /Y $.]{
\labellist \small\hair 2pt
\pinlabel $A$  at 41 134
\pinlabel $A$  at 134 235
\pinlabel $A$ at 140 46
\pinlabel $B$   at 234 138
\endlabellist
\includegraphics[height=25mm]{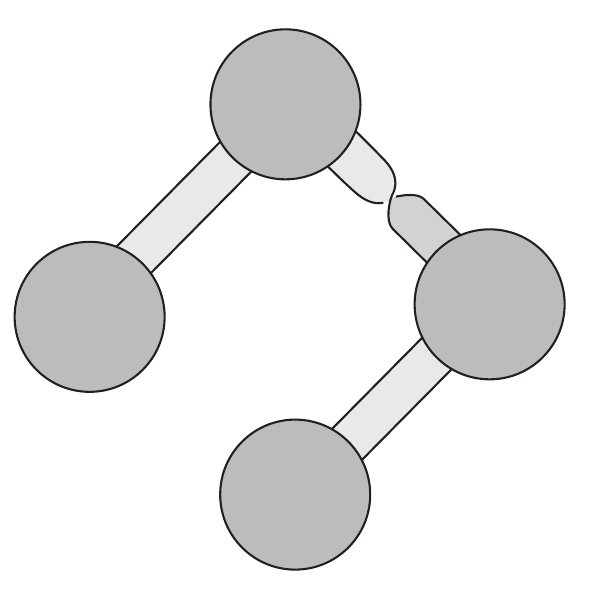}
\label{f.vprg.2}
}
\hspace{1cm}
\subfigure[$G_{/\mathcal{P}}$.]{
\includegraphics[height=25mm]{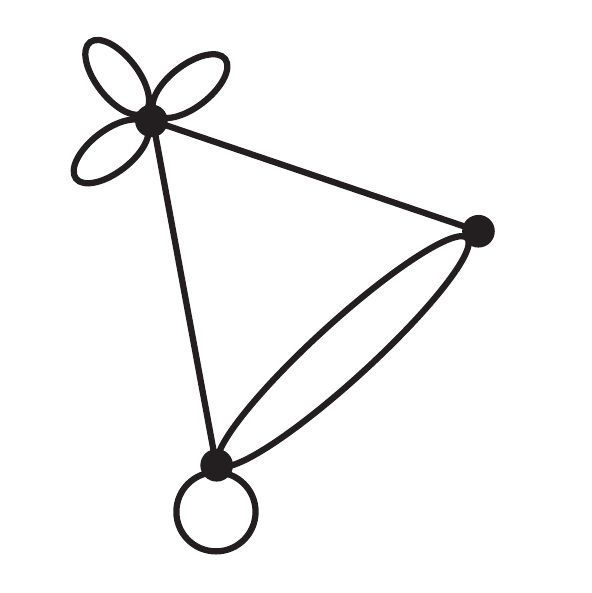}
\label{f.vprg.3}
}

\caption{A vertex partitioned ribbon graph and associated constructions. The letters $A$, $B$ and $C$ indicate the blocks of the partition. $X$ consists of the edge marked $d$, and $Y$ consists of the edges marked $c$.}
\label{f.vprg}
\end{figure}

We say that 
  $(G,\mathcal{P})$ is a the \emph{join} of $(G_1,\mathcal{P}_1)$ and $(G_2,\mathcal{P}_2)$, written $(G_1,\mathcal{P}_1)\vee (G_2,\mathcal{P}_2)$, if $G=G_1\vee G_2$ and, for $i=1,2$, $\mathcal{P}_i$ is the restriction of $\mathcal{P}$ to elements in $V(G_i)$. 
We state the following Lemma without proof.
\begin{lemma}\label{l.vprgms}
The set of equivalence classes of vertex partitioned ribbon graphs considered up to joins and isomorphism  forms a minor system where the grading is given by the cardinality of the edge set, deletion and contraction are given as above, and multiplication is given by  disjoint union. 
\end{lemma}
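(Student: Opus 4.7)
The plan is to verify directly the axioms of Definition~\ref{d.1} together with the multiplicative conditions of Proposition~\ref{p.1}, treating the vertex partition $\mathcal{P}$ as data carried alongside the underlying ribbon graph handled in Lemma~\ref{l.rgmin}. The grading by edge cardinality is invariant under both isomorphism and joins (which preserve $|E|$), and the unique degree-zero class is represented by the edgeless single-vertex graph with trivial partition: any edgeless $(G,\mathcal{P})$ is a join of single-vertex objects, all of which coincide up to equivalence.

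First I would check that deletion and contraction descend to equivalence classes. Compatibility with isomorphism is immediate from the definitions. For joins, if $(G,\mathcal{P}) = (G_1,\mathcal{P}_1)\vee(G_2,\mathcal{P}_2)$ and $e\in E(G_1)$, then $e$ is incident only to vertices of $G_1$, so both $G\ba e$ and $G/e$ again admit a join decomposition in which $(G_2,\mathcal{P}_2)$ is unchanged and the induced partition on the first factor is computed by the rule stated just before the lemma. Hence $(G,\mathcal{P})\ba e\sim ((G_1,\mathcal{P}_1)\ba e)\vee(G_2,\mathcal{P}_2)$, and analogously for contraction. Next I would verify the three commutation identities for distinct $e,f$. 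At the level of underlying ribbon graphs these are the same routine identities that are needed for Lemma~\ref{l.rgmin}. At the level of partitions, deletion is the identity, and contraction replaces the (at most two) blocks containing the endpoints of the contracted edge with a single merged block, substituting the new contraction vertex (or two vertices, in the orientable loop case) for the old endpoints. Since block merging is commutative and associative and the replacement rule depends only on endpoint data, a short case analysis on how $e$ and $f$ share endpoints and on the loop type of each yields all three identities.

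Finally, the multiplicative conditions hold because disjoint union is additive on edge sets and both minor operations are local to the component containing the edge in question, so $m(S_1\ba A_1\otimes S_2\ba A_2) = m(S_1\otimes S_2)\ba (A_1\sqcup A_2)$ and similarly for contraction. The step requiring the most care, and the expected main obstacle, is the bookkeeping for contraction of an orientable loop, where a single vertex splits into two new vertices both assigned to the same block of $\mathcal{P}'$; one must verify that a subsequent contraction of an edge $f$ incident to this original vertex treats the two new vertices consistently with the opposite order of operations. This works because the partition rule refers only to which original vertex is being modified, and because two vertices sitting in a common block remain in a common block after any merge. All other cases reduce essentially to the already-verified ribbon graph commutations plus commutativity of block union.
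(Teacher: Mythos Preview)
The paper states this lemma without proof (the sentence immediately preceding it reads ``We state the following Lemma without proof''), so there is no argument to compare against. Your direct verification of the axioms in Definition~\ref{d.1} together with the multiplicative compatibility conditions of Proposition~\ref{p.1} is exactly the kind of routine check the authors are deferring, and you are right that the orientable-loop contraction, where one vertex becomes two vertices assigned to the same block, is the only case in which the partition bookkeeping is not completely mechanical.

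One sentence deserves a second pass: ``any edgeless $(G,\mathcal{P})$ is a join of single-vertex objects, all of which coincide up to equivalence''. Taken literally this is not quite right. A join of two single-vertex objects is again a single-vertex object, and an edgeless $(G,\mathcal{P})$ with two isolated vertices lying in a \emph{common} block of $\mathcal{P}$ is not a disjoint union of single-vertex partitioned objects either, since the partition does not split. The reduction of all degree-zero objects to the single-vertex class therefore relies on the specific feature of the paper's definition of join that $\mathcal{P}$ on $G_1\vee G_2$ is constrained only through its restrictions to $V(G_1)$ and $V(G_2)$, so that several partitions on the joined object realise the same pair $(\mathcal{P}_1,\mathcal{P}_2)$ and hence become identified under the equivalence. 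Making this explicit is the one place your sketch should be expanded; everything else is straightforward.
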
 
 We will now identify a vertex partitioned ribbon graph with its equivalence class. 
\begin{definition}\label{d.hopfvrg}
We let  $\mathcal{H}^{vrg}$  denote the  Hopf algebra associated with vertex partitioned ribbon graphs via  Proposition~\ref{p.1}. 
Its coproduct is given by 
$
 \Delta_{vrg}(G,\mathcal{P})=\sum_{A\subseteq E(G)}(G,\mathcal{P})\backslash A^c  \otimes  (G,\mathcal{P})/ A$.
\end{definition}

While $\mathcal{H}^{rg} $ has three elements of graded dimension 1, $\mathcal{H}^{vrg} $ has four, giving rise to the following maps. 
   \begin{equation}\label{e.prgic}
   \begin{aligned}
\delta_{\mathrm{b}} (G,\mathcal{P}) =\begin{cases} 1 &\mbox{if } 
 (G,\mathcal{P})  =  (\raisebox{-1.6mm}{\includegraphics[scale=0.15]{d3a}}, \{ \{u\},\{v\} \}), \\
0 & \mbox{otherwise} ;\end{cases} 
&\quad 
\delta_{\mathrm{o}} (G,\mathcal{P}) =\begin{cases} 1 &\mbox{if } 
 (G,\mathcal{P}) = (\raisebox{4.5mm}{\rotatebox{-90}{\includegraphics[scale=0.2]{d5}}}  , \{ \{v\} \}),
\\
0 & \mbox{otherwise}; \end{cases} 
\\
\delta_{\mathrm{n}} (G,\mathcal{P}) =\begin{cases} 1 &\mbox{if } 
 (G,\mathcal{P})  =(\raisebox{4.5mm}{\rotatebox{-90}{\includegraphics[scale=0.2]{d2}}} , \{ \{v\} \}),
\\
0 & \mbox{otherwise};\end{cases}
& \quad 
\delta_{\mathrm{l}} (G,\mathcal{P}) =\begin{cases} 1 &\mbox{if } 
 (G,\mathcal{P}) =  (\raisebox{-1.6mm}{\includegraphics[scale=0.15]{d3a}}, \{ \{u,v\} \}) ,\\
0 & \mbox{otherwise} ;\end{cases} 
  \end{aligned}
  \end{equation}
  where the drawings above represent the equivalence classes of the  ribbon graphs, and $u$ and $v$ are the vertices in the relevant figures.
  
We set
\begin{equation}\label{e.vbr1}
 \delta_{\mathbf{a}}= \delta(a_1,a_2,a_3,a_4) := a_1\delta_{b} + a_2\delta_{o} +a_3\delta_{n}+a_4\delta_{l}.  
  \end{equation}
As it is not when restricted to ribbon graphs,  $ \delta_{\mathbf{a}}$ is not uniform unless $a_3=\sqrt{a_2a_4}$. The following theorem will shows that    $ \delta_{\mathbf{a}}$ is  uniform if and only if $a_3=\sqrt{a_2a_4}$. 

\begin{theorem}\label{t.vbr1}
The canonical Tutte polynomial of the  Hopf algebra $\mathcal{H}^{vrg}$ is given by
 \begin{equation}\label{e.t.vbr1}
 \alpha( \mathbf{a}, \mathbf{b}) ( G,\mathcal{P})  =      y_1^{r(G_{/\mathcal{P}})}    y_2^{|E|-\rho(G)}    y_3^{\rho(G)-r(G_{/\mathcal{P}})}   
  \sum_{A\subseteq E}   \left(\frac{x_1}{y_1}\right)^{r(A_{/\mathcal{P}})}  \left(\frac{x_2}{y_2}\right)^{|A|-\rho(A)} \left(\frac{x_3}{y_3}\right)^{\rho(A)-r(A_{/\mathcal{P}})},
 \end{equation}
where $\mathbf{a} = ( x_1,x_2,\sqrt{x_2x_3},x_3)$, $\mathbf{b}  =(y_1,y_2,\sqrt{y_2y_3},y_3)$, and $E=E(G)$. 
\end{theorem}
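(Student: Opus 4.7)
The plan is to apply Theorem~\ref{tm} with three rank-like functions
\[
r_1(G,\mathcal{P}) := r(G_{/\mathcal{P}}), \qquad r_2(G,\mathcal{P}) := |E(G)| - \rho(G), \qquad r_3(G,\mathcal{P}) := \rho(G) - r(G_{/\mathcal{P}}),
\]
which are exactly the three exponents appearing on the right side of \eqref{e.t.vbr1}. All three vanish on the edgeless elements of $\mathcal{H}^{vrg}_0$, so the only nontrivial hypothesis of Theorem~\ref{tm} to check is condition~\eqref{tm.e1}: that each $r_j$ has a prescribed additive behaviour under contraction of an edge $e$, depending only on the isomorphism type of the restriction $(G,\mathcal{P})|_e$, which is indexed by $\{b,o,n,l\}$ via the selectors of \eqref{e.prgic}.

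The bulk of the work is then a four-case computation of the ``increments'' $m_{ij}$. The key subsidiary facts are already available: by Lemma~\ref{s2.l7} together with the translation $\rho(G)=\rho(D(G))$ established in the proof of Theorem~\ref{t.rgbr1} (see \eqref{e.t.rgbr1}), the function $\rho$ decreases by $1$ when $e$ is a non-loop of $G$, is unchanged when $e$ is an orientable ribbon loop, and decreases by $\tfrac12$ when $e$ is a non-orientable ribbon loop; meanwhile $|E|$ drops by $1$ in every case. For $r_1$ one needs to relate contraction in $(G,\mathcal{P})$ with contraction in the auxiliary graph $G_{/\mathcal{P}}$: a bridge with endpoints in distinct blocks becomes a non-loop of $G_{/\mathcal{P}}$, so contracting it decreases $r(G_{/\mathcal{P}})$ by $1$; in the remaining three cases $e$ is already a loop of $G_{/\mathcal{P}}$ (either because $e$ is a ribbon loop on a single vertex, or because its endpoints belong to the same block), so $r(G_{/\mathcal{P}})$ is unchanged. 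Assembling these facts gives the table
\[
(m_{bj})_j=(1,0,0),\quad (m_{oj})_j=(0,1,0),\quad (m_{nj})_j=(0,\tfrac12,\tfrac12),\quad (m_{lj})_j=(0,0,1),
\]
and hence the associated coefficients $a_i=\prod_j x_j^{m_{ij}}$ are precisely
\[
a_b=x_1,\qquad a_o=x_2,\qquad a_n=\sqrt{x_2 x_3},\qquad a_l=x_3,
\]
matching the vector $\mathbf{a}=(x_1,x_2,\sqrt{x_2 x_3},x_3)$ in the statement (and confirming from the theoretical side the uniformity constraint $a_3=\sqrt{a_2 a_4}$ observed computationally before the theorem).

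Once \eqref{tm.e1} is verified, Theorem~\ref{tm} immediately supplies uniformity of $\delta_{\mathbf{a}}$ and yields \eqref{tm.e3}, which upon substituting in our $r_1,r_2,r_3$ is literally the formula \eqref{e.t.vbr1}. The main obstacle in the execution is the bookkeeping for $r_1$: one has to track carefully how the partition $\mathcal{P}$ evolves under the two ``exotic'' contractions — contracting an orientable loop on a vertex $v$ splits $v$ into two new vertices that must be placed in the single new block $W$, while contracting a same-block bridge $(u,v)$ merges the identical endpoint-blocks into one block containing the new vertex. In both of these cases one must check that the induced graph $(G/e)_{/\mathcal{P}'}$ is exactly $G_{/\mathcal{P}}$ with a loop deleted, so that $r_1$ is indeed preserved; the other cases are straightforward.
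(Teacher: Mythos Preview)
Your proposal is correct and follows essentially the same approach as the paper's proof: the same three rank functions $r_1=r(G_{/\mathcal P})$, $r_2=|E|-\rho(G)$, $r_3=\rho(G)-r(G_{/\mathcal P})$, the same appeal to Lemma~\ref{s2.l7} for the behaviour of $\rho$ under ribbon-graph contraction, the same increment table $m_{11}=m_{22}=m_{43}=1$, $m_{32}=m_{33}=\tfrac12$, and the same concluding application of Theorem~\ref{tm}. Your treatment of the partition bookkeeping (verifying $(G/e)_{/\mathcal P'}\cong G_{/\mathcal P}/e$ in each of the four edge-types, including the orientable-loop case where $v$ splits into two vertices that land in the same block) is in fact more explicit than the paper's, which simply cites \eqref{s2.e1} for $r_1$ without spelling this out.
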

\begin{proof}
Set 
$r_1(G,\mathcal{P}):=r(G_{/\mathcal{P}})$,   
$r_2(G,\mathcal{P}):=|E|-\rho(G)$, 
$r_3(G,\mathcal{P}):=\rho(G)-r(G_{/\mathcal{P}})$, 
where $\rho(G)$ is as in Equation~\eqref{e.rgrho}, and $E=E(G)$.

Recalling that $e$ is a loop in a graph if and only if it is a loop in its cycle matroid we have
\begin{equation}\label{e.t.vbr2}
 r_1(G,\mathcal{P}) = \begin{cases}  r_1((G,\mathcal{P})/e)&  \text{if $e$ is  a loop in $G_{/\mathcal{P}}$},    \\   r_1((G,\mathcal{P})/e) +1 &  \text{otherwise}.\end{cases} 
   \end{equation}
Similarly, by Lemma~\ref{s2.l7}, and using a result from \cite{CMNR1} that $e$ is  (not a / an orientable /  a nonorientable) loop in $G$ if and only if $e$ is a (not a / an orientable /  a nonorientable) ribbon loop in $D(G)$, we have  
\begin{equation}\label{e.t.vbr3}
 r_2(G,\mathcal{P}) = \begin{cases}  r_2((G,\mathcal{P})/e)  &  \text{if $e$ is not a loop in $G$},    \\   r_2((G,\mathcal{P})/e) +1 &  \text{if $e$ is an orientable loop in $G$}, \\   r_2((G,\mathcal{P})/e)  +\frac{1}{2} &  \text{if $e$ is a non-orientable loop in $G$} .\end{cases}  
   \end{equation}

From the cases for $r_1$ and $r_2$ above we can deduce that 
\[ r_3(G,\mathcal{P}) = \begin{cases}  r_3((G,\mathcal{P})/e)  &  \text{if $e$ is not a loop in $G$ or $G_{/\mathcal{P}}$},    \\  
 r_3((G,\mathcal{P})/e)  &  \text{if $e$ is  an orientable loop in $G$ and a loop in $G_{/\mathcal{P}}$},    \\
  r_3((G,\mathcal{P})/e)  &  \text{if $e$ is  a non-orientable loop in $G$ and a loop in $G_{/\mathcal{P}}$},    \\
   r_3((G,\mathcal{P})/e)  &  \text{if $e$ is not a loop in $G$, but is a loop in $G_{/\mathcal{P}}$}.
 \end{cases}   \]

Then if $\delta_1:=\delta_c$, $\delta_2:=\delta_o$, $\delta_3:=\delta_n$, and $\delta_4:=\delta_l$  in Theorem~\ref{tm} we have
$m_{11}=m_{22}=m_{43}=1$, $m_{32}=m_{33}=\tfrac{1}{2}$, and all other $m_{ij}$ are zero. The theorem then gives
 \[
 \alpha( \mathbf{a},\mathbf{b}) ( G,\mathcal{P})  =    y_1^{r(G_{/\mathcal{P}})}    y_2^{|E|-\rho(G)}    y_3^{\rho(G)-r(G_{/\mathcal{P}})}   
  \sum_{A\subseteq E}   \left(\frac{x_1}{y_1}\right)^{r(A_{/\mathcal{P}})}  \left(\frac{x_2}{y_2}\right)^{|A|-\rho(A)} \left(\frac{x_3}{y_3}\right)^{\rho(A)-r(A_{/\mathcal{P}})},
\]
as required.
\end{proof}

We can recognise the Bollob\'as-Riordan polynomial in Equation~\eqref{e.t.vbr1}:
\begin{corollary}\label{c.vbr1}
Let $G=(V,E)$ be a ribbon graph, $\hat{\mathcal{P}}=\{\{v\} \mid v\in V \}$, and $\alpha$ be as in Theorem~\ref{t.vbr1}. Then 
\[
 \alpha( \mathbf{a}, \mathbf{b}) ( G,\hat{\mathcal{P}}) =  x_1^{r(G)}   y_2^{|E|-r(G)}    (y_3/y_2)^{\frac{1}{2}\gamma(G)}   R_{G}\left(   \frac{y_1}{x_1}+1, \frac{x_2}{y_2},  \sqrt{  \frac{x_3y_2}{x_2y_3}}  \right) .
\] 
In particular, if $\mathbf{a} = ( 1, y,yz,yz^2)$, $\mathbf{b}  =(x-1,1,1,1)$, then 
\[ \alpha( \mathbf{a}, \mathbf{b}) ( G,\hat{\mathcal{P}}) =  R_G(x,y,z). \]
\end{corollary}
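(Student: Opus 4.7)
The plan is to reduce Corollary~\ref{c.vbr1} to a direct algebraic manipulation of the formula in Theorem~\ref{t.vbr1} once we exploit the special form of $\mathcal{P}$ and Euler's formula.

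First I would observe that when $\mathcal{P}=\{\{v\}\mid v\in V\}$ consists only of singletons, the contracted graph $G_{/\mathcal{P}}$ coincides with the underlying graph of $G$, so in particular $r(G_{/\mathcal{P}})=r(G)$ and more generally $r(A_{/\mathcal{P}})=r(A)$ for every $A\subseteq E$.  I would also verify (this is the routine check that makes the reduction self-contained) that the singleton partition is preserved under the contraction of any edge of $(G,\mathcal{P})$: contracting a non-loop or a non-orientable loop replaces two or one singleton blocks by one singleton block, while contracting an orientable loop replaces one singleton block by two singletons.  Thus the evaluation via Theorem~\ref{t.vbr1} stays within the singleton regime and the identification of ranks above is legitimate throughout.

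Next I would translate $\rho$ into graph invariants via Euler's formula.  Using $v(A)-|A|+f(A)=2c(A)-\gamma(A)$ in the definition $\rho(A)=\tfrac12(|A|+v(A)-f(A))$ one obtains $\rho(A)=r(A)+\tfrac12\gamma(A)$.  Substituting this and $r(A_{/\mathcal{P}})=r(A)$ into \eqref{e.t.vbr1} collapses the three exponent pairs of Theorem~\ref{t.vbr1} into
\[
\alpha(\mathbf{a},\mathbf{b})(G,\mathcal{P})=y_1^{r(G)}y_2^{|E|-r(G)}(y_3/y_2)^{\frac12\gamma(G)}\sum_{A\subseteq E}\left(\frac{x_1}{y_1}\right)^{r(A)}\!\left(\frac{x_2}{y_2}\right)^{|A|-r(A)}\!\left(\frac{x_3y_2}{x_2y_3}\right)^{\frac12\gamma(A)}.
\]
Rewriting $y_1^{r(G)}(x_1/y_1)^{r(A)}=x_1^{r(G)}(y_1/x_1)^{r(G)-r(A)}$ and comparing with the definition $R_G(u,v,w)=\sum_A(u-1)^{r(G)-r(A)}v^{|A|-r(A)}w^{\gamma(A)}$ with $u=y_1/x_1+1$, $v=x_2/y_2$, $w=\sqrt{x_3y_2/(x_2y_3)}$ yields the first displayed identity.

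For the specialization I would first check compatibility with uniformity: for $\mathbf{a}=(1,y,yz,yz^2)$ one has $a_3=yz=\sqrt{y\cdot yz^2}=\sqrt{a_2a_4}$, and for $\mathbf{b}=(x-1,1,1,1)$ trivially $b_3=1=\sqrt{b_2b_4}$, so the hypotheses of Theorem~\ref{t.vbr1} are satisfied.  Then $x_1=1$, $y_2=y_3=1$ kill all the prefactors, while $(y_1/x_1)+1=x$, $x_2/y_2=y$ and $\sqrt{x_3y_2/(x_2y_3)}=\sqrt{yz^2/y}=z$, producing $R_G(x,y,z)$ as claimed.  There is no real obstacle here; the only point requiring a little care is the bookkeeping of the Euler-formula substitution, and the only conceptual observation is the preservation of the singleton partition under contraction, which guarantees that the closed-form expression of Theorem~\ref{t.vbr1} applies uniformly throughout the sum.
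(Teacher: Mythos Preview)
Your approach is essentially the paper's: substitute $r(A_{/\mathcal{P}})=r(A)$ and $\rho(A)=r(A)+\tfrac12\gamma(A)$ into the state sum of Theorem~\ref{t.vbr1} and regroup. The algebra you carry out is correct and matches the paper's terse ``follows upon noting that here $r(A)=r(A_{/\mathcal{P}})$ and, via Euler's Formula, $\rho(A)=r(A)+\tfrac12\gamma(A)$.''

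One point to clean up: your paragraph verifying that the singleton partition is ``preserved under contraction'' is unnecessary, and in fact contains an error. Theorem~\ref{t.vbr1} already gives a closed state-sum formula for $\alpha(\mathbf{a},\mathbf{b})(G,\mathcal{P})$ valid for \emph{any} vertex-partitioned ribbon graph; the quantities $r(A_{/\mathcal{P}})$ appearing there refer to the spanning subgraph on $A$ quotiented by the \emph{original} partition $\mathcal{P}$, so $r(A_{/\mathcal{P}})=r(A)$ follows immediately from $\mathcal{P}$ being the singleton partition, with no need to track what happens under contraction. Moreover, your claim that contracting an orientable loop ``replaces one singleton block by two singletons'' is false: by the definition preceding Lemma~\ref{l.vprgms}, the block $\{v\}$ is replaced by the single block $(\{v\}\setminus\{v\})\cup W=\{w_1,w_2\}$, a two-element block. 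So the singleton regime is \emph{not} preserved --- which is precisely why $R_G(x,y,z)$ lives on $\mathcal{H}^{vrg}$ rather than on $\mathcal{H}^{rg}$. Simply delete that paragraph; the rest of your argument stands.
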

\begin{proof}
The result follows from Theorem~\ref{t.vbr1} upon noting that here $r(A)=r(A_{/\mathcal{P}})$  and, via Euler's Formula, $\rho(A)=r(A)+\tfrac{1}{2}\gamma(A)$.
\end{proof}

In light of Corollary~\ref{c.vbr1} it is natural to make the following definition.
\begin{definition}\label{def.vbr}
The \emph{Bollob\'as-Riordan polynomial}, $R_{ ( G,\mathcal{P})}(x,y,z)$, of a vertex partitioned ribbon graph  $( G,\mathcal{P})$ is defined by 
\begin{align*} R_{ ( G,\mathcal{P})}(x,y,z) &:=    \alpha( \mathbf{x}, \mathbf{y}) ( G,\mathcal{P})  
= \sum_{A\subseteq E(G)}  (x-1)^{r(G_{/\mathcal{P}}) -  r(A_{/\mathcal{P}})}  y^{|A|-r(A_{/\mathcal{P}})} z^{2( \rho(A)-r(A_{/\mathcal{P}}) )},
\end{align*}
where $  \alpha$ is as in Theorem~\ref{t.vbr1},   $\mathbf{x} = ( 1, y,yz,yz^2)$, and $\mathbf{y}  =(x-1,1,1,1)$.
\end{definition}

\begin{corollary}\label{c.vbr2} 
 The  projection $\phi: \mathcal{H}^{vrg} \rightarrow \mathcal{H}^{rg}$ defined by $ \phi (G, \mathcal{P})=G$ is a Hopf algebra morphism.   Furthermore it naturally induces the  identity \[\tilde{R}_{G}(x,y) =   (x-1)^{ \rho(G)-r(G/\mathcal{P}) }  R_{(G,\mathcal{P})}(x,y-1,1/\sqrt{( x-1)(y-1)})  .\]\end{corollary}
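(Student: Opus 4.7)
My plan has three steps. First I will verify that $\phi$ is a Hopf algebra morphism. Since deletion, contraction, multiplication, and (co)unit on $\mathcal{H}^{vrg}$ all lift their counterparts on $\mathcal{H}^{rg}$ with the partition playing a purely passive role---so in particular $\phi((G,\mathcal{P})\del A^c) = G\del A^c$ and $\phi((G,\mathcal{P})\con A) = G\con A$---the coproduct identity
\[
(\phi\otimes\phi)\circ\Delta_{vrg}(G,\mathcal{P}) = \sum_{A\subseteq E}G\del A^c\otimes G\con A = \Delta_{rg}(\phi(G,\mathcal{P})),
\]
together with multiplicativity and preservation of the (co)unit, follows directly from the definitions.

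Next, to produce the polynomial identity I will invoke Theorem~\ref{tgen}. The crucial observation is that $\phi$ collapses both bridge-type basis elements of $\mathcal{H}^{vrg}_1$---the one with two singleton vertex blocks (selected by $\delta_b$) and the one with a single merged block (selected by $\delta_l$)---onto the unique bridge element of $\mathcal{H}^{rg}_1$. Consequently the selector of Theorem~\ref{t.rgbr1} pulls back through $\phi$ to the selector of Theorem~\ref{t.vbr1} specialized at $x_3=x_1$ and $y_3=y_1$, so Theorem~\ref{tgen} gives
\[
\alpha_{\mathcal{H}^{vrg}}(x_1,x_2,\sqrt{x_1x_2},x_1,y_1,y_2,\sqrt{y_1y_2},y_1)(G,\mathcal{P}) = \alpha_{\mathcal{H}^{rg}}(x_1,x_2,\sqrt{x_1x_2},y_1,y_2,\sqrt{y_1y_2})(G).
\]

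Finally, specializing $(x_1,x_2,y_1,y_2) = (1,y-1,x-1,1)$ turns the $\mathcal{H}^{rg}$-side into $\tilde{R}_G(x,y)$ by Corollary~\ref{t.rgbr1.c1}. Theorem~\ref{t.vbr1} and Definition~\ref{def.vbr} each express the $\mathcal{H}^{vrg}$-side as a state sum; a routine comparison verifies that both $\alpha_{\mathcal{H}^{vrg}}(1,y-1,\sqrt{y-1},1,x-1,1,\sqrt{x-1},x-1)(G,\mathcal{P})$ and the quantity $(x-1)^{\rho(G)-r(G_{/\mathcal{P}})}R_{(G,\mathcal{P})}(x,y-1,1/\sqrt{(x-1)(y-1)})$ collapse to the same expression $\sum_{A\subseteq E}(x-1)^{\rho(G)-\rho(A)}(y-1)^{|A|-\rho(A)}$, the $r(A_{/\mathcal{P}})$-dependent exponents cancelling pairwise between the $(x-1)$- and $(y-1)$-contributions. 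The main obstacle is tracking these exponent cancellations cleanly, but the calculation is essentially mechanical once the substitutions are in place; conceptually, the identity is forced by the Hopf algebra morphism $\phi$.
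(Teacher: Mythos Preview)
Your proof is correct and follows essentially the same approach as the paper's: verify that $\phi$ is a Hopf algebra morphism, identify the pulled-back selector as the $x_3=x_1$, $y_3=y_1$ specialization, apply Theorem~\ref{tgen}, and then translate both sides into the named polynomials via Corollary~\ref{t.rgbr1.c1} and Definition~\ref{def.vbr}. Your explicit state-sum verification in the final step is a bit more detailed than the paper's terse invocation of the relevant results, but the strategy is identical.
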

\begin{proof}
The mapping is easily seen to be a well-defined Hopf algebra morphism. 
In Theorem~\ref{tgen} let $\mathcal{H}=\mathcal{H}^{vrg}$,  $\mathcal{H'}=\mathcal{H}^{rg}$,
 $\delta_{\mathcal{H}}$ be the selector used in Theorem~\ref{t.vbr1}, and 
 $\delta_{\mathcal{H'}}$ be the selector used in Theorem~\ref{t.rgbr1}. 
 We have $  \delta_{\mathcal{H}}(s, x_1,x_2,x_3, x_1) (G, \mathcal{P}) =  \delta_{\mathcal{H'}}(s, x_1,x_2,x_3) ( G) $. By Theorem~\ref{tgen}, and Corollaries~\ref{t.rgbr1} and~\ref{t.rgbr1.c1},  $\tilde{R}_{G}(x,y) =  (x-1)^{ \rho(G)-r(G_{/\mathcal{P}}) }  R_{(G,\mathcal{P})}(x,y-1,1/\sqrt{( x-1)(y-1)}) $.
\end{proof}

\subsection{Krushkal's polynomial}\label{xpdg} 
 The Krushkal polynomial \cite{But,Kr}  is a 4-variable extension of the Tutte polynomial to graphs  embedded (but not necessarily cellularly embedded) in surfaces.  
For a graph $G=(V,E)$ embedded in a surface $\Sigma$, denoted $G\subset \Sigma$, the {\em Krushkal polynomial} is defined by
\[  K_{G\subset \Sigma} ( x,y,a,b ) :=  \sum_{A\subseteq E(G)}  x^{r(G)-r(A)} y^{ \kappa(A)} a^{ \frac{1}{2}s(A)} b^{ \frac{1}{2} s^{\perp}(A)}   ,   \]
where $s(A):= \gamma(N(V \cup A))$ is the Euler genus of a regular neighbourhood  $N(V \cup A)$ of the spanning subgraph $(V,A)$ of $G$ (note $N(V \cup A)$ can be considered as a ribbon graph);  $s^{\perp}(A) := \gamma(\Sigma \backslash N(V \cup A))$; and, as in Equation~\eqref{LVeq3},
\begin{equation}\label{e.vgs1}
\kappa(A):= \#\mathrm{cpts}(\Sigma  \backslash N(V \cup  A))  - \#\mathrm{cpts}(\Sigma  \backslash N(V)).
\end{equation} 
Observe that here $\#\mathrm{cpts}(\Sigma  \backslash N(V))=\#\mathrm{cpts}(\Sigma)$ since we are  considering graphs in surfaces, rather than graphs in pseudo-surfaces.  
Note that we are following \cite{But} and using the form of the exponent of $y$ from the proof of Lemma~4.1 of \cite{ACEMS}  rather than the homological definition given in \cite{Kr}.

 The Krushkal polynomial absorbs both the Bollob\'as-Riordan and Las~Vergnas polynomials. From  \cite{But,Kr}
  \[   R_G(x,y,z) = y^{\frac{1}{2}\gamma(G)}   K_{G\subset \Sigma}  (x-1,y,yz^2,y^{-1}),   \]
  where $R_G$ is computed by considering the ribbon graph arising from a neighbourhood of $G$ in $\Sigma$. 
From \cite{ACEMS,But, EMMlv} 
\begin{equation}\label{e.lvkr} 
 L_{G \subset \Sigma}(x,y,z) =  z^{\frac{1}{2}(s(E) -s^{\perp} (E))}   K_{G\subset \Sigma}  (x-1,y-1,z^{-1},z).
  \end{equation}

Similarly to what we saw with the Bollob\'as-Riordan polynomial, the Krushkal polynomial is not the canonical Tutte polynomial arising from graphs in surfaces and their minors, but rather with vertex partitioned graphs in surfaces.
A \emph{vertex partitioned graph in a surface}, $(G\subset \Sigma ,\mathcal{P})$ consists of a  graph $G=(V,E)$  embedded in a surface $\Sigma$ (although not necessarily cellularly embedded) and a partition  $\mathcal{P}$ of its vertex set $V$. 

Considering only  graphs in surfaces for the moment (i.e., forgetting about the partition), if $e\in E$ then $(G\subset \Sigma )\backslash  e$ is the  graph in a surface obtained by removing the edge $e$ from the drawing of $G\subset \Sigma$ (without removing the  points of $e$ from $\Sigma$, or its incident vertices).    Edge contraction $(G\subset \Sigma )/  e$ is defined by forming a quotient space of the surface $G/e \subset \Sigma'$ then resolving any pinch points by ``splitting'' them into new vertices as in Figure~\ref{cont.a} and~\ref{cont.b}  (formally, delete a small neighbourhood of $e$ in $\Sigma$ and contract any resulting boundary components to points which become vertices).  

Deletion and contraction for  vertex partitioned graphs in surfaces is defined in the natural way, and analogously to the ribbon graph case.  If $e \in E(G)$, then \emph{deletion} is defined by $(G\subset \Sigma ,\mathcal{P}) \backslash e:=  ((G\subset \Sigma )\backslash e,\mathcal{P})$. \emph{Contraction} is defined by $ (G\subset \Sigma ,\mathcal{P}) / e:=  ((G\subset \Sigma )/ e,\mathcal{P}')$, where the partition $\mathcal{P}'$ is induced by $\mathcal{P}$ as follows. Suppose $e=(u,v)$ and $P_u, P_v\in \mathcal{P}$ are the blocks containing $u$ and $v$ respectively ($u$ may equal $v$ and the blocks need not be distinct). Then $\mathcal{P}'$ is obtained from $\mathcal{P}$ by removing blocks $P_u$ and  $P_v$,  and replacing them with the block $(P_u\cup P_v) \backslash \{u,v\} \cup W$ where $W$ is the set of vertices created by the contraction (so $w$ consists of one or two vertices). 

There are three graphs naturally associated with   $(G\subset \Sigma ,\mathcal{P})$. The  underlying graph in a surface $G\subset \Sigma$, the underlying abstract graph $G$, and the abstract graph  $G_{/\mathcal{P}}$  obtained by identifying the vertices in $G$ that belong to each block of $\mathcal{P}$.

It is convenient at this point to extend Equation~\eqref{e.rgrho} to the present setting, defining
\begin{equation}\label{e.vgs2}
\rho(A) := \tfrac{1}{2}\left(|A|+ v(A)-b(A)\right),
\end{equation}
where $b(A)$ denotes the number of boundary components of $ N(V\cup A)$. We set $   \rho( G\subset\Sigma,\mathcal{P}) := \rho(E)$.

We will show that the Krushkal polynomial arises as the canonical Tutte polynomial associated with vertex partitioned graphs in surfaces. 
\begin{definition}\label{d.vgs2}
We will say that two  vertex partitioned graphs in surfaces  $(G_1\subset \Sigma_1, \mathcal{P}_1)$ and  $(G_k\subset \Sigma_k,\mathcal{P}_k)$ are  \emph{Kr-equivalent} if there is a sequence of   vertex partitioned graphs in surfaces  $(G_1\subset \Sigma_1, \mathcal{P}_1), (G_2\subset \Sigma_1, \mathcal{P}_2), \ldots, (G_k\subset \Sigma_k,\mathcal{P}_k)$ such that $(G_i\subset \Sigma_i,\mathcal{P}_i)$ is obtained from  $(G_{i-1}\subset \Sigma_{i-1},\mathcal{P}_{i-1})$, or vice versa, by one of the following moves. 
\begin{enumerate}
\item Deleting a component of the surface that contains no edges of the graph.
\item Deleting an isolated vertex from a graph.
\item Connect summing two surface components (away from any graph components).
\item Replacing a region, with another surface with boundary (so that it forms a region of a new graph in a surface).
\end{enumerate}
\end{definition}
It is clear that Kr-equivalence gives rise to an equivalence relation, and we let $\mathcal{G}^{vgs}$ denote the set of all equivalence classes of graphs in pseudo-surfaces considered up to Kr-equivalence.  We grade $\mathcal{G}^{vgs}$ by the number of edges in any graph that represents its class.

The following lemma is easily verified.
\begin{lemma}\label{l.vgsms}
$\mathcal{G}^{vgs}$ forms a minor system where the grading is given by the cardinality of the edge set, deletion and contraction are given as above, and multiplication is given by disjoint union. 
\end{lemma}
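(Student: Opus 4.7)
The plan is to verify the axioms of Definition~\ref{d.1} directly, following the template established in Lemmas~\ref{l.hg}, \ref{s2.l4}, \ref{l.rgmin}, and~\ref{l.vprgms} for the analogous structures. First I would confirm that the grading by edge cardinality is well-defined on Kr-equivalence classes, since none of the four moves listed in Definition~\ref{d.vgs2} alters the edge set of the underlying graph. For the unique element in $\mathcal{G}^{vgs}_0$, any edgeless vertex partitioned graph in a surface is Kr-equivalent to the empty one via repeated application of moves~(1) and~(2), establishing $|\mathcal{G}^{vgs}_0|=1$.

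Next I would verify that deletion and contraction descend to Kr-equivalence classes. The key observation is that the four Kr-moves act either on surface components containing no edges, on isolated vertices, on connect sums away from graph components, or on entire regions of $\Sigma\setminus G$, and therefore take place away from the neighbourhood of any edge being deleted or contracted. Deletion is purely local: it leaves the surface, the remaining edges, and the partition intact, so it commutes with every Kr-move. Contraction of $e$ also modifies $\Sigma$ only in a neighbourhood of $e$ (with any pinch points that would appear resolved into new vertices by the very definition of contraction given before Definition~\ref{d.vgs2}) and updates $\mathcal{P}$ only at the blocks containing the endpoints of $e$. From these locality properties the required relations $(S\del e)\del f=(S\del f)\del e$, $(S\con e)\del f=(S\del f)\con e$, and $(S\con e)\con f=(S\con f)\con e$ for distinct $e\neq f$ follow immediately, since the operations then act in disjoint neighbourhoods and alter $\mathcal{P}$ independently.

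For the multiplicative structure supplied by Proposition~\ref{p.1}, three routine checks complete the proof: disjoint union of vertex partitioned graphs in surfaces is well-defined on Kr-equivalence classes (each move can be performed within a single component without reference to the others), the edge set splits as $E(G\sqcup H)=E(G)\sqcup E(H)$, and deletion and contraction commute with disjoint union because they act only on a single edge and its incident block(s). The unit element is the unique class in $\mathcal{G}^{vgs}_0$.

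The main obstacle I anticipate is verifying that contraction is well-defined under move~(4), which allows any region of $\Sigma\setminus G$ to be replaced by an arbitrary surface with matching boundary. I would handle this by showing that the neighbourhood of $e$ used in the contraction can be chosen disjoint from the interior of any replaced region, so that contracting $e$ and then replacing a region yields the same Kr-class as first replacing the region and then contracting $e$ (possibly followed by an additional application of move~(4) to absorb the merged region adjacent to the contracted edge). Once this locality is established, the commutativity of deletion/contraction with move~(4) follows by the same reasoning as in Lemma~\ref{s2.l4}, and no further complications arise.
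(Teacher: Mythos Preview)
The paper does not give a proof of this lemma: it simply states ``The following lemma is easily verified'' and moves on. Your outline is a correct and reasonable way to supply those details, following exactly the pattern used (implicitly) for the parallel Lemmas~\ref{l.hg}, \ref{s2.l4}, \ref{l.rgmin}, and~\ref{l.vprgms}, none of which are proved in the paper either.

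One small point worth tightening: Definition~\ref{d.vgs2} lists the Kr-moves without saying what they do to the partition $\mathcal{P}$, so when you argue that edgeless objects collapse to the unique grade-$0$ class you should make explicit the obvious convention that deleting an isolated vertex also removes it from its block (and drops empty blocks). With that convention in place your reduction via moves~(1) and~(2) goes through, and the rest of your locality arguments are sound.
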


\begin{definition}\label{d.vgs1}
We let  $\mathcal{H}^{vgs}$  denote the  Hopf algebra associated with $\mathcal{G}^{vgs}$ via Lemma~\ref{l.vgsms} and Proposition~\ref{p.1}. 
Its coproduct is 
$
 \Delta_{vgs}( G \subset \Sigma, \mathcal{P})=\sum_{A\subseteq E(G)} (( G \subset \Sigma, \mathcal{P})\backslash  A^c)  \otimes  (( G \subset \Sigma, \mathcal{P})/A )$.
\end{definition}

\begin{lemma}\label{l.vgs1}
$\mathcal{G}^{vgs}_1$  has a basis consisting of exactly five elements represented by 
\begin{enumerate}
\item a 1-path in the sphere with each vertex appearing in its own block of the partition,
\item a loop in the sphere,
\item a 1-path in the sphere with both vertices appearing in the same block of the partition,
\item a loop cellularly embedded in the real projective plane,
\item a loop that forms the meridian of a torus.
\end{enumerate}
\end{lemma}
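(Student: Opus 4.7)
The plan is to show every element of $\mathcal{G}^{vgs}_1$ is Kr-equivalent to exactly one of the five listed representatives. This breaks into two parts: reducing an arbitrary representative to a normal form, and checking the five candidates are pairwise inequivalent.

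For the reduction, I start from an arbitrary vertex partitioned graph in a surface $(G\subset\Sigma,\mathcal{P})$ with a single edge $e$. Using moves~(1) and~(2), I discard any components of $\Sigma$ that contain no edges and any isolated vertices not incident to $e$, reducing to the case where $\Sigma$ is connected and each vertex of $G$ is an endpoint of $e$. If $e$ is a non-loop with distinct endpoints $u,v$, then $N(V\cup e)$ is a closed disc whose complement in $\Sigma$ is a connected surface with a single boundary circle; applying move~(4) replaces this complement with a disc, yielding $e$ in the $2$-sphere with partition either $\{\{u\},\{v\}\}$ or $\{\{u,v\}\}$, corresponding to representatives~(1) and~(3). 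If $e$ is a loop at $v$, then $\mathcal{P}=\{\{v\}\}$ is forced, and $N(v\cup e)$ is either a M\"obius band (non-orientable ribbon loop, one boundary circle) or an annulus (orientable ribbon loop, two boundary circles). In the M\"obius case the complement has a single component with one boundary circle, which move~(4) replaces with a disc to give representative~(4). In the annulus case the complement is either two one-boundary components or a single two-boundary component, and move~(4) replaces these with two discs (giving representative~(2)) or an annulus (giving representative~(5)).

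For the distinctness step, I identify Kr-equivalence invariants that separate the five forms: (a) whether $e$ is a loop in the underlying graph~$G$, separating $\{(1),(3)\}$ from $\{(2),(4),(5)\}$; (b) for non-loops, whether the two endpoints lie in the same block of $\mathcal{P}$, separating~(1) from~(3); (c) for loops, the number of boundary circles of $N(v\cup e)$ (equivalently $2\rho(\{e\})$), separating~(4) from $\{(2),(5)\}$; and (d) for orientable loops, the value of $\kappa(\{e\})$, which I compute to equal~$1$ for~(2) (annulus neighbourhood with two disc complements) and~$0$ for~(5) (annulus neighbourhood with annular complement). Invariants~(a)--(c) are immediate, since none of the four moves alters the abstract graph, the partition, or the ribbon-graph structure at $e$: moves~(1)--(3) act on $\Sigma$ away from $e$, and move~(4) changes only the interior topology of a single region of $\Sigma\setminus G$.

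The main obstacle I anticipate is justifying that $\kappa(\{e\})$ is invariant under move~(4). Since this move replaces a single connected region of $\Sigma\setminus G$ with another connected surface carrying the same collection of boundary circles, it alters neither the number of regions of $\Sigma\setminus G$ nor the boundary structure of any region, so the counts $\#\mathrm{cpts}(\Sigma\setminus N(V))$ and $\#\mathrm{cpts}(\Sigma\setminus N(V\cup e))$ are unchanged. The remaining moves act on parts of $\Sigma$ disjoint from $e$ and so adjust both counts by the same amount. Once this is verified, an annular complement as in~(5) cannot be converted into a pair of disc complements as in~(2), the five forms are pairwise inequivalent, and combined with the reduction step the lemma follows.
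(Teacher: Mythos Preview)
Your reduction argument is essentially the same as the paper's: strip away isolated vertices and edgeless surface components, then case-split on whether $e$ is a non-loop or a loop, and in the loop case on the topology of $N(v\cup e)$ and the number of complementary regions, using move~(4) to normalise each region. The paper organises the loop case slightly differently (first by number of regions, then by annulus versus M\"obius band) but the logic is identical. Your proof goes further than the paper's in that you explicitly verify the five representatives are pairwise Kr-inequivalent via the invariants $e$ loop/non-loop, partition type, $\rho(\{e\})$, and $\kappa(\{e\})$; the paper omits this distinctness check entirely.
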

\begin{proof}
Let $(G\subset \Sigma, \mathcal{P})$ be a vertex partitioned graph in a surface with exactly one edge. If that edge is a bridge then  delete any isolated vertices,  remove any empty surface components, then replace the remaining region with a disc. What remains is a 1-path in the sphere with each vertex appearing in its own block of the partition or  a 1-path in the sphere with both vertices appearing in the same block of the partition.  

Otherwise the single edge in $(G\subset \Sigma, \mathcal{P})$ is a loop. Again delete any isolated vertices then  remove any empty surface components. The resulting  vertex partitioned graph in a surface has one or two regions. If it has two regions replace each with a disc to obtain a loop in the sphere. If it has one region then either a neighbourhood $N(e)$ of the edge is an annulus or M\"obius band. If it is a  M\"{o}bius band replace the unique region with a disc to get a loop cellularly embedded in the real projective plane.  If it is an  annulus  replace the unique region with an annulus to get a loop that forms the meridian of a torus.
\end{proof}

We now describe the selector of $\mathcal{H}^{vgs}$. For this set
\begin{equation}\label{dhgads}
\begin{aligned}
  \delta_{1}(G\subset \Sigma , \mathcal{P} ) &:= \begin{cases} 1 &\mbox{if $(G\subset \Sigma , \mathcal{P} )  $ is a 1-path in the sphere, with a partition of two blocks,} \\   0 & \mbox{otherwise;} \end{cases} 
  \\
   \delta_{2}(G\subset \Sigma , \mathcal{P} )  &:= \begin{cases} 1 &\mbox{if $(G\subset \Sigma , \mathcal{P} )   $ is a loop in the sphere,} \\   0 & \mbox{otherwise;} \end{cases} 
    \\ 
  \delta_{3}(G\subset \Sigma , \mathcal{P} ) &:= \begin{cases} 1 &\mbox{if $(G\subset \Sigma , \mathcal{P} )  $ is a 1-path in the sphere with a partition of one block,} \\   0 & \mbox{otherwise;} \end{cases} 
   \\ 
  \delta_{4}(G\subset \Sigma , \mathcal{P} )  &:= \begin{cases} 1 &\mbox{if $(G\subset \Sigma , \mathcal{P} )   $ is a  loop cellularly embedded in the real projective plane,} \\   0 & \mbox{otherwise;} \end{cases} 
    \\
   \delta_{5}(G\subset \Sigma , \mathcal{P} ) &:= \begin{cases} 1 &\mbox{if $(G\subset \Sigma , \mathcal{P} )  $ is a loop that is a meridian of a torus,}  \\   0 & \mbox{otherwise.} \end{cases} 
\end{aligned}
\end{equation}
Set
\[
 \delta_{\mathbf{a}}= \delta(a_1,a_2,a_3,a_4,a_5) := a_1\delta_{1} + a_2\delta_{2} +a_3\delta_{3}+a_4\delta_{4}+a_5\delta_{5}.  
\]
It can be seen that  $ \delta_{\mathbf{a}}$ is not uniform unless $a_4=\sqrt{a_3a_5}$, in which case the following theorem says that it is.

\begin{theorem}\label{t.vkr1}
The canonical Tutte polynomial of the  Hopf algebra $\mathcal{H}^{vgs}$ is given by
\begin{multline*}
 \alpha( \mathbf{a},\mathbf{b}) ( G\subset\Sigma,\mathcal{P})  =    
   y_1^{r(G_{/\mathcal{P}})} 
    y_2^{\kappa(G\subset\Sigma,\mathcal{P})}  
         y_3^{\rho(G\subset\Sigma,\mathcal{P})-r(G_{/\mathcal{P}})}   
    y_4^{  |E| - \rho(G\subset\Sigma,\mathcal{P})-\kappa(G\subset\Sigma,\mathcal{P})}   
\\
\\\sum_{A\subseteq E}  
 \left(\frac{x_1}{y_1}\right)^{r(A_{/\mathcal{P}})} 
  \left(\frac{x_2}{y_2}\right)^{\kappa(A)} 
        \left(\frac{x_3}{y_3}\right)^{\rho(A)-r(A_{/\mathcal{P}})}     
   \left(\frac{x_4}{y_4}\right)^{  |A| - \rho(A)-\kappa(A)}   
      ,\end{multline*}
where $\mathbf{a} = ( x_1,x_2, x_3 , \sqrt{x_3x_4},x_4 )$, $\mathbf{b}  =(y_1,y_2,y_3,  \sqrt{y_3y_4},y_4 )$, and $E=E(G)$. 
\end{theorem}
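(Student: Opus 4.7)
The plan is to follow the strategy of Theorems~\ref{t.rgbr1} and~\ref{t.vbr1}: identify four rank-like functions on $\mathcal{H}^{vgs}$ whose single-edge behavior matches the exponents encoded in $\mathbf{a}=(x_1,x_2,x_3,\sqrt{x_3x_4},x_4)$, and then invoke Theorem~\ref{tm}. Concretely, set
\[
r_1(G\subset\Sigma,\mathcal{P}):=r(G_{/\mathcal{P}}),\quad r_2:=\kappa,\quad r_3:=\rho-r(G_{/\mathcal{P}}),\quad r_4:=|E|-\rho-\kappa,
\]
with $\kappa$ as in~\eqref{e.vgs1} and $\rho$ as in~\eqref{e.vgs2}. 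These four functions sum to $|E|$, vanish on the unit of $\mathcal{H}^{vgs}$, and are well-defined on Kr-equivalence classes since the moves of Definition~\ref{d.vgs2} act on pieces of $\Sigma$ disjoint from the graph and leave each of $r(G_{/\mathcal{P}})$, $\rho$, and $\kappa$ unchanged.

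The core step is a case-by-case verification, for each of the five basis elements of $\mathcal{G}^{vgs}_1$ listed in Lemma~\ref{l.vgs1}, that the increments $r_j(G)-r_j(G/e)$ are the constants $m_{ij}$ forced by $a_i=\prod_j x_j^{m_{ij}}$. For $r_1$ the analysis is elementary combinatorics: the $1$-path with two blocks (type~$1$) contracts a non-loop of $G_{/\mathcal{P}}$ and drops the rank by $1$, while the four loop-type restrictions all yield loops in $G_{/\mathcal{P}}$ and fix $r_1$. The increment of $\rho$ can be read directly off $\rho(A)=\tfrac12(|A|+v(A)-b(A))$ applied to the ribbon graph $N(V\cup A)$, or equivalently by invoking the ribbon-graph translation of Lemma~\ref{s2.l7} via Lemma~\ref{t.rgbr.l1}: $\rho$ drops by $1$ for a non-loop edge of $G$ (types~$1$,~$3$), by $\tfrac12$ for a non-orientable loop (type~$4$), and is unchanged for an orientable loop (types~$2$,~$5$).

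The substantive topological input is the behavior of $\kappa$. Bridge-type edges (types~$1$ and~$3$) leave the region structure of $N(V\cup E)$ intact, so $m_{i2}=0$. For a quasi-loop in the sphere (type~$2$), the two complementary disc regions merge locally when $e$ is removed and are re-created as separate components through pinch-point resolution on contraction; a direct computation gives $\kappa(G)-\kappa(G/e)=1$, so $m_{22}=1$. A non-orientable loop (type~$4$) has a M\"obius-band neighborhood with a single complementary region, and contraction replaces the M\"obius band by a disc without altering region counts, giving $m_{42}=0$. The meridian of a torus (type~$5$) has both $N(e)$ and its complement equal to annuli with a single region each, preserved as $\Sigma$ becomes a sphere with two new vertices under contraction, so $m_{52}=0$. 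These combine with the $\rho$ and $r_1$ computations to give $(m_{i3})=(0,0,1,\tfrac12,0)$ and $(m_{i4})=(0,0,0,\tfrac12,1)$, matching $\mathbf{a}$ exactly.

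The principal obstacle is the topological bookkeeping for $\kappa$ under contraction in the five local configurations, particularly verifying that the change depends only on the isomorphism class of the singleton restriction rather than on the ambient graph; the other three rank functions inherit their deletion-contraction behavior from earlier results in Sections~\ref{s.mat}--\ref{s.BRrg}. Once these five local computations are in place, Theorem~\ref{tm} delivers both the uniformity of $\delta_{\mathbf{a}}$ and the claimed spanning-subgraph expansion for $\alpha(\mathbf{a},\mathbf{b})(G\subset\Sigma,\mathcal{P})$ in a single step.
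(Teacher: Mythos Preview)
Your proposal is correct and follows essentially the same approach as the paper: both define the same four rank-like functions $r_1=r(G_{/\mathcal{P}})$, $r_2=\kappa$, $r_3=\rho-r(G_{/\mathcal{P}})$, $r_4=|E|-\rho-\kappa$, verify their increments under contraction for each of the five basis elements of $\mathcal{G}^{vgs}_1$, and then invoke Theorem~\ref{tm}. You supply more explicit topological detail for the $\kappa$ verification (and note well-definedness on Kr-equivalence classes), where the paper simply asserts that $\kappa$ drops by $1$ exactly when $e$ is a quasi-loop; otherwise the arguments are the same.
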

\begin{proof}
Set 
$r_1(G\subset\Sigma,\mathcal{P}):=r(G_{/\mathcal{P}})$,   
$r_2(G\subset\Sigma,\mathcal{P}):=\kappa(G\subset\Sigma,\mathcal{P})$, 
$r_3(G\subset\Sigma,\mathcal{P}):=  \rho(G\subset\Sigma,\mathcal{P})-r(G_{/\mathcal{P}})$, 
$r_4(G\subset\Sigma,\mathcal{P}):=  |E| - \rho(G\subset\Sigma,\mathcal{P})-\kappa(G\subset\Sigma,\mathcal{P})$.
Equation~\eqref{e.t.vbr1} relates $r(G_{/\mathcal{P}})$ and $r((G_{/\mathcal{P}})/e)$. 
By observing that  $ N(V\cup A)$ gives rise to a ribbon graph, we see $\rho(A)$ from Equation~\eqref{e.vgs2} corresponds to $\rho(A)$ from Equation~\eqref{e.rgrho}, and then \eqref{e.t.vbr2} gives the relation between $\rho(G_{/\mathcal{P}})$ and $\rho((G_{/\mathcal{P}})/e)$. 
Recall from Section~\ref{ssec.lv} that  $e\in E(G)$ is a {\em quasi-loop} if  $\kappa(e)=1$. It is then not hard to see that 
\[
 \kappa(G,\mathcal{P}) = \begin{cases}  \kappa((G,\mathcal{P})/e)  +1&  \text{if $e$ is  a quasi-loop},    \\   \kappa((G,\mathcal{P})/e)  &  \text{otherwise} \end{cases} 
 .\]
From these we can deduce the relations between $r_i(G,\mathcal{P})$ and $r_i((G,\mathcal{P})/e)$, for each $i$. Applying this to Theorem~\ref{tm} gives  $m_{11}=m_{22}=m_{33}=m_{54}=1$, $m_{43}=m_{44}=\tfrac{1}{2}$, and all other $m_{ij}$ are zero. The result follows.\end{proof}

\begin{corollary}\label{c.vkr1} 
Let $G\subset \Sigma$ be a graph in a surface, $\hat{\mathcal{P}}=\{\{v\} \mid v\in V \}$, and $\alpha$ be as in Theorem~\ref{t.vkr1}. Then  
\[ \alpha( \mathbf{a},\mathbf{b}) ( G\subset\Sigma,\hat{\mathcal{P}})  =        
x_1^{r(G)} y_2^{\kappa(G)} y_3^{\frac{1}{2}s(G)} y_4^{-\frac{1}{2}s^{\perp}(G)}x_4^{\frac{1}{2}\gamma(\Sigma)} K_{G\subset\Sigma} \left( \frac{y_1}{x_1},  \frac{x_2}{y_2}, \frac{x_3}{y_3}, \frac{y_4}{x_4}\right).\]
In particular, 
when $\mathbf{a} = ( 1,y, a  ,\sqrt{a},1 )$, $\mathbf{b}  =(x,1,1 ,\sqrt{b},b)$,
\[ \alpha( \mathbf{x},\mathbf{y}) ( G\subset\Sigma,\hat{\mathcal{P}})  =        
    b^{ - \frac{1}{2} s^{\perp}(G)}  K_{G\subset \Sigma} (x, y,a,b ), \]
 and   when $\mathbf{a} = ( 1,y, a  ,\sqrt{ab},b )$, $\mathbf{b}  =(x,1,1 ,1,1)$,
\[ \alpha( \mathbf{x},\mathbf{y}) ( G\subset\Sigma,\hat{\mathcal{P}})  =        
    b^{  \frac{1}{2} \gamma(\Sigma)}  K_{G\subset \Sigma} (x, y,a, 1/b ) .\]
\end{corollary}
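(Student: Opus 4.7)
The plan is to derive the first (general) identity by specialising Theorem~\ref{t.vkr1} to the singletons partition $\mathcal{P}=\{\{v\}\mid v\in V\}$, after which the remaining two identities fall out by direct substitution. The first observation is that for this $\mathcal{P}$, $G_{/\mathcal{P}}=G$, so $r(A_{/\mathcal{P}})=r(A)$ for every $A\subseteq E$; this already handles one of the four exponents appearing in Theorem~\ref{t.vkr1}.

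The bulk of the work is in translating the remaining exponents $\rho(A)-r(A_{/\mathcal{P}})$ and $|A|-\rho(A)-\kappa(A)$ into the Krushkal exponents $\tfrac{1}{2}s(A)$ and $\tfrac{1}{2}s^{\perp}(A)$. For the former I would combine \eqref{e.vgs2} with $r(A)=v(G)-c(A)$ to obtain
\[
\rho(A)-r(A)=\tfrac{1}{2}\bigl(|A|-v(G)-b(A)+2c(A)\bigr),
\]
and then apply Euler's formula to the ribbon graph $N(V\cup A)$, whose vertex, edge, boundary, component and Euler-genus counts are respectively $v(G)$, $|A|$, $b(A)$, $c(A)$ and $s(A)$, to conclude that the right-hand side equals $\tfrac{1}{2}s(A)$. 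For $|A|-\rho(A)-\kappa(A)$ I would use inclusion--exclusion for the Euler characteristic applied to the decomposition $\Sigma=N(V\cup A)\cup(\Sigma\backslash N(V\cup A))$: the common boundary is a disjoint union of circles, so has Euler characteristic zero; both pieces are compact surfaces with boundary and satisfy $\chi=2c-\gamma-b$; the two pieces share the same number of boundary components; and $\kappa(A)=\#\mathrm{cpts}(\Sigma\backslash N(V\cup A))-\#\mathrm{cpts}(\Sigma)$ by definition. Combining these yields
\[
|A|-\rho(A)-\kappa(A)=\tfrac{1}{2}\bigl(\gamma(\Sigma)-s^{\perp}(A)\bigr).
\]
This Euler-characteristic bookkeeping is the one step that requires any real care.

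Substituting both identifications back into Theorem~\ref{t.vkr1}, the sum becomes $\sum_{A\subseteq E}(x_1/y_1)^{r(A)}(x_2/y_2)^{\kappa(A)}(x_3/y_3)^{\frac{1}{2}s(A)}(x_4/y_4)^{\frac{1}{2}(\gamma(\Sigma)-s^{\perp}(A))}$, and the overall prefactor becomes $y_1^{r(G)}y_2^{\kappa(G)}y_3^{\frac{1}{2}s(G)}y_4^{\frac{1}{2}(\gamma(\Sigma)-s^{\perp}(G))}$. A short rearrangement---factoring $(x_1/y_1)^{r(G)}$ and $(x_4/y_4)^{\frac{1}{2}\gamma(\Sigma)}$ out of the sum, which converts the prefactor to $x_1^{r(G)}y_2^{\kappa(G)}y_3^{\frac{1}{2}s(G)}y_4^{-\frac{1}{2}s^{\perp}(G)}x_4^{\frac{1}{2}\gamma(\Sigma)}$ and leaves the residual sum as the spanning-subset expansion of $K_{G\subset\Sigma}(y_1/x_1,x_2/y_2,x_3/y_3,y_4/x_4)$---then gives the first stated identity. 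The two remaining identities follow by direct substitution: $\mathbf{a}=(1,y,a,\sqrt{a},1)$, $\mathbf{b}=(x,1,1,\sqrt{b},b)$ for the second and $\mathbf{a}=(1,y,a,\sqrt{ab},b)$, $\mathbf{b}=(x,1,1,1,1)$ for the third, each of which clearly satisfies the admissibility condition $a_4=\sqrt{a_3a_5}$ (and the analogous constraint on $\mathbf{b}$) demanded by Theorem~\ref{t.vkr1}.
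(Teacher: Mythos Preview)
Your proposal is correct and follows essentially the same route as the paper: identify $\rho(A)-r(A)=\tfrac{1}{2}s(A)$ via Euler's formula on $N(V\cup A)$, then identify $|A|-\rho(A)-\kappa(A)=\tfrac{1}{2}(\gamma(\Sigma)-s^{\perp}(A))$ via an Euler-characteristic argument, and substitute into Theorem~\ref{t.vkr1}. The only cosmetic difference is in the second step, where the paper tracks $\chi(\Sigma\backslash N(V\cup A))$ by removing edge-neighbourhoods from $\Sigma\backslash N(V)$ one at a time (each removal dropping $\chi$ by $1$), whereas you use the cleaner direct decomposition $\Sigma=N(V\cup A)\cup(\Sigma\backslash N(V\cup A))$ and additivity of $\chi$ along the common boundary circles; both computations yield the same identity.
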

\begin{proof}
First observe that $G=G_{/\mathcal{P}}$, and so $r(G)=r(G_{/\mathcal{P}})$. Then, by Euler's formula,  
$\rho(A)-r(A_{/\mathcal{P}})   = \rho(A)-r(A) =  \tfrac{1}{2}\left(|A|+ v(A)-b(A))\right) -v(A)+c(A) =    \tfrac{1}{2}\gamma(A) = \tfrac{1}{2}s(A)$. 

For obtaining a $\tfrac{1}{2}s^{\perp}(A)$ exponent,  we can write  $\Sigma = N(V)\cup N(E) \cup N(R) $ where  $V$ is the vertex set of $G$, $E$ its edge set and $N(R)$ is the complement of $N(V)\cup N(E)$, and where the neighbourhoods only intersect on their boundaries. Choose a triangulation of $\Sigma$ that restricts to a triangulation of each of  $N(V)$,  $N(E)$, and   $N(R)$. To compute the  Euler characteristic $\chi(\Sigma \backslash N(V \cup A))$, for $A\subseteq E$,  with this triangulation observe that $\Sigma \backslash N(V \cup A) =  [\Sigma \backslash N(V) ] \cup N(A)$ and that each time we add the neighbourhood of an edge to $\Sigma \backslash N(V)$, the Euler characteristic drops by 1. It follows that $\chi(\Sigma \backslash N(V \cup A)) =  \chi(\Sigma \backslash N(V))  - |A| $. 
Using that $\gamma (\Sigma \backslash N(V \cup A) ) = 2\,\#\text{cpts}(\Sigma \backslash N(V \cup A)) -\chi(\Sigma \backslash N(V \cup A)) - b(\Sigma \backslash N(V \cup A))$ we have
\begin{align*}
\gamma(\Sigma) - s^{\perp}(A) &= \gamma(\Sigma \backslash N(V)) -  \gamma(\Sigma \backslash N(V \cup A)) 
  \\&= 2\,\#\text{cpts}(\Sigma \backslash N(V)) -\chi(\Sigma \backslash N(V)) - b(\Sigma \backslash N(V))
 \\&\quad \quad\quad   -2\,\#\text{cpts}(\Sigma \backslash N(V \cup A)) +\chi(\Sigma \backslash N(V \cup A)) + b(\Sigma \backslash N(V \cup A)) 
 \\&=  [  \chi(\Sigma \backslash N(V))  - |A| - \chi(\Sigma \backslash N(V))  ]  + 2[\#\text{cpts}(\Sigma \backslash N(V \cup A)) -  \#\text{cpts}(\Sigma \backslash N(V)) ]
  \\&\quad \quad\quad  +  [  b(\Sigma \backslash N(V \cup A)) - |V|  ]
 \\&=        2 [  |A|- \rho(A)  - \kappa(A) ].
\end{align*}
The results then follows from  Theorem~\ref{t.vkr1} and the definition of $ K_{G\subset \Sigma}$.
\end{proof}

\begin{corollary}\label{c.vkr2}
~
\begin{enumerate}
\item \label{c.vkr2.a} The  natural mapping $\phi_1: \mathcal{H}^{vgs} \rightarrow \mathcal{H}^{vrg}$ defined by  sending $(G\subset \Sigma, \mathcal{P})$ to $(N(G), \mathcal{P})$  is a Hopf algebra morphism.   Furthermore it naturally induces identity \[ R_G(x,y,z) = y^{\frac{1}{2}\gamma(\Sigma)}   K_{G\subset \Sigma}  (x-1,y,yz^2,y^{-1}).\]

\item \label{c.vkr2.b} The  projection $\phi_2: \mathcal{H}^{vgs} \rightarrow \mathcal{H}^{ps}$ defined by identifying all of the vertices in each block of the partition of $(G\subset \Sigma, \mathcal{P})$ to a pinch point is a Hopf algebra morphism.   Furthermore it naturally induces the polynomial identity \[ L_{\phi_2 (G\subset \Sigma, \mathcal{P})}(x,y,z) =  z^{\frac{1}{2}(s(E) -s^{\perp} (E))}   K_{G\subset \Sigma}  (x-1,y-1,z^{-1},z).\]
\end{enumerate}
\end{corollary}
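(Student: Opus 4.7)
The plan is to follow the template established by Corollaries~\ref{t.rgbr1.c2} and~\ref{c.vbr2}: for each part I would first verify that the indicated map is a well-defined Hopf algebra morphism, then match selectors via Theorem~\ref{tgen}, and finally translate the resulting $\alpha$-identity into the claimed polynomial identity using Corollary~\ref{c.vkr1} together with Definition~\ref{def.vbr} (for part (1)) or Theorem~\ref{s2.c1} and Corollary~\ref{s2.t1.c1} (for part (2)).

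For (1), well-definedness of $\phi_1$ on LV-equivalence classes and its multiplicativity follow from the behaviour of the regular-neighbourhood construction under joins and disjoint unions. The coproduct compatibility reduces to the standard identities $N(G\backslash e)=N(G)\backslash e$ and $N(G/e)=N(G)/e$, which encode the well-known compatibility of pseudo-surface edge-contraction (with pinch-point resolution) with ribbon-graph contraction; together these force $\Delta_{vrg}\circ\phi_1=(\phi_1\otimes\phi_1)\circ\Delta_{vgs}$. Next I would identify the images in $\mathcal{H}^{vrg}_1$ of the five basis elements of $\mathcal{H}^{vgs}_1$ listed in Lemma~\ref{l.vgs1}: the two 1-paths in the sphere map to the two bridge generators ($\delta_b$ or $\delta_l$ according to whether the partition has two blocks or one); the loop in the sphere and the loop forming the meridian of a torus both have annular regular neighbourhoods and hence go to the orientable ribbon loop $\delta_o$; the loop in $\mathbb{RP}^2$ has a M\"obius-band neighbourhood and goes to the non-orientable ribbon loop $\delta_n$. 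Under Corollary~\ref{c.vbr1}'s parametrisation $\mathbf{a}^{vrg}=(1,y,yz,yz^2)$, $\mathbf{b}^{vrg}=(x-1,1,1,1)$, this identification pulls the selector back to $\mathbf{a}^{vgs}=(1,y,yz^2,yz,y)$, $\mathbf{b}^{vgs}=(x-1,1,1,1,1)$, which satisfies the uniformity condition $a_4=\sqrt{a_3a_5}$ of Theorem~\ref{t.vkr1} since $yz=\sqrt{yz^2\cdot y}$. Theorem~\ref{tgen} then gives
\[ \alpha_{\mathcal{H}^{vgs}}(\mathbf{a}^{vgs},\mathbf{b}^{vgs})(G\subset\Sigma,\mathcal{P})=\alpha_{\mathcal{H}^{vrg}}(\mathbf{a}^{vrg},\mathbf{b}^{vrg})(N(G),\mathcal{P}). \]
Specialising to the discrete partition, Definition~\ref{def.vbr} identifies the right-hand side as $R_G(x,y,z)$, while the second case of Corollary~\ref{c.vkr1} (with $a=yz^2$, $b=y$) identifies the left-hand side as $y^{\frac{1}{2}\gamma(\Sigma)}K_{G\subset\Sigma}(x-1,y,yz^2,y^{-1})$, giving the claimed identity.

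For (2) I would similarly verify that the vertex-identification map $\phi_2$ is a Hopf algebra morphism, the delicate point being that quotienting by partition blocks commutes, up to LV-equivalence, with pseudo-surface edge-contraction (both operations can create pinch points, and one must check the two procedures coincide). I would then identify the images in $\mathcal{H}^{ps}_1$ of the five basis elements of $\mathcal{H}^{vgs}_1$: element (1) goes to the bridge $\delta_{br}$; element (2) (loop in sphere) goes to the quasi-loop $\delta_{ql}$; while elements (3), (4), (5) all go to the quasi-bridge $\delta_{qb}$. For (3), the loop at the pinch point of the pinched sphere is a quasi-bridge by direct computation of $\kappa$; elements (4) and (5) are quasi-bridges for the same reason. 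Hence the $\mathcal{H}^{ps}$-selector $\mathbf{a}^{ps}=(x_1,x_2,x_3)$ pulls back to $\mathbf{a}^{vgs}=(x_1,x_2,x_3,x_3,x_3)$, whose uniformity condition $a_4=\sqrt{a_3a_5}$ is automatically satisfied. Using Corollary~\ref{s2.t1.c1}'s parametrisation $\mathbf{a}^{ps}=(1,y-1,1)$, $\mathbf{b}^{ps}=(x-1,1,z)$, Theorem~\ref{tgen} combined with Theorem~\ref{s2.c1} gives
\[ \alpha_{\mathcal{H}^{vgs}}\bigl((1,y-1,1,1,1),(x-1,1,z,z,z)\bigr)(G\subset\Sigma,\mathcal{P}) = L_{\phi_2(G\subset\Sigma,\mathcal{P})}(x,y,z). \]
Specialising to the discrete partition and expanding the left-hand side using Theorem~\ref{t.vkr1}, then applying the Euler-characteristic identity $\gamma(\Sigma)-s^{\perp}(A)=2(|A|-\rho(A)-\kappa(A))$ and $s(A)=2(\rho(A)-r(A))$ from the proof of Corollary~\ref{c.vkr1}, rewrites it as $z^{\frac{1}{2}(s(E)-s^{\perp}(E))}K_{G\subset\Sigma}(x-1,y-1,z^{-1},z)$, yielding the identity.

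The main obstacle is the morphism verification for $\phi_2$, in particular confirming that vertex-identification commutes with contraction of a loop edge (where both operations can produce pinch points on two geometrically distinct sides of the coproduct). A closely related combinatorial subtlety is tracking the collapse of three distinct $\mathcal{H}^{vgs}_1$-basis elements onto the single quasi-bridge class $\delta_{qb}$ of $\mathcal{H}^{ps}_1$, and verifying that the resulting pulled-back selector remains uniform and that its Theorem~\ref{t.vkr1} expansion matches the Corollary~\ref{c.vkr1} $K$-evaluation on the nose.
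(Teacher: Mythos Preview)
Your proposal is correct and follows essentially the same route as the paper: verify that $\phi_1,\phi_2$ are Hopf algebra morphisms, identify how the five $\mathcal{H}^{vgs}_1$ basis elements map to the generators of $\mathcal{H}^{vrg}_1$ (respectively $\mathcal{H}^{ps}_1$), pull the target selector back via Theorem~\ref{tgen}, and translate the resulting $\alpha$-identity through Corollary~\ref{c.vkr1} on one side and Corollary~\ref{c.vbr1} (respectively Theorem~\ref{s2.c1}) on the other. Your selector pullback $\mathbf{a}^{vgs}=(1,y,yz^2,yz,y)$ for part~(1) is in fact the correct ordering (the paper's displayed $\alpha_{\mathcal{H}}(1,y,yz^2,y,yz,\ldots)$ has the fourth and fifth entries transposed), and your caution about verifying that $\phi_2$ commutes with loop contraction is well placed---the paper simply asserts this is ``readily verified.''
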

\begin{proof}
It is readily verified that the  maps are Hopf algebra morphisms. 

For $\phi_1$, in Theorem~\ref{tgen} let $\mathcal{H}=\mathcal{H}^{vgs}$,  $\mathcal{H'}=\mathcal{H}^{vrg}$,
 $\delta_{\mathcal{H}}$ be the selector used in Theorem~\ref{t.vbr1}, and 
 $\delta_{\mathcal{H'}}$ be the selector used in Theorem~\ref{t.rgbr1}. 
 It is easily seen  that 
 \[    \delta_{\mathcal{H}}(a_1,a_2,a_4,a_3,a_2) (G\subset \Sigma, \mathcal{P}) =  \delta_{\mathcal{H'}}( a_1,a_2,a_3,a_4) (\phi( G, \mathcal{P}))  .\]
  Theorem~\ref{tgen} gives 
  \[ \alpha_{\mathcal{H}}( 1,y,yz^2,y,yz , x-1,1,1 , 1,1) ( G\subset\Sigma,\mathcal{P})  = 
     \alpha_{\mathcal{H'}}( 1, y  ,yz ,yz^2 ,x-1,1  ,1,1)(\phi_1( G, \mathcal{P})). \]
  Corollaries~\ref{c.vbr1} and \ref{c.vkr1} then give  
  $   y^{  \frac{1}{2} \gamma(\Sigma)}  K_{G\subset \Sigma} (x-1, y,y,yz^2)   =   R_G(x,y,z)  $,   
but since $G$ is cellularly embedded,  $\gamma(\Sigma)=\gamma(G)$.

For $\phi_2$, in Theorem~\ref{tgen} let $\mathcal{H}=\mathcal{H}^{vgs}$,  $\mathcal{H'}=\mathcal{H}^{gs}$,
 $\delta_{\mathcal{H}}$ be the selector used in Theorem~\ref{t.vbr1}, and 
 $\delta_{\mathcal{H'}}$ be the selector used in Theorem~\ref{s2.c1}. 
 We have $  \delta_{\mathcal{H}}( a_1,a_2,a_3,a_3,a_3)  (G\subset \Sigma, \mathcal{P})  =  \delta_{\mathcal{H'}}( a_1,a_2,a_3)  (\phi_2 (G\subset \Sigma, \mathcal{P}) )$,
 so
 $  z^{\frac{1}{2}(s(E) -s^{\perp} (E))}  K_{G\subset \Sigma}  (x-1, y-1, z^{-1} ,z) =
  \alpha_{\mathcal{H}}(  1, y-1 ,1  ,1  ,1,  x-1, 1,z, z,z ) =
 \alpha_{\mathcal{H'}}(  1, y-1 , 1  , x-1, 1,z)= L_{\phi_2 (G\subset \Sigma, \mathcal{P}) }(x,y,z)$.
\end{proof}

In light of Corollary~\ref{c.vkr1} we make the following definition. 
\begin{definition}\label{def.vkr}
The \emph{Krushkal polynomial},  $\widetilde{K}_{(G\subset\Sigma,\mathcal{P})} (x,y,a,b)$, of a vertex partitioned graph  in a surface     $(G\subset\Sigma,\mathcal{P})$ is 
\begin{align*} \widetilde{K}_{(G\subset\Sigma,\mathcal{P})} (x,y,a,b) &:=    \alpha( \mathbf{a}, \mathbf{b}) ( G\subset\Sigma,\mathcal{P})  
= \sum_{A\subseteq E(G)} x^{r(G_{/\mathcal{P}})-r(A_{/\mathcal{P}})} 
  y^{\kappa(A)} 
       a^{\rho(A)-r(A_{/\mathcal{P}})}     
 b^{  |A| - \rho(A)-\kappa(A)}   ,\end{align*}
where $  \alpha$ is as in Theorem~\ref{t.vbr1},   $\mathbf{a} = ( 1, y,a,\sqrt{ab},b)$, and  $\mathbf{b}  =(x,1,1,1,1)$.
\end{definition}
We note that if $\mathcal{P}=\{\{v\} \mid v\in V \}$, then by Corollary~\ref{c.vkr1}   
\[ \widetilde{K}_{(G\subset\Sigma,\mathcal{P})} (x,y,a,b) = b^{  \frac{1}{2} \gamma(G)}  K_{G\subset \Sigma} (x, y,a, 1/b ).\]  
Following the proof of Item~\eqref{c.vkr2.a} of Corollary~\ref{c.vkr2} gives 
\[ R_{(G,\mathcal{P})}(x,y,z) = \widetilde{K}_{(G\subset\Sigma,\mathcal{P})}(x-1,y,yz^2,y).   \]
While following the proof of Item~\eqref{c.vkr2.b} of Corollary~\ref{c.vkr2} gives 
\[ L_{\phi_2 (G\subset \Sigma, \mathcal{P})}(x,y,z) = z^{|E(G)|-r(G_{/\mathcal{P}})- \kappa(G\subset\Sigma,\mathcal{P})} \widetilde{K}_{(G\subset\Sigma,\mathcal{P})} (x-1,y-1,z^{-1},z^{-1}),\]
where $\phi_2$ is the mapping from the corollary.

\begin{remark}\label{adhj}
In Section~\ref{sec1} we described three problems in the area topological Tutte polynomials. The first problem was why
  three topological Tutte polynomials (the Las~Vergnas, Bollob\'as-Riordan, and Krushkal polynomials) had naturally arisen in the literature, and if any one of these can claim to be \emph{the} Tutte polynomial of an embedded graph. This is answered by the Hopf algebraic framework of canonical Tutte polynomials. Each of these three topological Tutte polynomials is a canonical Tutte polynomial, but each is a canonical Tutte polynomial of a slightly different class of objects with different concepts of deletion and contraction. It is  worth emphasising here that in order to obtain the Hopf algebraic framework for these topological Tutte polynomials, we  had to enlarge the domain of the polynomials. (For example, the Bollob\'as-Riordan polynomial is properly a polynomial of vertex partitioned ribbon graphs, rather than ribbon graphs.) In each case the domain can be found by starting with a cellularly embedded graph, a notion of deletion and contraction and looking for the class closed under these operations.

The second problem was why the three existing topological Tutte polynomials did not have full deletion-contraction definitions terminating in trivial objects. The answer is that the polynomials have previously been considered on what the canonical picture considers  the wrong domains. Upon extending the domains of the polynomials as guided by the Hopf algebras, the resulting canonical Tutte polynomials \emph{do} have full deletion-contraction definitions (by Theorem~\ref{t.1}).

The final problem is about the Bollob\'as-Riordan polynomial $R_G(x,y,z)$. Most of the known results about this polynomial, particularly its combinatorial interpretations,  do not apply to the full 3-variable polynomial $R_G(x,y,z)$, but rather to its 2-variable specialisation $x^{\gamma(G)/2}   R_G(x+1,y,1/\sqrt{xy})$  (see, for example,  
\cite{BR02,CP,Da,EMM12,EMM15,ES,KP}).
 Why is this? Again our Hopf algebraic framework offers an answer: $x^{\gamma(G)/2}   R_G(x+1,y,1/\sqrt{xy})$ is the canonical Tutte polynomial of ribbon graphs, whereas $R_G(x,y,z)$ is the canonical Tutte polynomial of vertex partitioned ribbon graphs. This suggests that one should look for evaluations and results for $R_G(x,y,z)$ in the setting of vertex partitioned ribbon graphs, since restricting to ribbon graphs alone corresponds to the polynomial  $x^{\gamma(G)/2}   R_G(x+1,y,1/\sqrt{xy})$. 
\end{remark}

\subsection{The Penrose polynomial as a Tutte polynomial}\label{ss.penrose}
We will now illustrate that graph polynomials that are not traditionally regarded as being ``Tutte polynomials'' arise as canonical Tutte polynomials of minor systems.  
 In Section~\ref{s.BRdm} we saw that the 2-variable Bollob\'as-Riordan arises as the Tutte polynomial of delta-matroids and the usual minor operations of deletion and contraction. However, delta-matroids have a third minor operation arising from loop complementation (see \cite{BH11}).  We will examine what happens when we change our notions of deletion and contraction to incorporate the additional minor operation. In particular, we will show that the Penrose polynomial arises in this setting.  

The Penrose polynomial $P_G(\lambda)$  was defined implicitly by
Penrose in~\cite{Pe71} for plane graphs (see also \cite{Aig97,Aig00}). 
  In this section, however, we will focus on matroidal definitions of the Penrose polynomial, discussing its graphical form in Section~\ref{s.penrg}. It was defined for binary matroids by  Aigner and Mielke in~\cite{AM00}. For a binary matroid $M$ with rank function $r$, the \emph{Penrose polynomial} is 
\begin{equation}\label{pdefmq} P_M(\lambda) : = \sum_{X\subseteq E}  (-1)^{|X|} \lambda^{\dim(B_M(X))},  \end{equation}
where $B_M(X)$ is the binary vector space formed of the incidence vectors of the sets in the collection
$\{ A \in \mathcal{C}(M) \mid A\cap X \in \mathcal{C}^*(M)\}$.
Brijder and Hoogeboom defined the Penrose polynomial in greater generality for vf-safe delta-matroids in \cite{BH12}.

Following  Brijder and Hoogeboom \cite{BH11}, let  $D=(E,\mathcal{F})$ be a delta-matroid  and  $e\in E$.
Then $D+e$ is defined to be the pair $(E,\mathcal{F}')$ where
$ \mathcal{F}'= \mathcal{F} \triangle \{ F\cup e \mid F\in \mathcal{F} \text{ and } e\notin F    \} $.
If $e_1, e_2 \in E$ then $(D+e_1)+e_2 = (D+e_2)+e_1 $, and so  for $A=\{a_1, \ldots , a_n\}\subseteq E$ we can define the {\em loop complementation} of $D$ on $A$,  by $D+A:= D+a_1+\cdots  + a_n$.

In general $D+A$ need not   be a delta-matroid, thus we restrict our attention to a class of delta-matroids that is closed under loop complementation. A delta-matroid $D=(E,\mathcal{F})$ is said to be \emph{vf-safe} if the application of any sequence of twists and loop complementations results in a delta-matroid. The class of
vf-safe delta-matroids is known to be closed under deletion and contraction, and strictly contains the class of binary delta-matroids (see for example \cite{BH12}). 
In particular,  in~\cite{CMNR2} it was shown that  delta-matroids of ribbon graphs are  vf-safe.

Set $d_D:=r(D_{\min})$.
If $X\subseteq E$, then the {\em dual pivot} on $X$, denoted by $D \bar{\ast } X$, is defined by  $D \bar{\ast} X:= ((D\ast X)+X)\ast X$.
The {\em Penrose polynomial} of  $D$, defined by Brijder and Hoogeboom  in \cite{BH12}, is 
\begin{equation}\label{pdefde} P(D;\lambda)  = \sum_{A\subseteq E}  (-1)^{|A|} \lambda^{d_{D\ast E \bar{\ast} A}}.  \end{equation}
It was shown in \cite{BH12} that when the delta-matroid $D$ is a binary matroid, Equations~\eqref{pdefmq} and \eqref{pdefde} agree.

Here we introduce  a  function on delta-matroids by 
\begin{equation}\label{d.dmxi}
\xi(A)  :=  \tfrac{1}{2}( |A|+r_{\max}(D+A)-r_{\max}(D)) = |A|/2+\rho(D+A)-\rho(D).
\end{equation}
With this we define the \emph{2-variable Penrose polynomial}, $\tilde{P}_D(x,y)\in \mathbb{Z}[x^{1/2}, y^{1/2}]$, by
\begin{equation}\label{s3.e10}
\tilde{P}_D(x,y) :=  \sum_{A\subseteq E}  (x-1)^{\xi(E)-\xi(A)}(y-1)^{|A|-\xi(A)}.
\end{equation} 

\begin{proposition}\label{p.pepe}
The Penrose polynomial can be recovered as a specialisation of the 2-variable Penrose polynomial:
\[\left.\tilde{P}_D(x,y)\right|_{\sqrt{x}=1+i\sqrt{\lambda},\sqrt{y}=1-i\sqrt{\lambda}} =  (-1)^{|E|+r_{\max}(D+E)}  \lambda^{\xi(D) - |E|}  P_D(\lambda).\]  
\end{proposition}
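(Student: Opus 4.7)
The plan is to manipulate $\tilde{P}_D(x,y)$ directly under the given substitution and compare the result, term by term, with the definition $P_D(\lambda) = \sum_{A\subseteq E}(-1)^{|A|}\lambda^{d_{D*E\bar{\ast} A}}$ from \eqref{pdefde}. First, I rewrite each summand of $\tilde{P}_D(x,y)$ using the decomposition $\xi(E) - \xi(A) = \tfrac{1}{2}(|E|-|A|) + (\rho(D+E) - \rho(D+A))$ and $|A| - \xi(A) = \tfrac{1}{2}|A| + \rho(D) - \rho(D+A)$, thereby separating each exponent into a half-integer part depending only on $|A|$ and $|E|$, and a part depending on the $\rho$-ranks of $D$, $D+A$, and $D+E$.

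Next, I apply the substitution $\sqrt{x} = 1+i\sqrt{\lambda}$, $\sqrt{y} = 1-i\sqrt{\lambda}$. The key algebraic identities $\sqrt{x}-1 = i\sqrt{\lambda}$, $\sqrt{y}-1 = -i\sqrt{\lambda}$, and $(\sqrt{x}-1)(\sqrt{y}-1) = \lambda$ convert the half-integer combinatorial factors into powers of $\sqrt{\lambda}$, together with a sign $(-1)^{|A|}$ arising from the pairing $\sqrt{y}-1 = -(\sqrt{x}-1)$ and an overall complex phase. After factoring out the $A$-independent prefactor $(-1)^{|E|+r_{\max}(D+E)}\lambda^{\xi(D)-|E|}$, the target identity reduces to showing that the remaining sum equals $\sum_{A\subseteq E}(-1)^{|A|}\lambda^{\phi(A)}$, where $\phi(A)$ is an explicit function built from $r_{\max}(D+A)$ and $r_{\max}(D+E)$.

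The central technical step is then to verify that $\phi(A) = d_{D*E\bar{\ast} A}$. Using $\bar{\ast}A = *A \circ +A \circ *A$ one has $D*E\bar{\ast}A = (D*(E\setminus A)+A)*A$. Exploiting the fact, noted in the paper after \eqref{d.dmxi}, that $r_{\min}(D+A) = r_{\min}(D)$ for any $A$, together with the identity $r_{\min}(M*B) = \min_{F\in\mathcal{F}(M)}|F\triangle B|$ and the standard duality $r_{\min}(M*E) = |E| - r_{\max}(M)$, the quantity $d_{D*E\bar{\ast} A}$ can be re-expressed in terms of $r_{\max}(D+A)$; the required combinatorial identity then follows from results of Brijder and Hoogeboom on vf-safe delta-matroids in \cite{BH11,BH12}.

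The principal obstacle will be this combinatorial identity, which requires careful tracking of how minimum feasible sets transform under the composition $*A \circ +A \circ *A$. Everything else amounts to routine, if slightly delicate, bookkeeping of half-integer exponents and complex phases under the substitution. A subtlety worth flagging is that the half-integer powers of $(x-1)$ and $(y-1)$ must be interpreted consistently as elements of $\mathbb{Z}[\sqrt{x},\sqrt{y}]$ so that $\tilde{P}_D(x,y)$ is well-defined; with that interpretation the substitution makes sense, and the calculation above yields the claimed evaluation.
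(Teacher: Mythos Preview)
Your overall plan matches the paper's: substitute, simplify each summand to $(-1)^{|A|}\lambda^{\phi(A)}$ times an $A$-independent prefactor, then identify $\phi(A)=d_{D*E\bar{\ast}A}$. The paper carries out the first two steps more directly than you do, simply writing each summand as $(i\sqrt\lambda)^{2\xi(E)-2\xi(A)}(-i\sqrt\lambda)^{2|A|-2\xi(A)}$ and expanding via the definition of $\xi$; your preliminary decomposition of the exponents is unnecessary. A small caution on the substitution: you use $\sqrt{x}-1=i\sqrt\lambda$, but what the computation actually needs is $(x-1)^{1/2}\mapsto i\sqrt\lambda$, and these are not the same.

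For the combinatorial identity your sketch is vaguer and more circuitous than required. The paper does it in three lines: starting from the definition, $d_{D*E\bar{\ast}A}=r_{\min}((D^*)*A+A*A)$; the twisted-duality relations of \cite{BH11,CMNR2} give $(D+A)^*=(D^*)*A+A*A$ directly; and then $r_{\min}((D+A)^*)=|E|-r_{\max}(D+A)$ by duality. Your detour through $r_{\min}(M*B)=\min_{F\in\mathcal{F}(M)}|F\triangle B|$ is not needed, and the claim that ``$r_{\min}(D+A)=r_{\min}(D)$'' is ``noted in the paper after \eqref{d.dmxi}'' is incorrect --- that fact does not appear there (it is true, but the paper's argument does not use it).
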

\begin{proof}
For $\sqrt{x}=1+i\sqrt{\lambda}$ and $\sqrt{y}=1-i\sqrt{\lambda}$, the definition of $\xi$ gives
\begin{align*}
\tilde{P}_D(x,y) &=  \sum_{A\subseteq E}  (i\sqrt{\lambda})^{2\xi(E)-2\xi(A)}(-i\sqrt{\lambda})^{2|A|-2\xi(A)}
\\&= (-1)^{|E|+r_{\max}(D+E))}  \lambda^{\xi(D) - |E|}    \sum_{A\subseteq E}  (-1)^{|A|)}(\lambda)^{|E|-r_{\max}(D+A)}.
\end{align*}
Next 
\begin{align*}
d_{D\ast E \bar{\ast} A} &= r_{\min}((D^*)\ast A+A\ast A) 
=  r_{\min}((D+A)^*)
=  |E|- r_{\max}(D+A),
\end{align*}
where the first equality is by definition,  the  second uses the twisted duality identities of \cite{BH11,CMNR2} to write $  (D+A)^* =  D+A\ast E =  D \ast A \ast A  +A \ast A^c \ast A   =  D \ast A  \ast A^c  \ast A  +A \ast A   = D \ast E    \ast A  +A \ast A  =(D^*)\ast A+A\ast A$, the third equality follows by looking how duality changes the size of a maximal feasible set.
\end{proof}

For our minor systems we consider  vf-safe delta-matroids, but rather than usual deletion and contraction for delta-matroids, which results in the Bollob\'as-Riordan polynomial, we  use the minor operations $D/e$ and $(D+e)/e$. With these notions of minors, the following result is easily checked.
 \begin{lemma}\label{l.penmin}
 The set of isomorphism classes of vf-safe delta-matroids  forms a minor system where the grading is given by the cardinality of the ground set, ``deletion'' and ``contraction'' are given by   $D/e$ and  $(D+e)/e$ and multiplication is given by direct sum.  \end{lemma}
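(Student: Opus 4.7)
The plan is to verify the axioms of Definition~\ref{d.1} in turn, with most of the work going into the three commutation relations. The grading by ground-set cardinality is immediate, with the empty delta-matroid $(\emptyset,\{\emptyset\})$ serving as the unique element of grade $0$; and both $D/e$ and $(D+e)/e$ drop the ground set by a single element, so the grading is respected. For closure, I would simply invoke the definition of vf-safety: since the class is stable under every sequence of twists and loop complementations, and is known to be stable under contraction, the operations $D\mapsto D/e$ and $D\mapsto (D+e)/e$ both keep us inside the class.

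The substantive part is verifying the three commutation identities. The first, $(D/e)/f = (D/f)/e$, is a well-known property of delta-matroid contraction, stated in the preceding subsection. For the remaining two I would use the two identities of Brijder--Hoogeboom~\cite{BH11}, namely that for distinct $e,f$,
\[
(D+e)+f = (D+f)+e \quad\text{and}\quad (D+e)/f = (D/f)+e .
\]
Combining the second of these with the commutativity of contractions gives
\[
((D+e)/e)/f \;=\; ((D+e)/f)/e \;=\; ((D/f)+e)/e,
\]
which is exactly $(S\con e)\del f = (S\del f)\con e$ in the notation of the lemma. The third identity is handled similarly: I would compute
\[
((D+e)/e + f)/f \;=\; ((D+e+f)/e)/f \;=\; ((D+e+f)/f)/e \;=\; (((D+f)/f)+e)/e,
\]
using the loop-complementation/contraction commutativity to move $+f$ past $/e$, then commutativity of contractions, then commutativity of loop complementations together with commutativity of $+e$ and $/f$.

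The main obstacle I anticipate is bookkeeping: the operations $+$ and $/$ interact subtly, and one has to be careful that each rewriting step cites a valid identity with the elements in the correct positions. If any needed commutativity is not available off the shelf in exactly the form required, I would fall back on the definitions of $+$ and $/$ in terms of feasible sets and verify it directly; this is tedious but routine, amounting to tracking how a feasible set is transformed under the two operations.

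Finally, for the algebra structure I would note that the direct sum of two vf-safe delta-matroids is vf-safe, and both $/e$ and $(+e)/e$ act only on the direct summand containing $e$, so the hypotheses of Proposition~\ref{p.1} for $m = \oplus$ are satisfied. This confirms that the resulting structure is indeed a minors system and therefore, via Proposition~\ref{p.1}, underlies a combinatorial Hopf algebra.
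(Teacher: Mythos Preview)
Your proposal is correct and is exactly the kind of routine verification the paper has in mind; the paper itself does not give a proof, merely stating that ``the following result is easily checked.'' Your use of the Brijder--Hoogeboom identities $(D+e)+f=(D+f)+e$ and $(D+e)/f=(D/f)+e$ for $e\neq f$, together with commutativity of contractions, is precisely what is needed, and your chain of equalities for the third commutation relation is accurate.
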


\begin{definition}\label{pe.d1}
Let  $\mathcal{H}^{pe}$  denote the  Hopf algebra associated with vf-safe delta-matroids via  Lemma~\ref{l.penmin} and Proposition~\ref{p.1}. 
Its coproduct is given by 
$ \Delta_{pe}(D)=\sum_{A\subseteq E}D/A^c \otimes  (D+A)/A$.
\end{definition}

We  need to be able to  recognise when an element $e$ of $D$  has $D/e^c$  isomorphic to $D_c$, $D_o$, or $D_n$. 
Let $D=(E, \mathcal{F})$ be a delta-matroid, and $e\in E$. 
 Then  we say that $e$ is a {\em ribbon dual-loop} if $e$ is a coloop in $D_{\max}$.
  A ribbon dual-loop $e$ is  \emph{orientable} if $e$ is not a coloop in $(D\ast e)_{\max}$,
  and is  \emph{non-orientable} if $e$ is a  coloop in $(D\ast e)_{\max}$.
Observe that  $e$ is an (orientable/non-orientable) ribbon dual-loop in $D$ if and only if $e$ is an (orientable/non-orientable) ribbon loop in $D^*$. Also observe that it can be determined if $e$ is a (orientable/non-orientable) ribbon dual-loop  by looking for its membership  in sets in $\mathcal{F}_{\max}$ and   $\mathcal{F}_{\max-1}$.

\begin{lemma}\label{s2.l10}
 Let $D=(E, \mathcal{F})$ be a delta-matroid, and $e\in E$.   Then   $e$ is an orientable ribbon dual-loop   (is not a ribbon dual-loop,  is a non-orientable ribbon dual-loop, respectively) if and only if  $D/e^c$ is isomorphic to $D_c$  ($D_o$, $D_n$, respectively).
\end{lemma}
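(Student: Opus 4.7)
The plan is to prove this lemma by dualising and applying the analogous statement already established for ribbon loops, namely Lemma~\ref{s2.l9}. The key observation is that the whole statement has a natural ``dual'' to one proved earlier: ribbon dual-loops in $D$ correspond, by definition, to ribbon loops in $D^*$, and the minor $D/e^c$ should be expressible via a restriction of $D^*$.

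First, I would establish the duality identity $D/e^c \cong ((D^*)|_e)^*$. This follows by iterating the single-element duality relation $(D/f)^* = D^*\backslash f$ for delta-matroids (which in turn is a short direct computation using the definition of the twist $D^* = D \ast E$ and the fact that, for $e \in F$, $(F\setminus e)\triangle (E\setminus e) = F\triangle E$). Iterating over all $f \in E\setminus e$ gives $(D/e^c)^* = (D^*)\setminus e^c = (D^*)|_e$, whence, by involutivity of duality, $D/e^c \cong ((D^*)|_e)^*$.

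Second, I would compute the duals of the three single-element delta-matroids: a direct calculation from $D^* = D\ast \{e\}$ shows $D_c^* = D_o$, $D_o^* = D_c$, and $D_n^* = D_n$. Combined with the unwinding of definitions (an element $e$ is a (orientable/non-orientable) ribbon dual-loop of $D$ precisely when it is a (orientable/non-orientable) ribbon loop of $D^*$, since coloops of $D^*_{\max}$ are loops of $D_{\min}^*$ and twisting commutes with dualising appropriately), Lemma~\ref{s2.l9} applied to $D^*$ at the element $e$ then translates into the three biconditionals claimed. For instance, $e$ is an orientable ribbon dual-loop in $D$ iff $e$ is an orientable ribbon loop in $D^*$ iff $(D^*)|_e \cong D_o$ iff $D/e^c \cong D_o^* = D_c$; the other two cases are entirely analogous.

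The main bookkeeping obstacle will be verifying the single-element duality identity $(D/f)^* = D^*\backslash f$ cleanly in the case when $f$ is a loop or coloop of $D$ (since deletion and contraction for delta-matroids are defined by separate conventions in these degenerate cases). Apart from this routine check, the proof is essentially a translation via duality of the already-established Lemma~\ref{s2.l9}, so no genuinely new combinatorial work beyond the symmetric exchange manipulations used there is required.
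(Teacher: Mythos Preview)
Your proposal is correct and follows essentially the same route as the paper: both establish the key identity $D/e^c = (D^*|_e)^*$, compute the duals of $D_c$, $D_o$, $D_n$, and then invoke Lemma~\ref{s2.l9} applied to $D^*$ together with the observation that ribbon dual-loops in $D$ are exactly ribbon loops in $D^*$. The only cosmetic difference is that the paper derives $D/e^c = (D^*|_e)^*$ via a single chain of twist identities (using $D/A = (D\ast A)\backslash A$), whereas you propose iterating the single-element relation $(D/f)^* = D^*\backslash f$; the paper's route neatly sidesteps the loop/coloop case-check you flagged, but both arguments are equivalent.
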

\begin{proof}
We start by observing that 
\[ D/e^c = (D \ast e^c) \backslash e^c  =   ([(D \ast e^c) \backslash e^c]\ast e)\ast e =  [(D \ast E) \backslash e^c]\ast e = (D^*|_{e} )^*  . \]
So $D/e^c = D_c$  if and only if $(D^*|_{e} )^* =D_c$ if and only if $D^*|_{e} =D_o$. By Lemma~\ref{s2.l9} this happens if and only if $e$ is an orientable ribbon loop of $D^*$ which happens if and only if $e$ is an  orientable ribbon dual-loop in $D$.
Arguing similarly gives that  $D/e^c = D_o$  if and only if $e$ is  not a ribbon dual-loop in $D$, and  $D/e^c = D_o$  if and only if $e$ is a  non-orientable ribbon dual-loop in $D$.
\end{proof}

\begin{lemma}\label{pe.l2}
Let $D=(E, \mathcal{F})$ be a vf-safe delta-matroid, and $e\in E$. Then 
\[  r_{\max}(D) = \begin{cases}   
 r_{\max}(D+e)    &  \text{if $e$ is an orientable ribbon dual-loop},  
  \\  r_{\max}(D+e)   -1&  \text{if $e$ is   not a ribbon dual-loop},  
    \\  r_{\max}(D+e) +1  &  \text{if $e$ is a non-orientable ribbon dual-loop.}\end{cases}   \]
\end{lemma}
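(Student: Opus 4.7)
The plan is to compute $r_{\max}(D+e)$ directly from the definition of loop complementation, handling the orientable and non-ribbon-dual-loop cases by simple counting and invoking the vf-safe hypothesis only to resolve the delicate non-orientable case. Set $M_0 := \max\{|F| : F \in \mathcal{F},\ e \notin F\}$ and $M_1 := \max\{|F| : F \in \mathcal{F},\ e \in F\}$, with the convention $\max \emptyset = -\infty$, so that $r_{\max}(D) = \max(M_0, M_1)$. A direct expansion of $\mathcal{F}(D+e) = \mathcal{F} \triangle \{F \cup e : F \in \mathcal{F},\ e \notin F\}$ shows that loop complementation leaves sets avoiding $e$ unchanged, while a set $F \ni e$ is feasible in $D+e$ if and only if exactly one of $F$ and $F\setminus e$ lies in $\mathcal{F}$. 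Consequently $r_{\max}(D+e) = \max(M_0,\ M_1^+)$, where $M_1^+$ denotes the maximum cardinality of a set containing $e$ in $\mathcal{F}(D+e)$ and plainly satisfies $M_1^+ \leq \max(M_1, M_0+1)$.

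Next I would translate the three types of $e$ into inequalities between $M_0$ and $M_1$. Since $e$ is a ribbon dual-loop exactly when every maximum feasible set of $D$ contains $e$, this amounts to $M_1 > M_0$; and since $\mathcal{F}(D\ast e) = \{F \triangle e : F \in \mathcal{F}\}$ forces $(D\ast e)_{\max}$ to have cardinality $\max(M_0+1, M_1-1)$, a short check shows that $e$ is not a ribbon dual-loop iff $M_1 \leq M_0$, is a non-orientable ribbon dual-loop iff $M_1 = M_0 + 1$, and is an orientable ribbon dual-loop iff $M_1 \geq M_0 + 2$ (including the coloop case $M_0 = -\infty$). The orientable case is then settled by picking any $F \in \mathcal{F}$ with $e \in F$ and $|F| = M_1$: since $|F\setminus e| = M_1 - 1 > M_0$, necessarily $F\setminus e \notin \mathcal{F}$, so $F \in \mathcal{F}(D+e)$ and $M_1^+ = M_1$, giving $r_{\max}(D+e) = M_1 = r_{\max}(D)$. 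The not-a-ribbon-dual-loop case is symmetric: for any $F \in \mathcal{F}$ with $e \notin F$ and $|F| = M_0$, the bound $|F \cup e| = M_0 + 1 > M_1$ forces $F \cup e \notin \mathcal{F}$, placing $F \cup e \in \mathcal{F}(D+e)$ and yielding $r_{\max}(D+e) = M_0 + 1 = r_{\max}(D) + 1$.

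The main obstacle is the non-orientable case $M_1 = M_0 + 1$: both $\mathcal{F}$ and the shifted collection $\{F \cup e : F \in \mathcal{F},\ e \notin F\}$ contain sets of cardinality $M_1$, so the desired $r_{\max}(D+e) = M_0$ reduces to showing that these two collections \emph{coincide} at the top level, i.e.\ that $\{F \setminus e : F \in \mathcal{F},\ e \in F,\ |F| = M_1\}$ equals $\{F \in \mathcal{F} : e \notin F,\ |F| = M_0\}$. This is precisely where vf-safety enters. By hypothesis $D\bar{\ast}e = ((D\ast e)+e)\ast e$ is a delta-matroid, and a direct expansion of its definition gives
\[
\mathcal{F}(D\bar{\ast}e) = \{F \in \mathcal{F} : e \in F\}\ \cup\ \bigl(\mathcal{F}(D\setminus e)\,\triangle\,\mathcal{F}(D/e)\bigr),
\]
sorted by membership of $e$ (after identifying $\mathcal{F}(D/e)$ with its image among subsets of $E\setminus e$). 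In $D\bar{\ast}e$ the maximum cardinality $M_1$ is achieved only by sets containing $e$, so $e$ is a ribbon dual-loop there. To pin down its orientability I would verify the routine identities that $\ast e$ and $+e$ each commute with contraction $/f$ for every $f \neq e$ (an immediate check from definitions), whence $\bar{\ast}e$ commutes with $/e^c$. Combined with Lemma~\ref{s2.l10} and the direct computation on the three one-element delta-matroids showing that $D_n\bar{\ast}e = D_c$, the hypothesis $D/e^c = D_n$ gives $(D\bar{\ast}e)/e^c = D_c$, identifying $e$ as an \emph{orientable} ribbon dual-loop of $D\bar{\ast}e$. Applying the type-vs-inequalities characterization from the preceding paragraph now to $D\bar{\ast}e$ forces $\mathcal{F}(D\setminus e)\,\triangle\,\mathcal{F}(D/e)$ to have maximum cardinality at most $M_0 - 1$, which is exactly the coincidence at level $M_0$ required, completing the proof.
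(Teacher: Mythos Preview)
Your proof is correct. The two easy cases (orientable dual-loop and not a dual-loop) are handled essentially as in the paper: both arguments exhibit an explicit feasible set of $D+e$ of the required size, and your $M_0,M_1$ bookkeeping makes the upper bound $r_{\max}(D+e)\le\max(M_1,M_0+1)$ transparent where the paper leaves it implicit.

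The non-orientable case is where the two approaches diverge. The paper argues directly from $D/e^c=D_n$: it claims that any $F\in\mathcal F(D)_{\max}$ with $F\setminus e\notin\mathcal F$, or any $F\in\mathcal F(D)_{\max-1}$ with $F\cup e\notin\mathcal F$, would force $D/e^c$ to equal $D_c$ or $D_o$ respectively, contradicting $D/e^c=D_n$. You instead pass to the dual pivot $D\bar\ast e$, compute its feasible sets explicitly as $\{F\in\mathcal F:e\in F\}\cup\bigl(\mathcal F(D\setminus e)\triangle\mathcal F(D/e)\bigr)$, use the commutation of $\bar\ast e$ with $/f$ for $f\ne e$ together with $D_n\bar\ast e=D_c$ to identify $e$ as an \emph{orientable} ribbon dual-loop of $D\bar\ast e$, and then read off the required coincidence at level $M_0$ from your $M_1'\ge M_0'+2$ characterisation. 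Your route makes the role of vf-safety completely explicit (it is needed precisely so that Lemma~\ref{s2.l10} applies to $D\bar\ast e$), whereas the paper's contraction argument is terser and does not visibly invoke the hypothesis. What you gain is a self-contained argument whose dependence on the axioms is transparent; what the paper's approach gains, when fleshed out, is that it avoids the detour through $\bar\ast e$ and the auxiliary commutation identities.
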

\begin{proof}
If $e$ is  an orientable ribbon dual-loop then it is  a coloop of $D_{\max}$ but not a coloop of $(D\ast e)_{\max}$. If there were any sets  $F\in \mathcal{F}(D)_{\max-1}$ with $e\notin F$  then $\mathcal{F}(D\ast e)_{\max}$ would consist exactly of sets of the  form $F\cup\{e\}$ for these $F$, so $e$ would be a coloop   $(D\ast e)_{\max}$. Thus there is an element $X\in \mathcal{F}(D)_{\max}$ such that  $X\backslash \{e\} \notin \mathcal{F}(D)$ and it follows that $X$ is a maximal  set in $\mathcal{F}(D+e)$. Thus $r_{\max}(D+e) = r_{\max}(D)$. 

If $e$ is   not a ribbon dual-loop then it is not a coloop of $D_{\max}$. Thus there is $F\in \mathcal{F}(D)_{\max}$ such that $e\notin F$ and $F\cup \{e\} \notin \mathcal{F}(D)$. It follows that $F\cup \{e\} \in \mathcal{F}(D+e)$, and that  $r_{\max}(D+e) = r_{\max}(D) +1$.

If $e$ is a non-orientable ribbon dual-loop then, by  Lemma~\ref{s2.l10}, $D/e^c=D_n$. If there was a set  $F\in \mathcal{F}(D)_{\max}$ such that $F\backslash \{e\}\notin \mathcal{F}(D)$ then by contracting  the elements of $F\backslash \{e\}$ first we would get that  $D/e^c=D_c$. Similarly,  if there was a set  $F\in \mathcal{F}(D)_{\max-1}$ such that $F\cup \{e\}\notin \mathcal{F}(D)$ then by contracting  the elements of $F\backslash \{e\}$ first we would get that  $D/e^c=D_o$. Thus  $F\in \mathcal{F}(D)_{\max}$ if and only if $F\backslash \{e\}\in \mathcal{F}(D)_{\max-1}$, and it follows that  $r_{\max}(D+e) = r_{\max}(D) -1$.
\end{proof}

\begin{lemma}\label{pe.l1}
Let $D=(E, \mathcal{F})$ be a vf-safe delta-matroid, and $e\in E$. Then 
\[  \xi(D) = \begin{cases}
   \xi((D+e)/e) +\frac{1}{2} &  \text{if $e$ is an orientable ribbon dual-loop},  
  \\  \xi((D+e)/e)+  1&   \text{if $e$ is   not a ribbon dual-loop},  
    \\  \xi((D+e)/e) &  \text{if $e$ is a non-orientable ribbon dual-loop.}
    \end{cases}   \]
\end{lemma}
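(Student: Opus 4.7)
My plan is to expand both $\xi(D)$ and $\xi((D+e)/e)$ using the formulation $\xi(A)=|A|/2+\rho(D+A)-\rho(D)$, then reduce the difference to quantities handled by Lemmas~\ref{s2.l7} and~\ref{pe.l2}.

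First I would establish the operational identity $((D+e)/e)+(E-e) = (D+E)/e$, which follows from commutativity of loop complementation and contraction at distinct elements together with the relation $(D+f)\backslash f = D\backslash f$ used when rearranging iterated operations. Combining this with the formula for $\xi$ yields
\[
\xi(D) - \xi((D+e)/e) = \tfrac{1}{2} + \bigl[\rho(D+E)-\rho((D+E)/e)\bigr] - \bigl[\rho(D)-\rho(D+e)\bigr] - \bigl[\rho(D+e)-\rho((D+e)/e)\bigr].
\]

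Next I would evaluate each bracket. The equality of the two expressions for $\xi$ implicitly encodes $r_{\min}(D+A)=r_{\min}(D)$, so $\rho(D)-\rho(D+e) = -\tfrac{1}{2}(r_{\max}(D+e)-r_{\max}(D))$, which Lemma~\ref{pe.l2} evaluates in each ribbon dual-loop case. The first and third brackets have the form $\rho(X)-\rho(X/e)$ and are handled by Lemma~\ref{s2.l7} applied to $X=D+E$ and $X=D+e$ respectively; each output depends on the ribbon loop status of $e$ in $X$. A key intermediate observation is that this status coincides in both:
\[
(D+E)|_e \;=\; D|_e + e \;=\; (D+e)|_e,
\]
obtained by commuting loop complementation at $e$ past deletion at $E-e$ and using $(D+(E-e))\backslash(E-e) = D\backslash(E-e)$. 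A direct check gives $D_c+e=D_c$, $D_o+e=D_n$, and $D_n+e=D_o$, so the common ribbon loop status is determined by $D|_e$ alone.

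Finally I would tabulate the three ribbon dual-loop cases of $e$ in $D$ against the three \emph{a priori} independent ribbon loop cases of $e$ in $D$, giving nine sub-cases. Although the individual brackets depend on both statuses, the sum collapses to depend only on the ribbon dual-loop status, yielding $\tfrac12$, $1$, and $0$ in the orientable, non-ribbon-dual-loop, and non-orientable ribbon-dual-loop cases respectively, matching the claim. I expect the main obstacle to be the initial verification of the commutation identity $(D+E)|_e = D|_e + e = (D+e)|_e$, together with the implicit use of $r_{\min}(D+A)=r_{\min}(D)$; once these are in hand, the nine-case tabulation is routine but tedious, and I do not see a substantially cleaner route, since a duality shortcut $D\leftrightarrow D^*$ is obstructed by the non-trivial interaction $(D+A)^* = (D^*)\ast A + A\ast A$ between loop complementation and twist.
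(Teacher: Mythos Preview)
Your proposal is correct and follows essentially the same skeleton as the paper: expand $\xi(D)-\xi((D+e)/e)$, use the identity $((D+e)/e)+(E-e)=(D+E)/e$, show that the two ``contraction'' terms cancel, and then apply Lemma~\ref{pe.l2}. The differences are in the packaging.

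The paper works with $r_{\max}$ directly rather than with $\rho$, obtaining
\[
2\bigl(\xi(D)-\xi((D+e)/e)\bigr)=r_{\max}(D+E)-r_{\max}(D)-r_{\max}((D+E)/e)+r_{\max}((D+e)/e)+1,
\]
and then uses only the fact that $r_{\max}(X)-r_{\max}(X/e)$ equals $0$ or $1$ according to whether $e$ is a loop of $X$. The cancellation is achieved via the claim ``$e$ is a loop of $D+e$ iff $e$ is a loop of $D+E$'', proved by an induction on $|E|$. Your route through $\rho$ is equally valid but costs the extra observation $r_{\min}(D+A)=r_{\min}(D)$; on the other hand, your restriction identity $(D+E)|_e=(D+e)|_e=D|_e+e$ is cleaner than the paper's inductive argument and in fact implies it.

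One simplification: once you know $(D+E)|_e=(D+e)|_e$, Lemma~\ref{s2.l7} (via Lemma~\ref{s2.l9}) gives $\rho(D+E)-\rho((D+E)/e)=\rho(D+e)-\rho((D+e)/e)$ outright, so your first and third brackets cancel \emph{before} any case analysis. The nine-case tabulation is therefore unnecessary; only the three ribbon dual-loop cases from Lemma~\ref{pe.l2} remain, exactly as in the paper.
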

\begin{proof} 
We have
\begin{align}\label{pe.l1.e2}
\begin{split}
2(\xi(D) -  \xi((D+e)/e)) =&  |E| + r_{\max}(D+E)- r_{\max}(D) -|E| +1 - r_{\max}([(D+e)/e] +(E\backslash\{e\})) 
\\&  +r_{\max}((D+e)/e)
\\=&    r_{\max}(D+E)- r_{\max}(D) - r_{\max}((D+E)/e)  +r_{\max}((D+e)/e) +1,
\end{split}
\end{align}
where the first equality is by definition, the second uses that  $[(D+e)/e] +(E\backslash\{e\}) = (D+E)/e$. We also have  
\begin{equation}\label{pe.l1.e1}
r_{\max}(D)=\begin{cases}   r_{\max}(D/e)  & \text{if $e$ is a loop}, \\  r_{\max}(D/e)+1 &  \text{otherwise}.    
 \end{cases}
\end{equation}
(This identity follows easily from the definitions, but was also shown in the proof of Lemma~\ref{s2.l7}.)
Furthermore, we claim that $e$ is a loop of $D+e$ if and only if it is also a loop of $D+E$. Assuming this claim for the moment, we can use  
  Equation~\eqref{pe.l1.e1} to eliminate all of the contractions in  \eqref{pe.l1.e2}, giving
\[  2(\xi(D) -  \xi((D+e)/e))  =  r_{\max}(D+e)-    r_{\max}(D) +1. \] 
The result then follows by an application of Lemma~\ref{pe.l2}.

It remains to verify the claim that  $e$ is a loop of $D+e$ if and only if it is  a loop of $D+E$. For this suppose $f\in E$ with $e\neq f$ (if there is no such $f$ then the result is trivially true). Then if $e$ is a loop of $D+e$ it appears in no feasible sets of $D+e$, and so it cannot appear in a feasible set of $(D+e)+f$. By induction it follows that if $e$ is a loop of $D+e$ then it is a loop of $(D+e)+(E\backslash\{e\}) = D+E$. Applying this result to the delta-matroid $D+(E\backslash\{e\})$ gives that if $e$ is a loop of $(D+(E\backslash\{e\})+e)$ then it is a loop of $(D+(E\backslash\{e\})+e)+(E\backslash\{e\}) = D+e$ (since loop complementation is involuntary and commutes on disjoint elements). This completes the proof of the claim and the proof of the lemma.
\end{proof}

For constructing the Tutte polynomial of $\mathcal{H}^{pe}$  we use the same $\delta_{\mathbf{a}}$ as for the Bollob\'as-Riordan polynomial, see Equation~\eqref{e.BRdelta1}.  For uniformity we see that, by applying $\delta\otimes \delta$ to  $\Delta(D)$, where $D$ is over $E=\{e,f\}$ and has feasible sets $\emptyset$, $\{e\}$, and   $\{e,f \}$,  $\delta_{\mathbf{a}}$ is uniform only if $a_1=\sqrt{a_2a_3}$.

\begin{theorem}\label{s3.t1}
The canonical Tutte polynomial of the  Hopf algebra $\mathcal{H}^{pe}$ is the 2-variable Penrose polynomial
\begin{equation}\label{s3.t1.e1}
 \alpha( \mathbf{a}, \mathbf{b}) ( D)  =  x_1^{ \xi(D)}   y_2^{|E|- \xi(D)} \tilde{P}_D\left(\frac{y_1}{x_1}+1, \frac{x_2}{y_2} +1\right),
 \end{equation}
where $\mathbf{a} = (\sqrt{x_1x_2} ,x_1,x_2)$, $\mathbf{b}  =(\sqrt{y_1y_2},y_1,y_2)$, and $E$ is the ground set of the delta-matroid $D$. 
\end{theorem}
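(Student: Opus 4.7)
The plan is to apply Theorem~\ref{tm} directly to $\mathcal{H}^{pe}$, with a structure that closely parallels the proof of Theorem~\ref{s2.t3}. The substantive changes are that the underlying ``rank'' function is now $\xi$ rather than $\rho$, and that the roles of $\del$ and $\con$ are played by the modified minor operations $D/e$ and $(D+e)/e$, respectively.

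First I would set $r_1(D) := \xi(D)$ and $r_2(D) := |E| - \xi(D)$ and verify hypothesis~\eqref{tm.e1} of Theorem~\ref{tm}. Since in this minors system $D\del e^c = D/e^c$, Lemma~\ref{s2.l10} identifies which of $D_c, D_o, D_n$ the object $D/e^c$ is, according to whether $e$ is an orientable ribbon dual-loop, not a ribbon dual-loop, or a non-orientable ribbon dual-loop. Lemma~\ref{pe.l1} then records the exact increment of $\xi$ between $D$ and $(D+e)/e$ in each of these three cases (respectively $\tfrac{1}{2}$, $1$, and $0$). With the index ordering of Equation~\eqref{e.BRdelta1}, the selector $\delta_{\mathbf{a}} = \sqrt{x_1 x_2}\,\delta_c + x_1 \delta_o + x_2 \delta_n$ produces exponents $m_{1,\cdot} = (\tfrac{1}{2}, \tfrac{1}{2})$, $m_{2,\cdot} = (1, 0)$, $m_{3,\cdot} = (0, 1)$, and these exactly match the increments of $r_1$ and $r_2$ dictated by the lemma. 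The vanishing $r_j = 0$ on the unit holds trivially.

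Once the hypotheses are verified, Theorem~\ref{tm} simultaneously establishes the uniformity of $\delta_{\mathbf{a}}$ and gives
\begin{equation*}
\alpha(\mathbf{a},\mathbf{b})(D) = y_1^{\xi(D)}\, y_2^{|E|-\xi(D)} \sum_{A \subseteq E} \left(\frac{x_1}{y_1}\right)^{\xi(A)} \left(\frac{x_2}{y_2}\right)^{|A|-\xi(A)},
\end{equation*}
where $\xi(A)$ is the value of $\xi$ on the minor $D/A^c$ generated by the framework. A short algebraic rearrangement, pulling an $x_1^{\xi(D)}$ factor through the sum to convert each $(x_1/y_1)^{\xi(A)}$ into $(y_1/x_1)^{\xi(D)-\xi(A)}$, then matches the state-sum~\eqref{s3.e10} to yield $x_1^{\xi(D)} y_2^{|E|-\xi(D)}\,\tilde{P}_D\!\left(\tfrac{y_1}{x_1} + 1,\, \tfrac{x_2}{y_2} + 1\right)$, as required.

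The principal technical difficulty has already been absorbed into Lemmas~\ref{s2.l10} and~\ref{pe.l1}, whose proofs required delicate case analyses involving loop complementation; given those, the present result is essentially bookkeeping within the general framework. The only subtle point to confirm is that the quantity $\xi$ applied to the minor $D/A^c$ (which is what Theorem~\ref{tm} outputs) coincides with the $\xi(A)$ appearing in the definition of $\tilde{P}_D$, and this is verified by iterating Lemma~\ref{pe.l1} along the elements of $A^c$ and collecting the increments.
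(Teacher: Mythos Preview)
Your proposal is correct and follows essentially the same route as the paper: apply Theorem~\ref{tm} with $r_1=\xi$, $r_2=|E|-\xi$, read off the $m_{ij}$ from Lemmas~\ref{s2.l10} and~\ref{pe.l1}, and rearrange the resulting state sum. One small caveat on your final sentence: verifying that $\xi(D/A^c)$ coincides with the subset quantity $\xi_D(A)$ by ``iterating Lemma~\ref{pe.l1} along $A^c$'' is not quite the right mechanism, since that lemma governs the $\con$-operation $(D+e)/e$ rather than the $\del$-operation $D/e$; the identification instead comes down to the fact that loop complementation on elements outside $A^c$ does not affect whether an element of $A^c$ is a loop (so the drop in $r_{\max}$ upon contracting $A^c$ is the same for $D$ and $D+A$), a point the paper's proof also passes over silently.
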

\begin{proof}
Applying Theorem~\ref{tm} with $D\del e := D/e$ and $D\con e:=  (D+e)/e$, $r_1(A) := \xi(A)$, and  $r_2(A) :=|A|- \xi(A)$, and using  
 Lemmas~\ref{s2.l10} and \ref{pe.l1} gives $m_{11}=1/2$,   $m_{21}=1$, $m_{31}=0$, $m_{12}=1/2$,   $m_{22}=0$, $m_{32}=1$  (so  $a_1=\sqrt{x_1x_2}$, $a_2=x_1$, and $a_3=x_2$). Thus
\begin{align*}
 \alpha( \mathbf{a}, \mathbf{b}) ( D)  &= y_1^{ \xi(D)}   y_2^{|E|- \xi(D)}  \sum_{A\subseteq E} \left(\frac{x_1}{y_1}\right)^{\xi(A)}\left(\frac{x_2}{y_2} \right)^{|A|-\xi(A)}
 \\&=  x_1^{ \xi(D)}   y_2^{|E|- \xi(D)}  \sum_{A\subseteq E} \left(\frac{y_1}{x_1}\right)^{\xi(D)-\xi(A)}\left(\frac{x_2}{y_2} \right)^{|A|-\xi(A)},
\end{align*}
from which the result follows.
\end{proof}

At this point the reader might ask  what happens if we choose the minor operations   $D/e$ and $(D+e)/e $, rather than $D\ba e$ and $(D+e)/e $. If we do we obtain equivalent polynomials.  
Let  $\mathcal{H}^{pe}$ be the Hopf algebra of Definition~\ref{pe.d1} with coproduct $\Delta_{pe}=\sum_{A\subseteq E}D/A^c \otimes  (D+A)/A$.  Using the ``deletion and contraction'' $D\ba e$ and $(D+e)/e$ instead and proceeding as for $\mathcal{H}^{pe}$ results in a second Hopf algebra of delta-matroids, that we denote $\hat{\mathcal{H}}^{pe}$, with coproduct  $\hat{\Delta}_{pe}=\sum_{A\subseteq E}D\ba A^c \otimes  (D+A)/A$.

\begin{lemma}\label{dual5}
The function  $\ast:  \hat{\mathcal{H}}^{pe} \rightarrow \mathcal{H}^{pe}$ defined by $\ast: D\mapsto D^*$, where $D^*$ is the dual of the delta-matroid $D$ is a Hopf algebra morphism.
\end{lemma}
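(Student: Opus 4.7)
The plan is to verify that $\ast$ preserves each piece of the Hopf algebra structure in turn: algebra, unit, counit, and coproduct. The algebra and unit conditions follow directly from the definitions, since $(D_1\oplus D_2)^{*}=D_1^{*}\oplus D_2^{*}$ by inspection of feasible sets, and the unique delta-matroid on the empty ground set is self-dual. Preservation of the counit is immediate.

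The substantive step is to show that $\Delta_{pe}\circ \ast=(\ast\otimes \ast)\circ \hat{\Delta}_{pe}$. Comparing
\[
\Delta_{pe}(D^{*}) = \sum_{A\subseteq E} D^{*}/A^c \otimes (D^{*}+A)/A
\quad\text{and}\quad
(\ast\otimes\ast)\hat{\Delta}_{pe}(D) = \sum_{A\subseteq E}(D\backslash A^c)^{*}\otimes ((D+A)/A)^{*}
\]
term by term, this reduces to verifying, for every $A\subseteq E$,
\begin{enumerate}
\item[(i)] $D^{*}/A^c=(D\backslash A^c)^{*}$, and
\item[(ii)] $(D^{*}+A)/A=((D+A)/A)^{*}$.
\end{enumerate}
Identity (i) is the standard duality--minor relation $(D\backslash B)^{*}=D^{*}/B$ for delta-matroids, which is an immediate computation from $D^{*}=D\ast E$ and the definitions of deletion and contraction.

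For identity (ii) I would invoke the twisted duality calculus of \cite{BH11,CMNR2}: the operations $\ast e$ and $+e$ at each element generate an $S_3$-action whose third involution is the dual pivot $\bar{\ast}e=\ast e +e\ast e$, and operations on distinct elements commute. The paper already uses the resulting identity $(D+A)^{*}=D^{*}\ast A+A\ast A$ in the proof of Proposition~\ref{p.pepe}. Combined with (i), this yields
\[
((D+A)/A)^{*} = (D+A)^{*}\backslash A = (D^{*}\ast A+A\ast A)\backslash A = (D^{*}\bar{\ast} A)\backslash A,
\]
so it suffices to prove the auxiliary identity $(X\bar{\ast} A)\backslash A = (X+A)/A$ (applied with $X=D^{*}$). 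This auxiliary identity is itself a direct consequence of the $S_3$-relations: both sides describe the feasible sets of $X+A$ that contain $A$, with $A$ removed. It can be proved by induction on $|A|$ from the single-element case, which is checked directly from the definitions of $\ast$, $+$, $\backslash$, and the convention $D/e=D\backslash e$ for loops and coloops.

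The main obstacle is identity (ii), because the interaction of $\ast$ and $+$ is governed by the nontrivial twisted duality relations rather than naive commutativity; one cannot simply move $\ast A$ past $+A$. Once the auxiliary identity is established (by citing \cite{BH11,CMNR2} or by the brief single-element analysis and induction described above), matching the terms of $\Delta_{pe}(D^{*})$ and $(\ast\otimes\ast)\hat{\Delta}_{pe}(D)$ is routine bookkeeping, completing the verification that $\ast$ is a Hopf algebra morphism.
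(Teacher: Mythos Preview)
Your proposal is correct and follows essentially the same approach as the paper: both arguments reduce to verifying the two identities (i) $D^{*}/A^c=(D\backslash A^c)^{*}$ and (ii) $(D^{*}+A)/A=((D+A)/A)^{*}$, and both establish (ii) via the twisted duality $S_3$-relations of \cite{BH11,CMNR2}. The only cosmetic difference is organizational: the paper proves (ii) by a single chain of equalities (moving $\ast A^c$ out, applying the braid relation $\ast A+A\ast A=+A\ast A+A$, and using $(Y+A)\backslash A=Y\backslash A$), whereas you factor the same steps through the dual pivot and your auxiliary identity $(X\bar{\ast}A)\backslash A=(X+A)/A$.
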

\begin{proof}
To show that  $ \Delta_{pe}(D^*) = (\hat{\Delta}_{pe} (D))^* $ we use the twisted duality properties of delta-matroids from Brijder and Hoogeboom \cite{BH11} (see also \cite{CMNR2}) and that $D/A=(D\ast A) \ba A$. First
\[ D^*\ba A = (D\ast E) \ba A = ((D\ast A )\ast A^c ) \ba A  =   ((D\ast A ) \ba A)\ast A^c  = (D/A)^*.   \]  
Also,
\begin{multline*} (D^*+A)/A = (((D\ast E)+A)\ast A) \ba A  = ((((D\ast A )\ast A^c )  + A) \ast A) \ba A =     ((((D\ast A ))  + A) \ast A) \ba A )\ast A^c   \\ =  ((((D+ A ))  \ast  A) + A) \ba A )\ast A^c  =  (((D+ A ))  \ast  A) \ba A )\ast A^c = ((D+ A )) / A )\ast A^c  = (D+ A )) / A )^*. \end{multline*} 
Then 
\[ \Delta_{pe}(D^*)  =  \sum_{A\subseteq E}D^*/A^c \otimes  (D^*+A)/A =  \sum_{A\subseteq E}(D\ba A^c)^* \otimes  ((D+A)/A)^*=    (\hat{\Delta}_{pe} (D))^* .\]
The remaining properties are easily verified.
\end{proof}

We use the functions of of Equation~\eqref{e.dm.del} to define selectors.  For $ \hat{\mathcal{H}}^{pe}$ we take  $\hat{\delta}_{\hat{\mathbf{a}}}=a_1\delta_{b} + a_2\delta_{o} +a_3\delta_{n}$, and for $\mathcal{H}^{pe}$ take $\delta_{\mathbf{a}}=a_2\delta_{b} + a_1\delta_{o} +a_3\delta_{n}$. 
 Let $\hat{\alpha}(\hat{\mathbf{a}}, \hat{\mathbf{b}}  )$ denote the canonical Tutte polynomial  associated with   $\hat{\mathcal{H}}^{pe}$, and  $\alpha(\mathbf{a}, \mathbf{b}  )$ denote the canonical Tutte polynomial  associated with   $\mathcal{H}^{pe}$. Then, since $\ast$ is a Hopf algebra morphism, by Lemma~\ref{dual5} and  that $\hat{\delta}_{\hat{\mathbf{a}}} = \delta_{\mathbf{a}} \circ \ast$, we can apply Theorem~\ref{tgen} to get
 $   \hat{\alpha}(\hat{\mathbf{a}}, \hat{\mathbf{b}}  ) (D) =  \alpha(\mathbf{a}, \mathbf{b}  )(D^*)  $.  
Theorem~\ref{tgen} also gives that $\hat{\delta}_{\hat{\mathbf{a}}}$ is uniform if and only if $ \delta_{\mathbf{a}}$ is. Thus we have shown the following. 
\begin{theorem}\label{dual6}
The Tutte polynomial of the  Hopf algebra $\hat{\mathcal{H}}^{pe}$ is the 2-variable Penrose polynomial 
\begin{equation}
 \hat{\alpha}(\hat{\mathbf{a}}, \hat{\mathbf{b}}  ) (D)=  x_1^{ \xi(D^*)}   y_2^{|E|- \xi(D^*)} \tilde{P}_{D^*}\left(\frac{y_1}{x_1}+1, \frac{x_2}{y_2} +1\right),
 \end{equation}
where $\hat{\mathbf{a}} = (x_1,\sqrt{x_1x_2} ,x_2)$, $\hat{\mathbf{b}}  =(y_1,\sqrt{y_1y_2},y_2)$, and $E$ is the ground set of the delta-matroid $D$. 
\end{theorem}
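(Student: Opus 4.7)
The plan is to deduce the result directly from Theorem~\ref{tgen} applied to the Hopf algebra morphism $\ast\colon \hat{\mathcal{H}}^{pe}\to \mathcal{H}^{pe}$ established in Lemma~\ref{dual5}, and then to read off the closed form by invoking Theorem~\ref{s3.t1} on the dual delta-matroid.

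First, I would verify that the two selectors satisfy the compatibility hypothesis of Theorem~\ref{tgen}, namely $\hat{\delta}_{\hat{\mathbf{a}}} = \delta_{\mathbf{a}}\circ \ast$. Since duality is grading-preserving, it suffices to check this on the three isomorphism classes of delta-matroids of graded dimension~$1$. Direct computation from the definition of the twist gives $D_c^{\,*}=D_o$, $D_o^{\,*}=D_c$, and $D_n^{\,*}=D_n$, so
\[
\delta_b\circ\ast = \delta_o,\qquad \delta_o\circ\ast = \delta_b,\qquad \delta_n\circ\ast = \delta_n.
\]
With $\hat{\delta}_{\hat{\mathbf{a}}} = a_1\delta_b + a_2\delta_o + a_3\delta_n$ and $\delta_{\mathbf{a}} = a_2\delta_b + a_1\delta_o + a_3\delta_n$ (the swap of $a_1$ and $a_2$ is precisely what is recorded in the choice of $\hat{\mathbf{a}}$ and $\mathbf{a}$ in the statement), the compatibility $\hat{\delta}_{\hat{\mathbf{a}}}(D)=\delta_{\mathbf{a}}(D^*)$ follows on every basis element of $\hat{\mathcal{H}}^{pe}_1$, and hence on all of $\hat{\mathcal{H}}^{pe}$ by linearity.

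Second, I would apply Theorem~\ref{tgen} with $\phi = \ast$. Since $\ast$ is a Hopf algebra morphism (Lemma~\ref{dual5}), the theorem yields
\[
\hat{\alpha}(\hat{\mathbf{a}},\hat{\mathbf{b}})(D) \;=\; \alpha(\mathbf{a},\mathbf{b})(D^*)
\]
for all $D\in \hat{\mathcal{H}}^{pe}$. The same theorem also transfers uniformity from $\delta_{\mathbf{a}}$ to $\hat{\delta}_{\hat{\mathbf{a}}}$; uniformity of $\delta_{\mathbf{a}}$ for the given choice $\mathbf{a}=(\sqrt{x_1x_2},x_1,x_2)$ was already established in Section~\ref{ss.penrose} preceding Theorem~\ref{s3.t1}, so $\hat{\delta}_{\hat{\mathbf{a}}}$ is uniform as required for $\hat{\alpha}(\hat{\mathbf{a}},\hat{\mathbf{b}})$ to be a canonical Tutte polynomial.

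Third, to finish it only remains to substitute the explicit evaluation of $\alpha(\mathbf{a},\mathbf{b})(D^*)$ furnished by Theorem~\ref{s3.t1}, which gives
\[
\alpha(\mathbf{a},\mathbf{b})(D^*) \;=\; x_1^{\xi(D^*)}\,y_2^{|E|-\xi(D^*)}\,\tilde{P}_{D^*}\!\left(\tfrac{y_1}{x_1}+1,\;\tfrac{x_2}{y_2}+1\right),
\]
using that $|E(D^*)|=|E(D)|=|E|$. Combining this with the previous identity produces the formula in the statement. There is no real obstacle here: the entire argument is a two-line application of Theorems~\ref{tgen} and~\ref{s3.t1}, and the only place where care is needed is the bookkeeping in the first step, making sure that the permutation of the coefficients $(a_1,a_2,a_3)\mapsto(a_2,a_1,a_3)$ appearing in the passage from $\hat{\mathbf{a}}$ to $\mathbf{a}$ correctly mirrors the action of $\ast$ on $D_c$, $D_o$, $D_n$.
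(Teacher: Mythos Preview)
Your proposal is correct and follows exactly the paper's own argument: apply Theorem~\ref{tgen} via the Hopf algebra morphism $\ast$ of Lemma~\ref{dual5}, verify the selector compatibility $\hat{\delta}_{\hat{\mathbf{a}}}=\delta_{\mathbf{a}}\circ\ast$ by computing $D_c^*=D_o$, $D_o^*=D_c$, $D_n^*=D_n$, and then read off the closed form from Theorem~\ref{s3.t1}. You have simply made explicit the duality check on the one-element delta-matroids that the paper leaves implicit.
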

Thus the three minor operations $D\ba e$, $D/e$ and $(D+e)/e$ of delta-matroids only generate two Tutte polynomials: the 2-variable Bollob\'as-Riordan polynomial and the 2-variable Penrose polynomial.

\subsection{The Penrose polynomial for ribbon graphs}\label{s.penrg}
 Let $G$ be a ribbon graph and $A\subseteq E(G)$. The  \emph{partial Petrial}, $G^{\tau(A)}$, of $G$ is the ribbon graph obtained from $G$ by for each edge $e\in A$,  choosing one of the arcs $[a,b]$ where $e$ meets a vertex, detaching $e$ from the vertex along that arc giving two copies of the arc $[a,b]$, then reattaching it but by gluing $[a,b]$ to the arc $[b,a]$ (the directions are reversed). This is shown in  Figure~\ref{c1.edgetwist}.
From   \cite{EMM13}, the \emph{Penrose polynomial} of a ribbon graph (or cellularly embedded graph) $G$ is defined by 
\[  P_G(\lambda)   := \sum_{A\subseteq E(G)}  (-1)^{|A|}\lambda^{f(G^{\tau(A)})}.   \] 
If  $D(G)$ is the  delta-matroid of $G$ then, from \cite{CMNR2}, 
 \begin{equation}\label{e.2p} P_G(\lambda) =  \lambda^{c(G)} P_{D(G)}(\lambda).\end{equation}

\begin{figure}
\centering
\includegraphics[height=10mm]{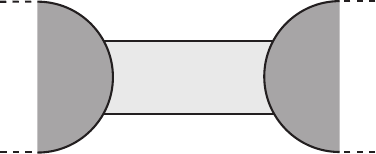}  
\raisebox{4mm}{\includegraphics[width=15mm]{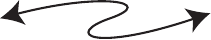}} 
\includegraphics[height=10mm]{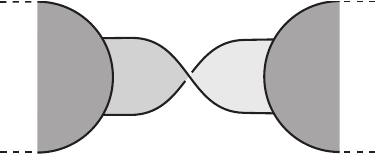}  
\caption{Giving a ``half-twist'' to an edge of a ribbon graph $G$ to form $G^{\tau(e)}$.}
\label{c1.edgetwist}
\end{figure}

Proceeding as in Section~\ref{ss.penrose}, we  define the \emph{2-variable Penrose polynomial} by 

\begin{equation}\label{yisnf}
\tilde{P}_G(x,y) :=  \sum_{A\subseteq E}  (x-1)^{\xi(E)-\xi(A)}(y-1)^{|A|-\xi(A)},
\end{equation} 
where
\[\xi(A)  =  \tfrac{1}{2}\left( |A|+f(G^{\tau(A)})-f(G)\right).
\]
We state the following without proof.
\begin{lemma}\label{l.penrgms}
The set of equivalence classes of ribbon graphs considered up to joins and isomorphism  forms a minor system where the grading is given by the cardinality of the edge set, deletion is given by $G/e$, contraction by $G^{\tau(e)}/e$, and multiplication is given by direct sum.
\end{lemma}
We  identify a ribbon graph with its equivalence class. 
\begin{definition}\label{d.hopperg}
Let  $\mathcal{H}^{per}$  denote the  Hopf algebra associated with ribbon graphs via Lemma~\ref{l.penrgms} and  Proposition~\ref{p.1}. 
Its coproduct is given by 
$
 \Delta_{per}(G)=\sum_{A\subseteq E(G)}G/ A^c  \otimes  G^{\tau(A)}/A$.

\end{definition}

Using $\delta_{b}$, $\delta_{o}$, and  $\delta_{n}$ from    Equation~\eqref{e.rgic}, gives
\begin{equation}\label{e.rgpe1}
 \delta_{\mathbf{a}}= \delta(a_1,a_2,a_3) :=a_1\delta_{b} + a_2\delta_{o} +a_3\delta_{n} .
  \end{equation}
  By considering the ribbon graph that describes a graph with one vertex and two loops on a Klein bottle, it can be seen that 
 $\delta_{\mathbf{a}}$ is not uniform unless $a_1=\sqrt{a_2a_3}$. The following theorem shows that it is uniform if this holds, and identifies the corresponding canonical Tutte polynomial.

\begin{theorem}\label{t.rgpe1}
The 2-variable Penrose polynomial polynomial arises as the canonical Tutte polynomial of the  Hopf algebra $\mathcal{H}^{per}$: 
 \begin{equation}
 \alpha( \mathbf{a}, \mathbf{b}) ( G)  =  x_1^{ \xi(D)}   y_2^{|E|- \xi(D)} \tilde{P}_D\left(\frac{y_1}{x_1}+1, \frac{x_2}{y_2} +1\right),\end{equation}
where $\mathbf{a} = ( \sqrt{x_1x_2}, x_1,x_2)$, $\mathbf{b}  =(\sqrt{y_1y_2} ,y_1,y_2)$, and $E=E(G)$. 
\end{theorem}

To prove the theorem we use the following lemma.

\begin{lemma}\label{t.rgpe.l1}
There is a natural Hopf algebra morphism $\phi: \mathcal{H}^{per}\rightarrow \mathcal{H}^{pe}$ given by $\phi:G\rightarrow D(G)$. 
\end{lemma}
\begin{proof}
Since $D(G\vee H)=D(G)\oplus D(H)$, $\phi$ is well-defined. 
It is easily seen that $\phi$ is multiplicative, and sends the (co)unit to the (co)unit.
It was shown in \cite{CMNR1,CMNR2} that $D(G)/A=D(G/A)$ and  $D(G)+ A=D(G^{\tau(A)})$, giving
$  D(\Delta_{per}(G))  =    \sum_{A\subseteq E} D(G/ A^c)  \otimes  D(G^{\tau(A)}/A)  =  \sum_{A\subseteq E} D(G)/ A^c  \otimes  (D(G)+A)/A  = \Delta_{pe}(D(G)) $.
\end{proof}

\begin{proof}[Proof of Theorem~\ref{t.rgpe1}]
Upon verifying that $ \delta_{b}(G)=  \delta'_{c}(D(G))$, $ \delta_{o}(G)=  \delta'_{o}(D(G))$  and $ \delta_{n}(G)=  \delta'_{n}(C(G))$, where the primed $\delta$'s are those of Equation~\eqref{e.dm.del}, Theorems~\ref{tgen} and~\ref{s2.t3} give
\[ \alpha( \mathbf{a}, \mathbf{b}) ( G) =  x_1^{ \xi(D(G))}   y_2^{|E|- \xi(D(G))} \tilde{P}_{D(G)}\left(\frac{y_1}{x_1}+1, \frac{x_2}{y_2} +1\right),\]
where $E:=E(D(G))$.
It remains to show that for any $\xi_{D(G)}(A) =  \xi_{G}(A)$, but this follows  by  
 Equation~\eqref{d.dmxi}, which gives, 
$\xi(A)  =  |A|/2+\rho(D+A)-\rho(D)$, and by Equation~\eqref{e.t.rgbr1}, which gives $\rho_{D(G)}(A) =  \rho_{G}(A)=   \tfrac{1}{2}\left(|A|-  v(A)+f(A)\right)$.
\end{proof}

\begin{corollary}\label{t.rgpc1}
Let $G$ be a ribbon graph. Then 
\[\left.\tilde{P}_G(x,y)\right|_{\sqrt{x}=1+i\sqrt{\lambda},\sqrt{y}=1-i\sqrt{\lambda}} =  (-1)^{f(G^{\tau(E)})-c(G)}  \lambda^{\xi(G) - |E|-c(G)}  P_{G}(\lambda).\]  
\end{corollary}
\begin{proof}
The result follows from Theorem~\ref{t.rgpe1}, Proposition~\ref{p.pepe} and Equation~\eqref{e.2p} upon noting that $D(G^{\tau(E)})_{\max} =  (C(G^{\tau(E)})^*))^*$, and so $r_{\max}(D(G)+E)=|E|-r((G^{\tau(E)})^*)= |E|-f(G^{\tau(E)})+c(G)$.
\end{proof}

\end{document}